\def\choixcompteur{subsection}
\newtheorem{theo}[\choixcompteur]{Th\'eor\`eme}
\newtheorem{prop}[\choixcompteur]{Proposition}
\newtheorem{lemm}[\choixcompteur]{Lemme}
\newtheorem{coro}[\choixcompteur]{Corollaire}
\theoremstyle{definition}
\newtheorem{defi}[\choixcompteur]{D\'efinition}
\newtheorem{exem}[\choixcompteur]{Exemple}
\newtheorem{exems}[\choixcompteur]{Exemples}
\newtheorem{rema}[\choixcompteur]{Remarque}
\newtheorem{remas}[\choixcompteur]{Remarques}
\newtheorem*{exem*}{Exemple}
\newtheorem*{exems*}{Exemples}
\newtheorem*{exam*}{Exemple}
\newtheorem*{exams*}{Exemples}
\newtheorem*{rema*}{Remarque}
\newtheorem*{remas*}{Remarques}
\newtheorem*{NB}{N.B}
\theoremstyle{definition}
\newtheorem*{defi*}{D\'efinition}
\newtheorem*{defiprop*}{D\'efinition-Proposition}
\theoremstyle{plain}
\newtheorem*{prop*}{Proposition}
\newtheorem*{lemm*}{Lemme}
\newtheorem*{coro*}{Corollaire}
\newtheorem*{theo*}{Th\'eor\`eme}
 \def\cdr@enoncedef{%
 \newenvironment{enonce*}[2][plain]%
 {\let\cdrenonce\relax \theoremstyle{##1}%
 \newtheorem*{cdrenonce}{##2}%
 \begin{cdrenonce}}%
 {\end{cdrenonce}}   }%
\def\cf{{\it cf.\/}\ }
\def\ie{{\it i.e.\/}\ }
\def\eg{{\it e.g.\/}\ }
\def\lc{{\it l.c.\/}\ }
\newcommand{\set}[1]{\{ #1\} } 
\def\resp{{resp.,\/}\ }
\def\parni{\par\noindent}
\def\ed{ \'editeur}
\def\eds{\'editeurs}
\def\Aff{\mathrm{Aff}}
\def\End{\mathrm{End}}
\def\germ{\mathrm{germ}}
\def\conv{\mathrm{conv}}
\def\bc{\buildrel\circ\over}
\def\N{{\mathbb N}}    
\def\Z{{\mathbb Z}}
\def\Q{{\mathbb Q}}
\def\R{{\mathbb R}}
\def\C{{\mathbb C}}
\def\F{{\mathbb F}}
\def\A{{\mathbb A}}
\newcommand{\g}[1]{\mathfrak{#1}} 
\def\qa{\alpha}     
\def\qb{\beta}
\def\qd{\delta}
\def\qe{\varepsilon}
\def\qf{\varphi}
\def\qg{\gamma}
 \def\ql{\lambda}
\def\qm{\mu}
\def\qn{\nu}
\def\qo{\omega}
\def\qp{\pi}
\def\qpp{\varpi}
\def\qr{\rho}
\def\qs {\sigma}
\def\qt{\tau}
\def\QD{\Delta}
\def\QF{\Phi}
\def\QL{\Lambda}
\def\QO{\Omega}
\def\shb{{\mathcal B}}
\def\shf{{\mathcal F}}
\def\shi{{\mathscr I}}
\def\shm{{\mathcal M}}
\def\sho{{\mathcal O}}
\def\shp{{\mathcal P}}
\def\shs{{\mathcal S}}
\def\sht{{\mathcal T}}
\def\shu{{\mathcal U}}
\def\shv{{\mathcal V}}
\def\SHI{{\mathscr I}}
\def\SHS{{\mathscr S}}
\begin{document}


\title{Groupes de Kac-Moody d\'eploy\'es \\sur un corps local,\goodbreak II.  Masures ordonn\'ees}
\author{Guy Rousseau}

\date{12 Mai 2015}

\maketitle

%






\begin{abstract}
Pour un groupe de Kac-Moody d\'eploy\'e (au sens de J. Tits) sur un corps r\'eellement valu\'e quelconque, on construit une masure affine ordonn\'ee sur laquelle ce groupe agit.
Cette construction g\'en\'eralise celle d\'ej\`a effectu\'ee par S. Gaussent et l'auteur quand le corps r\'esiduel contient le corps des complexes 
et celle de F. Bruhat et J. Tits quand le groupe est r\'eductif. On montre que cette masure v\'erifie bien toutes les propri\'et\'es des masures affines ordonn\'ees comme d\'efinies pr\'ec\'edemment par l'auteur.
On utilise le groupe de Kac-Moody maximal au sens d'O. Mathieu et on montre quelques r\'esultats pour celui-ci sur un corps quelconque; en particulier on prouve, dans certains cas, un r\'esultat de simplicit\'e pour ce groupe maximal.

Un erratum \`a la fin corrige une erreur dans le contre-exemple de \ref{4.12} 3 (c).
\end{abstract}
\footnotetext[1]{MSC (2010): 20G44; 20G25; 20E42; 51E24; 20E32}
\selectlanguage{english}
\begin{abstract}
For a split Kac-Moody group (in J. Tits' definition) over a field endowed with a real valuation, we build an ordered affine hovel on which the group acts. This construction generalizes the one already done by S. Gaussent and the author when the residue field contains the complex field 
and the one by F. Bruhat and J. Tits when the group is reductive.
We prove that this hovel has all the properties of ordered affine hovels (masures affines ordonn\'ees) as defined previously by the author.
We use the maximal Kac-Moody group as defined by O. Mathieu and we prove a few new results about it over any field; in particular we prove, in some cases, a simplicity result for this group.

At the end an erratum corrects a mistake in the counter-example of \ref{4.12} 3 (c).
\end{abstract}
\selectlanguage{french}

\setcounter{tocdepth}{1}    
\tableofcontents

\section*{Introduction}
\label{seIntro}

\bigskip
\par L'\'etude des groupes de Kac-Moody sur un corps local a \'et\'e initi\'ee par Howard Garland \cite{Gd-95} pour certains groupes de lacets. Dans \cite{Ru-06} on a construit un immeuble "microaffine" pour tous les groupes de Kac-Moody (minimaux au sens de J. Tits) sur un corps muni d'une valuation r\'eelle. C'est un immeuble (en g\'en\'eral non discret) avec les bonnes propri\'et\'es habituelles des immeubles. Cependant cet immeuble microaffine n'est pas l'analogue des immeubles de F. Bruhat et J. Tits pour les groupes r\'eductifs. Il correspond plut\^ot \`a leur fronti\`ere dans la compactification de Satake ou compactification poly\'edrique. De plus il ne traduit pas les d\'ecompositions de Cartan de \cite{Gd-95}.

\par Une autre construction est envisageable pour un groupe de Kac-Moody sur un corps r\'eellement valu\'e. Elle est la g\'en\'eralisation directe de celle de Bruhat-Tits (\cite{BtT-72} et \cite{BtT-84a}) et traduit les d\'ecompositions de Cartan de  \cite{Gd-95} dans le cas des groupes de lacets (voir \ref{5.14}.1).
Cependant, comme ces d\'ecompositions ne sont v\'erifi\'ees que pour un sous-semi-groupe,  
  l'espace $\shi$ ainsi construit \`a la Bruhat-Tits est tel que deux points quelconques ne sont pas toujours dans un m\^eme appartement. Il ne m\'erite donc pas le nom d'immeuble. Il s'est av\'er\'e cependant utile et a donc \'et\'e consid\'er\'e sous le nom de masure (hovel).

\par La construction de cette masure a \'et\'e effectu\'ee dans \cite{GR-08} et on a pu l'utiliser pour des r\'esultats en th\'eorie des repr\'esentations. Le corps valu\'e int\'eressant dans ce cadre est le corps des s\'eries de Laurent complexes $\C(\!(t)\!)$. On s'est donc plac\'e dans le cas d'un corps $K$ muni d'une valuation discr\`ete avec un corps r\'esiduel contenant $\C$. Cette situation d'\'egale caract\'eristique $0$ simplifie les raisonnements et permet, en particulier, d'utiliser les r\'esultats du livre de S. Kumar \cite{Kr-02}. On a cependant d\^u faire quelques hypoth\`eses restrictives sur le groupe de Kac-Moody (en particulier la sym\'etrisabilit\'e).

\par Par ailleurs dans \cite{Ru-10} on a \'elabor\'e une d\'efinition abstraite de masure affine (inspir\'ee de la d\'efinition abstraite des immeubles affines de \cite{T-86a}) et on a montr\'e qu'elle est satisfaite par la plupart des masures d\'efinies pr\'ec\'edemment. De cette d\'efinition d\'ecoulent des propri\'et\'es int\'eressantes: les r\'esidus en chaque point sont des immeubles jumel\'es, \`a l'infini on trouve des immeubles jumel\'es et deux immeubles microaffines, il existe un pr\'eordre invariant sur la masure.

\par Le but du pr\'esent article est de construire la masure affine d'un groupe de Kac-Moody d\'eploy\'e sur un corps muni d'une valuation r\'eelle non triviale et de montrer qu'elle satisfait aux axiomes abstraits de masure affine ordonn\'ee de \cite{Ru-10}. Ceci est r\'ealis\'e sans aucune restriction sur le groupe de Kac-Moody ni sur le corps valu\'e. Pour cela on a essentiellement remplac\'e dans \cite{GR-08} les groupes de Kac-Moody \`a la Kumar par ceux d'Olivier Mathieu \cite{M-89} et la repr\'esentation adjointe dans l'alg\`ebre de Lie par celle dans l'alg\`ebre enveloppante enti\`ere, puisqu'on va consid\'erer aussi de la caract\'eristique (r\'esiduelle) positive.

\par Ces complications techniques sont r\'ecompens\'ees par la possibilit\'e (nouvelle) de travailler sur un vrai corps local $K$, \ie complet pour une valuation discr\`ete avec corps r\'esiduel fini.
Alors la masure associ\'ee $\SHI$ est semi-discr\`ete et d'\'epaisseur finie (\ref{5.16}), ce qui constitue la bonne g\'en\'eralisation des immeubles affines localement finis.
On a ainsi le bon support g\'eom\'etrique pour d\'efinir (pour tout groupe de Kac-Moody) une alg\`ebre de Hecke sph\'erique (associ\'ee \`a des sommets sp\'eciaux de $\SHI$) ou d'Iwahori-Hecke  (associ\'ee \`a des chambres de $\SHI$); on peut alors, par exemple, prouver l'isomorphisme de Satake ou retrouver les alg\`ebres de Hecke affines doubles (DAHA) dans le cas affine.
 Ces applications sont d\'evelopp\'ees dans \cite{GR-12} et \cite{GR-13}. Jusqu'\`a pr\'esent  seul le cas affine \'etait connu, partiellement ou avec des moyens d\'etourn\'es de g\'eom\'etrie alg\'ebrique.

\par Il y a en fait beaucoup de choix possibles pour les groupes de Kac-Moody, \cf \cite{T-89b}. On consid\`ere ici les groupes d\'eploy\'es "minimaux" tels que d\'efinis par Jacques Tits \cite{T-87b}; leurs propri\'et\'es essentielles sont expliqu\'ees dans \cite{T-92a} et aussi \cite{Ry-02a}. On a r\'esum\'e celles-ci et prouv\'e quelques compl\'ements (essentiellement sur les morphismes) dans la premi\`ere partie de cet article.

\par La seconde partie est consacr\'ee \`a l'alg\`ebre enveloppante enti\`ere introduite par J. Tits pour construire ses groupes et \`a la repr\'esentation adjointe, d\'ej\`a largement utilis\'ee par B. R\'emy. On y montre un th\'eor\`eme de Poincar\'e-Birkhoff-Witt (avec des puissances divis\'ees tordues) et on y construit des exponentielles tordues. Ces r\'esultats tirent leur origine dans un travail de g\'en\'eralisation du th\'eor\`eme de simplicit\'e de R. Moody \cite{My-82}, qui est expliqu\'e dans l'appendice.

\par Les relations de commutation dans un groupe de Kac-Moody minimal $G$ sont compliqu\'ees (voire inexistantes). Pour mener \`a bien des calculs on va raisonner dans un groupe maximal qui ici sera celui ($G^{pma}$ ou $G^{nma}$) d\'efini par O. Mathieu. On a cependant besoin d'une connaissance plus concr\`ete de celui-ci, c'est le but de la troisi\`eme partie. Il est en partie accompli gr\^ace aux exponentielles tordues de la partie pr\'ec\'edente.

\par Dans ces trois premi\`eres parties on \'etudie les groupes de Kac-Moody de mani\`ere g\'en\'erale sur un anneau quelconque, ou sur un corps si on veut plus de r\'esultats de structure. On les consid\`ere ensuite sur un corps valu\'e (mais aussi sur son anneau des entiers). Dans la quatri\`eme partie on construit l'appartement t\'emoin et les sous-groupes parahoriques (ou assimil\'es) associ\'es aux sous-ensembles ou filtres de cet appartement t\'emoin. C'est la partie la plus technique. Comme dans \cite{GR-08}, ces sous-groupes parahoriques du groupe de Kac-Moody $G$ sont construits en plusieurs \'etapes en utilisant les groupes maximaux $G^{pma}$ et $G^{nma}$ contenant $G$. On a cependant d\^u apporter des changements substantiels aux raisonnements de \cite{GR-08}.

\par On r\'ecolte enfin dans la cinqui\`eme partie les fruits du travail pr\'ec\'edent. Par un proc\'ed\'e classique utilisant l'appartement t\'emoin et les sous-groupes parahoriques, on y construit la masure affine (ordonn\'ee) d'un groupe d\'eploy\'e sur un corps r\'eellement valu\'e. Et on montre les m\^emes r\'esultats qu'en \cite{GR-08} ou \cite{Ru-10}. Les raisonnements sont sans changement substantiel, mais, gr\^ace aux parties pr\'ec\'edentes, il n'y a plus d'hypoth\`ese technique superflue.

\par L'appendice rassemble des r\'esultats sur le groupe de Kac-Moody maximal \`a la Mathieu $G^{pma}$ (sur un corps $k$ quelconque) qui sont cons\'equences des r\'esultats des parties \ref{s2} et \ref{s3}. On y compare $G^{pma}$ avec d'autres groupes de Kac-Moody maximaux d\'efinis par M. Ronan et B. R\'emy \cite{RR-06} ou L. Carbone et H. Garland \cite{CG-03}.
On y g\'en\'eralise aussi le th\'eor\`eme de simplicit\'e d'un sous-quotient de $G^{pma}$ d\^u \`a R. Moody \cite{My-82} en caract\'eristique  $0$; on l'obtient ici en caract\'eristique $p$ assez grande, pour un corps non alg\'ebrique sur $\F_p$.
Ce th\'eor\`eme de simplicit\'e vient d'\^etre compl\'et\'e par T. Marquis \cite{Mar13} pour tous les corps finis, en s'appuyant \'egalement sur les parties \ref{s2} et \ref{s3} de cet article.

\par Je remercie Bertrand R\'emy pour m'avoir sugg\'er\'e d'\'etudier l'article \cite{My-82} de R. Moody et Olivier Mathieu pour m'avoir signal\'e son article \cite{M-96} et pour quelques \'eclaircissements sur la construction de son groupe de Kac-Moody ou la simplicit\'e des alg\`ebres de Kac-Moody.

\section{Le groupe de Kac-Moody minimal (\`a la Tits)}\label{s1}

\par On introduit ici le groupe de Kac-Moody objet principal d'\'etude de cet article et on y \'etudie en particulier sa fonctorialit\'e.

\subsection{Syst\`emes g\'en\'erateurs de racines}\label{1.1}

\par
\par 1) Une {\it matrice de Kac-Moody} (ou matrice de Cartan g\'en\'eralis\'ee)
 est une matrice carr\'ee $A = (a_{i,j})_{i,j\in I}$ , \`a coefficients entiers,
 index\'ee par un ensemble fini $I$ et qui v\'erifie:
\par (i) $a_{i,i} = 2 \quad \forall i \in I$ ,
\par (ii) $a_{i,j} \leq 0 \quad \forall i \not = j $ ,
\par (iii) $a_{i,j} = 0 \iff a_{j,i} = 0 $.

\medskip
\par 2) Un {\it syst\`eme g\'en\'erateur de racines } (en abr\'eg\'e SGR) \cite{By-96} est un quadruplet
${\mathcal S}=(A,Y,({\overline{\alpha_i}})_{i\in I},({\alpha}{_i^\vee})_{i\in I})$ form\'e d'une matrice
 de Kac-Moody $A$ index\'ee par $I$, d'un $\mathbb Z-$module libre $Y$ de rang fini $n$,  d'une famille $({\overline{\alpha_i}})_{i\in I}$ dans son dual $X=Y^*$ et
 d'une famille $({\alpha}{_i^\vee})_{i\in I}$ dans $Y$. Ces donn\'ees sont
soumises \`a la condition de compatibilit\'e suivante: $\;$
 $a_{i,j} = {\overline{\alpha_j}}({\alpha}{_i^\vee}) $.

 \par On dit que $\vert I\vert$ est le {\it rang} du SGR ${\mathcal S}$ et $n$ sa {\it dimension}.

 \par On dit que le SGR ${\mathcal S}$ est {\it libre} (ou {\it adjoint}) (resp. {\it colibre}  (ou {\it coadjoint})) si $({\overline{\alpha_i}})_{i\in I}$ (resp. $({\alpha}{_i^\vee})_{i\in I}$) est libre dans (ou engendre) $X$ (resp. $Y$).

 \par Par exemple le SGR {\it simplement connexe} ${\mathcal S}_A=(A,Y_A,({\overline{\alpha_i}})_{i\in I},({\alpha}{_i^\vee})_{i\in I})$ avec $Y_A$ de base les ${\alpha}{_i^\vee}$ est le seul SGR colibre et coadjoint associ\'e \`a $A$.

\medskip
\par 3) On introduit le $\Z-$module libre $Q=\bigoplus_{i\in I}\,\Z\alpha_i$. Il y a donc un homomorphisme de groupes $bar: Q\rightarrow X$ , $\alpha\mapsto {\overline\alpha}$ tel que $bar(\alpha_i)={\overline{\alpha_i}}$.
 On note $Q^+=\sum_{i\in I}\,\N\alpha_i\subset Q$ et $Q^-=-Q^+$. Quand ${\mathcal S}$ est libre on identifie $Q$ \`a un sous-module de $X$  et on ne fait pas de diff\'erence entre $\alpha$ et ${\overline\alpha}$.
 Le SGR {\it adjoint minimal} $\shs_{Am}=(A,Q^*,({{\alpha_i}})_{i\in I},({\alpha}{_i^\vee})_{i\in I})$ est libre et adjoint.

\par On note $Q^\vee=\sum_{i\in I}\Z\alpha_i{^\vee}$. On dit que le SGR ${\mathcal S}$ est {\it sans cotorsion} si $Q^\vee$ est facteur direct dans $Y$ \ie $Y/Q^\vee$ est sans torsion. C'est le cas de ${\mathcal S}_A$ pour lequel $Q^\vee=Y$.

\medskip
\par 4) On note $V = Y\otimes_\Z\mathbb R$ et $V_\C = Y\otimes_\Z\mathbb C$; pour $\alpha\in Q$ et $h\in V_\C$, on \'ecrit $\alpha(h)={\overline{\alpha}}(h)$. L'{\it alg\`ebre de Kac-Moody complexe} ${\mathfrak g}_{\mathcal S}$ associ\'ee \`a $\mathcal S$ est engendr\'ee par ${\mathfrak h}_{\mathcal S}=V_\C$ et des \'el\'ements $(e_i,f_i)_{i\in I}$ avec les relations suivantes (pour $h,h'\in{\mathfrak h}_{\mathcal S}$ et $i\neq j\in I$):
\medskip
\par (KMT1) \quad$[h,h']=0$\quad;\;$[h,e_i]=\alpha_i(h)e_i$\quad;\;$[h,f_i]=-\alpha_i(h)f_i$\quad;\;$[e_i,f_i]=-\alpha_i^{\vee}$.
\medskip
\par (KMT2) \quad$[e_i,f_j]=0$\quad;\quad$(ad\,e_i)^{1-a_{i,j}}(e_j)=(ad\,f_i)^{1-a_{i,j}}(f_j)=0$.
\medskip
\par On a alors une graduation de l'alg\`ebre de Lie $\g g_\shs$ par $Q$ : $\g g_\shs=\g h_\shs\oplus(\oplus_{\alpha\in\Delta}\;\g g_\alpha)$ o\`u $\Delta\subset Q\setminus\{0\}$ est le {\it syst\`eme de racines} de $\g g_\shs$.
On a $\g h_\shs=(\g g_\shs)_0$, $\g g_{\alpha_i}=\C.e_i$ et $\g g_{-\alpha_i}=\C.f_i$, de plus $\g g_\alpha\subset \g g_{\overline\alpha}=\{x\in\g g_\shs\;\mid\;[h,x]={\overline\alpha}(x)\;\forall h\in\g h\}$ est non nul pour $\alpha\in\Delta\cup\{0\}$.

\par On sait que $\Delta$ et les $\g g_\alpha$ ne d\'ependent que de $A$ et non de $\shs$. On note
$\g g_A=\g g_{\shs_A}$.

\par Les racines $\alpha_i$ sont dites {\it simples}. Les racines de $\Delta^+=\Delta\cap Q^+$ (resp. $\Delta^-=-\Delta^+$) sont dites {\it positives} (resp. {\it n\'egatives}). On a $\Delta=\Delta^-\bigsqcup\Delta^+$.

\par La sous-alg\`ebre {\it nilpotente} (resp. {\it de Borel}) {\it positive} est $\g n^+_\shs=\oplus_{\alpha\in\Delta^+}\;\g g_\alpha$ (ind\'ependante de $\shs$ et donc not\'ee aussi $\g n^+_A$) (resp. $\g b^+_\shs=\g h_\shs\oplus\g n^+_\shs$). On a de m\^eme des sous-alg\`ebres n\'egatives $\g n^-_\shs=\g n^-_A$ et $\g b^-_\shs=\g h_\shs\oplus\g n^-_\shs$ avec la d\'ecomposition triangulaire $\g g_\shs=\g n^-_\shs\oplus\g h_\shs\oplus\g n^+_\shs$.

N.B. Souvent on r\'eserve le terme "alg\`ebre de Kac-Moody" au cas o\`u $\shs$ est libre, colibre et de dimension $2\vert I\vert-rang(A)$, voir \eg \cite{K-90} ; ce n'est pas appropri\'e \`a notre situation.

\medskip
\par 5) Le {\it groupe de Weyl (vectoriel)} $W^v$ associ\'e \`a $A$ est un groupe de Coxeter de syst\`eme de g\'en\'erateurs l'ensemble $S=\{\,s_i\,\mid\,i\in I\}$ des automorphismes de $Q$ d\'efinis par $s_i(\alpha) = \alpha - {\alpha}(\alpha{_i^\vee}) \alpha_i$. Il stabilise $\Delta$ et agit aussi sur $X$ et $Y$ par des formules semblables.

\par On note $\Phi=\Delta_{re}$ l'ensemble des {\it racines r\'eelles} c'est-\`a-dire
 des \'el\'ements de $\Delta$ ou $Q$ de la forme $\alpha = w(\alpha_i)$ avec
$w \in W^v$ et $ i \in I$. Si $\alpha \in \Phi$, alors $s_{\alpha} =
w.s_i.w^{-1} $ est bien d\'etermin\'e par $\alpha$, ind\'ependamment du choix
de $w$ et de $i$ tels que $\alpha = w(\alpha_i)$. Pour $\beta\in Q$ on a
$s_{\alpha}(\beta) = \beta -{ \beta}({\alpha}{^\vee })\alpha $ pour un $\alpha{^\vee } \in Y $ avec
 ${\alpha}(\alpha{^\vee }) = 2 $.  Si $\Phi^+ = \Phi\cap\Delta^+$ et $\Phi^- = - \Phi^+$, on a
$\Phi = \Phi^+ \bigsqcup \Phi^-$.

 \par Les racines de $\Delta_{im}=\Delta\setminus\Phi$ sont dites {\it imaginaires}.

 Si on a deux parties $\Psi\subset\Psi'$ de $\Delta\cup\{0\}$, on dit que $\Psi$ est {\it close} (resp. un {\it id\'eal} de $\Psi'$) si : \quad $\alpha , \beta \in \Psi$, (resp.  $\alpha\in\Psi , \beta \in \Psi'$), $p,q\geq{}1, p\alpha +q \beta \in \Delta\cup\{0\} \Rightarrow p\alpha + q\beta \in \Psi$.
 La partie $\Psi$ est dite {\it pr\'enilpotente} s'il
 existe $w , w' \in W^v$ tels que $w\Psi \subset \Delta^+$ et $w'\Psi \subset
 \Delta^-$, alors $\Psi$ est finie et contenue dans la partie
$w^{-1}(\Phi^+) \cap (w')^{-1}(\Phi^-)$ de $\Phi$ qui est {\it nilpotente} (\ie
pr\'enilpotente et close).

\medskip
\par 6) Un {\it morphisme de SGR}, $\varphi\;:\;{\mathcal S}=(A,Y,({\overline{\alpha_i}})_{i\in I},({\alpha}{_i^\vee})_{i\in I})\rightarrow{\mathcal S}'=(A',Y',({\overline{\alpha'_i}})_{i\in I'},({\alpha'}{_i^\vee})_{i\in I'})$ est une application lin\'eaire $\varphi \;:\;Y\rightarrow Y'$  et une injection $I\rightarrow I'$ , $i\mapsto i$ telles que $A=A'_{\vert I\times I}$, $\varphi(\alpha_i^\vee)={\alpha'}{_i^\vee}$ et ${\overline{\alpha'_i}}\circ\varphi={\overline{\alpha_i}}$ $\forall i\in I$. On note $\varphi^*\;:\;X'={Y'}^*\rightarrow X=Y^*$ l'application duale


\par Cette d\'efinition de morphisme est duale de celle de \cite[4.1.1]{By-96}  qui est  valable dans un cadre plus g\'en\'eral. L'exemple le plus simple de morphisme est $\qf^{ad}=bar^*:\shs\to\shs_{Am}$.

\par Pour un morphisme de SGR $\varphi\;:\;{\mathcal S}\rightarrow{\mathcal S}'$, on d\'efinit de mani\`ere \'evidente un morphisme $\g g_\varphi\;:\;\g g_\shs\rightarrow \g g_{\shs'}$ entre les alg\`ebres de Lie correspondantes.

\medskip
\par 7) Soit  $\varphi\;:\;{\mathcal S}\rightarrow{\mathcal S}'$ un morphisme de SGR. On dit que:

\par $\shs$ est une {\it extension centrale torique} de $\shs'$ si $\varphi$ est surjective et $I=I'$; elle est dite de plus {\it scind\'ee} si  Ker$(\varphi)$ a un suppl\'ementaire contenant $Q^\vee$,

\par $\shs$ est une {\it extension centrale finie} de $\shs'$ si $\varphi$ est injective, $I=I'$ et les dimensions sont \'egales,

\par plus g\'en\'eralement $\shs$ est {\it extension centrale} de $\shs'$ si $\varphi^*$ est injective et $I=I'$,

\par $\shs$ est un {\it sous-SGR} de $\shs'$ si $\varphi$ est injective et $Y'/\varphi(Y)$ est sans torsion,

\par $\shs'$ est une {\it extension semi-directe} (torique) de $\shs$ si $\shs$ est un  sous-SGR de $\shs'$ et si $I=I'$. Elle est dite {\it directe} si, de plus, il existe un suppl\'ementaire de $\varphi(Y)$ dans $Y'$ contenu dans Ker$({\overline \alpha_i})$, $\forall i \in I$.

\par $\varphi$ est une {\it extension commutative} si $I=I'$ (alors $A=A'$ et $Q=Q'$).

\subsection{Tores associ\'es}\label{1.2}

\par \`A un SGR $\shs$, on associe un $\Z-$sch\'ema en groupes $\g T_\shs=\g T_Y$ (vu comme foncteur en groupes sur la cat\'egorie des anneaux) d\'efini par $\g T_Y(k)=Y\otimes_\Z\,k^*$ (pour tout anneau $k$) ou par $\g T_Y={\mathrm Spec}(\Z[X])$. C'est un tore (isomorphe \`a $(\g{Mult})^{dim(\shs)}$) de groupe de cocaract\`eres (resp. caract\`eres) $Y$ (resp. $X$), voir \cite{DG-70}.

\par Un morphisme $\varphi\;:\;{\mathcal S}\rightarrow{\mathcal S}'$ de SGR donne un homomorphisme $\g T_\varphi\,:\,\g T_\shs\rightarrow\g T_{\shs'}$ de tores. Le groupe de Weyl $W^v$ agit sur $\g T_\shs$ via ses actions sur $Y$ et $X$.

\par Si $\varphi$ est extension centrale, $\g T_\varphi$ est surjectif (au sens sch\'ematique) de noyau le groupe multiplicatif $\g Z$ de groupe des caract\`eres $X/\varphi^*(X')$.
Si $\varphi$ est extension centrale torique, $\g T_\varphi$ est surjectif (au sens fonctoriel) de noyau le tore $\g T_{{\mathrm Ker}(\varphi)}$.
Si $\varphi$ est extension centrale finie, alors le th\'eor\`eme des diviseurs \'el\'ementaires appliqu\'e \`a $\varphi^*(X')\subset X$ donne une description explicite de $\g Z(k)$ comme produit de groupes de racines de l'unit\'e et l'homomorphisme $\g T_\varphi(k)$ est en g\'en\'eral non surjectif.

\par Si $\shs$ est un {\it sous-SGR} de $\shs'$, $\g T_\varphi$ identifie $\g T_\shs$ \`a un sous-tore facteur direct de $\g T_{\shs'}$.

\begin{prop}
\label{1.3} Soit $\shs=(A,Y,({\overline{\alpha_i}})_{i\in I},({\alpha}{_i^\vee})_{i\in I})$ un SGR.

\par a) Il existe une extension centrale finie $\shs^s$ de $\shs$ qui est sans cotorsion.
 Le SGR $\shs$ est extension centrale de $\shs^{ad}=(A,\overline Q^*,({\overline{\alpha_i}})_{i\in I},({\alpha}{_i^\vee})_{i\in I})$ qui est adjoint.
Si $\shs$ est libre alors $\shs^s$ et $\shs^{ad}$ aussi, de plus $\shs^{ad}=\shs_{Am}$.

\par b) Le SGR $\shs^1=(A,Q^\vee,({\overline{\alpha_i}}_{\vert Q^\vee})_{i\in I},({\alpha}{_i^\vee})_{i\in I})$  n'est en g\'en\'eral pas libre. Le SGR $\shs^s$ (resp. $\shs_A$) en est extension semi-directe (resp. centrale torique).

\par c) Il existe une extension centrale torique $\shs^{sc}$ de $\shs$ qui est  sans cotorsion et colibre. \label{N15} Si $\shs$ est colibre et sans cotorsion, cette extension est scind\'ee. Si $\shs$ est libre ou si $Q$ est facteur direct dans $X$, alors c'est \'egalement vrai pour $\shs^{sc}$.

\par d) Il existe une extension semi-directe $\shs^{\ell}$ de $\shs$ qui est libre (et avec $Q^\ell$ facteur direct de $X^\ell$). Si $\shs$ est libre et $Q$ facteur direct de $X$, cette extension est directe. \label{N15} Si $\shs$ est colibre ou si $Q^\vee$ est facteur direct dans $Y$, alors c'est \'egalement vrai pour $\shs^{\ell}$.

\par e) Si $\shs$ est libre, colibre et sans cotorsion, alors $\shs$ est extension semi-directe d'un SGR $\shs^{mat}$ libre, colibre, sans cotorsion et de dimension $2r-s$ o\`u $r$ est le rang de $\shs$ et $s$ ($\leq{}r$) le rang de la matrice $A$.

\end{prop}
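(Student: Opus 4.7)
Mon plan est de proc\'eder point par point par de l'alg\`ebre lin\'eaire sur $\Z$: pour chaque assertion on construit le SGR voulu en modifiant le r\'eseau $Y$ (sous-r\'eseau satur\'e, quotient libre, dual, ou somme directe avec un $\Z^I$), puis on v\'erifie m\'ecaniquement les relations $a_{i,j}=\bar\alpha_j(\alpha_i^\vee)$ et le type de morphisme obtenu. L'ingr\'edient structurel est le th\'eor\`eme de structure des groupes ab\'eliens de type fini et le fait qu'un sous-groupe satur\'e d'un $\Z$-module libre en est facteur direct.

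Pour (a), on pose $Y^s$ \'egal \`a la pr\'eimage, par $Y\twoheadrightarrow Y/Q^\vee$, d'un suppl\'ementaire libre dans une d\'ecomposition (non canonique) $Y/Q^\vee=T\oplus F$ avec $T$ la partie de torsion; alors $Y^s$ est de m\^eme rang que $Y$, contient $Q^\vee$ comme facteur direct, et le SGR $\shs^s$ correspondant est une extension centrale finie sans cotorsion de $\shs$. Pour $\shs^{ad}$, on consid\`ere l'application de restriction $Y\to\bar Q^*$ o\`u $\bar Q=bar(Q)\subset X$; la bidualit\'e des $\Z$-modules libres donne $X^{ad}=\bar Q$, les $\bar\alpha_i$ engendrent $X^{ad}$ (d'o\`u le caract\`ere adjoint) et le morphisme dual est l'inclusion $\bar Q\hookrightarrow X$ (extension centrale). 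Dans le cas libre on a $\bar Q\cong Q$, donc $\shs^{ad}=\shs_{Am}$, et $\shs^s$ reste libre puisque $Y^s$ est d'indice fini dans $Y$. Pour (b), comme $\shs^s$ est sans cotorsion, $Q^\vee$ est facteur direct de $Y^s$, donc l'inclusion $Q^\vee\hookrightarrow Y^s$ fait de $\shs^1$ un sous-SGR de $\shs^s$, et $\shs^s$ en est extension semi-directe; la surjection $Y_A\twoheadrightarrow Q^\vee$, $e_i\mapsto\alpha_i^\vee$ donne l'extension centrale torique $\shs_A\to\shs^1$.

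Pour (c), on pose $Y^{sc}=Y\oplus Y_A$ avec $\alpha_i^{\vee,sc}=(\alpha_i^\vee,e_i)$, $\pi(y,y_A)=y$ (surjective), et $\bar\alpha_i^{sc}=\bar\alpha_i\circ\pi$. Les $\alpha_i^{\vee,sc}$ sont libres (vu leurs secondes coordonn\'ees $e_i\in Y_A$), $Y^{sc}/Q^{\vee,sc}\cong Y$ est libre (sans cotorsion), et $\bar Q^{sc}=\bar Q\oplus\{0\}\subset X\oplus Y_A^*=X^{sc}$ pr\'eserve la libert\'e et le fait que $Q$ soit facteur direct. Si $\shs$ est colibre et sans cotorsion, \'ecrivant $Y=Q^\vee\oplus L$, le graphe de l'application lin\'eaire $\phi:Y\to Y_A$ d\'efinie par $\phi(\alpha_i^\vee)=e_i$ et $\phi(L)=0$ fournit un suppl\'ementaire de $\ker(\pi)=\{0\}\oplus Y_A$ contenant $Q^{\vee,sc}$, d'o\`u la scission. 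Pour (d), sym\'etriquement, on pose $Y^\ell=Y\oplus\Z^I$, $\alpha_i^{\vee,\ell}=(\alpha_i^\vee,0)$ et $\bar\alpha_i^\ell(y,z)=\bar\alpha_i(y)+z_i$; la composante $z_i$ force l'ind\'ependance lin\'eaire des $\bar\alpha_i^\ell$ (libre), $Y\hookrightarrow Y^\ell$ a pour quotient $\Z^I$ libre (semi-directe), et $Q^\ell$ est facteur direct de $X^\ell$ en compl\'ement de $X\oplus\{0\}$. Le cas directe (sous les hypoth\`eses $\shs$ libre et $Q$ facteur direct de $X$) est obtenu par le choix trivial $\shs^\ell=\shs$. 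Les pr\'eservations de la colibert\'e et de la propri\'et\'e ``$Q^\vee$ facteur direct de $Y$'' se lisent directement sur le premier facteur $Y$ de $Y^\ell$.

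Le point principal est (e). Supposant $\shs$ libre, colibre et sans cotorsion, on \'ecrit $Y=Q^\vee\oplus M$. Soit $N\subset\Z^I$ le noyau du pairing $(c_i)\mapsto\sum c_i\bar\alpha_i|_{Q^\vee}$, de rang $r-s$. L'\'etape cruciale est l'injectivit\'e de la compos\'ee $N\hookrightarrow\Z^I\to M^*$, $(c_i)\mapsto\sum c_i\bar\alpha_i|_M$: si $(c_i)\in N$ s'annule sur $M$, alors $\sum c_i\bar\alpha_i$ s'annule sur $Y=Q^\vee\oplus M$, donc $(c_i)=0$ par la libert\'e de $\shs$. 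Par dualit\'e rationnelle, la fl\`eche $M\otimes\Q\to N^*\otimes\Q$ est surjective; on rel\`eve une $\Q$-base de $N^*$ en une famille libre $(m_j)_{j=1}^{r-s}$ dans $M$, et on pose $M'$ \'egal \`a la satur\'ee de $\bigoplus_j\Z m_j$ dans $M$. Par construction $M'\to N^*$ est surjective rationnellement, donc la compos\'ee $N\to(M')^*$ reste injective. Alors $Y^{mat}=Q^\vee\oplus M'$ est facteur direct de $Y$ de rang $2r-s$, contenant $Q^\vee$ comme facteur direct (sans cotorsion), les $\alpha_i^\vee$ y restent libres (colibre), et les $\bar\alpha_i|_{Y^{mat}}$ sont libres: toute relation $(c_i)$ avec $\sum c_i\bar\alpha_i|_{Y^{mat}}=0$ donne $(c_i)\in N$ par restriction \`a $Q^\vee$, puis $(c_i)=0$ par l'injectivit\'e de $N\to(M')^*$ appliqu\'ee \`a la restriction \`a $M'$.
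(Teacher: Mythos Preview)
Your constructions in (a)--(d) coincide with the paper's (same $Y^s$, $Y^{sc}=Y\oplus Y_A$, $Y^\ell=Y\oplus\Z^I$), and your argument for (e) is a correct dual reformulation: the paper works with the image $\psi(Y)$ of $y\mapsto(\overline\alpha_i(y))_{i}$ in $\Z^r$ and lifts a basis of a complement of the saturation of $\psi(Q^\vee)$, whereas you work with the kernel $N$ of the transpose pairing and pass through $N\hookrightarrow M^*$; both yield a $Y^{mat}=Q^\vee\oplus M'$ of rank $2r-s$, and your saturation of $M'$ makes the torsion-freeness of $Y/Y^{mat}$ explicit.

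One small gap: in (d), ``cette extension est directe'' refers to the specific $\shs^\ell$ you just constructed, not to the trivial choice $\shs^\ell=\shs$ (this matters for the functoriality asserted in the Remarques following the proposition). The fix is standard: under the hypotheses $\shs$ libre and $Q$ facteur direct de $X$, the $\overline\alpha_i$ extend to a $\Z$-basis of $X$, so one can choose $\psi:\Z^I\to Y$ with $\overline\alpha_i\circ\psi=-\mathrm{pr}_i$, and then $\{(\psi(z),z):z\in\Z^I\}$ is a complement of $Y$ in $Y^\ell$ contained in $\bigcap_i\ker(\overline\alpha_i^\ell)$.
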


\begin{remas*}

\par 1) On a $\shs^{sc\ell}=\shs^{\ell sc}$ et les constructions de c) et d) sont fonctorielles en $\shs$.

\par 2) Dans le cas e) le SGR $\shs^{mat}$ v\'erifie donc les hypoth\`eses de \cite[p 16]{M-88a}. \label{N1} Mais $\shs$ n'est pas toujours extension directe de $\shs^{mat}$.

\par 3) Quand la matrice $A$ est inversible (par exemple dans le cas classique d'une matrice de Cartan) tout SGR est libre, colibre et extension semi-directe \label{N15}  (ou aussi centrale torique) d'un SGR de dimension \'egale au rang (\ie semi-simple dans le cas classique).

\par 4) L'extension commutative $\qf^{ad}$ se factorise par $\shs^\ell$, $\qf^{ad}:\shs\to\shs^\ell\to\shs^{lad}=\shs_{Am}$ avec le premier morphisme extension semi-directe et le second extension centrale.

\end{remas*}

\begin{proof}

\par On consid\`ere un suppl\'ementaire $Y_0$ de $(Q^\vee\otimes\Q)\cap Y$ dans $Y$ et on pose $Y^s=Q^\vee\oplus Y_0$. Les assertions a) et b) sont alors \'evidentes.

\par c) On d\'efinit $Y^{sc}=Y\oplus(\oplus_{i\in I}\,\Z u_i^\vee)$, son dual est donc $X^{sc}=X\oplus(\oplus_{i\in I}\,\Z u_i)$ o\`u les $u_i$ forment une base duale des $u_i^\vee$. On note $\overline \alpha_i^{sc}=\overline \alpha_i\in X$, $\alpha_i^{sc\vee}=\alpha_i^\vee+u_i^\vee$ et $\varphi$ est la projection de $Y^{sc}$ sur $Y$. \cf \cite[7.1.2]{Ry-02a}.

\par d) On d\'efinit $Y^{\ell}=Y\oplus(\oplus_{i\in I}\,\Z v_i^\vee)$, son dual est donc $X^{\ell}=X\oplus(\oplus_{i\in I}\,\Z v_i)$ o\`u les $v_i$ forment une base duale des $v_i^\vee$. On note $\overline \alpha_i^{\ell}=\overline \alpha_i+v_i$, $\alpha_i^{\ell\vee}=\alpha_i^\vee$ et $\varphi$ est l'injection canonique de $Y$ dans $Y^{\ell}$ .

\par e) Soit $\psi\,:\,Y\rightarrow\Z^r$, $y\mapsto(\alpha_i(y))_{i\in I}$. Comme $\shs$ est libre, $\psi(Y)$ est de rang $r$, tandis que $\psi(Q^\vee)$ est de rang $s$. Le module $\Q\psi(Q^\vee)\cap\psi(Y)$ a un suppl\'ementaire dans $\psi(Y)$ de rang $r-s$; on suppose que la base de ce suppl\'ementaire est $\psi(v_1),\cdots,\psi(v_{r-s})$ pour des $v_i\in Y$. Comme $\shs$ est colibre $Y^{mat}=Q^\vee\oplus(\oplus_{j=1}^{r-s}\,\Z v_j)$ est de rang $2r-s$. Comme le quotient de $Y$ par $Q^\vee$ est sans torsion , il en est de m\^eme pour $Y^{mat}$. On note $\alpha_i^{mat}=\alpha_{i}\vert_{ Y^{mat}}$ et $\alpha_i^{mat\vee}=\alpha_i^\vee$. L'injection de $Y^{mat}$ dans $Y$ fait bien de $\shs$ une extension semi-directe de $\shs^{mat}$ et $\shs^{mat}$ est colibre, sans cotorsion, de dimension $2r-s$. Comme $\psi(Y^{mat})=\psi(Q^\vee)\oplus(\oplus_{j=1}^{r-s}\,\Z \psi(v_j))$ est de rang $r$, les $\alpha_i^{mat}$ sont bien libres dans $X^{mat}=(Y^{mat})^*$.
\end{proof}

\subsection{Relations dans ${\mathrm Aut}(\g g_A)$ }\label{1.4} \cf \cite[3.3 et 3.6]{T-87b}

\par Pour $i\in I$, $ad(e_i)$ et $ad(f_i)$ sont  localement nilpotents dans $\g g_A$ ou $\g g_\shs$, $\exp(ad(e_i))$, $\exp(ad(f_i))$ sont des automorphismes de $\g g_A$ ou $\g g_\shs$ et l'on a :
 $$\exp(ad(e_i)).\exp(ad(f_i)).\exp(ad(e_i))=\exp(ad(f_i)).\exp(ad(e_i)).\exp(ad(f_i))$$
 on note cet \'el\'ement $s_i^*$ ou $s^*_{\alpha_i}$. L'application $s_i^*\mapsto s_i$ s'\'etend en un homomorphisme surjectif $w^*\mapsto w$ du sous-groupe $W^*$ de ${\mathrm Aut}(\g g_A)$ engendr\'e par les $s_i^*$ sur le groupe de Weyl $W^v$. Le groupe $W^*$ stabilise $\g h$ et permute les $\g g_\alpha$ pour $\alpha\in\Delta$; les actions induites sur $\g h$ ou $\Delta$ se d\'eduisent des actions de $W^v$ via l'homomorphisme pr\'ec\'edent. En particulier $W^*$ est le m\^eme qu'il soit d\'efini sur $\g g_A$ ou $\g g_\shs$.

\par Pour $\alpha\in\Phi$, l'espace $\g g_\alpha$ est de dimension $1$ et on peut choisir les \'el\'ements de base $(e_\alpha)_{\alpha\in\Phi}$ de fa\c{c}on que $e_{\alpha_i}=e_i$, $[e_\alpha,f_\alpha]=-\alpha^\vee$ et $w^*e_\alpha=\pm{}e_{w\alpha}$ pour $\alpha\in\Phi$, $i\in I$ et $w^*\in W^*$.

\par Si $\alpha,\beta\in\Phi$ forment une paire pr\'enilpotente de racines, l'ensemble $[\alpha,\beta]=\{\,p\alpha+q\beta\in\Phi\,\vert\,p,q\in\N\}$ est fini. Si $\alpha$ et $\beta$ sont non colin\'eaires, on note $]\alpha,\beta[=[\alpha,\beta]\setminus\{\alpha,\beta\}$, et on l'ordonne, par exemple de fa\c{c}on que $p/q$ soit croissant; on a alors la formule suivante pour les commutateurs (pour $r,r'\in\C$):

$$(\exp(ad(re_\alpha)),\exp(ad(r'e_\beta)))=\prod\;\exp(ad(C_{p,q}^{\alpha,\beta}r^pr'^qe_\gamma))$$

\par o\`u $\gamma=p\alpha+q\beta$ parcourt $]\alpha,\beta[$ et les $C_{p,q}^{\alpha,\beta}$ sont des entiers bien d\'efinis.

\subsection{Foncteur de Steinberg}\label{1.5}  \cf \cite[3.6]{T-87b}

\par Ce foncteur $\g{St}_A$ de la cat\'egorie des anneaux dans celle des groupes est engendr\'e par  $\vert\Phi\vert$ exemplaires du groupe additif $\g{Add}$. Plus pr\'ecis\'ement, pour $\alpha\in\Phi$, $k$ un anneau et $r\in k$, on introduit le symbole $\g x_\alpha(r)$ et le groupe $\g{St}_A(k)$ est engendr\'e par les \'el\'ements  $\g x_\alpha(r)$ pour $\alpha\in\Phi$, $r\in k$ soumis aux relations $\g x_\alpha(r+r')=\g x_\alpha(r).\g x_\alpha(r')$ et aux relations de commutation:

\medskip
\par (KMT3)\quad$(\g x_\alpha(r),\g x_\beta(r'))=\prod\;\g x_\gamma(C_{p,q}^{\alpha,\beta}r^pr'^q)$
\medskip
\par pour $r,r'\in k$, $\{\alpha,\beta\}$ pr\'enilpotente et $\gamma$, $C_{p,q}^{\alpha,\beta}$ comme ci-dessus en \ref{1.4}.

\medskip
\par Si $\Psi$ est une partie nilpotente de $\Phi$ et si on remplace ci-dessus $\Phi$ par $\Psi$, on d\'efinit un foncteur en groupes $\g U_\Psi$. C'est un groupe alg\'ebrique unipotent isomorphe comme sch\'ema \`a $(\g{Add})^{\vert\Psi\vert}$ et qui ne d\'epend que de $\Psi$ et $A$ (donc $\Phi$) mais pas de $\shs$. Si $\alpha\in\Phi$ et $\Psi=\{\alpha\}$, $\g U_{\alpha}=\g U_{\{\alpha\}}$ est un sous-foncteur en groupes de $\g{St}_A$ isomorphe \`a $\g{Add}$ par $\g x_\alpha$. En fait $\g{St}_A$ est l'amalgame des groupes  $\g U_{\alpha}$ et  $\g U_{[\alpha,\beta]}$ pour  $\alpha\in\Phi$ et $\{\alpha,\beta\}$ pr\'enilpotente.

\par Pour $\alpha\in\Phi$, $k$ un anneau et $r\in k^*$, on d\'efinit dans $\g{St}_A(k)$, $\tilde s_\alpha(r)=\g x_\alpha(r).\g x_{-\alpha}(r^{-1}).\g x_\alpha(r)$, $\tilde s_\alpha=\tilde s_\alpha(1)$ et $\alpha^*(r)=\tilde s_\alpha^{-1}.\tilde s_\alpha(r^{-1})$. On a $\tilde s_\alpha(-r)=\tilde s_\alpha(r)^{-1}=\tilde s_\alpha.\alpha^*(-r^{-1})$. En particulier $\tilde s_\alpha^{-1}=\tilde s_\alpha(-1)$.

\begin{rema*}
 Si jamais le quotient ${\overline{\g g}}_A$ de $\g g_A$ par son id\'eal gradu\'e maximal d'intersection triviale avec $\g h_A$ est diff\'erent de $\g g_A$, la diff\'erence n'affecte que les espaces radiciels imaginaires. Le remplacement de $\g g_A$ par ${\overline{\g g}}_A$ ne change donc pas $\g{St}_A$ (ni $\g G_\shs$ d\'efini ci-dessous); ce n'est par contre pas vrai pour le groupe $\g G_\shs^{pma}$ du {\S{}} 3.
\end{rema*}

\subsection{Le groupe de Kac-Moody minimal $\g G_\shs$ (\`a la Tits)}\label{1.6} \cf \cite[3.6]{T-87b}, \cite[8.3.3]{Ry-02a}

\par Pour un anneau $k$, on d\'efinit le groupe $\g G_\shs(k)$ comme le  produit libre $\g{St}_A(k)*\g T_\shs(k)$ quotient\'e par les quatre relations suivantes, pour $\alpha$ racine simple, $\beta\in\Phi$, $r\in k$ et $t\in\g T_\shs(k)$:
\medskip
\par(KMT4) \quad  $t.\g x_\alpha(r).t^{-1}=\g x_\alpha(\alpha(t)r)$,
\medskip
\par (KMT5) \quad $\tilde s_\alpha.t.\tilde s_\alpha^{-1}=s_\alpha(t)$,
\medskip
\par (KMT6) \quad $\tilde s_\alpha(r^{-1})=\tilde s_\alpha.\alpha^\vee(r)$,
\medskip
\par (KMT7) \quad $\tilde s_\alpha.\g x_\beta(r).\tilde s_\alpha^{-1}=\g x_\gamma(\epsilon r)$,
\par \qquad\qquad\qquad si $\gamma=s_\alpha(\beta)$ et $s^*_\alpha(e_\beta)=\epsilon e_\gamma$ (avec $\epsilon=\pm{}1$).
\medskip
\begin{rema*}

Les relations (KMT5) et (KMT7) permettent aussit\^ot de g\'en\'eraliser (KMT4) au cas o\`u $\alpha\in\Phi$ n'est pas simple.

\end{rema*}

\begin{enonce*}[definition]{Propri\'et\'es}

\par 1) D'apr\`es \cite[3.10.b]{T-87b} il existe des foncteurs en groupes $\g G_\#$ et $\g U^\pm_\#$  satisfaisant aux axiomes (KMG 1 \`a 9) de \lc. D'apr\`es le th\'eor\`eme 1 (i) p. 553 de \lc, il en r\'esulte que, pour tout anneau $k$, l'homomorphisme canonique de $\g T_\shs(k)$ dans $\g G_\shs(k)$ est injectif.

\medskip
\par 2) Pour tout $\alpha\in\Phi$, l'homomorphisme $\g x_{\alpha k}\,:\,k\rightarrow\g U_\alpha(k)\rightarrow \g G_\shs(k)$ est injectif, voir la d\'emonstration de \cite[9.6.1]{Ry-02a} si $Y/\Z\alpha_i^\vee$ est sans torsion ($\forall i\in I$) ou si $k$ n'a pas d'\'el\'ements nilpotents; le cas g\'en\'eral se traite comme dans la partie b) de la d\'emonstration du lemme \ref{4.11} ci-dessous. Plus g\'en\'eralement ces m\^emes raisonnements montrent que, pour toute partie nilpotente $\Psi$ de $\Phi$, l'homomorphisme canonique de $\g U_\Psi(k)$ dans $\g G_\shs(k)$ est injectif et aussi que, pour $\alpha\not=\beta$ et $r$ ou $r'$ non nul $\g x_\alpha(r)\not=\g x_\beta(r')$.

\medskip
\par 3) On identifie donc $\g T_\shs$, $\g U_\alpha$ et $\g U_\Psi$ \`a des sous-foncteurs en groupes de $\g G_\shs$; si $\Psi'$ est un id\'eal de $\Psi$, alors $\g U_{\Psi'}$ est distingu\'e dans $\g U_\Psi$. De m\^eme on note $\g U_\shs^{\pm}$ (resp. $\g B_\shs^{\pm}$) le sous-foncteur en groupes de $\g G_\shs$ tel que, pour un anneau $k$, $\g U_\shs^{\pm}(k)$ (resp. $\g B_\shs^{\pm}(k)$) est le sous-groupe engendr\'e par les $\g U_\alpha(k)$ pour $\alpha\in\Phi^{\pm}$ (resp. et par $\g T_\shs(k)$).

\medskip
\par 4) On note $\g N_\shs$ le sous-foncteur de $\g G_\shs$ tel que $\g N_\shs(k)$ soit le sous-groupe engendr\'e par $\g T_\shs(k)$ et les $\tilde s_\alpha$ pour $\alpha$ racine simple. On a un homomorphisme $\nu^v$ de $\g N_\shs$ sur $W^v$ trivial sur  $\g T_\shs$ tel que $\nu^v(\tilde s_\alpha)=s_\alpha$ et $\g N_\shs(k)$ agit sur $\g T_\shs$ et $\Phi$ via $\nu^v$, \cf (KMT4), (KMT5), (KMT7) et 2) ci-dessus. D'apr\`es \cite[3.7.d]{T-87b} les $\tilde s_{\alpha_i}$ satisfont aux relations de tresse, d'apr\`es (KMT6) $(\tilde s_{\alpha_i})^2\in\g T_\shs(k)$, le noyau de $\nu^v$ est donc \'egal \`a $\g T_\shs(k)$.

\par Si $k$ est un corps avec au moins quatre \'el\'ements, $\g N_\shs(k)$ est le normalisateur de $\g T_\shs(k)$ dans $\g G_\shs(k)$  \cite[8.4.1]{Ry-02a}. Si $k$ est un corps infini, on sait de plus que tous les sous-tores $k-$d\'eploy\'es maximaux de $\g G_\shs$ sont conjugu\'es \`a $\g T_\shs$ par $\g G_\shs(k)$ \cite[10.4.1]{Ry-02a}.

\medskip
\par 5) Les th\'eor\`emes 1  et 1' p. 553 de  \cite{T-87b} montrent que, sur les corps, $\g G_\shs$ v\'erifie les axiomes (KMG 1 \`a 9) de \lc: il existe un homomorphisme $\pi$ de foncteurs en groupes $\g G_\shs\rightarrow\g G_\#$ qui est un isomorphisme sur les corps et v\'erifie $\pi(\g U_\shs^\pm)\subset\g U_\#^\pm$. En particulier (KMG4) dit que, pour une extension de corps $k\rightarrow k'$, $\g G_\shs(k)$ s'injecte dans $\g G_\shs(k')$.

\par Si $k$ est un corps, $(\g G_\shs(k),(\g U_\alpha(k))_{\alpha\in\Phi},\g T_\shs(k))$ est une donn\'ee radicielle de type $\Phi$, au sens de \cite[1.4]{Ru-06}, ou donn\'ee radicielle jumel\'ee enti\`ere \cite[8.4.1]{Ry-02a} (c'est plus pr\'ecis que les donn\'ees radicielles jumel\'ees de \lc, \ie les RGD-systems de \cite[8.6.1]{AB-08}).
 En particulier, pour $u\in \g U_\alpha(k)\setminus\{1\}$, il existe $u',u''\in \g U_{-\alpha}(k)$ tels que $m(u):=u'uu''$ conjugue $\g U_{\beta}(k)$ en $\g U_{s_\alpha(\beta)}(k)$, $\forall\beta\in\Phi$, donc $m(u)\in\g N_\shs(k)$; on a $m(\g x_\alpha(r))=\widetilde s_{-\alpha}(r^{-1})$.
On montre que $\g B_\shs^{\pm}(k)$ est produit semi-direct de $\g T_\shs(k)$ et $\g U_\shs^{\pm}(k)$ \cite[1.5.4]{Ry-02a}. Pour  une partie nilpotente de la forme $\Psi=\Phi^+\cap w\Phi^-$, on a  $\g U_\Psi(k)=\g U_\shs^+(k)\cap w\g U_\shs^-(k)w^{-1}$ \cf \lc 3.5.4. Le centre de $\g G_\shs(k)$ est $\{t\in \g T_\shs(k)\mid\qa_i(t)=1,\forall i\in I\}$  \cf \lc 8.4.3.

\par Toujours si $k$ est un corps, la donn\'ee radicielle ci-dessus permet de construire des immeubles combinatoires jumel\'es $\shi^v_+(k)$ et $\shi^v_-(k)$, agit\'es par $\g G_\shs(k)$: l'immeuble $\shi^v_\epsilon(k)$ de $\g G_\shs$ sur $k$ est associ\'e au syst\`eme de Tits $(\g G_\shs(k),\g B^\epsilon_\shs(k),\g N_\shs(k))$.

\end{enonce*}

\subsection{Un quotient du foncteur de Steinberg}\label{1.7}

\par Pour un anneau $k$, on d\'efinit $\overline{\g{St}}_A(k)$ comme le quotient de ${\g{St}}_A(k)$ par les relations suivantes, pour $\alpha,\beta$ racines simples, $\gamma\in\Phi$ et $r,r'\in k$:

\medskip
\par ($\overline{\mathrm KMT}$5) \; $\tilde s_\alpha.\beta^*(r).\tilde s_\alpha^{-1}=\beta^*(r).\alpha^*(r^{-\alpha(\beta^\vee)})$,

\medskip
\par ($\overline{\mathrm KMT}$7) \; $\tilde s_\alpha(r).\g x_\gamma(r').\tilde s_\alpha(r)^{-1}=\g x_\delta(\epsilon.r^{-\gamma(\alpha^\vee)}.r')$,
\par\qquad\qquad\qquad si $\delta=s_\alpha(\gamma)$ et $s^*_\alpha(e_\gamma)=\epsilon.e_\delta$,

\medskip
\par ($\overline{\mathrm KMT}$8) \; a) $\alpha^*\,:\, k^*\rightarrow\overline{\g{St}}_A(k)$ est un homomorphisme de groupes,

\par\qquad\qquad b) $\alpha^*(r).\beta^*(r')=\beta^*(r').\alpha^*(r)$, on note ${\g{T}}_A(k)$ le groupe commutatif engendr\'e par tous ces \'el\'ements (pour $\alpha,\beta$ racines simples et $r,r'\in k$),

\par\qquad\qquad c) La formule $\gamma(\alpha^*(r))=r^{\gamma(\alpha^\vee)}$ d\'efinit un homomorphisme de groupes ${\g{T}}_A(k)\rightarrow k^*$, not\'e $\gamma$.

\par On note de la m\^eme mani\`ere les \'el\'ements et leurs images dans $\overline{\g{St}}_A(k)$.

\begin{remas*} 1) Dans le cas classique, Steinberg consid\`ere lui aussi un quotient de ${\g{St}}_A$ un peu analogue: comparer ($\overline{\mathrm KMT}$7) et ($\overline{\mathrm KMT}$8) respectivement aux conditions (B') et (C) de \cite{Sg-62} ou \cite{Sg-68}.

\medskip
\par 2) On n'a pas cherch\'e ici un syst\`eme minimal de relations pour $\overline{\g{St}}_A(k)$. Le lecteur trouvera des r\'eductions faciles par lui-m\^eme et d'autres moins \'evidentes dans \cite[3.7 ou 3.8]{T-87b}, \cite{Sg-62} ou \cite{Sg-68}.

\medskip
\par 3) L'endomorphisme $s'_\beta$ du groupe ${\g{T}}_A(k)$ d\'efini par $s'_\beta(t)=t.\beta^*(\beta(t))^{-1}$ est une involution, car $s'_\beta(s'_\beta(t))=t.\beta^*(\beta(t))^{-1}.\beta^*(\beta(t.\beta^*(\beta(t))^{-1}))^{-1}=t.\beta^*(\beta(t))^{-1}.\beta^*(\beta(t))^{-1}.\beta^*(\beta(\beta^*(\beta(t))))$ $=t$ (car $\beta\circ\beta^*$ est l'\'el\'evation au carr\'e). En fait pour $t=\gamma^*(r)$ on a $s'_\beta(t)=s'_\beta(\gamma^*(r))=\gamma^*(r).\beta^*(\beta(\gamma^*(r)))^{-1}=\gamma^*(r).\beta^*(r^{-\beta(\gamma^\vee)})$ ( $=(\gamma^*-\beta(\gamma^\vee)\beta^*)(r)$ avec une notation additive pour Hom$(k^*,\overline{\g{St}}_A(k))$ ). En particulier ($\overline{\mathrm KMT}$5) et (KMT5) sont semblables.

\medskip
\par 4) La relation ($\overline{\mathrm KMT}$4)\;: \; $t.\g x_\alpha(r).t^{-1}=\g x_\alpha(\alpha(t)r)$, pour $\alpha\in\Phi$, $r\in k$ et $t\in{\g{T}}_A(k)$ est cons\'equence des relations ci-dessus:

\par En effet pour $\alpha$, $\beta$ racines simples, $r\in k^*$, $r'\in k$, on a : $\beta^*(r).\g x_\alpha(r').\beta^*(r)^{-1}=\tilde s_\beta^{-1}.\tilde s_\beta(r^{-1}).\g x_\alpha(r').\tilde s_\beta(r^{-1})^{-1}.\tilde s_\beta=\tilde s_\beta^{-1}.\g x_\gamma(\epsilon.r^{\alpha(\beta^\vee)}.r').\tilde s_\beta=\g x_\alpha(r^{\alpha(\beta^\vee)}.r')=\g x_\alpha(\alpha(\beta^*(r)).r')$ (avec $\gamma=s_\beta(\alpha)$
 et $s^*_\beta(e_\alpha)=\epsilon.e_\gamma$).

\par Cette relation s'\'etend au cas $\alpha$ non simple d'apr\`es ($\overline{\mathrm KMT}$5), ($\overline{\mathrm KMT}$7) et ($\overline{\mathrm KMT}$8): si $\alpha$, $\beta$ sont simples et $\gamma$ satisfait \`a  ($\overline{\mathrm KMT}$4),on a:

\par $\beta^*(r).\g x_{s_\alpha(\gamma)}((-1)^{\gamma(\alpha^\vee)}.\epsilon.r').\beta^*(r)^{-1}=\beta^*(r).\tilde s_\alpha^{-1}.\g x_\gamma(r').\tilde s_\alpha.\beta^*(r)^{-1}$
\par\noindent$=\tilde s_\alpha^{-1}.\beta^*(r).\alpha^*(r^{-\alpha(\beta^\vee)}).\g x_\gamma(r').\alpha^*(r^{\alpha(\beta^\vee)}).\beta^*(r)^{-1}.\tilde s_\alpha$
\par\noindent$=\tilde s_\alpha^{-1}.\g x_\gamma(r^{\gamma(\beta^\vee)}.(r^{-\alpha(\beta^\vee)})^{\gamma(\alpha^\vee)}.r').\tilde s_\alpha=\g x_{s_\alpha(\gamma)}((-1)^{\gamma(\alpha^\vee)}.\epsilon.r^{\gamma(\beta^\vee)-\alpha(\beta^\vee).\gamma(\alpha^\vee)}.r')$
\par et $\gamma(\beta^\vee)-\alpha(\beta^\vee).\gamma(\alpha^\vee)=s_\alpha(\gamma)(\beta^\vee)$. Donc $s_\alpha(\gamma)$ satisfait aussi \`a ($\overline{\mathrm KMT}$4).

\medskip
\par 5) On peut appliquer ($\overline{\mathrm KMT}$7) \`a $\qg=\pm{}\qa$. On sait que $s^*_\qa(e_{\pm{}\qa})=e_{\mp\qa}$. Donc $\widetilde s_\qa(r).x_{\pm{}\qa}(r')$ $.\widetilde s_\qa(r)^{-1}=x_{\mp\qa}(r^{\mp2}.r')$ et
 $\widetilde s_\qa(r)=\widetilde s_\qa(r).\widetilde s_\qa(r).\widetilde s_\qa(r)^{-1}=x_{-\qa}(r^{-1}).x_\qa(r).x_{-\qa}(r^{-1})=\widetilde s_{-\qa}(r^{-1})$.

\end{remas*}

\begin{prop}
\label{1.8}

\par 1) L'homomorphisme canonique de $\g{St}_A$ dans $\g G_\shs$ se factorise par $\overline{\g{St}}_A$, autrement dit les relations ($\overline{\mathrm KMT}$5), ($\overline{\mathrm KMT}$7) et ($\overline{\mathrm KMT}$8) sont satisfaites dans $\g G_\shs$.

\par 2) Pour un anneau $k$, $\g G_\shs(k)$ est le quotient du produit libre $\overline{\g{St}}_A(k)*\g T_\shs(k)$ par les 2 relations suivantes (pour $\alpha$ racine simple, $\beta\in\Phi$, $r\in k$ et $t\in\g T_\shs(k)$) :

\par(KMT4) \quad  $t.\g x_\beta(r).t^{-1}=\g x_\beta(\beta(t).r)$,

\par ($\overline{\mathrm KMT}$6) \quad $\alpha^*(r)=\alpha^\vee(r)$ si $r\in k^*$.

\par 3) La relation ($\overline{\mathrm KMT}$6) induit un homomorphisme de foncteurs en groupes $\psi\,:\,\g T_A\rightarrow\g T_\shs$ dont le noyau $\g Z_\shs$ est central dans $\g{St}_A$. Le foncteur $\g G_\shs$ est le quotient (comme foncteur en groupes) du produit semi-direct $(\overline{\g{St}}_A/\g Z_\shs)\rtimes\g T_\shs$ par $\g T_A/\g Z_\shs$ antidiagonal. En particulier le quotient (comme foncteur en groupes) $\overline{\g{St}}_A/\g Z_\shs$ s'injecte dans $\g G_\shs$.

\end{prop}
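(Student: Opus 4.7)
Le plan est de d\'eriver chacune des trois relations ($\overline{\mathrm{KMT}}$5), ($\overline{\mathrm{KMT}}$7), ($\overline{\mathrm{KMT}}$8) par un court calcul \`a partir de (KMT4)--(KMT7). L'observation qui d\'ebloque tout est que, dans $\g G_\shs(k)$, la d\'efinition $\alpha^*(r)=\tilde s_\alpha^{-1}\tilde s_\alpha(r^{-1})$ combin\'ee \`a (KMT6) donne imm\'ediatement $\alpha^*(r)=\alpha^\vee(r)$. Ainsi ($\overline{\mathrm{KMT}}$8) d\'ecoule imm\'ediatement du fait que $\alpha^\vee$ est un cocaract\`ere, $\g T_\shs$ est commutatif, et $\gamma$ est un caract\`ere de $\g T_\shs$ ; ($\overline{\mathrm{KMT}}$5) se r\'eduit \`a $\tilde s_\alpha\beta^\vee(r)\tilde s_\alpha^{-1}=s_\alpha(\beta^\vee)(r)=\beta^\vee(r)\alpha^\vee(r^{-\alpha(\beta^\vee)})$, cons\'equence de (KMT5) et du calcul $s_\alpha(\beta^\vee)=\beta^\vee-\alpha(\beta^\vee)\alpha^\vee$. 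Pour ($\overline{\mathrm{KMT}}$7), on \'ecrit $\tilde s_\alpha(r)=\tilde s_\alpha\alpha^\vee(r^{-1})$ via (KMT6), on commute $\alpha^\vee(r^{-1})$ \`a travers $\g x_\gamma(r')$ par (KMT4) (ce qui produit le facteur $r^{-\gamma(\alpha^\vee)}$), puis on applique (KMT7).

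\textbf{Partie 3 : construction du produit semi-direct quotient\'e.}
Je traite (3) avant (2), car (3) fournit le mod\`ele explicite permettant d'obtenir (2). La relation ($\overline{\mathrm{KMT}}$6) d\'efinit sans ambig\"uit\'e un homomorphisme de foncteurs en groupes $\psi:\g T_A\to\g T_\shs$, $\alpha^*(r)\mapsto\alpha^\vee(r)$, bien d\'efini car ($\overline{\mathrm{KMT}}$8) traduit exactement les relations multiplicatives entre les cocaract\`eres $\alpha_i^\vee$ dans $\g T_\shs$. Pour la centralit\'e de $\g Z_\shs=\ker\psi$ dans $\overline{\g{St}}_A$ : un \'el\'ement $z=\prod_i\alpha_i^*(r_i)\in\g Z_\shs$ v\'erifie $\gamma(z)=\prod_i r_i^{\gamma(\alpha_i^\vee)}=\gamma(\psi(z))=1$ pour tout $\gamma\in\Phi$ ; la remarque 4 de \ref{1.7} donne alors $z\g x_\gamma(r')z^{-1}=\g x_\gamma(r')$ et $z\tilde s_\alpha z^{-1}=\tilde s_\alpha(\alpha(z))=\tilde s_\alpha$, d'o\`u la centralit\'e dans $\overline{\g{St}}_A$ (engendr\'e par ces \'el\'ements). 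Je formerais ensuite $(\overline{\g{St}}_A/\g Z_\shs)\rtimes\g T_\shs$ via l'action de $\g T_\shs$ h\'erit\'ee de (KMT4) (un bref calcul analogue utilisant ($\overline{\mathrm{KMT}}$8a) montre qu'elle fixe $\g T_A/\g Z_\shs$ et pr\'eserve les relations d\'efinissantes de $\overline{\g{St}}_A$), et quotienterais par l'image centrale de l'antidiagonal $z\mapsto(\overline z,\psi(z)^{-1})$ pour obtenir un groupe $G'$. Les relations (KMT4)--(KMT7) se v\'erifient alors dans $G'$ ; en particulier (KMT5) pour $t\in\g T_\shs$ \emph{quelconque} r\'esulte du calcul $(\tilde s_\alpha,1)(1,t)(\tilde s_\alpha,1)^{-1}=(\alpha^\vee(\alpha(t))^{-1},t)$, qui est congrue \`a $(1,s_\alpha(t))$ modulo l'antidiagonal. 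Combin\'ee \`a la Partie 1, cette v\'erification fournit l'isomorphisme $G'\cong\g G_\shs$ et l'injection $\overline{\g{St}}_A/\g Z_\shs\hookrightarrow\g G_\shs$.

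\textbf{Partie 2 : identification du quotient.}
Posons $H(k):=(\overline{\g{St}}_A(k)*\g T_\shs(k))/(\text{KMT4},\overline{\mathrm{KMT}}6)$. La Partie 1 fournit une surjection canonique $H(k)\twoheadrightarrow\g G_\shs(k)$, et la fl\`eche inverse s'obtiendra en identifiant $H(k)$ \`a $G'(k)$ de la Partie 3. Cette identification se lit dans les deux sens : (i) le morphisme canonique $H\to G'$ est bien d\'efini car (KMT4) et ($\overline{\mathrm{KMT}}$6) sont toutes deux v\'erifi\'ees dans $G'$ par construction ; (ii) le morphisme $(\overline{\g{St}}_A/\g Z_\shs)\rtimes\g T_\shs\to H$, $(n,h)\mapsto nh$, est un homomorphisme de groupes (car la conjugaison $hnh^{-1}$ dans $H$ co\"{\i}ncide avec l'action semi-directe, gr\^ace \`a (KMT4) appliqu\'ee aux g\'en\'erateurs $\g x_\alpha(r)$, $\tilde s_\alpha$) qui se factorise par l'antidiagonal (car $\alpha^*(r)\cdot\alpha^\vee(r)^{-1}=1$ dans $H$ par ($\overline{\mathrm{KMT}}$6)). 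Le v\'eritable enjeu technique de toute la preuve --- v\'erifier (KMT5) pour $t\in\g T_\shs(k)$ arbitraire directement dans $H(k)$, alors que seul le cas $t=\alpha_i^\vee(r)$ se d\'eduit imm\'ediatement de ($\overline{\mathrm{KMT}}$5) et ($\overline{\mathrm{KMT}}$6) --- est ainsi contourn\'e par le passage par $G'$, o\`u la structure semi-directe quotient\'ee rend la relation automatique.
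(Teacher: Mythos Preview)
Your approach is correct. Part~1 is essentially the paper's proof verbatim: identify $\alpha^*(r)=\alpha^\vee(r)$ via (KMT6), deduce ($\overline{\mathrm{KMT}}$8), then ($\overline{\mathrm{KMT}}$5) from (KMT5), and ($\overline{\mathrm{KMT}}$7) by writing $\tilde s_\alpha(r)=\tilde s_\alpha\,\alpha^\vee(r^{-1})$ and applying (KMT4) then (KMT7).

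For Parts~2--3 you take a genuinely different route. The paper handles Part~2 \emph{directly} in $H(k)$: it observes that (KMT7) is a special case of ($\overline{\mathrm{KMT}}$7), (KMT6) follows from ($\overline{\mathrm{KMT}}$6), and for (KMT5) it gives the one-line computation
\[
t^{-1}\tilde s_\alpha t\tilde s_\alpha^{-1}
=\g x_\alpha(\alpha(t)^{-1})\g x_{-\alpha}(\alpha(t))\g x_\alpha(\alpha(t)^{-1})\,\tilde s_\alpha^{-1}
=\tilde s_\alpha(\alpha(t)^{-1})\tilde s_\alpha^{-1}
=\alpha^*(\alpha(t)^{-1})
=t^{-1}s_\alpha(t),
\]
using only (KMT4), the definition of $\alpha^*$, ($\overline{\mathrm{KMT}}$5) with $\beta=\alpha$, and ($\overline{\mathrm{KMT}}$6). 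Part~3 is then read off from Part~2 together with the injectivity $\g T_\shs\hookrightarrow\g G_\shs$ (1.6.1). You instead build $G'$ first, verify (KMT4)--(KMT7) there, and use $G'\cong\g G_\shs$ to get Part~2; your (KMT5) computation in the semi-direct product is formally the same calculation dressed in different notation. The paper's route is shorter because it avoids having to separately verify that the $\g T_\shs$-action preserves the defining relations of $\overline{\g{St}}_A$ (which you acknowledge but skip); your route makes the structural statement in Part~3 more self-contained. Both are valid.
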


\begin{proof}

\par Par d\'efinition de $\alpha^*(r)$ et (KMT6), la relation ($\overline{\mathrm KMT}$6) est bien satisfaite dans  $\g G_\shs(k)$. On en d\'eduit aussit\^ot la relation ($\overline{\mathrm KMT}$8) et que l'image de $\g T_A(k)$ est dans $\g T_\shs(k)$. D'autre part les deux involutions $s'_\beta$ et $s_\beta$ co\"{\i}ncident sur cette image d'apr\`es les calculs de \ref{1.7}.3; ainsi ($\overline{\mathrm KMT}$5) est une cons\'equence de (KMT5). Enfin $\tilde s_\alpha(r).\g x_\gamma(r').\tilde s_\alpha(r)^{-1}=\tilde s_\alpha.\alpha^*(r^{-1}).\g x_\gamma(r').\alpha^*(r).\tilde s_\alpha^{-1}=\tilde s_\alpha.\g x_\gamma(\gamma(\alpha^*(r^{-1})).r').\tilde s_\alpha^{-1}=\tilde s_\alpha.\g x_\gamma(r^{-\gamma(\alpha^\vee)}.r').\tilde s_\alpha^{-1}=\g x_\delta(\epsilon.r^{-\gamma(\alpha^\vee)}.r')$, d'o\`u ($\overline{\mathrm KMT}$7).

\par On a ainsi montr\'e 1) et que les relations (KMT4), ($\overline{\mathrm KMT}$6) sont v\'erifi\'ees dans $\g G_\shs(k)$. Inversement il faut montrer que (KMT5), (KMT6) et (KMT7) sont des cons\'equences de (KMT4), ($\overline{\mathrm KMT}$5), ($\overline{\mathrm KMT}$6), ($\overline{\mathrm KMT}$7) et ($\overline{\mathrm KMT}$8). La relation (KMT7) est un cas particulier de ($\overline{\mathrm KMT}$7), (KMT6) d\'ecoule de ($\overline{\mathrm KMT}$6) et de la d\'efinition de $\alpha^*$. Pour $\alpha$ simple et $t\in\g T_\shs(k)$, on a : $t^{-1}.\tilde s_\alpha.t.\tilde s_\alpha^{-1}=t^{-1}.\g x_{\alpha}(1).\g x_{-\alpha}(1).\g x_{\alpha}(1).t.\tilde s_\alpha^{-1}=\g x_{\alpha}(\alpha(t)^{-1}).\g x_{-\alpha}(\alpha(t)).\g x_{\alpha}(\alpha(t)^{-1}).\tilde s_\alpha^{-1}=\tilde s_\alpha(\alpha(t)^{-1}).\tilde s_\alpha^{-1}=\tilde s_\alpha.\alpha^*(\alpha(t)).\tilde s_\alpha^{-1}=\alpha^*(\alpha(t)^{-1})=\alpha^\vee(\alpha(t)^{-1})=t^{-1}.s_\alpha(t)$ ; en effet pour $t=\lambda(r)$ avec $\lambda\in Y$ et $r\in k^*$ on a : $s_\alpha(\lambda(r))=(\lambda-\alpha(\lambda)\alpha^\vee)(r)=\lambda(r).\alpha^\vee(r^{-\alpha(\lambda)})=\lambda(r).\alpha^\vee(\alpha(\lambda(r))^{-1})$. Ainsi (KMT5) est satisfaite.

\par 3) On a vu en \ref{1.6}.1 que $\g T_\shs$ s'injecte dans $\g G_\shs$, la relation ($\overline{\mathrm KMT}$6) n'induit donc aucun quotient dans $\g T_\shs$. Par contre, comme $\g T_A$ est engendr\'e par les $\alpha^*$, cette relation se traduit par un homomorphisme $\psi\,:\,\g T_A\rightarrow\g T_\shs$ ; celui-ci est compatible avec les morphismes $\gamma\,:\,\g T_A(k)\rightarrow k^*$ et $\gamma\,:\,\g T_\shs(k)\rightarrow k^*$ (pour $\gamma\in\Phi$), d'apr\`es ($\overline{\mathrm KMT}$8c) et la relation correspondante dans $\g T_\shs$.
 Ainsi la relation ($\overline{\mathrm KMT}$4) (\ref{1.7}.4) montre que le noyau $\g Z_\shs$ est central dans $\g{St}_A$. La relation (KMT4) montre que $\g G_\shs$ est quotient du produit semi-direct indiqu\'e par des relations induites par ($\overline{\mathrm KMT}$6). La relation ($\overline{\mathrm KMT}$4) montre que ce quotient se limite \`a l'action antidiagonale de $\g T_A/\g Z_\shs$.
\end{proof}

\begin{coro}\label{1.9}

\par 1) Pour le SGR simplement connexe $\shs_A$, on a $\g T_A=\g T_{\shs_A}$ et $\overline{\g{St}}_A=\g G_{\shs_A}$, que l'on notera aussi $\g G_A$ ou $\g G^A$.

\par 2) Si $k$ est un corps, l'homomorphisme canonique de $\g U_{\shs_A}^{\pm{}}(k)$ dans $\g U_{\shs}^{\pm{}}(k)$ est un isomorphisme.

\end{coro}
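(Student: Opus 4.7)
\medskip
\noindent\textbf{Plan for the proof.}

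For assertion 1, my approach is to show that for the simply connected datum $\shs_A$ the torus homomorphism $\psi:\g T_A\to\g T_{\shs_A}$ of Proposition~\ref{1.8}.3 is an isomorphism of functors in groups, so that $\g Z_{\shs_A}$ is trivial. Since $Y_A$ has the $\alpha_i^\vee$ as a free $\Z$-basis, one has $\g T_{\shs_A}(k)=\bigoplus_{i\in I}k^*$, the $i$-th factor being the image of $\alpha_i^\vee:k^*\to\g T_{\shs_A}(k)$. On the $\overline{\g{St}}_A$ side, the relations $(\overline{\mathrm{KMT}}8\mathrm a)$--$(\overline{\mathrm{KMT}}8\mathrm b)$ present $\g T_A(k)$ as an abelian group generated by the $\alpha_i^*(r)$, each $\alpha_i^*$ a homomorphism in $r$, all commuting; and since the $\alpha_i^\vee$ are a basis of $Y_A$, every $\gamma\in\Phi$ pairs with $Y_A$ to give a well-defined character of $\bigoplus_ik^*$, so $(\overline{\mathrm{KMT}}8\mathrm c)$ imposes no supplementary relation. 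Thus $\psi$ identifies the two functors. Invoking Proposition~\ref{1.8}.3, the group $\g G_{\shs_A}$ is the quotient of $\overline{\g{St}}_A\rtimes\g T_{\shs_A}$ by the antidiagonal copy of $\g T_A/\g Z_{\shs_A}=\g T_A\cong\g T_{\shs_A}$; since this copy projects isomorphically onto the second factor, the quotient collapses $\g T_{\shs_A}$ altogether and leaves $\overline{\g{St}}_A$, which proves the equalities $\g T_A=\g T_{\shs_A}$ and $\overline{\g{St}}_A=\g G_{\shs_A}$.

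For assertion 2, let $k$ be a field. The functorial inclusion from Proposition~\ref{1.8}.3 gives an injection $\overline{\g{St}}_A/\g Z_\shs\hookrightarrow\g G_\shs$, and by part~1 this identifies $\g G_{\shs_A}=\overline{\g{St}}_A$ with a subquotient of $\overline{\g{St}}_A$. The subgroup $\g U^\pm_{\shs_A}(k)$ sits inside $\overline{\g{St}}_A(k)$ as the subgroup generated by the $\g U_\alpha(k)$ for $\alpha\in\Phi^\pm$. To show it embeds into $\g G_\shs(k)$ it suffices to check $\g U^\pm_{\shs_A}(k)\cap\g Z_\shs(k)=\{1\}$; but $\g Z_\shs$ lies in $\g T_A$, and over a field the RGD-system structure of Property~\ref{1.6}.5 yields $\g U^\pm_{\shs_A}(k)\cap\g T_A(k)=\{1\}$ (a standard consequence of $\g U^+\cap\g B^-=\{1\}$ in an RGD system). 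The image in $\g G_\shs(k)$ is then generated by the same subfunctors $\g U_\alpha$, so it coincides with $\g U^\pm_\shs(k)$; surjectivity onto $\g U^\pm_\shs(k)$ is immediate by construction.

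The main obstacle I expect is the careful bookkeeping around the presentation of $\g T_A(k)$ in the simply connected case, i.e.\ verifying that $(\overline{\mathrm{KMT}}8\mathrm c)$ really introduces no hidden collapse of $\g T_A$ onto a proper quotient of $\g T_{\shs_A}$; once this is clear the triviality of $\g Z_{\shs_A}$ falls out and the rest is an application of Proposition~\ref{1.8}.3. The only other technical point is to invoke cleanly the RGD-system axiom $\g U^\pm\cap\g T=\{1\}$ over a field, which is part of Property~\ref{1.6}.5 and allows us to pass freely between $\g U^\pm_{\shs_A}$ and its image in $\g G_\shs$.
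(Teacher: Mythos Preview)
Your proof is correct and takes essentially the same approach as the paper. The only cosmetic differences are that the paper concludes $\overline{\g{St}}_A=\g G_{\shs_A}$ via Proposition~\ref{1.8}.2 (once $\g T_A=\g T_{\shs_A}$, relation (KMT4) reduces to the already-verified $(\overline{\mathrm{KMT}}4)$ of~\ref{1.7}.4) rather than via~\ref{1.8}.3, and phrases part~2 using the semidirect product $\g B^\pm_{\shs_A}=\g T_{\shs_A}\ltimes\g U^\pm_{\shs_A}$ from~\ref{1.6}.5, which is exactly your RGD-system input $\g U^\pm\cap\g T=\{1\}$ in another form.
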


\begin{proof} 1) Comme $\g T_{\shs_A}(k)$ est produit direct des groupes $\alpha^\vee(k^*)$ (pour $\alpha$ racine simple) et s'injecte dans $\g G_{\shs_A}(k)$, la relation ($\overline{\mathrm KMT}$6) permet d'identifier $\g T_A(k)$ et $\g T_{\shs_A}(k)$. Alors (KMT4) se r\'eduit \`a ($\overline{\mathrm KMT}$4) dont on sait qu'il est v\'erifi\'e dans $\overline{\g{St}}_A(k)$ (\ref{1.7}.4), on a donc bien l'identification propos\'ee.

\par 2) On sait dans $\g G_{\shs_A}(k)$ que $\g T_{\shs_A}(k)$ et $\g U_{\shs_A}^{\pm{}}(k)$ forment un produit semi-direct (\ref{1.6}.5) le quotient par $\g Z_\shs(k)$ n'affecte donc pas $\g U_{\shs_A}^{\pm{}}(k)$.
\end{proof}

\subsection{Fonctorialit\'e}\label{1.10}

\par Si $\varphi\,:\,\shs\rightarrow\shs'$ est une extension commutative de SGR, la proposition \ref{1.8} permet de d\'efinir un morphisme $\g G_\varphi\,:\,\g G_\shs\rightarrow\g G_{\shs'}$ de foncteurs. En effet $\shs$ et $\shs'$ correspondent \`a la m\^eme matrice $A$, donc les m\^emes $\Phi$, $\g g_A$, ${\g{St}}_A(k)$ et $\overline{\g{St}}_A(k)$.
 La compatibilit\'e de $\varphi$ avec les racines et coracines permet  de d\'efinir $\g G_\varphi(k)$ comme le passage au quotient de ${\mathrm Id}*\g T_\varphi(k)$. Pour tout anneau $k$, les homomorphismes $\g G_\varphi(k)$ et $\g T_\varphi(k)$ ont m\^eme noyau; le groupe $\g G_{\shs'}(k)$ est engendr\'e par $\g T_{\shs'}(k)$ et le sous-groupe (distingu\'e) image de $\g G_\varphi(k)$  (\ref{1.8}.3);
on a $\g N_{\shs'}(k)=\varphi(\g N_{\shs}(k)).\g T_{\shs'}(k)$ .
De plus  l'image r\'eciproque de $\g N_{\shs'}(k)$ (resp. $\g T_{\shs'}(k)$) dans $\g G_{\shs}(k)$ est \'egale \`a $\g N_{\shs}(k)$ (resp. $\g T_{\shs}(k)$).
 Le noyau Ker$\g G_\qf$ est central dans $\g G_\shs$ (mais trivial si $\shs$ est un sous-SGR).
Par construction $\g G_\varphi$ induit un isomorphisme de $\g U^+_{\shs}(k)$ sur $\g U^+_{\shs'}(k)$ et de $\g U^-_{\shs}(k)$ sur $\g U^-_{\shs'}(k)$, pour tout corps $k$ (\ref{1.9}.2).

\par Sur un corps $k$, on voit facilement qu'il est possible d'identifier les immeubles combinatoires  $\shi^v_\pm$ de $\g G_{\shs}$ et $\g G_{\shs'}$ avec leurs facettes et leurs appartements. Les actions de $\g G_{\shs}(k)$ et $\g G_{\shs'}(k)$ sont compatibles (via $\g G_\varphi(k)$).

\par On va \'etudier quelques cas particuliers int\'eressants.

\begin{coro}\label{1.11}

\par Si le SGR $\shs'$ est extension semi-directe (resp. directe) du SGR $\shs$, alors $\g G_{\shs'}$ est produit semi-direct (resp. direct) de $\g G_{\shs}$ par un tore $\g T_{Z}$ o\`u $Z$ est un suppl\'ementaire de $Y$ dans $Y'$ et, pour un anneau $k$,  l'action de $\g T_{Z}(k)$ sur $\g G_{\shs}(k)$ est donn\'ee par les relations \quad $t.t'.t^{-1}=t'$ \;et\; $t.\g x_\alpha(r).t^{-1}=\g x_\alpha(\alpha(t).r)$\quad pour $t\in \g T_{Z}(k)$, $t'\in \g T_{\shs}(k)$, $\alpha\in\Phi$ et $r\in k$.

\end{coro}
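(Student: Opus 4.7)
Le plan est d'exploiter l'hypoth\`ese de sous-SGR pour scinder le tore $\g T_{\shs'}$, puis de s'appuyer sur la pr\'esentation de $\g G_{\shs'}$ donn\'ee par la Proposition \ref{1.8} et sur les propri\'et\'es fonctorielles rassembl\'ees en \ref{1.10} pour identifier $\g G_{\shs'}$ \`a un produit semi-direct de $\g G_\shs$ par un sous-tore. Comme $\shs$ est un sous-SGR de $\shs'$, le morphisme $\varphi:Y\to Y'$ est injectif \`a conoyau sans torsion et $\varphi(Y)$ admet donc un suppl\'ementaire $Z$ dans $Y'$, ce qui fournit une d\'ecomposition fonctorielle $\g T_{\shs'}=\g T_\shs\times\g T_Z$ (en identifiant $\g T_\shs$ \`a son image par $\g T_\varphi$). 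Par ailleurs $I=I'$ entra\^{\i}ne $A=A'$, $\Phi=\Phi'$ et $\overline{\g{St}}_A=\overline{\g{St}}_{A'}$.

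Les rappels de \ref{1.10} donnent alors que $\g G_\varphi$ est injectif (son noyau est trivial dans le cas sous-SGR) et que l'image r\'eciproque de $\g T_{\shs'}(k)$ dans $\g G_\shs(k)$ est $\g T_\shs(k)$; on en d\'eduit $\g G_\shs(k)\cap\g T_Z(k)=\g T_\shs(k)\cap\g T_Z(k)=\{1\}$. De plus $\g G_{\shs'}(k)$ est engendr\'e par $\g T_{\shs'}(k)$ et par les $\g x_\alpha(r)$ (pour $\alpha\in\Phi$, $r\in k$), tous contenus dans $\g G_\shs(k).\g T_Z(k)$, d'o\`u $\g G_{\shs'}(k)=\g G_\shs(k).\g T_Z(k)$. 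Pour obtenir la structure d'action, la commutation $t.t'.t^{-1}=t'$ pour $t\in\g T_Z(k)$ et $t'\in\g T_\shs(k)$ provient de la commutativit\'e de $\g T_{\shs'}$; pour $\alpha\in\Phi$ et $r\in k$, la relation (KMT4) \'etendue \`a toute racine r\'eelle via (KMT5) et (KMT7) (cf. la remarque suivant (KMT7) en \ref{1.6}) donne $t.\g x_\alpha(r).t^{-1}=\g x_\alpha(\alpha(t).r)$ dans $\g G_{\shs'}(k)$. Comme $\g G_\shs(k)$ est engendr\'e par $\g T_\shs(k)$ et les $\g x_\alpha(r)$, ceci montre que $\g T_Z(k)$ normalise $\g G_\shs(k)$, d'o\`u la structure de produit semi-direct avec l'action voulue.

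Pour la variante directe, $Z$ est choisi dans $\bigcap_{i\in I}\mathrm{Ker}(\overline{\alpha_i'})$; les r\'eflexions $s_i\in W^v$, agissant sur $Y'$ par $y\mapsto y-\overline{\alpha_i'}(y).\alpha_i^\vee$, fixent alors $Z$ point par point. Pour toute racine r\'eelle $\alpha=w(\alpha_i)\in\Phi$ et tout $z\in Z$ on obtient $\overline{\alpha}(z)=\overline{\alpha_i'}(w^{-1}z)=\overline{\alpha_i'}(z)=0$, d'o\`u $\alpha(t)=1$ pour tout $t\in\g T_Z(k)$; l'action de $\g T_Z$ sur chaque $\g U_\alpha$ est triviale et le produit devient direct. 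Le point le plus d\'elicat de la d\'emonstration est la trivialit\'e de l'intersection $\g G_\shs(k)\cap\g T_Z(k)$, mais elle se ram\`ene imm\'ediatement \`a l'\'enonc\'e fonctoriel de \ref{1.10} sur la pr\'eimage de $\g T_{\shs'}$ dans $\g G_\shs$; le reste n'est qu'une application directe des relations (KMT4) \`a (KMT7).
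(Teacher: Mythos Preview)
Ta démonstration est correcte et suit essentiellement la même approche que l'article, qui repose lui aussi sur la présentation de la Proposition~\ref{1.8}. L'article est simplement plus direct: il observe que dans $\overline{\g{St}}_A*(\g T_Y\times\g T_Z)$ la relation~$(\overline{\mathrm{KMT}}6)$ ne fait intervenir que $\overline{\g{St}}_A*\g T_Y$ (les coracines étant dans~$Y$), ce qui donne immédiatement $\g G_{\shs'}=\g G_\shs\rtimes\g T_Z$ au niveau des présentations, tandis que tu vérifies les axiomes du produit semi-direct à l'intérieur de~$\g G_{\shs'}(k)$ via les conséquences fonctorielles de~\ref{1.10}.
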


\begin{proof}

\par On a $Y'=Y\oplus Z$, donc $\g T_{Y'}=\g T_{Y}\times\g T_{Z}\subset\g G_{\shs'}$. Dans $\overline{\g{St}}_A*(\g T_{Y}\times\g T_{Z})$ la relation ($\overline{\mathrm KMT}$6) n'implique que $\overline{\g{St}}_A*\g T_{Y}$ et (KMT4) d\'ecrit l'action de $\g T_{Z}$ sur $\overline{\g{St}}_A$, d'o\`u le r\'esultat.
\end{proof}

\begin{coro}\label{1.12}

\par Si le SGR $\shs$ est extension centrale torique du SGR $\shs'$, alors $\g G_{\shs'}$ est quotient (comme foncteur en groupes) de $\g G_{\shs}$ par un sous-tore facteur direct $\g T'$ de $\g T_\shs$, central dans $\g G_{\shs}$. Si l'extension centrale est scind\'ee, alors $\g G_{\shs}$ est isomorphe au produit direct de $\g G_{\shs'}$ et $\g T'$.

\end{coro}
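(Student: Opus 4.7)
Le plan consiste \`a d\'eduire ce corollaire directement de la discussion g\'en\'erale de \ref{1.10} et du Corollaire \ref{1.11} qui pr\'ec\`ede. D'abord, une extension centrale torique $\varphi\,:\,\shs\to\shs'$ est a fortiori une extension commutative ($A=A'$, $I=I'$), donc \ref{1.10} fournit un morphisme de foncteurs en groupes $\g G_\varphi\,:\,\g G_\shs\to\g G_{\shs'}$ dont le noyau est central dans $\g G_\shs$ et co\"{\i}ncide avec celui de $\g T_\varphi\,:\,\g T_\shs\to\g T_{\shs'}$. Or d'apr\`es \ref{1.2}, ce dernier noyau est le sous-tore $\g T':=\g T_{\mathrm{Ker}(\varphi)}$, et c'est un facteur direct de $\g T_\shs$ car $\mathrm{Ker}(\varphi)$ est facteur direct de $Y$ (la surjection $\varphi\,:\,Y\twoheadrightarrow Y'$ \'etant scind\'ee comme morphisme de $\Z-$modules, puisque $Y'$ est libre).

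Il reste \`a v\'erifier que $\g G_{\shs'}$ est bien le quotient \emph{fonctoriel} de $\g G_\shs$ par $\g T'$, \ie que $\g G_\varphi(k)$ est surjectif pour tout anneau $k$. Ceci d\'ecoulera de la surjectivit\'e fonctorielle de $\g T_\varphi$ (\ref{1.2}) et du fait que $\g G_{\shs'}(k)$ est engendr\'e par $\g T_{\shs'}(k)$ et les $\g x_\alpha(r)$, ces derniers se relevant de mani\`ere \'evidente en \'el\'ements de m\^eme nom dans $\g G_\shs(k)$.

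Pour le second \'enonc\'e, l'hypoth\`ese de scindage fournit un suppl\'ementaire $Y_0$ de $\mathrm{Ker}(\varphi)$ dans $Y$ contenant $Q^\vee$. On d\'efinira alors le sous-SGR $\shs^\natural=(A,Y_0,({\overline{\alpha_i}}_{\vert Y_0})_{i\in I},({\alpha}{_i^\vee})_{i\in I})$ de $\shs$, que la restriction $\varphi\vert_{Y_0}$ identifiera \`a $\shs'$, de sorte que $\shs$ sera une extension semi-directe de $\shs^\natural$ de suppl\'ementaire $\mathrm{Ker}(\varphi)$. Le Corollaire \ref{1.11} donnera alors $\g G_\shs\simeq\g G_{\shs'}\rtimes\g T'$, et la centralit\'e d\'ej\`a \'etablie de $\g T'$ transformera ce produit semi-direct en produit direct. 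L'obstacle principal se r\'eduit \`a une simple v\'erification des hypoth\`eses de \ref{1.11}, essentiellement que $Y/Y_0\cong\mathrm{Ker}(\varphi)$ est sans torsion, ce qui est automatique puisque $\mathrm{Ker}(\varphi)$ est libre.
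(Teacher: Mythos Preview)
Ta d\'emonstration est correcte et suit essentiellement la m\^eme voie que celle du papier : l\`a o\`u tu invoques \ref{1.10} et \ref{1.2} (qui reposent eux-m\^emes sur \ref{1.8}), le papier cite \ref{1.8} directement ; pour le cas scind\'e, tu d\'etailles via le Corollaire \ref{1.11} ce que le papier r\'esume par \og d'o\`u le r\'esultat\fg{} apr\`es avoir identifi\'e $\shs''\simeq\shs'$. Une micro-simplification : puisque $\mathrm{Ker}(\varphi)\subset\mathrm{Ker}(\overline{\alpha_i})$ pour tout $i$ (car $\overline{\alpha_i}=\overline{\alpha'_i}\circ\varphi$), l'extension $\shs^\natural\hookrightarrow\shs$ est d\'ej\`a \emph{directe} au sens de \ref{1.1}.7, donc \ref{1.11} donne imm\'ediatement un produit direct sans passer par le semi-direct puis la centralit\'e.
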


\begin{proof}

\par Comme $\varphi\,:\,Y\rightarrow Y'$ est surjective, $Z={\mathrm Ker}\varphi$ est facteur direct dans $Y$ et contenu dans les noyaux Ker$\beta$ pour $\beta\in\Phi$. On pose $\g T'=\g T_Z$ et la premi\`ere assertion est alors une cons\'equence directe de \ref{1.8}. Si l'extension est scind\'ee, $Z$ a un suppl\'ementaire $Y''$ contenant les coracines et $\varphi$ induit un isomorphisme de $\shs''$ sur $\shs'$, d'o\`u le r\'esultat.
\end{proof}

\begin{coro}\label{1.13}

\par Si $\varphi\,:\,\shs\rightarrow\shs'$ est une extension centrale finie, alors pour un anneau $k$, $\g T_\varphi(k)\,:\,\g T_\shs(k)\rightarrow\g T_{\shs'}(k)$ a pour noyau le groupe fini $\g Z(k)=\{\,z\in\g T_\shs(k)\mid\chi\circ\g T_\varphi(z)=1\;\forall\chi\in X'\,\}$ contenu dans tous les noyaux de racines de $\Phi$. Ce groupe $\g Z(k)$ est central dans $\g G_\shs(k)$, le quotient $\g G_\shs(k)/\g Z(k)$ est un sous-groupe distingu\'e de $\g G_{\shs'}(k)$ et $\g G_{\shs'}(k)=(\g G_\shs(k)/\g Z(k)).\g T_{\shs'}(k)$.

\end{coro}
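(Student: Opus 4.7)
The plan is to reduce everything to the results already collected in \ref{1.10} (on functoriality) and the explicit description of tori from \ref{1.2}, then to invoke the defining relations (KMT4) of \ref{1.6}.

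First, I would compute the kernel of $\g T_\varphi(k)$. Since $\g T_\shs=\mathrm{Spec}(\Z[X])$ and $\g T_{\shs'}=\mathrm{Spec}(\Z[X'])$, an element $z\in \g T_\shs(k)=\mathrm{Hom}(X,k^*)$ lies in $\ker\g T_\varphi(k)$ exactly when $\chi\bigl(\g T_\varphi(z)\bigr)=z(\varphi^*(\chi))=1$ for every $\chi\in X'$, which is the defining condition of $\g Z(k)$. Because $\varphi$ is a finite central extension, $Y\hookrightarrow Y'$ is injective with $Y'/\varphi(Y)$ finite, so $\varphi^*(X')\subset X$ has finite index; the elementary divisor theorem applied in \ref{1.2} then exhibits $\g Z(k)$ as a finite product of groups of roots of unity in~$k^*$.

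Next I would check the inclusion $\g Z(k)\subset\bigcap_{\alpha\in\Phi}\ker\alpha$. The morphism axiom $\overline{\alpha'_i}\circ\varphi=\overline{\alpha_i}$ (see \ref{1.1}.6) means $\overline{\alpha_i}\in\varphi^*(X')$ for every $i\in I=I'$, hence $Q\subset\varphi^*(X')$; in particular every $\alpha\in\Phi\subset Q$ belongs to $\varphi^*(X')$, so every $z\in\g Z(k)$ satisfies $\alpha(z)=1$. Centrality of $\g Z(k)$ in $\g G_\shs(k)$ is then immediate: the elements of $\g Z(k)$ commute with $\g T_\shs(k)$ (a commutative subgroup) and, by (KMT4) extended to all $\alpha\in\Phi$ via the remark in \ref{1.6}, they commute with every $\g x_\alpha(r)$; but $\g G_\shs(k)$ is generated by $\g T_\shs(k)$ together with the $\g U_\alpha(k)$, so $\g Z(k)$ is central.

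Finally I would appeal to \ref{1.10}: for a commutative extension the homomorphisms $\g G_\varphi(k)$ and $\g T_\varphi(k)$ have the same kernel, the image of $\g G_\varphi(k)$ is a normal subgroup of $\g G_{\shs'}(k)$, and $\g G_{\shs'}(k)$ is generated by this image together with $\g T_{\shs'}(k)$. Combining this with the first step gives $\ker\g G_\varphi(k)=\g Z(k)$, so $\g G_\shs(k)/\g Z(k)$ embeds as a normal subgroup of $\g G_{\shs'}(k)$, and the product decomposition $\g G_{\shs'}(k)=(\g G_\shs(k)/\g Z(k)).\g T_{\shs'}(k)$ follows at once. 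The only non\-routine point is the identification $\ker\g G_\varphi(k)=\ker\g T_\varphi(k)$, but this is precisely what \ref{1.10} provides, so no genuine obstacle remains.
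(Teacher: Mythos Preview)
Your proof is correct and follows essentially the same route as the paper. The paper's three-line argument invokes the defining relations $(\overline{\mathrm{KMT}}6)$ and (KMT4) directly and alludes to the injection $\g T_{\shs'}(k)\hookrightarrow\g G_{\shs'}(k)$; you instead collect the same ingredients from \ref{1.10} and \ref{1.2} and spell out the centrality argument via (KMT4) explicitly, which is just a more detailed rendering of the same reasoning.
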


\begin{NB} Sauf si $k$ est un corps alg\'ebriquement clos, $\g G_{\shs'}(k)$ n'est pas forc\'ement \'egal \`a $\g G_\shs(k)/\g Z(k)$, car $\g T_\varphi(k)$ n'est pas forc\'ement surjective, voir \ref{1.2}.

\end{NB}

\begin{proof}

\par L\`a encore ($\overline{\mathrm KMT}$6) dans $\g G_{\shs'}$ est cons\'equence de la m\^eme relation dans $\g G_{\shs}$ et (KMT4) montre que $\g T_{\shs'}(k)$ normalise l'image de $\g G_{\shs}(k)$. L'injection de $\g T_{\shs'}(k)$ dans $\g G_{\shs'}(k)$ permet de contr\^oler la situation.
\end{proof}


\section{Alg\`ebre enveloppante enti\`ere}\label{s2}

\par Cette alg\`ebre "enveloppante" est la base de la construction par J. Tits de son groupe de Kac-Moody \'etudi\'e ci-dessus. On va  prouver ci-dessous un th\'eor\`eme de Poincar\'e-Birkhoff-Witt et introduire des "exponentielles tordues" dans le compl\'et\'e de cette alg\`ebre.
 On g\'en\'eralise ainsi des r\'esultats de H. Garland dans le cas affine (\cf \cite[Th. 5.8]{Ga78}), pr\'ecis\'es par D. Mitzman \cite{Mn-85}, voir \ref{2.12} ci-dessous.

\subsection{Rappels}\label{2.1} \cf \cite{T-87b}, \cite{Ry-02a}, \cite{M-88a}, \cite[VIII {\S{}} 12]{B-Lie}.

\medskip
\par 1) Dans l'alg\`ebre enveloppante $\shu_\C(\g g_\shs)$ de l'alg\`ebre de Lie complexe $\g g_\shs$, J. Tits
 \cite[{\S{}}  4]{T-87b} a introduit la sous$-\Z-$alg\`ebre $\shu_\shs$ engendr\'ee par les \'el\'ements suivants:
 \par\quad $e_i^{(n)}=\left.\begin{array}{c}e_i^n \\\hline n!\end{array}\right.$\qquad pour $i\in I$ et $n\in\N$,
 \par\quad $f_i^{(n)}=\left.\begin{array}{c}f_i^n \\\hline n!\end{array}\right.$\qquad pour $i\in I$ et $n\in\N$,
 \par\quad $\left(\begin{array}{c}h \\ n\end{array}\right)$ \qquad pour $h\in Y$ et $n\in\N$.

 \par La sous-alg\`ebre $\shu_\shs^+$ (resp. $\shu_\shs^-$, $\shu_\shs^0$) est engendr\'ee par les \'el\'ements du premier type (resp. du second type, du troisi\`eme type). On oubliera souvent l'indice $\shs$ dans les notations pr\'ec\'edentes, d'ailleurs $\shu_\shs^+$ et $\shu_\shs^-$ ne d\'ependent que de $A$ et non de $\shs$.

 \par On sait que $\shu_\shs$ (resp. $\shu_\shs^+$, $\shu_\shs^-$, $\shu_\shs^0$) est une $\Z-$forme de l'alg\`ebre enveloppante correspondante $\shu_\C(\g g_\shs)$ (resp $\shu_\C(\g n^+_\shs)$, $\shu_\C(\g n^-_\shs)$, $\shu_\C(\g h_\shs)$) et que l'on a la d\'ecomposition unique $\shu_\shs=\shu_\shs^+.\shu_\shs^0.\shu_\shs^-$.

 \medskip
 \par 2) La graduation de l'alg\`ebre de Lie $\g g_\shs$ par $Q$ induit une graduation de l'alg\`ebre associative $\shu_\shs$ par $Q$; le sous-espace de poids $\alpha\in Q$ est not\'e $\shu_{\shs\alpha}$. La sous-alg\`ebre $\shu_\shs^+$ (resp. $\shu_\shs^-$) est gradu\'ee par $Q^+$ (resp. $Q^-$), la sous-alg\`ebre $\shu_\shs^0$ est de poids $0$.

 \par L'alg\`ebre $\shu_\shs$ est un sous$-\shu_\shs-$module pour la repr\'esentation adjointe $ad$ de $\shu_\C(\g g_\shs)$ sur elle-m\^eme. En effet $(ad\,e_i^{(n)})(u)=\left.\begin{array}{c}(ad(e_i))^n \\\hline n!\end{array}\right. .(u)=\sum_{p+q=n}\,(-1)^q.e_i^{(p)}.u.e_i^{(q)}$ et, pour $h\in Y$, $ad\left(\begin{array}{c}h \\ n\end{array}\right)$ agit sur $\shu_\C(\g g_\shs)_\alpha$ par multiplication par $\left(\begin{array}{c}\alpha(h) \\ n\end{array}\right)\in\Z$.

 \par La filtration de $\shu_\C(\g g_\shs)$ induit une filtration sur $\shu_\shs$. Le niveau $1$ de cette filtration est donc somme directe de $\Z$ et de $\g g_{\shs\Z}=\g g_{\shs}\cap\shu_\shs$. L'alg\`ebre de Lie $\g g_{\shs\Z}$ est un sous$-\shu_\shs-$module pour l'action adjointe, elle a une d\'ecomposition triangulaire $\g g_{\shs\Z}=\g n^+_{\shs\Z}\oplus Y\oplus\g n^-_{\shs\Z}$ (o\`u $\g n^{\pm}_{\shs\Z}=\g n^{\pm}_{\shs}\cap\shu_\shs^\pm$ et $Y=\g h_{\shs\Z}=\g H_\shs\cap\shu_\shs^0$) et une d\'ecomposition radicielle $\g g_{\shs\Z}=Y\oplus(\oplus_{\alpha\in\Delta}\,\g g_{\shs\alpha\Z})$ (o\`u $\g g_{\shs\alpha\Z}=\g g_{\alpha}\cap\shu_\shs$).

 \medskip
 \par 3) L'alg\`ebre $\shu_\shs$ (ou $\shu_\shs^+$, $\shu_\shs^-$, $\shu_\shs^0$) a une structure de $\Z-$big\`ebre cocommutative, co-inversible, elle est non commutative (sauf $\shu_\shs^0$). Sa comultiplication $\nabla$, sa co-unit\'e $\epsilon$ et sa co-inversion (ou antiautomorphisme principal) $\tau$ respectent la graduation et la filtration: elles sont donn\'ees sur les g\'en\'erateurs par les formules suivantes (pour $i\in I$, $h\in Y$ et $n\in\N$):

\par $\nabla\left(\begin{array}{c}h \\ n\end{array}\right)=\sum_{p+q=n}\,\left(\begin{array}{c}h \\ p\end{array}\right)\otimes\left(\begin{array}{c}h \\ q\end{array}\right)$, $\epsilon\left(\begin{array}{c}h \\ n\end{array}\right)=0$ pour $n>0$

\par\noindent et $\tau\left(\begin{array}{c}h \\ n\end{array}\right)=\left(\begin{array}{c}-h \\ n\end{array}\right)=(-1)^n.\left(\begin{array}{c}h+n-1 \\ n\end{array}\right)=(-1)^n.\sum_{p+q=n}\,\left(\begin{array}{c}n-1 \\ p\end{array}\right)\left(\begin{array}{c}h \\ q\end{array}\right)$

\par $\nabla e_i^{(n)}=\sum_{p+q=n}\,e_i^{(p)}\otimes e_i^{(q)}$, $\epsilon e_i^{(n)}=0$ pour $n>0$ et $\tau e_i^{(n)}=(-1)^n e_i^{(n)}$

\par $\nabla f_i^{(n)}=\sum_{p+q=n}\,f_i^{(p)}\otimes f_i^{(q)}$, $\epsilon f_i^{(n)}=0$ pour $n>0$ et $\tau f_i^{(n)}=(-1)^n f_i^{(n)}$.

\medskip
4) Les automorphismes $s_i^*$ de \ref{1.4} s'\'etendent \`a $\shu_\C(\g g_\shs)$ et stabilisent $\shu_\shs$. On a une action de $W^*$ sur $\shu_\shs$. Ainsi, pour une racine r\'eelle $\alpha$ et un $e_\alpha$ base (sur $\Z$) de $\g g_{\alpha\Z}$, l'\'el\'ement $e_\alpha^{(n)}=\left.\begin{array}{c}e_\alpha^n \\\hline n!\end{array}\right.$ est dans $\shu_\shs$; c'est une base de $\shu_{\shs n\alpha}\cap\shu_\C(\g g_\alpha)$. On a comme ci-dessus $\nabla e_\alpha^{(n)}=\sum_{p+q=n}\,e_\alpha^{(p)}\otimes e_\alpha^{(q)}$, $\epsilon e_\alpha^{(n)}=0$ pour $n>0$ et $\tau e_\alpha^{(n)}=(-1)^n e_\alpha^{(n)}$.

\par L'automorphisme $\exp(ad(e_\alpha))$ de $\shu_\C(\g g_\shs)$ stabilise $\shu_\shs$, puisqu'on a $\left.\begin{array}{c}(ad(e_\alpha))^n \\\hline n!\end{array}\right. .(u)=\sum_{p+q=n}\,(-1)^q.e_\alpha^{(p)}.u.e_\alpha^{(q)}$. On note $s^*_\alpha$ l'\'el\'ement  suivant  $\exp(ad(e_\alpha)).\exp(ad(f_\alpha)).\exp(ad(e_\alpha))=\exp(ad(f_\alpha)).\exp(ad(e_\alpha)).\exp(ad(f_\alpha))$; cet \'el\'ement de $W^*$ d\'epend du choix de $e_\alpha$ comme base de $\g g_{\alpha\Z}$, si on change $e_\alpha$ en $-e_\alpha$, on change $s^*_\alpha$ en $(s^*_\alpha)^{-1}$.

\par Plus g\'en\'eralement le foncteur en groupes $\g G_\shs$ admet une repr\'esentation adjointe $Ad$ dans les automorphismes de $\shu_\shs$ respectant sa filtration et donc aussi dans les automorphismes de $\g g_{\shs\Z}$ \cite[9.5]{Ry-02a}. Pour $\alpha\in\Phi$ et $r\in R$, $\g x_\alpha(r)$ agit sur $\shu_{\shs R}=\shu_\shs\otimes R$ comme $\sum_{n\geq0}\,ad(e_\alpha^{(n)})\otimes r^n$; le groupe $\g T_\shs$ agit selon la d\'ecomposition de $\shu_\shs$ ou $\g g_\Z$ selon les poids.

\medskip
\par 5)  L'alg\`ebre $\shu^+$ est gradu\'ee par $Q^+$. On peut la graduer par un {\it degr\'e total} dans $\N$ dont chaque facteur regroupe un nombre fini d'anciens facteurs. Ainsi sa compl\'etion positive (pour le degr\'e total) est ${\widehat\shu}^+=\prod_{\alpha\in Q^+}\,\shu^+_\alpha$; c'est une alg\`ebre pour le prolongement naturel de la multiplication de $\shu^+$. On peut de m\^eme consid\'erer l'alg\`ebre ${\widehat\shu}_k^+=\prod_{\alpha\in Q^+}\,\shu^+_\alpha\otimes k$ (en g\'en\'eral diff\'erente de ${\widehat\shu}^+\otimes k$) pour tout anneau $k$.

\par On peut consid\'erer la compl\'etion positive ${\widehat\shu}^p$ de $\shu$ selon le degr\'e total (dans $\Z$); c'est une alg\`ebre contenant ${\widehat\shu}^+$.  On d\'efinit de m\^eme ${\widehat\shu}^p_k$ qui contient ${\widehat\shu}_k^+$ . L'alg\`ebre ${\widehat\shu}^p_k$ contient l'alg\`ebre de Lie  ${\widehat{\g g}}^p_k=(\oplus_{\alpha<0}\,\g g_{\alpha k})\oplus\g h_k\oplus(\prod_{\alpha>0}\,\g g_{\alpha k})$.

\par L'action adjointe $ad$ de $\shu$ sur $\shu$ respecte la graduation. On peut donc la prolonger en une action de ${\widehat\shu}^p$  (ou ${\widehat\shu}^p_k$) sur elle m\^eme. L'alg\`ebre de Lie  ${\widehat{\g g}}^p_k$ est un sous$-ad({\widehat\shu}^p_k)-$module.

\par Les r\'esultats pr\'ec\'edents et suivants sont bien s\^ur encore valables si on remplace $\Delta^+$ par $\Delta^-$ ou par un conjugu\'e de $\Delta^\pm$ par un \'el\'ement du groupe de Weyl $W^v$. On peut en particulier d\'efinir des compl\'etions n\'egatives ${\widehat\shu}^-$, ${\widehat\shu}^n$ ou ${\widehat{\g g}}^n$ (selon l'oppos\'e du degr\'e total).

\begin{prop}\label{2.2} L'alg\`ebre de Lie $\g g_\Z$ (resp. $\g n^+_\Z$, $\g n^-_\Z$) est le $\shu-$module (resp. $\shu^+-$module, $\shu^--$module) engendr\'e par $\g h_{\shs\Z}=Y$ et les $e_i$, $f_i$ (resp. par les $e_i$, par les $f_i$) pour $i\in I$ (pour l'action adjointe de $\shu$). L'alg\`ebre de Lie $\g g_\Z$  contient la sous-alg\`ebre de Lie $\g g^1_\Z$ engendr\'ee par $\g h_{\shs\Z}$ et les $e_i$, $f_i$ pour $i\in I$.

\end{prop}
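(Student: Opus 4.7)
La seconde assertion est imm\'ediate : comme $\g g_\Z=\g g_\shs\cap\shu_\shs$ est stable par crochet (intersection d'une sous-alg\`ebre de Lie et d'une sous-alg\`ebre associative munie du commutateur), elle contient la sous-alg\`ebre de Lie $\g g^1_\Z$ engendr\'ee par ses \'el\'ements $Y$ et $e_i, f_i$. Pour la premi\`ere assertion, l'inclusion du $\shu$-module engendr\'e dans $\g g_\Z$ s'obtient en notant que les g\'en\'erateurs sont dans $\g g_\Z$ par construction, et que $ad(\shu_\shs)$ stabilise $\g g_\Z$ puisque $\shu_\shs$ est stable par cette action d'apr\`es \ref{2.1}.2 et $\g g_\shs$ l'est \'egalement, l'action $ad$ prolongeant le crochet de Lie sur la sous-alg\`ebre de Lie engendratrice $\g g_\shs\subset\shu_\C(\g g_\shs)$. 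La conservation du $Q$-degr\'e par $ad$ fournit les inclusions analogues pour les versions $\g n^+_\Z$ et $\g n^-_\Z$.

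Pour l'inclusion r\'eciproque, la d\'ecomposition triangulaire $\g g_\Z=\g n^-_\Z\oplus Y\oplus\g n^+_\Z$ et $\shu_\shs=\shu^-_\shs\shu^0_\shs\shu^+_\shs$ ram\`enent le probl\`eme aux deux cas sym\'etriques $\g n^+_\Z$ et $\g n^-_\Z$. On se concentre sur $\g n^+_\Z$ : posons $M^+=ad(\shu^+_\shs)(\{e_i\}_{i\in I})\subset\g n^+_\Z$ et montrons par r\'ecurrence sur la hauteur de $\alpha\in\Delta^+$ que $M^+_\alpha=\g g_{\Z,\alpha}$. Le cas $\alpha=\alpha_i$ est trivial. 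En hauteur sup\'erieure, les relations (KMT2) fournissent l'engendrement Lie de $\g n^+_\Q$ par les $e_i$, donc les crochets it\'er\'es $ad(e_{i_1})\cdots ad(e_{i_k})(e_j)\in M^+$ engendrent au moins un $\Z$-r\'eseau plein de $\g g_{\Z,\alpha}$.

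L'obstacle principal sera l'\'etape de \emph{saturation}, \`a savoir \'etablir $M^+_\alpha=\g g_{\Z,\alpha}$ exactement, et non seulement $M^+_\alpha\otimes\Q=\g g_{\Q,\alpha}$. Les \'el\'ements $ad(e_{i_1}^{(n_1)})\cdots ad(e_{i_k}^{(n_k)})(e_j)$, \'egaux \`a $(ad\,e_{i_1})^{n_1}\cdots(ad\,e_{i_k})^{n_k}(e_j)/\prod_\ell n_\ell!$, sont dans $M^+\cap\g g_\Z$ par la stabilit\'e \'etablie plus haut ; les divisions par $\prod_\ell n_\ell!$ absorbent les d\'enominateurs factoriels qui appara\^itraient autrement dans les transform\'ees par $W^*$ des $e_i$. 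Pour les racines r\'eelles, $\g g_{\Z,\alpha}=\Z e_\alpha$ d'apr\`es \ref{1.4}, et un calcul explicite des $w^*(e_i)=\pm e_\alpha$ utilisant les puissances divis\'ees pr\'ec\'edentes conclut. Pour les racines imaginaires, o\`u $\g g_{\Z,\alpha}$ peut \^etre de rang sup\'erieur \`a $1$, l'argument reposera sur la description de $\shu^+_\shs$ comme $\Z$-alg\`ebre engendr\'ee par les $e_i^{(n)}$ (\ref{2.1}.1), combin\'ee au th\'eor\`eme de Poincar\'e-Birkhoff-Witt \`a puissances divis\'ees tordues \'etabli plus loin dans cette partie.
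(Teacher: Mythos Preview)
Votre traitement des inclusions faciles et de l'assertion sur $\g g^1_\Z$ est correct. Le probl\`eme est dans l'\'etape de saturation.

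\textbf{Circularit\'e.} Pour les racines imaginaires, vous invoquez \og le th\'eor\`eme de Poincar\'e-Birkhoff-Witt \`a puissances divis\'ees tordues \'etabli plus loin dans cette partie\fg. Or ce th\'eor\`eme (Proposition~\ref{2.6}) repose sur la Proposition~\ref{2.4}, dont la d\'emonstration utilise explicitement la Proposition~\ref{2.2} pour se ramener au Lemme~\ref{2.5}. Votre argument est donc circulaire. Par ailleurs, m\^eme en admettant PBW, on ne voit pas comment il entra\^inerait la saturation $M^+_\alpha=\g g_{\Z,\alpha}$ : PBW donne une base de $\shu^+$, pas une description de l'image de $ad(\shu^+)$ sur les $e_i$.

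\textbf{L'approche du papier.} L'auteur proc\`ede tout autrement. Il note que si le $\shu$-module engendr\'e $\g g^2_\Z$ \'etait strictement contenu dans $\g g_\Z$, alors (les deux \'etant des $\Z$-modules libres) il existerait un premier $p$ tel que l'image de $\g g^2_\Z$ dans $\g g_\Z\otimes\F_p$ soit propre. Ceci contredit un r\'esultat de Mathieu \cite[1.7.2, p.~308]{M-96}, qui affirme pr\'ecis\'ement que $\g g_\Z\otimes\F_p$ est engendr\'e comme $\shu\otimes\F_p$-module (adjoint) par $\g h_\Z\otimes\F_p$ et les $e_i,f_i$. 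C'est cette r\'ef\'erence ext\'erieure --- non triviale, et qui encapsule tout le travail de saturation en caract\'eristique positive --- qui porte la d\'emonstration ; elle ne peut pas \^etre remplac\'ee par un argument interne \'el\'ementaire sans reproduire une partie substantielle de~\cite{M-96}.

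Accessoirement, votre esquisse pour les racines r\'eelles m\'eriterait aussi plus de soin : l'action de $W^*$ fait intervenir $\exp(ad\,f_i)$ et ne reste donc pas dans $ad(\shu^+)$ ; il faut un argument de r\'ecurrence sur la hauteur via la th\'eorie de $\g{sl}_2$ pour montrer que $ad(e_i^{(m)})(e_\beta)=\pm e_\alpha$ exactement.
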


\begin{proof} Comme $\g g_\Z$ est une alg\`ebre de Lie, elle contient $\g g_\Z^1$; comme c'est un sous$-\shu-$module de $\shu$ elle contient le $\shu-$module $\g g_\Z^2$ engendr\'e par $\g h_{\shs\Z}$ et les $e_i$, $f_i$. Si jamais $\g g_\Z\not=\g g_\Z^2$, il existe un nombre premier $p$ tel que l'image de $\g g_\Z^2$ dans $\g g_\Z\otimes\F_p$ n'est pas tout $\g g_\Z\otimes\F_p$. Mais ceci contredit \cite[1.7.2 p308]{M-96}. Le raisonnement pour $\g n^\pm$ est analogue.
\end{proof}

\begin{coro}\label{2.3} a) Soit $K$ un corps de caract\'eristique $p$. Si, pour tout coefficient de la matrice de Kac-Moody $A$, on a $p>-a_{i,j}$ ou si $p=0$, alors l'alg\`ebre $\g g^1_K=\g g^1_\Z\otimes K$
 (engendr\'ee par $\g h_{\shs\Z}\otimes K$ et les $e_i$, $f_i$ pour $i\in I$) est \'egale \`a $\g g_K=\g g_\Z\otimes K$. De m\^eme l'alg\`ebre de Lie $\g n^+_K=\g n^+_\Z\otimes K$ (resp. $\g n^-_K=\g n^-_\Z\otimes K$) est engendr\'ee par  
 les $e_i$ (resp. les $f_i$).

 \par b) Dans le cas simplement lac\'e \ie si $\vert a_{i,j}\vert\leq{}1$ pour $i\not=j$, on a $\g g^1_\Z=\g g_\Z$.

\end{coro}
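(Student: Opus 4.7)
\emph{Plan of proof.} The strategy is to upgrade Proposition~\ref{2.2} by showing that $\g g^1_K$ is itself stable under the adjoint action of $\shu_K$: once this is established, one has
\[
\g g_K = ad(\shu_K)\bigl(Y_K\cup\{e_i,f_i\}\bigr)\subset ad(\shu_K)(\g g^1_K)\subset \g g^1_K,
\]
and the reverse inclusion is trivial. The map $ad\colon\shu\to\End(\shu)$ is a unital algebra homomorphism (the usual Hopf-algebra computation gives $ad(u_1u_2)=ad(u_1)\circ ad(u_2)$, using that the antipode $\tau$ is an anti-algebra morphism), so stability under $ad(u)$ for $u$ in a generating set of $\shu$ suffices. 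The generators from \ref{2.1} are $e_i^{(n)}$, $f_i^{(n)}$, $\binom{h}{n}$; the last act on the weight-$\alpha$ component of $\shu$ by the integer scalar $\binom{\alpha(h)}{n}$, hence preserve the $Q$-graded subspace $\g g^1_K$.

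For the stability under $(ad\,e_i)^{(n)}=ad(e_i^{(n)})$, and symmetrically $(ad\,f_i)^{(n)}$, I rely on the divided-power Leibniz identity
\[
(ad\,e_i)^{(n)}([x,y])=\sum_{k=0}^n\bigl[(ad\,e_i)^{(k)}(x),\,(ad\,e_i)^{(n-k)}(y)\bigr],
\]
which holds over $\Q$ by the binomial expansion of $(ad\,e_i)^n$ applied to a bracket, and hence integrally by injectivity $\g g_\Z\hookrightarrow\g g_\Q$. Since $\g g^1_K$ is spanned by iterated brackets of the Lie generators $Y\cup\{e_j,f_j\}$, Leibniz reduces the whole question to the explicit computation of $(ad\,e_i)^{(n)}$ on those generators. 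Using \ref{1.1} and the Serre relations (KMT2): $(ad\,e_i)^{(n)}(h)=0$ for $n\geq 2$ and equals $-\alpha_i(h)e_i$ for $n=1$; $(ad\,e_i)^{(n)}(f_j)=0$ unless $j=i$, in which case the only nonzero values are $-\alpha_i^\vee$ ($n=1$) and $e_i$ ($n=2$, coming from $(ad\,e_i)^2(f_i)=2e_i$); and $(ad\,e_i)^{(n)}(e_j)$ vanishes for $j=i$ and, for $j\neq i$, vanishes whenever $n\geq 1-a_{i,j}$ by Serre, while for $1\leq n\leq -a_{i,j}$ the hypothesis $p>-a_{i,j}$ gives $n<p$, so $n!$ is invertible in $K$ and
\[
(ad\,e_i)^{(n)}(e_j)=(n!)^{-1}\,(ad\,e_i)^n(e_j)
\]
is a scalar multiple of an iterated Lie bracket, hence lies in $\g g^1_K$. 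Every output is in $\g g^1_K$, proving the stability; the statements for $\g n^\pm_K$ follow from the identical argument restricted to the $\shu^\pm$-module description of Proposition~\ref{2.2}.

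For part~(b), the simply-laced hypothesis gives $-a_{i,j}\leq 1$ for $i\neq j$, so Serre kills $(ad\,e_i)^{(n)}(e_j)$ for all $n\geq 2$, and the only remaining higher divided power is $(ad\,e_i)^{(2)}(f_i)=e_i$, which sits in $\g g^1_\Z$ without any fractional coefficient (since $(ad\,e_i)^2(f_i)=2e_i$ is already divisible by $2$ in $\g g^1_\Z$). Consequently $\g g^1_\Z$ is $\shu$-stable integrally, and Proposition~\ref{2.2} yields $\g g^1_\Z=\g g_\Z$. The only conceptual step beyond routine computation is recognizing that divided-power operators on $\g g$ satisfy a Leibniz rule, which reduces the claim to a finite, explicit list of checks calibrated exactly to the numerical hypothesis $p>-a_{i,j}$.
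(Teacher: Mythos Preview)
Your argument is correct. The paper's own proof is almost entirely by citation (to \cite{M-96}, \cite{MT-72}, \cite{T-81b}), whereas you give the underlying computation explicitly: show that $\g g^1_K$ is stable under the divided-power adjoint operators $ad(e_i^{(n)})$, $ad(f_i^{(n)})$, $ad\binom{h}{n}$, then invoke Proposition~\ref{2.2}. This is precisely the content of the alternative route the paper alludes to (``On peut aussi utiliser la proposition \ref{2.2} et \cite[prop.~1]{MT-72}''). For part~(b), the paper deduces the integral statement from part~(a) by the reduction-mod-$p$ trick used in the proof of Proposition~\ref{2.2} (if $\g g^1_\Z\ne\g g_\Z$, some prime $p$ would witness $\g g^1_{\F_p}\ne\g g_{\F_p}$, contradicting (a), which applies for every $p$ in the simply-laced case). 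Your approach is more direct: you observe that in the simply-laced case no denominators are needed at all, so the $\shu$-stability of $\g g^1_\Z$ holds already over $\Z$. Both routes are short; yours has the small advantage of avoiding the detour through all residue fields.
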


\begin{proof} Le a) est clair pour $p=0$; pour $p>0$ c'est \cite[1.7.3 p308]{M-96}. On en d\'eduit alors facilement le b) comme dans la d\'emonstration pr\'ec\'edente. On peut aussi utiliser la proposition \ref{2.2} et \cite[prop 1 p181]{MT-72} pour prouver a) et b). Voir aussi \cite[prop 1]{T-81b}.
\end{proof}

\begin{prop}\label{2.4} Soit $x\in\g g_\Z$. Il existe des \'el\'ements $x^{[n]}\in\shu$ pour $n\in\N$, tels que $x^{[0]}=1$, $x^{[1]}=x$ et (si on note $x^{(n)}=x^n/n!\in\shu_\C(\g g_\shs)$) l'\'el\'ement $x^{[n]}-x^{(n)}$ est de filtration $<n$ dans $\shu_\C(\g g_\shs)$. Si de plus $x\in\g g_{\alpha\Z}$, $\alpha\in Q$, on peut supposer $x^{[n]}\in\shu_{n\alpha}$.

\end{prop}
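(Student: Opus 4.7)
The plan is to first construct $x^{[n]}$ in the homogeneous case (where $x$ lies in a single weight space of $\g g_\shs$) and then extend to arbitrary $x$ via a multinomial formula; the main obstacle is the imaginary-root subcase of the homogeneous step.

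For the homogeneous case $x \in \g g_{\qa\Z}$: if $\qa = 0$ (so $x \in Y$) set $x^{[n]} = \binom{x}{n} \in \shu^0$, which agrees with $x^n/n!$ modulo lower filtration. If $\qa \in \Phi$ is a real root, then $\g g_{\qa\Z}$ has rank one, generated by $e_\qa = w^*(e_i)$ for suitable $w^* \in W^*$ and $i \in I$ (cf.\ \ref{2.1}.4), so for $x = c\, e_\qa$ with $c \in \Z$ we set $x^{[n]} = c^n e_\qa^{(n)}$; this lies in $\shu$ because $W^*$ stabilizes $\shu$.

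The hard subcase is $\qa \in \Delta_{im}$, where $\g g_{\qa\Z}$ can have rank $>1$ and no Weyl-group conjugation to a simple-root space is available. Here the plan is to invoke the construction of integral divided powers attached to imaginary root spaces in the Tits--Mathieu $\Z$-form (cf.\ \cite[{\S}4]{T-87b} and \cite{M-88a}): for a chosen $\Z$-basis $(y_j)$ of $\g g_{\qa\Z}$, one obtains integral lifts $y_j^{[k]} \in \shu_{k\qa}$ with $y_j^{[k]} - y_j^{(k)}$ of filtration $<k$, the nontrivial content being that the rational element $y_j^k/k!$ admits such a $\Z$-lift modulo lower filtration.

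Once the homogeneous divided powers are in hand, one extends to general $x$ as follows. Decompose $x = h + \sum_\qa x_\qa$ with $h \in Y$ and $x_\qa \in \g g_{\qa\Z}$ (a finite sum, since $\g g_{\shs\Z}$ is a direct sum by \ref{2.1}.2). Because the associated graded of the PBW filtration on $\shu_\C(\g g_\shs)$ is the commutative algebra $S^\bullet(\g g_\shs)$, the multinomial formula yields
\[
\frac{x^n}{n!} \equiv \sum_{m_0+\sum m_\qa = n} \frac{h^{m_0}}{m_0!}\,\prod_\qa \frac{x_\qa^{m_\qa}}{m_\qa!} \pmod{\shu_{<n}}
\]
for any fixed ordering of the $\qa$'s. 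Using that $\binom{h}{m_0}$ agrees with $h^{m_0}/m_0!$ modulo lower filtration, one sets
\[
x^{[n]} := \sum_{m_0+\sum m_\qa = n} \binom{h}{m_0} \prod_\qa x_\qa^{[m_\qa]},
\]
which lies in $\shu$ and whose leading term in $S^n(\g g_\shs)$ equals $x^n/n!$, hence $x^{[n]} - x^{(n)} \in \shu_{<n}$. The same multinomial device, applied within a single weight space $\g g_{\qa\Z}$, reduces the homogeneous step to the basis elements $y_j$ discussed above and ensures that $x^{[n]} \in \shu_{n\qa}$, as every step preserves the $Q$-grading. The base cases $x^{[0]}=1$ and $x^{[1]}=x$ are automatic.
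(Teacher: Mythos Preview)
Your multinomial reduction to homogeneous elements, and your handling of the Cartan and real-root cases, match the paper's approach exactly. The gap is in the imaginary-root subcase, which is the entire content of the proposition.

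You write that for $\qa\in\Delta_{im}$ one should ``invoke the construction of integral divided powers attached to imaginary root spaces in the Tits--Mathieu $\Z$-form'', citing \cite[\S4]{T-87b} and \cite{M-88a}. But \cite[\S4]{T-87b} only \emph{defines} $\shu_\shs$ as the $\Z$-subalgebra generated by $e_i^{(n)}$, $f_i^{(n)}$, $\binom{h}{n}$; it does not construct lifts $y_j^{[k]}$ for imaginary-root vectors $y_j$. Establishing that such lifts exist is precisely what Proposition~\ref{2.4} is asserting, so this step is not a proof but a restatement of the claim. (Mathieu's work does give a route to this, as the paper notes in \ref{3.1}, but it goes through the structure of $\Z[\g U^{ma}_\Psi]$ and is not a direct citation one can invoke here.)

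The paper supplies the missing mechanism: Proposition~\ref{2.2} says that $\g n^+_\Z$ is generated as a $\shu^+$-module (under the adjoint action) by the $e_i$, so every imaginary-root basis vector is reached from the $e_i$ by iterated applications of operators $ad(e_i^{(k)})$. The inductive step is then Lemma~\ref{2.5}: if $x\in\g g_{\beta\Z}$ already admits lifts $x^{[n]}\in\shu_{n\beta}$, then so does $y=ad(e_i^{(k)})(x)\in\g g_{\gamma\Z}$ with $\gamma=\beta+k\alpha_i$. The proof of Lemma~\ref{2.5} is the technical heart: one computes the weight-$n\gamma$ component of $(\exp e_i)(\exp x)(\exp -e_i)=\exp\bigl(\sum_m ad(e_i^{(m)})(x)\bigr)$ in $\widehat\shu^+_\C$, modulo filtration $<n$, and compares the two sides to isolate $y^{(n)}$ as an integral element plus lower-filtration terms. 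This inductive conjugation argument is what your proposal is missing.
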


\begin{rema*} Pour $x\in Y=\g h_\Z$, on peut prendre  $x^{[n]}=\left(\begin{array}{c}h \\ n\end{array}\right)$. Pour $x\in\g g_{\alpha\Z}$, $\alpha\in\Phi$, on peut prendre $x^{[n]}=x^{(n)}$. Le probl\`eme se pose pour $x\in\g g_{\alpha\Z}$, $\alpha\in\Delta_{im}$. Des calculs explicites sont faits dans \cite{Mn-85}; on constate que $x^{[n]}$ n'est pas forc\'ement dans $\sum_{n\in\N}\,\Q x^n$.

\end{rema*}

\begin{proof}  Pour $a,b\in\Z$, $x,y\in\g g_\Z$, on a $(ax+by)^{(n)}=\sum_{p+q=n}\;a^p b^q x^{(p)} y^{(q)}$ modulo filtration $<n$. On d\'eduit ainsi l'existence des $(ax+by)^{[n]}$ de celle des $x^{[p]}$,  $y^{[q]}$. La d\'emonstration se r\'eduit donc au cas o\`u $x$ est homog\`ene et l'un des g\'en\'erateurs du $\Z-$module $\g g_\Z$. Le r\'esultat signal\'e en remarque pour $x=e_i$, $x=f_i$ ou $x\in Y$ et la proposition \ref{2.2} montrent qu'il suffit de d\'emontrer le lemme suivant (le cas n\'egatif est semblable).
\end{proof}

\begin{lemm}\label{2.5} Soient $\beta\in\Delta^+$ et $x\in\g g_{\beta\Z}$ tels qu'il existe des \'el\'ements $x^{[n]}\in\shu_{n\beta}$ satisfaisant aux conditions de la proposition pr\'ec\'edente. Pour $i\in I$, $k\in \N$, notons $y=(ad(e_i)^k/k!)(x)\in\g g_\Z$ et $\gamma=\beta+k\alpha_i$ ($\in\Delta^+$ si $y\not=0$). Alors il existe des \'el\'ements $y^{[n]}\in\shu_{n\gamma}$ (pour $n\in\N$) satisfaisant aux conditions de la proposition pr\'ec\'edente.

\end{lemm}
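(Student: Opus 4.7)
The natural candidate is
$$y^{[n]} := ad(e_i^{(nk)})(x^{[n]}),$$
which lies in $\shu_{n\gamma}$ because $ad(e_i^{(nk)})$ shifts the weight by $nk\alpha_i$ and $x^{[n]}\in\shu_{n\beta}$; the base cases $y^{[0]}=1$ and $y^{[1]}=y$ are immediate. In $\shu_\C(\g g_\shs)$ one has $ad(e_i^{(nk)})=ad(e_i)^{nk}/(nk)!$, and since $ad(e_i)$ is a derivation preserving $\g g_\shs$ it preserves the filtration; combined with $x^{[n]}=x^{(n)}+$(an element of filtration $<n$), the problem reduces to computing $ad(e_i^{(nk)})(x^{(n)})$ modulo filtration $<n$.

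A Leibniz calculation in the associated graded gives, modulo filtration $<n$,
$$ad(e_i^{(nk)})(x^{(n)}) \equiv \sum_{(a_r)\in P_{k,n}} \prod_{r\geq 0} \frac{z_r^{a_r}}{a_r!},$$
where $z_r:=ad(e_i^{(r)})(x)\in\g g_{(\beta+r\alpha_i)\Z}$ (which vanishes unless $r=0$ or $\beta+r\alpha_i\in\Delta$) and
$$P_{k,n}:=\Big\{(a_r)_{r\geq 0}\;\Big|\;\sum_r a_r=n,\;\sum_r r\,a_r=nk\Big\}.$$
The ``dominant'' index with $a_k=n$ and $a_r=0$ for $r\neq k$ contributes exactly $y^{(n)}=z_k^n/n!$; the other indices produce spurious cross terms which must be corrected away.

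The plan to do so is a simultaneous induction on $n$ producing divided powers $z_r^{[n]}\in\shu_{n(\beta+r\alpha_i)}$ for \emph{every} $r\geq 0$, not only for $r=k$. With base cases $z_r^{[0]}:=1$ and $z_r^{[1]}:=z_r$, the inductive step at level $n\geq 2$ is
$$z_r^{[n]} := ad(e_i^{(nr)})(x^{[n]}) - \sum_{\substack{(a_s)\in P_{r,n}\\ (a_s)\neq\mathrm{dom}}} \prod_{s\geq 0} z_s^{[a_s]},$$
the correction product being taken in some fixed order (say increasing $s$). The sum is finite because $s\leq nr$ whenever $a_s>0$; in every non-dominant tuple each $a_s$ is strictly less than $n$, so the required $z_s^{[a_s]}$ are available by the induction hypothesis. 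Since the commutator of elements of filtrations $a$ and $b$ has filtration at most $a+b-1$, replacing each $z_s^{[a_s]}$ in the product by its symmetric-algebra counterpart $z_s^{(a_s)}$ alters the product only modulo filtration $<n$, and the non-commutative product itself agrees with the symmetric one modulo filtration $<n$; the subtraction therefore cancels exactly the cross terms and $z_r^{[n]}-z_r^{(n)}$ has filtration $<n$. Setting $r=k$ produces the desired $y^{[n]}$.

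The main obstacle is the bookkeeping of this simultaneous induction: verifying that the non-commutative products appearing in the correction genuinely agree with their symmetric counterparts modulo filtration $<n$, that each $z_r^{[n]}$ so defined really lies in $\shu_\shs$ and not merely in $\shu_\C(\g g_\shs)$, and that the choice of ordering in the correction product is immaterial at top filtration. A more conceptual alternative, in line with the twisted exponentials developed later in Section~\ref{s2}, would be to work formally with $\exp(tx):=\sum_n t^n x^{[n]}$ in a suitable completion and the group-like series $E(s):=\sum_m s^m e_i^{(m)}$, and to use the identity $ad(E(s))(\exp(tx))=\exp(t\cdot ad(E(s))(x))$ (valid modulo filtration $<n$ at each order in $t$) to read off $y^{[n]}$ directly as a coefficient.
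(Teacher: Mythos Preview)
Your argument is correct and is essentially the paper's: both establish that the element $ad(e_i^{(nk)})(x^{[n]})\in\shu_{n\gamma}$ equals, modulo filtration $<n$, the sum of $y^{(n)}$ and products $\prod_s z_s^{(a_s)}$ with every $a_s<n$, and then conclude by the same simultaneous induction on $n$ over all values of $r$. The only difference is that you obtain this identity by a direct Leibniz expansion, whereas the paper reads it off the weight-$n\gamma$ component of $(\exp e_i)(\exp x)(\exp -e_i)=\exp\bigl(\sum_m z_m\bigr)$ --- which is precisely the ``more conceptual alternative'' you sketch at the end.
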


\begin{proof} On raisonne par r\'ecurrence sur $n$; c'est vrai pour $n=0$ ou $1$. Si $\beta$ et $\alpha_i$ sont colin\'eaires, on a $\beta=\alpha_i$, donc $y=0$ pour $k\geq{}1$. On peut donc supposer $\beta$ et $\alpha_i$ ind\'ependants. On travaille dans l'alg\`ebre ${\widehat\shu}^+_\C=\prod_{\alpha\in Q^+}\,\shu^+_{\alpha\C}$
et pour $z\in \g n^+$ on note $\exp\,z=\sum_{n\in\N}\,z^{(n)}$.

\par On a alors \qquad$(\exp\,e_i).(\exp\,x).(\exp-e_i)=\exp(\sum_{m\in\N}\;(ad(e_i)^m/m!)(x))$

\noindent et aussi \qquad
$(\exp\,e_i).z.(\exp-e_i)=\sum_{p,q}\,(-1)^qe_i^{(p)}ze_i^{(q)}=\sum_{n\in\N}\,((ad\,e_i)^n/n!)(z)$ pour $z\in\shu^+_\C$,

\noindent voir \cite[II {\S{}} 6 ex.1 p90 et VIII {\S{}} 12 lemme 2 p174]{B-Lie}.

\par Identifions les termes de poids $n\gamma$ dans les deux membres de la premi\`ere relation ci-dessus, en calculant modulo filtration $<n$.

\par Dans le membre de gauche on trouve $\sum_{p+q=nk}\,(-1)^q e_i^{(p)}x^{(n)}e_i^{(q)}$ qui n'est diff\'erent de $\sum_{p+q=nk}\,(-1)^q e_i^{(p)}x^{[n]}e_i^{(q)}\in\shu_{n\gamma}$ que par le terme de poids $n\gamma$ de $(\exp\,e_i).z.(\exp-e_i)$ (pour $z=x^{[n]}-x^{(n)}\in\shu_{n\gamma\C}$ de filtration $<n$) c'est-\`a-dire par $\sum_{p+q=nk}\,(-1)^q e_i^{(p)}ze_i^{(q)}$. D'apr\`es la seconde relation ci-dessus ce dernier terme est de filtration $<n$. Ainsi le terme de poids $n\gamma$ du membre de gauche est dans $\shu_{n\gamma}$ modulo filtration $<n$.

\par Dans le membre de droite les termes de poids dans $n\beta+\N\alpha_i$ sont ceux de la somme $(\sum_{m\in\N}\,((ad\,e_i)^m/m!)(x)\,)^n/n!$, \'egale \`a $\sum_{\sum p_j=n}\;\prod_{j\in\N}\,(((ad\,e_i)^j/j!)(x))^{(p_j)}$ modulo des termes de filtration $<n$ et de m\^emes poids.
Pour s\'electionner les termes de poids $n\gamma$ dans cette derni\`ere somme, on a deux conditions sur les $p_j$: $n=\sum\,p_j$ et $nk=\sum\,jp_j$.  Ainsi $p_j=0$ pour $j>nk$, $p_j\leq{}n$ et $jp_j\leq{}nk$; donc si $p_j=n$ on a $j=k$.
Ainsi la partie de poids $n\gamma$ du membre de droite est, modulo filtration $<n$, la somme de $y^{(n)}$ et de produits $\prod_{j=o}^{j=kn}\,(((ad\,e_i)^j/j!)(x))^{(p_j)}$ avec des $p_j<n$.
Par r\'ecurrence le $j$-i\`eme terme de l'un de ces produits est dans $\shu$ modulo filtration $<p_j$, donc chacun de ces produits est dans $\shu_{n\gamma}$ modulo filtration $<n$.

\par Par comparaison des membres de gauche et de droite, on obtient que $y^{(n)}$ est dans $\shu_{n\gamma}$ modulo filtration $<n$.
\end{proof}

\subsection{Poincar\'e-Birkhoff-Witt pour $\shu$}\label{2.6}

\par Pour $\alpha\in\Delta\cup\{0\}$, soit $\shb_\alpha$ une base de $\g g_{\alpha\Z}$ sur $\Z$. Pour $\alpha=\alpha_i$ racine simple, on choisit $\shb_\alpha=\{e_i\}$ et $\shb_{-\alpha}=\{f_i\}$. Par conjugaison par $W^v$, pour $\alpha\in\Phi$, on peut choisir $\shb_\alpha=\{e_\alpha\}$ et $\shb_{-\alpha}=\{f_\alpha\}$ de fa\c{c}on que $[e_\alpha,f_\alpha]=-\alpha^\vee$. On ordonne la base $\shb=\cup\,\shb_\alpha$ de $\g g_{\Z}$ de mani\`ere quelconque.

\par Pour $x\in\shb$ on choisit des \'el\'ements $x^{[n]}$ comme en \ref{2.4}. Pour $N=(N_x)_{x\in\shb}\in\N^{(\shb)}$, on d\'efinit $[N]=\prod_{x\in\shb}\,x^{[N_x]}$ (ce produit est pris dans l'ordre choisi et est en fait fini car presque tous les $N_x$ sont nuls). Le poids de $[N]$ (et, par d\'efinition, de $N$) est la somme pond\'er\'ee $pds([N])=\sum\,N_xpds(x)\in Q$ des poids des $x\in\shb$.

\begin{prop*} Ces \'el\'ements $[N]$ forment une base de $\shu$ sur $\Z$.
\end{prop*}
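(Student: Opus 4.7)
The plan is to establish \emph{linear independence} and \emph{generation} over $\Z$ separately, the former being a routine filtration argument and the latter going through the associated graded algebra.

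For linear independence, one exploits the embedding $\shu\hookrightarrow\shu_\C(\g g_\shs)$ and the fact that $\shu_\shs$ is a $\Z$-form (\ref{2.1}.1). By the very construction in \ref{2.4}, each $[N]$ has filtration exactly $|N|=\sum_xN_x$ in $\shu_\C(\g g_\shs)$, with $[N]-\prod_{x\in\shb}x^{(N_x)}$ of filtration strictly less than $|N|$. Since, by the classical PBW theorem applied to the ordered basis $\shb$ of $\g g_\shs$, the ordered monomials $\prod x^{(N_x)}$ are a $\C$-basis of $\shu_\C(\g g_\shs)$, a triangular argument on $(|N|,\text{monomial ordering})$ implies that the $[N]$ are $\C$-linearly independent, hence a fortiori $\Z$-linearly independent.

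For generation, let $M$ denote the $\Z$-submodule spanned by the $[N]$; obviously $M\subset\shu$. To show $\shu\subset M$, consider the associated graded $\operatorname{gr}\shu$ of $\shu$ for the standard filtration inherited from $\shu_\C(\g g_\shs)$. Since $\shu\otimes\Q=\shu_\Q(\g g_\shs)$ one has $\operatorname{gr}\shu\otimes\Q=S_\Q(\g g_\shs)$, so $\operatorname{gr}\shu$ is a $\Z$-form of the symmetric algebra; it contains the images of the generators $e_i^{(n)},f_i^{(n)},\binom{h}{n}$, which are the divided powers $\gamma_n(e_i),\gamma_n(f_i),\gamma_n(h)$ in $S_\Q(\g g_\shs)$. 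By \ref{2.4}, it also contains $\gamma_n(x)$ for every $x\in\shb$, hence for every $x\in\g g_\Z$ by linearity modulo the filtration. One deduces that $\operatorname{gr}\shu$ coincides with the integral divided power algebra $\Gamma_\Z(\g g_\Z)\subset S_\Q(\g g_\shs)$. The standard PBW for divided power algebras then states that, for the ordered basis $\shb$ of $\g g_\Z$, the ordered products of divided powers $\prod_x\gamma_{N_x}(x)$ form a $\Z$-basis of $\Gamma_\Z(\g g_\Z)$; these are precisely the images of the $[N]$ in $\operatorname{gr}\shu$.

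Finally, one lifts back to $\shu$. Given $u\in\shu$ of filtration $\leq n$, its symbol in $\operatorname{gr}_n\shu$ is a finite $\Z$-combination $\sum c_N\,\overline{[N]}$ with $|N|=n$; subtracting $\sum c_N[N]\in M$ yields an element of $\shu$ of filtration $<n$, and one concludes by induction on $n$ that $u\in M$.

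The main obstacle lies in the middle step: identifying $\operatorname{gr}\shu$ with $\Gamma_\Z(\g g_\Z)$ rigorously, that is, checking that all integral divided powers of \emph{imaginary} root vectors are already present in $\operatorname{gr}\shu$ (for which \ref{2.4} is indispensable, since no naive $x^n/n!$ formula applies there) and that $\operatorname{gr}\shu$ picks up no extra $\Z$-torsion beyond $\Gamma_\Z(\g g_\Z)$. Once this identification is granted the filtration lift is automatic, and both PBW bases (over $\C$ and for $\Gamma_\Z$) can be invoked as known results.
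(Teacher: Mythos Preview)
Your approach is correct and is essentially what the paper does: it cites \cite[VIII \S12 n\textsuperscript{o}3 prop.~1]{B-Lie}, whose proof is precisely this filtration/associated-graded argument, and remarks that it extends to infinite dimension.

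Your self-declared ``main obstacle'' is smaller than you think. The inclusion $\Gamma_\Z(\g g_\Z)\subset\operatorname{gr}\shu$ follows from \ref{2.4}, as you say. The reverse inclusion $\operatorname{gr}\shu\subset\Gamma_\Z(\g g_\Z)$ is immediate from the \emph{definition} of $\shu$: it is the $\Z$-subalgebra of $\shu_\C$ generated by $e_i^{(n)},f_i^{(n)},\binom{h}{n}$. Since $\operatorname{gr}\shu_\C=S_\C(\g g_\shs)$ is a domain, the symbol of a product is the product of symbols, so every element of $\operatorname{gr}\shu$ is a $\Z$-polynomial in the symbols $\gamma_n(e_i),\gamma_n(f_i),\gamma_n(h)$; these lie in $\Gamma_\Z(\g g_\Z)$ because $e_i,f_i,h\in\g g_\Z$. (Your phrase ``no extra $\Z$-torsion'' is slightly off: there is no torsion inside $S_\Q$; the issue is whether $\operatorname{gr}\shu$ could be a lattice strictly larger than $\Gamma_\Z$, and this is what the generation argument rules out.)
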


\begin{remas*} 1) L'alg\`ebre gradu\'ee $gr(\shu)$ d\'efinie par la filtration de $\shu$ est donc une alg\`ebre de polyn\^omes divis\'es \`a coefficients dans $\Z$ et dont les ind\'etermin\'ees sont les \'el\'ements de $\shb$.

\par 2) Bien s\^ur ces r\'esultats sont encore valables sur un anneau $R$ quelconque. Si $R$ contient $\Q$, on peut remplacer $x^{[n]}$ par $x^{(n)}$ pour tout $x\in\shb$.

\end{remas*}

\begin{proof} C'est une cons\'equence imm\'ediate de \ref{2.4} d'apr\`es \cite[VIII {\S{}} 12 n\degres{}3 prop. 1]{B-Lie} qui se g\'en\'eralise sans probl\`eme \`a la dimension infinie.
\end{proof}

\begin{prop}\label{2.7}  Pour $x\in\g g_{\alpha\Z}$, $\alpha\in\Delta\cup\{0\}$, on peut modifier les $x^{[n]}$ de la proposition \ref{2.4} de fa\c{c}on \`a satisfaire aux relations suppl\'ementaires suivantes:

\par\qquad $\nabla(x^{[n]})=\sum_{p+q=n}\,x^{[p]}\otimes x^{[q]}$\quad et \quad $\epsilon(x^{[n]})=0$ pour $n>0$.

\end{prop}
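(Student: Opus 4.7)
The plan is first to dispose of the elementary cases and then reduce to a cohomological construction for imaginary roots.

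For $x \in Y$, the choice $x^{[n]} = \binom{x}{n}$ from the remark after Prop.~\ref{2.4} together with the explicit formulas of \ref{2.1}.3 gives directly $\nabla(x^{[n]}) = \sum_{p+q=n} \binom{x}{p}\otimes\binom{x}{q}$ and $\epsilon(x^{[n]}) = 0$ for $n>0$. For $\alpha \in \Phi$ real and $x = e_\alpha$ (in particular $x = e_i, f_i$), the choice $x^{[n]} = e_\alpha^{(n)}$ works by the formulas in \ref{2.1}.4. It remains to treat the case $x \in \g g_{\alpha\Z}$ for $\alpha \in \Delta_{im}$, for which the $x^{[n]}$ produced by Prop.~\ref{2.4} are far from canonical.

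For the imaginary case I would argue by induction on $n$, the cases $n = 0, 1$ being immediate since $x^{[0]} = 1$ and $x^{[1]} = x \in \g g_\shs$ is primitive in $\shu_\C(\g g_\shs)$. Assume $x^{[0]}, \ldots, x^{[n-1]}$ have been constructed satisfying Prop.~\ref{2.4} together with the new coproduct and counit relations. Starting from any candidate $y^{[n]} \in \shu_{n\alpha}$ produced by Prop.~\ref{2.4}, define the defect
\[
D := \nabla(y^{[n]}) - y^{[n]}\otimes 1 - 1\otimes y^{[n]} - \sum_{0<p<n} x^{[p]}\otimes x^{[n-p]} \;\in\; \shu_\shs\otimes\shu_\shs.
\]
One seeks $v \in \shu_{n\alpha}$ of filtration $<n$ with $\bar\nabla(v) := \nabla(v) - v\otimes 1 - 1\otimes v = D$; then $x^{[n]} := y^{[n]} - v$ still satisfies the filtration condition of Prop.~\ref{2.4} and kills the defect, hence realizes all the requested relations.

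Existence of $v$ rests on two ingredients. First, coassociativity of $\nabla$ combined with the inductive hypothesis forces $D$ to satisfy the 2-cocycle condition $(\mathrm{id}\otimes\nabla)D = (\nabla\otimes\mathrm{id})D$ (suitably reduced). Second, over $\C$ the element $y^{[n]}$ differs from the genuine divided power $x^{(n)}$ (which is grouplike-in-series because $x$ is primitive) by an element of filtration $<n$; in characteristic zero, $\bar\nabla$ on $\shu_\C(\g g_\shs)$ has kernel reduced to primitives plus scalars, and is surjective onto reduced 2-cocycles, so a preimage $v \in \shu_{\C,n\alpha}$ exists. The counit condition is then automatic, since $x^{[n]}$ has weight $n\alpha \neq 0$ and $\epsilon$ vanishes on every nontrivial weight space of $\shu_\shs$.

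The principal obstacle will be the \emph{integrality} of the correction $v$: the cohomological argument above produces $v$ only in $\shu_{\C,n\alpha}$, whereas one needs $v \in \shu_{\shs,n\alpha}$. To control this, I would expand $D$ (which lies in $\shu_\shs \otimes \shu_\shs$) in the PBW basis of Prop.~\ref{2.6} and read off the coefficients of $v$ as integer combinations. A more structural alternative, which avoids the integrality issue altogether, would be to organize the induction along the inductive procedure of Lemma~\ref{2.5}: since $e_i$ is primitive, $\exp(te_i)$ is grouplike in $\widehat\shu^+_\C[[t]]$, so conjugation by it is a coalgebra automorphism that preserves the divided-power property, which would propagate the desired relations from $x$ to $y = (ad\,e_i)^{(k)}(x)$ along each step of Prop.~\ref{2.4} while keeping all the terms in $\shu_\shs$.
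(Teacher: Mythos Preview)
Your inductive framework and the definition of the defect $D$ are exactly what the paper does. The gap is at the step you flag yourself: integrality of the correction $v$. Saying you would ``expand $D$ in the PBW basis and read off the coefficients of $v$ as integer combinations'' hides the entire content of the argument --- an integral $2$-cocycle is not automatically an integral coboundary. The paper stays integral throughout and never invokes the rational vanishing you cite: writing $D=\sum_{P,R} a^n_{P,R}\,[P]\otimes[R]$ in the basis of Prop.~\ref{2.6} (the $a^n_{P,R}$ are integers since $D\in\shu_\shs\otimes\shu_\shs$), the counit gives $a^n_{P,R}=0$ if $P=0$ or $R=0$, cocommutativity gives $a^n_{P,R}=a^n_{R,P}$, and your cocycle identity (coassociativity) becomes $a^n_{P+Q,R}=a^n_{P,Q+R}$ for $P,Q,R\neq0$. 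These three constraints together force $a^n_{P,R}$ to depend only on $S=P+R$; call it $b^n_S\in\Z$. Then $v=\sum_{S\neq0} b^n_S\,[S]$ lies in $\shu_{n\alpha}$ and, using the inductive hypothesis $\bar\nabla([S])=\sum_{P+R=S,\,P,R\neq0}[P]\otimes[R]$ for $d^\circ(S)<n$, one has $\bar\nabla(v)=D$ exactly. This ``depends only on $P+R$'' lemma is the step you are missing.

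Your alternative~(b) does not work as stated: conjugation by the grouplike $\exp(e_i)$ does send an exponential sequence for $x$ to one for $\exp(e_i)\,x\,\exp(-e_i)=\sum_{m\geq0}(ad\,e_i)^{(m)}(x)$, but not to one for an individual $y=(ad\,e_i)^{(k)}(x)$. Projecting the resulting inhomogeneous sequence onto the weight-$n\gamma$ component does not yield an exponential sequence for $y$, because $\nabla$ of a weight-$\mu$ component picks up cross-terms from all decompositions $\mu=\mu_1+\mu_2$. So you would still need a separation argument, which in the end amounts to the combinatorial lemma above.
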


\begin{defi*} Pour $x\in\g g_{\alpha\Z}$, $\alpha\in\Delta\cup\{0\}$, on dit que $(x^{[n]})_{n\in\N}$ est une {\it suite exponentielle} associ\'ee \`a $x$ si elle satisfait aux conditions des propositions \ref{2.4} (y compris $x^{[n]}\in\shu_{n\alpha}$) et \ref{2.7}.

\par Plus g\'en\'eralement, si $x\in\g g_k$ pour $k$ un anneau, il existe des {\it suites exponentielles inhomog\`enes} associ\'ees \`a $x$; on peut prendre $(\lambda y+\mu z)^{[n]}=\sum_{p+q=n}\,\lambda^p\mu^q y^{[p]}z^{[q]}$.

\end{defi*}

\begin{rema*} $\shu $ (resp. $gr(\shu)$) est donc isomorphe comme cog\`ebre gradu\'ee (resp. big\`ebre gradu\'ee) \`a une alg\`ebre de polyn\^omes divis\'es, plus pr\'ecis\'ement \`a l'alg\`ebre sym\'etrique \`a puissances divis\'ees $S^{pd}_\Z(\g g_\Z)$.

\end{rema*}

\begin{proof} Quitte \`a soustraire $\epsilon(x^{[n]})$, on peut supposer $\epsilon(x^{[n]})=0$, pour $n>0$.  On raisonne par r\'ecurrence sur $n$; c'est clair pour $n=0,1$. Supposons le r\'esultat vrai pour $m<n$, alors pour $N\in\N^{(\shb)}$ et $d^\circ(N):=\sum_x\,N_x<n$ on a $\nabla({[N]})=\sum_{P+Q=N}\,{[P]}\otimes {[Q]}$ (avec des $P,Q\in\N^{(\shb)}$). La propri\'et\'e des $x^{(n)}$ vis \`a vis de $\nabla$ et la relation entre $x^{(n)}$ et $x^{[n]}$ montre qu'il existe des $a^n_{P,R}\in\C$ tels que:
\par
$\nabla(x^{[n]})=\sum_{p+q=n}\,x^{[p]}\otimes x^{[q]}+\sum_{d^\circ(P+R)<n}\,a^n_{P,R}[P]\otimes[R]$.

\par D'apr\`es la propri\'et\'e de la co-unit\'e $\epsilon$,  $a^n_{P,R}=0$ si $P$ ou $R=0$. Par co-commutativit\'e,  $a^n_{P,R}=a^n_{R,P}$. Enfin $\nabla$ \'etant compatible \`a la graduation,  $a^n_{P,R}=0$ si le poids de $P+R$ (\ie de $[P].[R]$) est diff\'erent de $n\alpha$. D'apr\`es la co-associativit\'e et la formule ci-dessus pour $\nabla({[N]})$ quand $d^\circ(N)<n$ on a:
\medskip

\par\noindent$\nabla(x^{[n]})\otimes1+\sum_{p+q+r=n}^{r>0}\,x^{[p]}\otimes x^{[q]}\otimes x^{[r]}+\sum_{d^\circ(P+Q+R)<n}\,a^n_{P+Q,R}[P]\otimes[Q]\otimes[R]$
\par\noindent$=1\otimes\nabla(x^{[n]})+\sum_{p+q+r=n}^{p>0}\,x^{[p]}\otimes x^{[q]}\otimes x^{[r]}+\sum_{d^\circ(P+Q+R)<n}\,a^n_{P,Q+R}[P]\otimes[Q]\otimes[R]$
\medskip

\par Apr\`es remplacement de $\nabla(x^{[n]})$ par son expression et simplification, on obtient:

\par\noindent$\sum_{d^\circ(P+Q)<n}\,a^n_{P,Q}[P]\otimes[Q]\otimes1+\sum_{d^\circ(P+Q+R)<n}\,a^n_{P+Q,R}[P]\otimes[Q]\otimes[R]$
\par\noindent$=\sum_{d^\circ(Q+R)<n}\,a^n_{Q,R}1\otimes[Q]\otimes[R]+\sum_{d^\circ(P+Q+R)<n}\,a^n_{P,Q+R}[P]\otimes[Q]\otimes[R]$
\medskip
\par On identifie alors les coefficients de $[P]\otimes[Q]\otimes[R]$ dans les deux membres. Pour $P=0$, $Q=0$ ou $R=0$, on retrouve des relations connues. Pour $P,Q,R\not=0$, on trouve que $a^n_{P+Q,R}=a^n_{P,Q+R}$. Sachant de plus que $a^n_{P,R}=a^n_{R,P}$, on en d\'eduit que $a^n_{P,R}$ (pour $P,R\not=0$) ne d\'epend que de $P+R$, on le note donc $b^n_{P+R}$. (On laisse au lecteur cet exercice \'el\'ementaire, mais n\'ecessitant l'examen de plusieurs cas.)

\par L'\'el\'ement $x^{\{n\}}=x^{[n]}-\sum_{d^\circ(S)<n}^{S\not=0}\,b^n_S[S]$ est bien \'egal \`a $x^{(n)}$ modulo filtration $<n$ et, comme $b^n_S=0$ pour $pds(S)\not=n\alpha$, son poids est bien $n\alpha$. On a:

\par $\nabla x^{\{n\}}=\sum_{p+r=n}\,x^{[p]}\otimes x^{[r]}+\sum_{d^\circ(P+R)<n}\,a^n_{P,R}[P]\otimes[R]$
\par\qquad\qquad$-\sum_{d^\circ(S)<n}^{S\not=0}\,b^n_{S}([S]\otimes1+1\otimes[S])-\sum\,b^n_{S}[P]\otimes[R]$

\par\noindent o\`u la derni\`ere somme porte sur les $P,R$ tels que $P+R=S$, $d^\circ(S)<n$ et $P,R\not=0$. Elle est donc \'egale \`a la seconde somme puisque $a^n_{P,R}=b^n_{P+R}$ ou $0$ selon que $P$ et $R\not=0$ ou non. Ainsi :

\par\qquad $\nabla x^{\{n\}}=x^{\{n\}}\otimes1+1\otimes x^{\{n\}}+\sum_{p+r=n}^{p,r\not=0}\,x^{[p]}\otimes x^{[r]}$
\end{proof}

\subsection{Exponentielles tordues}\label{2.8}

\par On raisonne dans ${\widehat\shu}^+_R$ pour une racine $\alpha\in\Delta^+$ (ou ${\widehat\shu}^-_R$ pour $\alpha\in\Delta^-$) et un anneau $R$.

\par Si $x\in\g g_{\alpha\Z}$, $\alpha\in\Delta^+$ et $\lambda\in R$, {\it l'exponentielle tordue} $[\exp]\lambda x= \sum_{n\geq{}0}\,\lambda^nx^{[n]}\in{\widehat\shu}^+_R$ n'est d\'efinie que modulo le choix de la suite exponentielle $x^{[n]}$.

\par Ses propri\'et\'es multiplicatives sont mauvaises: $[\exp](\lambda+\mu)x= \sum_{n\geq{}0}\,(\lambda+\mu)^nx^{[n]}$ est en g\'en\'eral diff\'erent de $[\exp]\lambda x.[\exp]\mu x$. Pour $\lambda\in\Z$ il faudrait poser $[\exp]\lambda x=([\exp]x)^\lambda$ (voir ci-dessous pour $\lambda<0$), mais l'extension pour  $\lambda\in R$ pose probl\`eme. Pour tous $x,y\in\g g_\Z$, $\lambda,\mu\in k$, $[\exp](\lambda x).[\exp](\mu y)$ est un choix possible d'exponentielle tordue (inhomog\`ene) pour $\lambda x+\mu y$.

\par Par contre $\epsilon([\exp]\lambda x)=1$ et pour le coproduit $\nabla [\exp]\lambda x=[\exp]\lambda x\widehat\otimes[\exp]\lambda x$ : l'exponentielle tordue est un \'el\'ement de type groupe. La co-inversion $\tau$ v\'erifie $prod\,\circ(Id\widehat\otimes\tau)\circ\nabla=\epsilon=prod\,\circ(\tau\widehat\otimes Id)\circ\nabla$. Donc $([\exp]\lambda x).(\tau[\exp]\lambda x)=1=(\tau[\exp]\lambda x).([\exp]\lambda x)$: l'exponentielle tordue est inversible dans ${\widehat\shu}^+_R$ et son inverse vaut $\tau[\exp]\lambda x=\sum_{n\geq{}0}\,\lambda^n\tau x^{[n]}$. Bien s\^ur $\tau x^{[n]}$ est l'une des suites exponentielles associ\'ees \`a $\tau x=-x$, comme la suite $(-1)^n x^{[n]}$, mais elle n'est pas clairement li\'ee (en g\'en\'eral) \`a cette derni\`ere.

\par Pour $R$ contenant $\Q$, ou $\alpha\in\Phi$, on peut consid\'erer les exponentielles classiques $exp(x)\in{\widehat\shu}^\pm_R$ qui ont d'excellentes propri\'et\'es.

\subsection{Unicit\'e des exponentielles tordues}\label{2.9}

\par 1) Pour $x\in\g g_{\alpha\Z}$, cherchons s'il existe une suite exponentielle $x^{\{n\}}$ autre que $x^{[n]}$. D'apr\`es la d\'emonstration de \ref{2.7}, il suffit de se poser la question quand on change $x^{[n+1]}$ sans changer $x^{[m]}$ pour $m\leq{}n$. On a alors $x^{\{n+1\}}=x^{[n+1]}+y$ avec $y\in\shu_{(n+1)\alpha\Z}$ de filtration $\leq{}n$. Sachant que $\nabla x^{[n+1]}=\sum_{p+q=n+1}\,x^{[p]}\otimes x^{[q]}$, on a $\nabla x^{\{n+1\}}=\sum_{p+q=n+1}\,x^{\{p\}}\otimes x^{\{q\}}$ si et seulement si $\nabla y=y\otimes 1+1\otimes y$, autrement dit si $y$ est primitif dans la cog\`ebre $\shu$, c'est-\`a-dire si $y\in\g g_{(n+1)\alpha\Z}$ \cite[II {\S{}} 1 n\degres{} 5 cor p13]{B-Lie}.

\par Ainsi la suite $x^{[n]}$ n'est unique qu'\`a des \'el\'ements $x_2\in\g g_{2\alpha\Z},\cdots,x_m\in\g g_{m\alpha\Z},\cdots$ pr\`es. Autrement dit un autre choix pour $[\exp]x$ peut s'\'ecrire $[\exp]'x=[\exp]x.[\exp]x_2.[\exp]x_3.\cdots.[\exp]x_m.\cdots$.

\par 2) Pour $\alpha$ racine r\'eelle, il y a unicit\'e puisque $\g g_{m\alpha\Z}=0$ pour $m\geq{}2$, on a donc toujours $x^{[n]}=x^{(n)}$ et $[\exp]x=\exp\,x$. Par contre il n'y a pas unicit\'e des $x^{[n]}$ pour $\alpha=0$ ou $\alpha$ racine imaginaire. On verra en \ref {2.12} dans des cas affines les bons choix propos\'es par D. Mitzman.

\par 3) Pour $x\in\g g_{\alpha\Z}$ mais en raisonnant dans ${\widehat\shu}^+_\Q$, on obtient qu'il existe
 $y_2\in\g g_{2\alpha\Q},\cdots,y_m\in\g g_{m\alpha\Q},\cdots$ tels que $[\exp]x=\exp(x).\exp(y_2).\cdots.\exp(y_m).\cdots$. En particulier $([\exp]x)^{-1}=\tau[\exp]x=\cdots.\exp(-y_m).\cdots.\exp(-y_2).\exp(-x)$. Si les espaces $\g g_{m\alpha}$ commutent, les exponentielles ci-dessus commutent.

 \par Mais l'on a naturellement $[\exp]\lambda x=\exp(\lambda x).\exp(\lambda^2y_2).\cdots.\exp(\lambda^my_m).\cdots$ et non pas $[\exp]\lambda x=\exp(\lambda x).\exp(\lambda y_2).\cdots.\exp(\lambda y_m).\cdots$ (qui d'ailleurs n'est en g\'en\'eral pas dans ${\widehat\shu}^+_\Q$). Il para\^{\i}t donc difficile de faire de $[\exp]$ un sous-groupe \`a un param\`etre pour $\alpha\in\Delta^{im}$.

 \par 4) {\bf Remarque}: l'expression ci-dessus de $[\exp]x$ (pour $x\in\g g_{\alpha\Z}$, $\alpha\in\Delta$) en terme d'exponentielles classiques, montre que, $\forall n,m\in\N$, $x^{[n]}\in\shu_\Q(\oplus_{r\geq{}1}\,\g g_{r\alpha\Q})\cap\shu_{n\qa}$ et $x^{[n]}.x^{[m]}=\left(\begin{array}{c}n+m \\ n\end{array}\right)x^{[n+m]}$ modulo $\shu_\Q(\oplus_{r\geq{}2}\,\g g_{r\alpha\Q})$. Ce n'\'etait pas clair auparavant pour $\alpha$ imaginaire.

 \par Ainsi la base de $\shu$ sur $\Z$ construite en \ref{2.6}, qui est compatible avec la graduation par les poids dans $Q$, est aussi compatible avec la d\'ecomposition de $\shu_\C$ ou $\shu_\Q$ en produit tensoriel correspondant \`a la d\'ecomposition en somme directe $\g g=\g h\oplus(\oplus_{\qa\in\QD}\,\g g_\qa)$, \`a condition de regrouper une racine imaginaire et ses multiples positifs. Pour $\alpha$ imaginaire, cette base est compatible \`a la filtration par les $(\oplus_{n\geq{}N}\,\g g_{n\qa})$.

 \subsection{Exponentielles tordues en poids nul}\label{2.10}

 \par On peut essayer de g\'en\'eraliser \ref{2.8} et \ref{2.9} pour $\alpha=0$ \ie $x=h\in Y=\g h_\Z$. On travaille donc dans $\shu^0$ et, pour simplifier, on se place en rang $1$: $Y$ est de base $h$ et $\shu^o=\sum_{i\geq{}0}\,\Z\left(\begin{array}{c}h \\ i\end{array}\right)$. On raisonne dans ${\widehat\shu}^0_R=\prod_{i\geq{}0}\,R\left(\begin{array}{c}h \\ i\end{array}\right)$ pour un anneau $R$; ce n'est pas le compl\'et\'e de $\shu^0_R$ pour la graduation par $Q$; la multiplication de $\shu^0_R$ se prolonge car
 $\left(\begin{array}{c}h \\ p\end{array}\right)\left(\begin{array}{c}h \\ q\end{array}\right)\in\sum_{i=sup(p,q)}^{i=p+q}\;\Z\left(\begin{array}{c}h \\ i\end{array}\right)$. Ainsi ${\widehat\shu}^0_R$ est une alg\`ebre commutative avec une comultiplication $\nabla: {\widehat\shu}^0_R\rightarrow{\widehat\shu}^0_R{\widehat\otimes}{\widehat\shu}^0_R$, mais malheureusement sans co-inversion et d\'ependant du choix de $h$ (au lieu de $-h$), voir ci-dessous.

 \par On sait que $\shu^0$ est la big\`ebre des distributions \`a l'origine du groupe multiplicatif $\g{Mult}$. La big\`ebre affine de ce groupe est $\Z[\g{Mult}]=\Z[T,T^{-1}]$ et la dualit\'e entre ces deux big\`ebres est donn\'ee par $\left(\begin{array}{c}h \\ i\end{array}\right)T^m=\left(\begin{array}{c}m \\ i\end{array}\right)$ (en fait $\left(\begin{array}{c}h \\ i\end{array}\right)$ est ${{\partial}\over{\partial T^i}}={{1}\over{i!}}({{\partial}\over{\partial T}})^i$ calcul\'e en l'\'el\'ement neutre) \cf \cite[II {\S{}} 4 5.10]{DG-70}.

 \par Pour $x\in R$, on peut poser $[x]=\sum_{i\geq{}0}\,(x-1)^i\left(\begin{array}{c}h \\ i\end{array}\right)\in{\widehat\shu}^0_R$ (c'est moralement $(1+(x-1))^h$). On a bien $[x]T^n=x^n$ pour $n\geq{}0$; par contre pour $n<0$, ceci n'a un sens que si $x-1$ est nilpotent (donc $x\in R^*$).

 \par On a $\nabla[x]=[x]\otimes[x]\in{\widehat\shu}^0_R{\widehat\otimes}{\widehat\shu}^0_R$: l'\'el\'ement $[x]$ est de type groupe.

 \par Le prolongement naturel de $\tau$ devrait donner:

 \par$\tau[x]=\sum_{i\geq{}0}\,(x-1)^i\left(\begin{array}{c}-h \\ i\end{array}\right)=\sum_{p,q}\,(1-x)^{p+q}\left(\begin{array}{c}p+q-1 \\ p\end{array}\right)\left(\begin{array}{c}h \\ q\end{array}\right)$

 \qquad$=\sum_{q\geq{}0}\,(1-x)^{q}(\sum_{p\geq{}0}\,(1-x)^{p}\left(\begin{array}{c}p+q-1 \\ p\end{array}\right))\left(\begin{array}{c}h \\ q\end{array}\right)$.

 \par Mais ce calcul n'a un sens dans ${\widehat\shu}^0_R$ que si $1-x$ est nilpotent, auquel cas $\tau[x]$ est bien \'egal \`a $[x^{-1}]=\sum_{q\geq{}0}\,(1-x)^qx^{-q}\left(\begin{array}{c}h \\ q\end{array}\right)$, vu le calcul ci-dessus de $x^{-q}=[x]T^{-q}$.

 \par Les exponentielles tordues en poids nul semblent donc trop imparfaites.

 \subsection{Polyn\^omes de Mitzman}\label{2.11} \cite[p114-116]{Mn-85}

 \par On consid\`ere des ind\'etermin\'ees ${\underline Z}=(Z_s)_{s\geq{}1}$ et $\zeta$. Le polyn\^ome $\Lambda_s({\underline Z})$ est le coefficient de $\zeta^s$ dans le d\'eveloppement de l'expression formelle $\exp(\sum_{j\geq{}1}\,Z_j{{\zeta^j}\over{j}})$. C'est un polyn\^ome en les $Z_j$ \`a coefficients positifs dans $\Q$. Son poids total est $s$ si $Z_j$ est de poids $j$.

 \par On a: $\Lambda_0=1$, $\Lambda_1=Z_1$, $\Lambda_2=Z_1^{(2)}+(1/2)Z_2$, $\Lambda_3=Z_1^{(3)}+(1/2)Z_1Z_2+(1/3)Z_3$, $\cdots$

 \begin{enonce*}{Cas particuliers} Pour certaines sp\'ecialisations des ind\'etermin\'ees, on a:

 \par si $Z_i=0$ pour $i\geq{}2$, alors $\Lambda_n=Z_1^{(n)}$,
 \par si $Z_i=Z^i$ pour $i\geq{}1$, alors $\Lambda_n=Z^n$,
 \par si $Z_i=t^iZ$ pour $i\geq{}1$ et $t\in R$, alors $\Lambda_n=t^n\left(\begin{array}{c}Z+n-1 \\ n\end{array}\right)=(-t)^n\left(\begin{array}{c}-Z \\ n\end{array}\right)$.
\end{enonce*}

 \begin{lemm*} a) $n\Lambda_n=\sum_{p+q=n}^{p\geq{}1}\,Z_p\Lambda_q$, si $n\geq{}1$.

 \par b) $\Lambda_n(\underline Z+\underline Z')=\sum_{p+q=n}\,\Lambda_p(\underline Z).\Lambda_q(\underline Z')$.

 \par c) $\Lambda_n({\underline Z})$ est congru \`a $Z_1^{(n)}$ modulo des polyn\^omes de degr\'e total $\leq{}n-1$. Pour $n\geq{}1$ $\Lambda_s({\underline Z})$ n'a pas de terme constant.

\end{lemm*}

\begin{proof} La premi\`ere assertion r\'esulte de la troisi\`eme ligne de la d\'emonstration de \cite[4.1.14]{Mn-85} p.115 puisque $\exp(\sum_{j\geq{}1}\,Z_j\zeta^j/j)=\sum_{n\geq{}0}\,\Lambda_n({\underline Z})\zeta^n$. Pour la seconde assertion, on a $\sum_{n\geq{}0}\,\Lambda_n({\underline Z}+{\underline Z'})\zeta^n=\exp(\sum_{j\geq{}1}\,(Z_j+Z_j'){{\zeta^j}\over{j}})=\exp(\sum_{j\geq{}1}\,Z_j{{\zeta^j}\over{j}}).\exp(\sum_{j\geq{}1}\,Z_j'{{\zeta^j}\over{j}})=(\sum_{p\geq{}0}\,\Lambda_p({\underline Z})\zeta^p)$\goodbreak\noindent.$(\sum_{q\geq{}0}\,\Lambda_q({\underline Z})\zeta^q)$, d'o\`u le r\'esultat. Enfin la derni\`ere assertion est claire.
\end{proof}

 \subsection{Exponentielles tordues dans les alg\`ebres affines}\label{2.12}

 \par Dans une $\Q-$alg\`ebre de Lie gradu\'ee consid\'erons une suite $\underline x=(x_i)_{i\geq{}1}$ d'\'el\'ements commutant deux \`a deux et de poids $pds(x_i)=i.pds(x_1)$. Dans l'alg\`ebre enveloppante on peut donc calculer $x^{\{n\}}=\Lambda_n(\underline x)$ qui est de poids $n.pds(x_1)$. D'apr\`es le lemme la suite des $x^{\{n\}}$ est exponentielle. On peut donc lui associer l'exponentielle tordue
 $\{\exp\}x=\sum_n\,x^{\{n\}}=\exp(\sum_j\,x_j/j)=\prod_j\,\exp(x_j/j)$.

 \par Revenons \`a la situation de \ref{2.9}: $\alpha\in\Delta$ et $x\in\g g_{\alpha\Z}$. Supposons que les espaces $\g g_{\alpha}$, $\g g_{2\alpha}$,$\cdots$, $\g g_{i\alpha}$,$\cdots$ commutent deux \`a deux. C'est vrai dans le cas des alg\`ebres de Kac-Moody affines (ou semi-simples); mais malheureusement ce sont essentiellement les seuls cas o\`u cela soit v\'erifi\'e. Pour une suite exponentielle $x^{[n]}$, il existe des \'el\'ements $y_i\in \g g_{i\alpha\Q}$ pour $i\geq{}2$ tels que $[\exp]x=\exp(x).\exp(y_2).\cdots.\exp(y_m).\cdots=\exp(x+y_2+\cdots+y_m+\cdots)$. Posons $x_1=x$, $x_2=2y_2$,$\cdots$, $x_m=my_m$,$\cdots$. On a alors $[\exp]x=\exp(\sum_{j\geq{}1}\,x_j/j)$. En identifiant les termes de poids $n\alpha$, on obtient $x^{[n]}=\Lambda_n(\underline x)\in\shu$. Gr\^ace \`a \ref{2.11}.a on en d\'eduit facilement par r\'ecurrence que $x_i\in\g g_\Z$, $\forall i$; mais les exemples ci-dessous montrent que $y_i=x_i/i$ n'est en g\'en\'eral pas dans $\g g_\Z$.

 \par David Mitzman a calcul\'e explicitement une base de l'alg\`ebre enveloppante enti\`ere $\shu_\shs$ dans le cas affine (et pour un certain SGR $\shs$) \cite[4.2.6 et 4.4.12]{Mn-85}. Elle s'exprime comme la base de \ref{2.6}; avec $h^{[n]}=\Lambda_n(h,h,\cdots)=(-1)^n\left(\begin{array}{c}-h \\ n\end{array}\right)$ pour $h\in\g h_\Z$ et $x^{[n]}=\Lambda_n(x,0,\cdots)=x^{(n)}$ pour $x$ base de $\g g_\alpha$, $\alpha\in\Phi$. Si $\alpha\in\Delta_{im}$ et $x\in\shb_\alpha$ le choix de $x^{[n]}$ est $x^{[n]}=\Lambda_n(x,x_2,\cdots,x_m,\cdots)$ avec des $x_i\in\g g_{i\alpha\Z}$; donc $[\exp]x=\exp(x).\prod_{j=2}^\infty\,\exp({1\over j}x_j)$, par d\'efinition des $\QL_n$. On va  d\'ecrire ces $x_j$ dans le cas le plus facile, \cf \lc 4.2.5: $\g g$ est  simplement lac\'ee (donc non tordue, de type $\widetilde A$, $\widetilde D$ ou $\widetilde E$). Voir aussi le corollaire \ref{2.3}.b.

 \par Une alg\`ebre de Lie $\g g$ de l'un de ces types peut s'\'ecrire comme extension centrale (avec centre de dimension $1$) d'un produit semi-direct (avec $\C$) d'une alg\`ebre de lacets $\g g'=\g g_1\otimes\C[t,t^{-1}]$ o\`u $\g g_1$ est une alg\`ebre de Lie simple complexe de type $A$, $D$ ou $E$. Les racines imaginaires sont les multiples entiers d'une racine $\delta$ et, pour $n\not=0$, on a $\g g_{n\delta}=\g h_1\otimes t^n$ o\`u $\g h_1$ est une sous-alg\`ebre de Cartan de $\g g_1$. Si $h_1,\cdots,h_\ell$ est une base de coracines dans $\g h_1$, l'espace $\g g_{n\delta\Z}$ admet pour base $\shb_{n\delta}=\{\,h_{i,n}=h_i\otimes t^n\mid i=1,\ell\,\}$. Le bon choix (de Mitzman) pour $(h_{i,n})^{[p]}$ est $\Lambda_p(h_{i,n},h_{i,2n},\cdots,h_{i,jn},\cdots)$, \cf \lc 3.9.1 et 4.2.3.

 \par On peut remarquer que $(h_{i,n})^{[p]}=\Lambda_p(h_{i}\otimes t^n,h_{i}\otimes t^{2n},\cdots,h_{i}\otimes t^{jn},\cdots)$ a pour image $(h_{i})^{[p]}\otimes t^{np}=\left(\begin{array}{c}h+p-1 \\ p\end{array}\right)\otimes t^{np}=\left(\begin{array}{c}-h \\ p\end{array}\right)\otimes (-t^n)^p$ dans le quotient $\shu_\C(\g g_1)\otimes\C[t,t^{-1}]$ de $\shu_\C(\g g')$ (troisi\`eme cas particulier de \ref{2.11}).

\medskip
 \par {\bf L'exemple de $\widetilde{SL}_2$}: Pour le type $\widetilde A_1$, on peut pr\'eciser encore plus. On a $\g g'=\g{sl}_2\otimes\C[t,t^{-1}]\subset \End_{\C[t,t^{-1}]}(\C[t,t^{-1}]^2)$. Si $h=diag(1,-1)\in\g{sl}_2$, alors, pour $n\not=0$, $\g g_{n\delta\Z}$ a pour base $h_n=t^nh$. La repr\'esentation naturelle de $\g g'$ sur $\C[t,t^{-1}]^2$ induit un homomorphisme $\pi$ de $\shu_\C(\g g')$ dans $\End_{\C[t,t^{-1}]}(\C[t,t^{-1}]^2)$ ou de $\widehat\shu^p_\C$ dans $\End_{\C(\!(t)\!)}(\C(\!(t)\!)^2)$.
 On a $\pi(h_n^{[p]})=(-t^n)^p\left(\begin{array}{c}-h \\ p\end{array}\right)=t^{np}\left(\begin{array}{c}h+p-1 \\ p\end{array}\right)\in \End_{\Z(\!(t)\!)}(\Z(\!(t)\!)^2)$. On en d\'eduit que $\pi(\shu)\subset \End_{\Z[t,t^{-1}]}(\Z[t,t^{-1}]^2)$ et que, pour $\lambda$ dans un anneau $k$, $\pi([\exp]\lambda h_n)=diag(u,v)\in \End_{k(\!(t)\!)}(k(\!(t)\!)^2)$ avec $u=\sum^\infty_{p=0}\,(\lambda t^n)^p\left(\begin{array}{c}p \\ p\end{array}\right)=1+\lambda t^n+\lambda^2t^{2n}+\cdots$ et $v=\sum^\infty_{p=0}\,(-\lambda t^n)^p\left(\begin{array}{c}1 \\ p\end{array}\right)=1-\lambda t^n=u^{-1}$. En particulier  $\pi([\exp]\lambda h_n)$ est bien inversible et m\^eme dans $SL_2(k(\!(t)\!))$.

 \subsection{Sous-alg\`ebres et compl\'etions}\label{2.13}
\medskip
\par 1) Pour $\alpha\in\Delta$, le module  $\shu^\alpha=\shu_\C(\oplus_{n\geq{}1}\,\g g_{n\alpha})\cap(\oplus_{n\geq{}0}\,\shu_{n\alpha})=\shu_\C(\oplus_{n\geq{}1}\,\g g_{n\alpha})\cap\shu$ est une sous$-\Z-$alg\`ebre de $\shu$.   Pour $\alpha\in\Phi$, $\shu^\alpha$ admet pour base sur $\Z$ les $e_\alpha^{(n)}$, $n\in\N$. Pour $\alpha\in\Delta_{im}$ et $n\in\N$, on choisit une base $\shb_{n\alpha}$ de $\g g_{n\alpha\Z}$ et des suites exponentielles $x^{[p]}$ pour $x\in\shb_{n\alpha}$. D'apr\`es \ref{2.9}.4 ci-dessus, ces $x^{[p]}$ sont dans $\shu^\alpha$ et, d'apr\`es \ref{2.6}, les $[N]=\prod_{x\in\shb_{(\alpha)}}\,x^{[N_x]}$ (pour $(\alpha)=\N^*\alpha\subset\Delta_{im}$, $\shb_{(\alpha)}=\cup_{\beta\in(\alpha)}\,\shb_\beta$, $N\in\N^{(\shb_{(\alpha)})}$) forment une base de $\shu^\alpha$. On en d\'eduit, en particulier, que $\shu^\alpha$ est une sous$-\Z-$big\`ebre co-inversible de $\shu$.

\par Si $\Psi\subset\Delta\cup\{0\}$ est un ensemble clos, $\g g_\Psi=\oplus_{\alpha\in\Psi}\,\g g_\alpha$ est une alg\`ebre de Lie; l'alg\`ebre $\shu(\Psi)$ engendr\'ee par les $\shu^\alpha$ pour $\alpha\in\Psi$ est une sous-big\`ebre co-inversible de $\shu$ de base les $[N]$ pour $N\in\N^{(\shb_\Psi)}$ avec $\shb_\Psi=\cup_{\alpha\in\Psi}\,\shb_\alpha$. L'alg\`ebre enveloppante de $\g g_\Psi$ est $\shu_\C(\Psi)=\shu(\Psi)\otimes\C$ et $\shu(\Psi)=\shu_\C(\Psi)\cap\shu$. Les alg\`ebres $\shu(\Psi)$ et $\g g_{\Psi\Z}=\g g_\Psi\cap\g g_\Z$ sont stables par l'action adjointe de   $\shu(\Psi)$. Pour $\Psi=\Delta^+$ (resp $\Psi=\Delta^+\cup\{0\}$) on a $\shu(\Psi)=\shu^+$ (resp. $\shu(\Psi)=\shu^0\otimes\shu^+$). On a $\shu^\alpha=\shu((\alpha))$.

\par Pour un anneau $k$, on note $\shu_k^\alpha=\shu^\alpha\otimes_\Z\,k$ et $\shu_k(\Psi)=\shu(\Psi)\otimes_\Z\,k$.
\medskip
\par 2) Si $\Psi\subset\Delta^+$ est clos, l'alg\`ebre $\shu(\Psi)$ est gradu\'ee par $Q^+$. On peut la graduer par le degr\'e total. Sa compl\'etion positive ${\widehat\shu}(\Psi)=\prod_{\alpha\in Q^+}\,\shu(\Psi)_\alpha$ est une sous$-\Z-$alg\`ebre  de $\widehat\shu^+$. On peut de m\^eme consid\'erer l'alg\`ebre ${\widehat\shu}_k(\Psi)=\prod_{\alpha\in Q^+}\,\shu(\Psi)_\alpha\otimes k$ (en g\'en\'eral diff\'erente de ${\widehat\shu}(\Psi)\otimes k$) pour tout anneau $k$.

\par Si $\Psi\subset\Delta\cup\{0\}$ est clos, on peut consid\'erer la compl\'etion positive ${\widehat\shu}^p(\Psi)$ de $\shu(\Psi)$ selon le degr\'e total (dans $\Z$); c'est une sous$-\Z-$alg\`ebre de ${\widehat\shu}^p={\widehat\shu}^p(\Delta\cup\{0\})$. On d\'efinit de m\^eme ${\widehat\shu}^p_k(\Psi)$. L'intersection de ${\widehat\shu}^p_k(\Psi)$ avec $\widehat{\g g}^p_k$ est ${\widehat{\g g}}^p_k(\Psi)=(\oplus_{\alpha\in\Psi}^{\alpha<0}\,\g g_{\alpha k})\oplus\g h_k\oplus(\prod_{\alpha\in\Psi}^{\alpha>0}\,\g g_{\alpha k})$. Bien s\^ur ${\widehat\shu}^p_k(\Psi)$ et ${\widehat{\g g}}^p_k(\Psi)$ sont des sous$-ad({\widehat\shu}^p_k(\Psi))-$modules de ${\widehat\shu}^p_k$

\par Si $\Psi'$ est un id\'eal de $\Psi$, ${\widehat\shu}^p_k(\Psi')$ (resp ${\shu}_k(\Psi')$) est un id\'eal de  ${\widehat\shu}^p_k(\Psi)$ (resp ${\shu}_k(\Psi)$).

\medskip
\par 3) Les r\'esultats pr\'ec\'edents et suivants sont bien s\^ur encore valables si on remplace $\Delta^+$ par $\Delta^-$ ou par un conjugu\'e de $\Delta^\pm$ par un \'el\'ement du groupe de Weyl $W^v$. On peut en particulier d\'efinir des compl\'etions n\'egatives ${\widehat\shu}^n(\Psi)$ (selon l'oppos\'e du degr\'e total).

 \subsection{Modules int\'egrables \`a plus haut poids}\label{2.14}

\par Soit $\lambda$ un poids dominant de $\g T_\shs$, \ie $\lambda\in X^+=\{\,\lambda\in X\mid \lambda(\alpha_i^\vee)\geq{}0\,,\,\forall i\in I\,\}$. Consid\'erons le $\g g_\shs-$module irr\'eductible $L(\lambda)$ de plus haut poids $\lambda$ \cite[chap. 9 et 10]{K-90}. Si $v_\lambda$ est un vecteur non nul de poids $\lambda$, le module $L(\lambda)$ est un quotient du module de Verma $V(\lambda)=\shu_\C(\g g_\shs)\otimes_{\shu_\C(\g b^+_\shs)}\,\C v_\lambda$ (dans le cas non sym\'etrisable, on prend plut\^ot pour $L(\lambda)$ le quotient int\'egrable maximal de $V(\lambda)$ comme en \cite[p28]{M-88a}). Le sous$-\shu_\shs-$module $L_\Z(\lambda)=\shu_\shs.v_\lambda=\shu^-_\shs.v_\lambda$ est une $\Z-$forme de $L(\lambda)$; on peut d\'efinir $L_k(\lambda)=L_\Z(\lambda)\otimes_\Z\,k$, pour tout anneau $k$. Ces modules sont int\'egrables au sens de \cite{T-81b} et gradu\'es par $Q^-$(\`a translation pr\`es) avec des espaces de poids de dimension finie.
En particulier on a une repr\'esentation diagonalisable de $\g T_\shs$ dans $L_\Z(\lambda)$.

\par Il est clair que, pour tout anneau $k$, le $k-$module $L_k(\lambda)$ est en fait un $\widehat\shu^p_{\shs,k}-$module.

\par On d\'efinit de m\^eme des modules int\'egrables \`a plus bas poids $L_k(\lambda)$ pour $\lambda\in -X^+$; ce sont des $\widehat\shu^n_{\shs,k}-$modules.


\section{Les groupes de Kac-Moody maximaux (\`a la Mathieu)}\label{s3}

\par On va \'etudier ci-dessous deux groupes de Kac-Moody "maximaux" dits {\it positif} et {\it n\'egatif} qui contiennent le groupe minimal \'etudi\'e dans la premi\`ere partie. Ils se d\'eduisent l'un de l'autre par l'\'echange de $\Delta^+$ et $\Delta^-$; on va donc essentiellement se concentrer sur le groupe positif. Ce groupe sera not\'e $\g G_\shs^{pma}$, les sous-groupes ou les alg\`ebres associ\'ees seront en g\'en\'eral not\'es avec un exposant $pma$ ou $ma+$. Les analogues pour $\Delta^-$ seront not\'es avec un exposant $nma$ ou $ma-$.

\subsection{Groupes (pro-)unipotents}\label{3.1}

\par Soit $\Psi\subset\Delta^+$ un ensemble clos de racines. On consid\`ere l'alg\`ebre $\shu(\Psi)$ de \ref{2.13}, qui ne d\'epend que de $A$ et $\Psi$ (et non de $\shs$). C'est une $\Z-$big\`ebre co-inversible, cocommutative, gradu\'ee par $Q^+$:  $\shu(\Psi)=\bigoplus_{\alpha\in\N\Psi}\,\shu(\Psi)_\alpha$; tous ses espaces de poids sont libres de dimension finie sur $\Z$. On consid\`ere son dual restreint $\Z[\g U^{ma}_\Psi]=\bigoplus_{\alpha\in\N\Psi}\,\shu(\Psi)_\alpha^*$. C'est une $\Z-$big\`ebre commutative et co-inversible (sa co-inversion est le dual de $\tau$ encore not\'e $\tau$, qui est un isomorphisme d'alg\`ebres); c'est-\`a-dire l'alg\`ebre d'un sch\'ema en groupes affine $\g U^{ma}_\Psi$ que l'on verra comme un foncteur en groupes: pour un anneau $R$, $\g U^{ma}_\Psi(R)={\mathrm Hom}_{\Z-alg}(\Z[\g U^{ma}_\Psi],R)$.  On note $\g U^{ma+}= \g U^{ma}_{\Delta^+}$. Pour $\alpha\in\Delta_{im}^+$ on note $\g U_{(\alpha)}=\g U^{ma}_{\{\,n\alpha\mid n\geq{}1\,\}}$, pour $\alpha\in\Phi^+$ on note $\g U_\alpha=\g U_{(\alpha)}=\g U^{ma}_{\{\alpha\}}$; alors $\g U^{ma}_\Psi$ et $\g U_{(\alpha)}$ sont des sous-sch\'emas en groupes de $\g U^{ma+}$.

\par La d\'efinition de $\Z[M]=\Z[\g U^{ma}_\Psi]$ dans \cite[p 19-21]{M-88a} est plus compliqu\'ee car elle ne se limite pas \`a $\Psi\subset\Delta^+$. Dans notre cas le lemme 3 de \lc dit bien que $\Q[M]$ est le $\Q-$dual restreint de $\shu_\Q(\Psi)$ et alors $\Z[M]$ (d\'efini page 21 de \lc) est bien le dual restreint de $\shu(\Psi)$.

\par La base de $\shu(\Psi)$ index\'ee par des $N\in\N^{(\shb_\Psi)}$ expliqu\'ee en \ref{2.13}.1 fournit par dualit\'e une base $(Z^N)_N$ de $\Z[\g U^{ma}_\Psi]$, index\'ee par les m\^emes $N$. Comme $\nabla[N]=\sum_{P+Q=N}\,[P]\otimes[Q]$, on a $Z^P.Z^Q=Z^{P+Q}$. Ainsi $\Z[\g U^{ma}_\Psi]$ est une alg\`ebre de polyn\^omes sur $\Z$ dont les ind\'etermin\'ees $Z_x$ sont index\'ees par les $x\in\shb_\Psi$.

\par On retrouve ainsi un r\'esultat d'Olivier Mathieu \cite[lemme 2 p41]{M-89}. Inversement si on admet ce r\'esultat, $\Z[\g U^{ma}_\Psi]$ admet comme base des mon\^omes, son dual restreint $\shu(\Psi)$ admet une base gradu\'ee qui a de bonnes propri\'et\'es pour le coproduit $\nabla$. Un raisonnement analogue au raisonnement d'unicit\'e de \ref{2.9} permet alors de construire des suites exponentielles.

\begin{prop}\label{3.2} Pour $\Psi$ comme ci-dessus et $R$ un anneau, $\g U^{ma}_\Psi(R)$ s'identifie au sous-groupe multiplicatif de ${\widehat\shu}_R(\Psi)$ form\'e des produits $\prod_{x\in\shb_\Psi}\,[\exp]\lambda_xx$, pour des $\lambda_x\in R$, le produit \'etant pris dans l'ordre (quelconque) choisi sur $\shb$. L'\'ecriture d'un \'el\'ement de $\g U^{ma}_\Psi(R)$ sous la forme d'un tel produit est unique.
\end{prop}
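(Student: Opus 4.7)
L'id\'ee est d'interpr\'eter $\g U^{ma}_\Psi(R)$ comme l'ensemble des \'el\'ements de type groupe d'une big\`ebre compl\'et\'ee, puis d'identifier ceux-ci aux produits d'exponentielles tordues via la base de Poincar\'e-Birkhoff-Witt tordue de $\shu(\Psi)$ construite en \ref{2.6}.

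Je commencerais par observer que l'accouplement entre $\shu(\Psi)$ et son dual restreint $\Z[\g U^{ma}_\Psi]$ \'etend ${\widehat\shu}_R(\Psi) = \prod_{\alpha\in\N\Psi}\, \shu(\Psi)_\alpha \otimes R$ en le dual $R$-lin\'eaire complet de $\Z[\g U^{ma}_\Psi] \otimes R$. Puisque la multiplication sur $\Z[\g U^{ma}_\Psi]$ est duale de la comultiplication $\nabla$ de $\shu(\Psi)$, un \'el\'ement $u \in {\widehat\shu}_R(\Psi)$ induit un morphisme d'alg\`ebres $\Z[\g U^{ma}_\Psi] \to R$ (\ie un point $R$-rationnel de $\g U^{ma}_\Psi$) si et seulement si $u$ est de type groupe, c'est-\`a-dire $\epsilon(u) = 1$ et $\nabla(u) = u \widehat\otimes u$ (o\`u $\nabla$ d\'esigne aussi son prolongement naturel \`a ${\widehat\shu}_R(\Psi)$, \cf \ref{2.8}). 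Cette correspondance identifie $\g U^{ma}_\Psi(R)$ au groupe multiplicatif des \'el\'ements de type groupe de ${\widehat\shu}_R(\Psi)$.

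Il reste alors \`a montrer que ces derniers sont exactement les produits $\prod_{x\in\shb_\Psi}\, [\exp]\lambda_x x$ \'ecrits dans l'ordre fix\'e sur $\shb$. Que chaque tel produit soit de type groupe r\'esulte de \ref{2.8} (chaque facteur $[\exp]\lambda_x x$ l'est) et du fait qu'un produit d'\'el\'ements de type groupe est encore de type groupe. Pour la r\'eciproque et l'unicit\'e, j'\'ecrirais un \'el\'ement $u \in {\widehat\shu}_R(\Psi)$ dans la base PBW tordue $([N])_{N\in\N^{(\shb_\Psi)}}$ de \ref{2.6}: $u = \sum_N\, a_N\, [N]$ avec $a_N \in R$. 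Comme $\nabla$ est un morphisme d'alg\`ebres et que $\nabla(x^{[n]}) = \sum_{p+q=n}\, x^{[p]} \otimes x^{[q]}$ par \ref{2.7}, on d\'eveloppe dans l'ordre fix\'e sur $\shb_\Psi$ pour obtenir $\nabla([N]) = \sum_{P+Q=N}\, [P] \otimes [Q]$. La condition $\nabla(u) = u\,\widehat\otimes\,u$ jointe \`a $\epsilon(u) = a_0 = 1$ se traduit en $a_{P+Q} = a_P\, a_Q$ pour tous $P,Q$, ce qui impose $a_N = \prod_x\, \lambda_x^{N_x}$ avec $\lambda_x := a_{\delta_x} \in R$ uniquement d\'etermin\'es par $u$. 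Enfin, le d\'eveloppement ordonn\'e du produit $\prod_x\, [\exp]\lambda_x x = \prod_x\, \sum_{n\geq0}\, \lambda_x^n\, x^{[n]}$ redonne exactement $\sum_N\, (\prod_x \lambda_x^{N_x})\, [N]$, \'etablissant \`a la fois la surjectivit\'e et l'unicit\'e de la famille $(\lambda_x)$.

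Le point le plus d\'elicat est la v\'erification de la formule du coproduit $\nabla([N]) = \sum_{P+Q=N}\, [P]\otimes[Q]$ sur la base tordue: elle repose sur le choix pr\'ecis des suites exponentielles fait en \ref{2.7}, sur la cocommutativit\'e de $\nabla$, et sur la r\`egle $(a\otimes b)(c\otimes d) = ac\otimes bd$ dans $\shu(\Psi) \otimes \shu(\Psi)$ qui permet de r\'eordonner correctement les facteurs. Une fois ceci \'etabli, toute l'argumentation n'est qu'un jeu dualit\'e-base faisant appel aux r\'esultats d\'ej\`a obtenus en \ref{2.6}--\ref{2.8} et \`a la description de $\Z[\g U^{ma}_\Psi]$ comme alg\`ebre de polyn\^omes.
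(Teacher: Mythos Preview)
Your argument is correct and follows essentially the same route as the paper: both identify $\g U^{ma}_\Psi(R)$ with the group-like elements of ${\widehat\shu}_R(\Psi)$ via duality, and both rest on the key coproduct formula $\nabla([N])=\sum_{P+Q=N}[P]\otimes[Q]$ (stated in \ref{3.1}). The only cosmetic difference is that the paper concludes via the polynomial description of the dual $R[\g U^{ma}_\Psi]=R[Z_x]_{x\in\shb_\Psi}$ and the evaluation $Z_x\big(\prod_y[\exp]\lambda_yy\big)=\lambda_x$, whereas you stay on the ${\widehat\shu}_R(\Psi)$ side and solve the relation $a_{P+Q}=a_Pa_Q$ directly --- these are two faces of the same computation.
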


\begin{remas*} 1) Si $\Psi$ est fini et donc form\'e de racines r\'eelles, on retrouve le groupe de sommes formelles de \cite[9.2]{Ry-02a}, c'est-\`a-dire l'unique sch\'ema en groupes lisse connexe unipotent d'alg\`ebre de Lie $\g g_{\Psi\Z}=\oplus_{\alpha\in\Psi}\,\g g_{\alpha\Z}$, \cf \cite[prop. 1 p547]{T-87b} et aussi \ref{3.4} ci-dessous. Dans ce cas les exponentielles sont classiques et les produits sont finis.

\par 2) Pour $\alpha\in\Phi^+$, le choix de l'\'el\'ement de base $e_\alpha$ de $\g g_{\alpha\Z}$ d\'etermine un isomorphisme $\g x_\alpha$ du groupe additif $\g{Add}$ sur $\g U_\alpha$ donn\'e par $\g x_\alpha(r)=\exp(r.e_\alpha)$.

\par 3) Dans les produits infinis propos\'es et pour tout $n\in\N$, tous les facteurs sauf un nombre fini sont \'egaux \`a $1$ modulo des termes de degr\'e total $\geq{}n$. Ces produits infinis sont donc bien dans $\widehat\shu_R(\Psi)$.

\end{remas*}

\begin{proof} Le $R-$dual de $R[\g U^{ma}_\Psi]$ est $\widehat\shu_R(\Psi)$. Donc $\g U^{ma}_\Psi(R)$ s'identifie \`a l'ensemble des \'el\'ements $y\in\widehat\shu_R(\Psi)$ tels que $1=\epsilon(y)$ $(=y(1))$ et $y(\varphi.\varphi')=y(\varphi).y(\varphi')$ pour tous $\varphi,\varphi'\in R[\g U^{ma}_\Psi]$ \ie $y\,\circ\nabla^*=prod\,\circ(y\otimes y)$ ou encore $\nabla y=y\otimes y$. Ainsi $\g U^{ma}_\Psi(R)$  est l'ensemble des \'el\'ements de $\widehat\shu_R(\Psi)$ de terme constant $1$ et de type groupe. Parmi ces \'el\'ements il y a les produits infinis de l'\'enonc\'e. Mais $R[\g U^{ma}_\Psi]$ est une alg\`ebre de polyn\^omes d'ind\'etermin\'ees $Z_x$, $x\in\shb_\Psi$. Par d\'efinition $Z_x(\prod_y\,[\exp]\lambda_yy)=\lambda_x$. Ainsi ces produits infinis fournissent une et une seule fois toutes les applications de $\shb_\Psi$ dans $R$, donc tous les homomorphismes de $R[\g U^{ma}_\Psi]$ dans $R$.
\end{proof}

\begin{lemm}\label{3.3} Soient $\Psi'\subset\Psi\subset\Delta^+$ des sous-ensembles clos de racines.

\par a) $\g U^{ma}_{\Psi'}$ est un sous-groupe ferm\'e de $\g U^{ma}_{\Psi}$ et $\Z[\g U^{ma}_{\Psi}/\g U^{ma}_{\Psi'}]$ est une alg\`ebre de polyn\^omes d'ind\'etermin\'ees index\'ees par $\shb_\Psi\setminus\shb_{\Psi'}$.

\par b) Si $\Psi\setminus\Psi'$ est \'egalement clos, alors on a une d\'ecomposition unique $\g U^{ma}_{\Psi}=\g U^{ma}_{\Psi'}.\g U^{ma}_{\Psi\setminus\Psi'}$.

\par c) Si $\Psi'$ est un id\'eal de $\Psi$, alors $\g U^{ma}_{\Psi'}$ est un sous-groupe distingu\'e de $\g U^{ma}_{\Psi}$ et on a un produit semi-direct si, de plus, $\Psi\setminus\Psi'$ est clos.

\par d) Si  $\Psi\setminus\Psi'$ est r\'eduit \`a une racine $\alpha$, alors le quotient $\g U^{ma}_{\Psi}/\g U^{ma}_{\Psi'}$ est isomorphe au groupe additif $\g{Add}$ pour $\alpha$ r\'eelle et au groupe unipotent commutatif $\g{Add}^{mult(\alpha)}$ pour $\alpha$ imaginaire.

\par e) Si $\qa,\qb\in\Psi$, $\qa+\qb\in\Psi$ implique $\qa+\qb\in\Psi'$, alors $\g U^{ma}_{\Psi}/\g U^{ma}_{\Psi'}$ est commutatif. Le groupe $\g U^{ma}_{\Psi}/\g U^{ma}_{\Psi'}(R)$ est isomorphe au groupe additif $\prod_{\qa\in\Psi\setminus\Psi'}\,\g g_{R\qa}$.

\par f) Si $\Psi''\subset\Psi'$ est un autre sous-ensemble clos et si $\qa\in\Psi'$, $\qb\in\Psi$, $\qa+\qb\in\QD$ implique $\qa+\qb\in\Psi''$, alors $\g U^{ma}_{\Psi''}$ contient le groupe de commutateurs $(\g U^{ma}_{\Psi},\g U^{ma}_{\Psi'})$
\end{lemm}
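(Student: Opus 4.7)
Le plan est d'utiliser la dualit\'e entre la big\`ebre gradu\'ee $\shu(\Psi)$ et son dual restreint $\Z[\g U^{ma}_\Psi]$, ainsi que la description explicite fournie par la Proposition \ref{3.2} qui \'ecrit tout \'el\'ement $u\in\g U^{ma}_\Psi(R)$ de mani\`ere unique comme produit ordonn\'e $\prod_{x\in\shb_\Psi}[\exp](\lambda_x x)$. Tout repose sur le choix d'un bon ordre sur $\shb_\Psi$ adapt\'e \`a la situation et sur le suivi syst\'ematique des poids dans $\widehat\shu_R(\Psi)$.

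Pour (a), l'inclusion de sous-big\`ebres co-inversibles $\shu(\Psi')\subset\shu(\Psi)$ se dualise en une surjection d'anneaux $\Z[\g U^{ma}_\Psi]\twoheadrightarrow\Z[\g U^{ma}_{\Psi'}]$; comme ces deux anneaux sont des alg\`ebres de polyn\^omes en les ind\'etermin\'ees $Z_x$ de \ref{3.1}, la surjection envoie simplement \`a $0$ les $Z_x$ pour $x\in\shb_\Psi\setminus\shb_{\Psi'}$. Ainsi $\g U^{ma}_{\Psi'}$ est ferm\'e dans $\g U^{ma}_\Psi$, d\'efini par l'id\'eal $(Z_x)_{x\in\shb_\Psi\setminus\shb_{\Psi'}}$. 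En ordonnant $\shb_\Psi$ pour que $\shb_\Psi\setminus\shb_{\Psi'}$ apparaisse en premier, la proposition \ref{3.2} fournit un syst\`eme canonique de repr\'esentants $\prod_{x\in\shb_\Psi\setminus\shb_{\Psi'}}[\exp](\lambda_x x)$ des classes \`a droite modulo $\g U^{ma}_{\Psi'}$, ce qui identifie $\g U^{ma}_\Psi/\g U^{ma}_{\Psi'}$ \`a l'espace affine $\mathrm{Spec}\,\Z[Z_x:x\in\shb_\Psi\setminus\shb_{\Psi'}]$.

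Pour (b), lorsque $\Psi\setminus\Psi'$ est aussi clos, le m\^eme argument de produit ordonn\'e donne une d\'ecomposition unique $u=u_1u_2$ avec $u_1\in\g U^{ma}_{\Psi\setminus\Psi'}(R)$ et $u_2\in\g U^{ma}_{\Psi'}(R)$, les produits partiels \'etant des points des deux sous-groupes gr\^ace \`a la cl\^oture des deux parties. Pour (d), en sp\'ecialisant \`a $\Psi\setminus\Psi'=\{\alpha\}$, on a $\shb_{\Psi\setminus\Psi'}=\shb_\alpha$ de cardinal $1$ si $\alpha\in\Phi$ et $mult(\alpha)$ si $\alpha\in\Delta_{im}$; modulo $\g U^{ma}_{\Psi'}$ tous les poids sup\'erieurs $n\alpha$ avec $n\geq 2$ disparaissent, donc les exponentielles tordues se r\'eduisent \`a leur terme de degr\'e $1$, d'o\`u l'isomorphisme additif avec $\g{Add}^{mult(\alpha)}$.

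Les parties (c), (e), (f) se traitent par un m\^eme principe: tracer les poids qui apparaissent dans un conjugu\'e ou un commutateur calcul\'e dans $\widehat\shu_R(\Psi)$ via l'action adjointe. Pour (c), si $\Psi'$ est un id\'eal de $\Psi$, le conjugu\'e $[\exp](\lambda x)\cdot[\exp](\mu y)\cdot[\exp](\lambda x)^{-1}$ pour $x\in\g g_{\alpha\Z}$ ($\alpha\in\Psi$) et $y\in\g g_{\beta\Z}$ ($\beta\in\Psi'$) ne produit que des termes de poids $\beta+k\alpha$ ($k\geq 0$), tous dans $\Psi'$ par la d\'efinition d'id\'eal et la cl\^oture; donc $\g U^{ma}_{\Psi'}$ est distingu\'e, et (b) donne le produit semi-direct lorsque $\Psi\setminus\Psi'$ est clos. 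Pour (e), l'hypoth\`ese implique d'abord que $\Psi'$ est un id\'eal de $\Psi$, puis que tout poids $p\alpha+q\beta$ ($p,q\geq 1$, $\alpha,\beta\in\Psi$) dans $\Delta$ appartient \`a $\Psi'$, d'o\`u $(\g U^{ma}_\Psi,\g U^{ma}_\Psi)\subset\g U^{ma}_{\Psi'}$; le quotient ab\'elien s'identifie au groupe additif $\prod_{\alpha\in\Psi\setminus\Psi'}\g g_{R\alpha}$ via les param\'etrisations fournies par (a) et (d). Pour (f), la m\^eme analyse appliqu\'ee \`a un commutateur $(u,u')$ avec $u\in\g U^{ma}_\Psi$ et $u'\in\g U^{ma}_{\Psi'}$ montre que les poids engendr\'es sont de la forme $\alpha+\beta+\cdots$ avec au moins un terme dans $\Psi'$ et un dans $\Psi$, donc dans $\Psi''$ par hypoth\`ese. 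La principale difficult\'e technique est de justifier rigoureusement ces calculs dans les compl\'etions, mais la graduation par $Q^+$ (ou plus finement par le degr\'e total) assure que chaque espace de poids est de dimension finie et que les manipulations formelles de produits infinis ont un sens dans $\widehat\shu_R(\Psi)$.
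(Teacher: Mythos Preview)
Pour (a), (b), (d) et (e), ton approche est essentiellement celle de l'article: m\^eme utilisation de la dualit\'e, de l'ordre sur $\shb_\Psi$, de la proposition \ref{3.2}, et (pour (d)) de la remarque \ref{2.9}.4. Rien \`a redire.

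Pour (c) et (f) en revanche, l'article proc\`ede diff\'eremment et ton argument direct pr\'esente une vraie lacune. Le simple tra\c{c}age des poids ne suffit pas: un \'el\'ement de $\widehat\shu_R(\Psi)$ dont toutes les composantes sont de poids dans $\N\Psi'$ n'appartient pas automatiquement \`a $\widehat\shu_R(\Psi')$, car cette sous-alg\`ebre est d\'efinie par la base de PBW $[N]$ avec $N\in\N^{(\shb_{\Psi'})}$ et non par une condition sur les poids. Par exemple, si $\alpha\in\Psi\setminus\Psi'$ et $\beta\in\Psi'$, le produit $e_\alpha e_\beta$ a pour poids $\alpha+\beta$ (qui est dans $\Psi'$ par la propri\'et\'e d'id\'eal), mais son expression dans la base $[N]$ fait intervenir $e_\alpha$, donc $e_\alpha e_\beta\notin\shu(\Psi')$. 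Ton affirmation \og ne produit que des termes de poids $\beta+k\alpha$, tous dans $\Psi'$\fg\ est d'ailleurs impr\'ecise (il faut $p\alpha+q\beta$, $p,q\geq 0$) et, m\^eme corrig\'ee, n'entra\^{\i}ne pas la conclusion. On pourrait essayer de r\'eparer via $uvu^{-1}-1=u(v-1)u^{-1}$ et une propri\'et\'e d'id\'eal bilat\`ere, mais l'id\'eal bilat\`ere de $\widehat\shu_R(\Psi)$ engendr\'e par $\widehat\shu_R(\Psi')_+$ est strictement plus gros que $\widehat\shu_R(\Psi')_+$; il faudrait alors un argument suppl\'ementaire exploitant que $uvu^{-1}$ est de type groupe, ce qui n'est pas imm\'ediat.

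L'article \'evite ces difficult\'es par un argument sch\'ematique: pour (c), on \'etablit d'abord le r\'esultat sur $\C$ (o\`u les exponentielles tordues deviennent des exponentielles classiques, et l'on peut invoquer \cite[6.1.2]{Kr-02} ou un calcul Lie-th\'eorique standard via l'action adjointe sur $\g g_{\Psi'}$). Puis on note que le normalisateur de $\g U^{ma}_{\Psi'}$ dans $\g U^{ma}_\Psi$ est un sous-sch\'ema ferm\'e \cite[II \S 1 3.6b]{DG-70}; comme $\Z[\g U^{ma}_\Psi]$ est un anneau de polyn\^omes et que le normalisateur est tout $\g U^{ma}_\Psi$ sur $\C$, il l'est sur $\Z$. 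La preuve de (f) suit le m\^eme sch\'ema avec le centralisateur de $\g U^{ma}_{\Psi'}/\g U^{ma}_{\Psi''}$ dans $\g U^{ma}_\Psi/\g U^{ma}_{\Psi''}$. Cette r\'eduction \`a $\C$ puis descente par clôture/int\'egrit\'e est \`a la fois plus courte et plus robuste que le tra\c{c}age direct.
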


\begin{proof} a) $\shu(\Psi')$ est une sous-big\`ebre co-inversible de $\shu(\Psi)$, donc $\g U^{ma}_{\Psi'}$ est un sous-groupe de $\g U^{ma}_{\Psi}$. Ce sous-groupe est ferm\'e, d\'efini par l'annulation des ind\'etermin\'ees $Z_x$ pour $x\in\shb_\Psi\setminus\shb_{\Psi'}$. Pour la derni\`ere assertion il suffit dans \ref{3.2} d'ordonner $\shb$ de fa\c{c}on que $\shb_{\Psi'}$ soit en dernier (\`a droite).

\par L'assertion b) r\'esulte aussit\^ot de la proposition \ref{3.2}

\par c) Pla\c{c}ons nous d'abord sur $\C$ (ou un anneau contenant $\Q$). On peut alors remplacer dans  \ref{3.2} les exponentielles tordues par les exponentielles habituelles. Des calculs classiques montrent alors que $\g U^{ma}_{\Psi'}(\C)$ est distingu\'e dans $\g U^{ma}_{\Psi}(\C)$. En fait il est clair que $\g U^{ma}_{\Psi}(\C)$ ou $\g U^{ma}_{\Psi'}(\C)$ est le groupe introduit dans \cite[6.1.1]{Kr-02} presque sous le m\^eme nom et on peut donc faire r\'ef\'erence \`a \cite[6.1.2]{Kr-02}. Le normalisateur de $\g U^{ma}_{\Psi'}$ dans $\g U^{ma}_{\Psi}$ est ferm\'e dans $\g U^{ma}_{\Psi}$ \cite[II {\S{}} 1 3.6b]{DG-70}. Comme $\Z[\g U^{ma}_\Psi]$ est une alg\`ebre de polyn\^omes et que l'on vient de voir que ce normalisateur est \'egal \`a $\g U^{ma}_\Psi$ sur $\C$, il en est de m\^eme sur $\Z$.

\par d) Ordonnons $\shb_\Psi$ de mani\`ere que $\shb_\alpha$ soit en premier. Alors, d'apr\`es \ref{2.6} et \ref{3.2}, un \'el\'ement de $\g U^{ma}_\Psi(R)$ s'\'ecrit de mani\`ere unique sous la forme
 $(\prod_{x\in\shb_\alpha}\,[\exp]\lambda_xx)z$ avec des $\lambda_x\in R$ et $z\in\g U^{ma}_{\Psi'}(R)$. Soit $\Theta=\{\,n\alpha\in\Delta\mid n\geq{}2\,\}\subset\Psi'$ (vide si $\alpha$ est r\'eelle); il reste \`a montrer que $(\prod_x\,[\exp]\lambda_xx).(\prod_x\,[\exp]\lambda'_xx)=\prod_x\,[\exp](\lambda_x+\lambda'_x)x$ (modulo $\g U^{ma}_\Theta(R)$).
 Mais on a vu dans la remarque \ref{2.9}.4 que, pour $x,y\in\shb_\alpha$, $x^{[n]},y^{[n]}\in\shu^\alpha\cap\shu_{n\qa}$ (ces \'el\'ements commutent donc modulo $\shu(\Theta)$), et que $x^{[n]}.x^{[m]}=\left(\begin{array}{c}n+m \\ n\end{array}\right)x^{[n+m]}$ modulo $\shu(\Theta)$. Le r\'esultat est alors clair.

 \par e) Il est clair que les \'el\'ements de $\widehat\shu^p_R(\Psi)$ commutent modulo l'id\'eal $\widehat\shu^p_R(\Psi')$. Vus \ref{3.2} et les bonnes propri\'et\'es de la base des $[N]$ (\ref{2.9}.4), $\g U^{ma}_{\Psi}(R)/\g U^{ma}_{\Psi'}(R)$ est commutatif isomorphe au produit direct des groupes $\g U^{ma}_{(\qa)}(R)/\g U^{ma}_{(\qa)\setminus\{\qa\}}(R)$ pour $\qa\in\Psi\setminus\Psi'$.

 \par f) Comme en c) on a le r\'esultat sur $\C$. Le centralisateur de $\g U^{ma}_{\Psi'}/\g U^{ma}_{\Psi''}$ dans $\g U^{ma}_{\Psi}/\g U^{ma}_{\Psi''}$ est donc \'egal \`a $\g U^{ma}_{\Psi}/\g U^{ma}_{\Psi''}$ sur $\C$. Mais ce centralisateur est ferm\'e dans $\g U^{ma}_{\Psi}/\g U^{ma}_{\Psi''}$ \cite[II {\S{}} 1 3.6c]{DG-70}. Comme $\Z[\g U^{ma}_{\Psi}/\g U^{ma}_{\Psi''}]$ est un anneau de polyn\^omes, ce centralisateur est \'egal \`a $\g U^{ma}_{\Psi}/\g U^{ma}_{\Psi''}$.
\end{proof}

\subsection{Filtrations de $\g U^{ma}_\Psi$}\label{3.4}

\par Soient $\Psi$ un sous-ensemble clos de $\Delta^+$ et $n$ le plus petit des degr\'es totaux ($\deg(\alpha)=\sum n_i$) des $\alpha=\sum\,n_i\alpha_i\in\Psi$. Choisissons $\alpha\in\Psi$ de degr\'e total $n$, alors $\Psi'=\Psi\setminus\{\alpha\}$ est un id\'eal de $\Psi$. Ainsi $\g U^{ma}_{\Psi'}$ est un sous-groupe distingu\'e de $\g U^{ma}_{\Psi}$ et le quotient $\g U^{ma}_{\Psi}/\g U^{ma}_{\Psi'}$ est isomorphe au groupe additif $\g{Add}$ ou \`a  $\g{Add}^{mult(\alpha)}$. Si $\alpha$ est r\'eelle on a un produit semi-direct.

\par Ce proc\'ed\'e fournit une num\'erotation de $\Psi$ par $\N$ (ou un intervalle $[0,N]$ de $\N$): $\Psi=\{\beta_i\}$. Pour  $n\in\N$, on note $\Psi_n=\{\,\beta_i\mid\,i\geq{}n\,\}$. D'apr\`es le lemme \ref{3.3} le groupe $\g U^{ma}_{\Psi_n}$ est distingu\'e dans $\g U^{ma}_{\Psi}$ et $\g U^{ma}_{\Psi_n}/\g U^{ma}_{\Psi_{n+1}}$ est isomorphe \`a $\g{Add}^{mult(\beta_n)}$. La proposition \ref{3.2} montre que $\g U^{ma}_{\Psi}$ est limite projective des $\g U^{ma}_{\Psi}/\g U^{ma}_{\Psi_n}$. Ainsi $\g U^{ma}_{\Psi}$ est un groupe pro-unipotent (ou m\^eme unipotent si $\Psi$ est fini).

\par Pour $d\in\N$, notons $\Psi(d)$ l'ensemble des racines de $\Psi$ de degr\'e total $\geq{}d$. Alors les $\g U^{ma}_{\Psi(d)}$ sont distingu\'es dans $\g U^{ma}_{\Psi}$ et $\g U^{ma}_{\Psi(d)}/\g U^{ma}_{\Psi(d+1)}(R)$ est isomorphe au groupe additif $\bigoplus_{\qa\in\Psi(d)\setminus\Psi(d+1)}\,\g g_{R\qa}$.
Le groupe $\g U^{ma}_{\Psi}(R)$ est limite projective des $\g U^{ma}_{\Psi}/\g U^{ma}_{\Psi(d)}(R)$.

\subsection{Groupes de Borel et paraboliques minimaux}\label{3.5}

\par Le {\it groupe de Borel} $\g B_\shs^{ma+}$ est par d\'efinition le produit semi-direct $\g T_\shs\ltimes\g U^{ma+}$ pour l'action suivante de $\g T_\shs$ sur $\g U^{ma+}$. Pour un anneau $R$, $\g T_\shs(R)$ agit sur $\shu_{\shs R}$ par des automorphismes de big\`ebre: sur $\shu_{\shs\alpha R}$, l'action $Ad(t)$ de $t\in\g T_\shs(R)$ est la multiplication par $\alpha(t)\in R^*$. On en d\'eduit aussit\^ot l'action sur $\g U^{ma+}(R)$: $Int(t).[\exp]\lambda x=[\exp]\alpha(t)\lambda x$ si $x\in \g g_{\alpha R}$. Il est clair que les groupes $\g U^{ma}_{\Psi}$ de \ref{3.3} sont stables par l'action de $\g T_\shs$.

\par Si $\alpha=\alpha_i$ est une racine simple on a $\g U^{ma+}_{}=\g U_{\alpha}\ltimes\g U^{ma}_{\Delta^+\setminus\{\alpha\}}$. On d\'efinit comme ci-dessus $\g U_{-\alpha}$ avec $\Z[\g U_{-\alpha}]$ dans le dual de $\shu_\C(\g g_{-\alpha})\cap\shu_\shs$ et isomorphe \`a $\g{Add}$ par $\g x_{-\alpha}:\g{Add}\rightarrow\g U_{-\alpha}$, $\g x_{-\alpha}(r)=\exp(rf_\alpha)$. O. Mathieu d\'efinit un sch\'ema en groupes affine  {\it parabolique minimal} $\g P_i^{pma}=\g P_\alpha^{pma}$ (contenant $\g B_\shs^{ma+}$ et associ\'e \`a $\Delta^+\cup\{0,-\alpha\}$) et un sous-groupe ferm\'e $\g A_\alpha^Y=\g A_i^Y$ (associ\'e \`a $\{0,\pm\alpha\}$); il montre que $\g P_\alpha^{pma}=\g A_\alpha^Y\ltimes\g U^{ma}_{\Delta^+\setminus\{\alpha\}}$ \cite[lemme 8 p26]{M-88a}.

\par Le sch\'ema en groupes $\g A_\alpha^Y$ contient les sous-groupes ferm\'es $\g T_\shs$, $\g U_{\alpha}$ et $\g U_{-\alpha}$, plus pr\'ecis\'ement $\Z[\g A_\alpha^Y]$ est contenu dans le dual de $\shu_\shs\cap\shu_\C(\g g_\alpha\otimes\g h\otimes\g g_{-\alpha})$ et sa restriction \`a $\shu_\shs\cap\shu_\C(\g g_{\pm\alpha})$ (resp. $\shu_\shs\cap\shu_\C(\g h)$) est $\Z[\g U_{\pm\alpha}]$ (resp. $\Z[\g T_\shs]$). Il est clair (\cf \eg [\lc lemme 7.2]) que $\g A_\alpha^Y$ est le groupe r\'eductif de SGR $(\,(2),Y,\alpha,\alpha^\vee)$ il agit sur $\g g_\Z$ et est isog\`ene au produit de $SL_2$ par un tore d\'eploy\'e.

\par L'\'el\'ement $\widetilde s_i=\g x_{\alpha}(1).\g x_{-\alpha}(1).\g x_{\alpha}(1)$ de $\g A_\alpha^Y(\Z)$ normalise $\g T_\shs$, v\'erifie $\widetilde s_i^2=\alpha^\vee(-1)\in\g T_\shs(\Z)$ et \'echange $\g x_{\alpha}$, $\g x_{-\alpha}$; il agit sur $\g g_\Z$ comme $s_i^*$ (\cf \ref{1.4}). Enfin la double classe (grosse cellule) $\g C_i=\g B_\shs^{ma+}.\widetilde s_i.\g B^{ma+}_\shs$ de $\g P_i^{pma}$ est un ouvert dense et se d\'ecompose de mani\`ere unique sous la forme $\g C_i=\g U_{\alpha_i}.\widetilde s_i.\g B_\shs^{ma+}=\widetilde s_i.\g U_{-\alpha_i}.\g B_\shs^{ma+}$.

\begin{rema*}  Mathieu se place dans le cas d'un SGR $\shs$ libre, colibre, sans cotorsion et de dimension $2\vert I\vert-rang(A)$. Bien s\^ur ces hypoth\`eses sont inutiles pour la plupart de ses r\'esultats (c'est particuli\`erement clair pour la derni\`ere). On se placera cependant dans ce cadre (depuis l'alin\'ea pr\'ec\'edent) jusqu'en \ref{3.18}, 
 o\`u on examinera la g\'en\'eralisation gr\^ace aux r\'esultats de \ref{1.3}.
\end{rema*}

\par On abandonne en g\'en\'eral dans la suite (et jusqu'en \ref{3.17b}) l'indice $\shs$ et on abr\`ege parfois $\g B^{ma+}$ en $\g B$, $\g P_i^{pma}$ en $\g P_i$, $\g A_i^Y$ en $\g A_i$, etc.

\subsection{La construction de Mathieu}\label{3.6} \cf \cite[XVIII {\S{}} 2]{M-88a}, \cite{M-88b}, \cite[I et II]{M-89}

\par On consid\`ere des sch\'emas sur $\Z$, m\^eme si on n'aura essentiellement besoin de raisonner que sur des (sous-anneaux de) corps, \'eventuellement de caract\'eristique positive.

\par Soient $w\in W^v$ et $\widetilde w=s_{i_1}.\cdots.s_{i_n}$ une d\'ecomposition r\'eduite de $w$. On consid\`ere le sch\'ema $\g E(\widetilde w)=\g P_{i_1}\times^{\g B}\g P_{i_2}\times^{\g B}\cdots\times^{\g B}\g P_{i_n}$ et le sch\'ema de Demazure $\g D(\widetilde w)=\g E(\widetilde w)/\g B$. 

\par Le sch\'ema $\g B(w)$ est l'affinis\'e de $\g E(\widetilde w)$, c'est-\`a-dire $Spec(\Z[\g E(\widetilde w)])$, il ne d\'epend que de $w$  \cite[p 40, l -15 \`a -1]{M-89}. En particulier $\g B(s_i)=\g P_i$.

\par Si $w'\leq{}w$ (pour l'ordre de Bruhat-Chevalley), il existe une d\'ecomposition r\'eduite $\widetilde w'$ de $w'$ extraite de $\widetilde w$. Si dans l'\'ecriture de $\g E(\widetilde w)$, on remplace par $\g B$ les $\g P_{i_j}$ correspondant \`a des $s_{i_j}$ absents de $\widetilde w'$, on obtient un sous-sch\'ema ferm\'e de $\g E(\widetilde w)$ clairement isomorphe \`a $\g E(\widetilde w')$. L'immersion ferm\'ee $\g E(\widetilde w')\rightarrow \g E(\widetilde w)$ induit un morphisme $\g B(w')\rightarrow \g B(w)$ entre les affinis\'es; c'est aussi une immersion ferm\'ee, ind\'ependante des choix de $\widetilde w$ et $\widetilde w'$ \cite[bas de page 255]{M-88a}.

\par On a donc un syst\`eme inductif de sch\'emas $\g B(w)$ et on note $\g G^{pma}$ le ind-sch\'ema limite inductive de ce syst\`eme; c'est le {\it groupe de Kac-Moody positivement maximal} associ\'e \`a $\shs$. Chaque morphisme $\g B(w)\rightarrow\g G^{pma}$ est une immersion ferm\'ee. On verra essentiellement $\g G^{pma}$ comme un foncteur sur la cat\'egorie des anneaux:  $\g G^{pma}(R)= \underrightarrow{lim} \,  \g B(w)(R)$.

\par En fait $\g G^{pma}$ est un ind-sch\'ema en groupes : pour des d\'ecompositions r\'eduites $\widetilde w=s_{i_1}.\cdots.s_{i_n}$ et $\widetilde w'=s_{j_1}.\cdots.s_{j_m}$, il existe un morphisme naturel de $\g P_{i_1}\times\cdots\times\g P_{i_n}\times\g P_{j_1}\times\cdots\times\g P_{j_m}$ dans $\g B(\psi(w,w'))$ pour un certain $\psi(w,w')\leq{}w.w'$ \cite[p 41 et 45]{M-89}. Ce morphisme se factorise donc par $\g B(w)\times\g B(w')$. Ceci permet de d\'efinir la multiplication dans $\g G^{pma}$. La d\'efinition de l'inversion est claire. Pour cette structure de groupe le compos\'e $\g P_{i_1}\times\cdots\times\g P_{i_n}\rightarrow\g E(\widetilde w)\rightarrow\g B(w)\hookrightarrow\g G^{pma}$ est simplement la multiplication.

\subsection{Modules \`a plus haut poids et repr\'esentation adjointe}\label{3.7}

\par 1) Pour $\lambda\in X^+$, on a d\'efini en \ref{2.14} un $\shu_\shs-$module \`a plus haut poids $L_\Z(\lambda)$ qui est int\'egrable, donc un $\g T_\shs-$module. Pour un anneau $k$ le module $L_k(\lambda)$ est stable par  $\widehat\shu^{p}_{\shs k}$ donc par $\g U^{ma+}(k)$. On obtient ainsi une repr\'esentation de $\g B^{ma+}$ dans $L_\Z(\lambda)$. L'int\'egrabilit\'e de $L_\Z(\lambda)$ \cite{T-81b} montre en fait que $L_\Z(\lambda)$ est un $\g A_i-$module donc aussi un $\g P_i-$module ($\forall i\in I$). Ce module est "localement fini", plus pr\'ecis\'ement pour tout
sous$-\Z-$module $M$ de rang fini de $L_\Z(\lambda)$, il existe un sous$-\Z-$module facteur direct  de rang fini $M'$ de $L_\Z(\lambda)$ contenant $M$, tel que $M'\otimes k$ soit un $\g P_i(k)-$module pour tout anneau $k$. On en d\'eduit aussit\^ot un morphisme de $\g B(w)$ dans $\g{GL}(L_\Z(\lambda))$, $\forall w\in W$. On obtient ainsi une repr\'esentation de $\g G^{pma}$ dans $\g{GL}(L_\Z(\lambda))$.

\par 2) Soit $M=L_\Z(\lambda)$ le module \`a plus haut poids pr\'ec\'edent: $M=\oplus_{\nu\in Q^+}\, M_{\lambda-\nu}$. Pour un anneau $k$, on d\'efinit le compl\'et\'e n\'egatif $\widehat M^n_k$ de $M\otimes k$ : $\widehat M^n_k=\prod_{\nu\in Q^+}\, M_{\lambda-\nu}\otimes k$. Il est clair que $\widehat M^n_k$ est un $\widehat\shu^n_k-$module gradu\'e; en particulier c'est un $\g U^{ma-}(k)-$module.

\par 3) Pour un anneau $k$, l'action adjointe $ad$ de l'alg\`ebre  $\widehat\shu^p_k$ sur elle-m\^eme (\ref{2.1}.5) induit une action, dite adjointe et not\'ee $Ad$, du groupe $\g U^{ma+}(k)\subset \widehat\shu^p_k$ sur $\widehat\shu^p_k$. L'action de $\g T_\shs(k)$ est d\'efinie par les poids, d'o\`u la repr\'esentation adjointe $Ad$ de $\g B^{ma+}(k)$ sur $\widehat\shu^p_k$. Cette action stabilise \'evidemment  $\widehat{\g g}^p_k$ et $\widehat\shu^+_k$.

\par Pour $\alpha\in\Phi$, $r\in k$ et $u\in\widehat\shu^p_k$, $Ad(\g x_\alpha(r))(u)= \sum_{n\geq{}0}\,(ad\,e_\alpha^{(n)}).r^nu= \sum_{p,q\geq{}0}\,e_\alpha^{(p)}.r^nu.e_\alpha^{(p)}$.

\subsection{Le cas classique}\label{3.8}

\par Supposons la matrice $A$ de Cartan \ie $\Delta=\Phi$ fini et $W^v$ fini. Soit $\g G$ le groupe r\'eductif d\'eploy\'e sur $\Z$ de SGR $\shs$; d'apr\`es l'hypoth\`ese de Mathieu il est semi-simple simplement connexe. Le groupe $\g B^{ma+}_\shs=\g T_\shs\ltimes \g U^{ma+}$ est le sous-groupe de Borel $\g B$ de $\g G$ (\cf \ref{3.2}.1) et $\g P_{\alpha_i}^{pma}=\g A_{\alpha_i}^Y\ltimes\g U^{ma}_{\Delta^+\setminus\{\alpha_i\}}$ est un sous-groupe parabolique minimal de $\g G$.

\par Pour une d\'ecomposition r\'eduite $\widetilde w=s_{i_1}.\cdots.s_{i_n}$ dans $W^v$, le produit d\'efinit un morphisme $\pi\,:\,\g E(\widetilde w)=\g P_{\alpha_{i_1}}\times^{\g B}\cdots\times^{\g B}\g P_{\alpha_{i_n}}\rightarrow \g G$. Si on restreint $\pi$ aux grosses cellules, on obtient un isomorphisme de $\Omega(\widetilde w)=\g C_{{i_1}}\times^{\g B}\cdots\times^{\g B}\g C _{{i_n}}$ sur son image $\g U_{\alpha_{i_1}}.\widetilde s_{{i_1}}.\cdots.\g U_{\alpha_{i_n}}.\widetilde s_{{i_n}}.\g B=\widetilde s_{{i_1}}.\cdots.\widetilde s_{{i_n}}.\g U_{-s_{{i_n}}.\cdots.s_{{i_2}}(\alpha_{i_1})}.\cdots.\g U_{-s_{{i_n}}(\alpha_{i_{n-1}})}.\g U_{-\alpha_{i_n}}.\g B$.

\par Si $w=w_0$ est l'\'el\'ement de plus grande longueur de $W^v$, cette image est un ouvert dense: la grosse cellule $\g C(\g G)$ de $\g G$. On a donc la suite de morphismes:

\qquad\qquad\qquad\qquad\qquad\qquad \xymatrix{ \Omega(\widetilde w_0)\ar@{^{(}->}[r]^i&\g E(\widetilde w_0)\ar[r]^\pi&\g G\\}

\parni avec $i$ et $\pi\circ i$ des immersions ouvertes. De plus l'image $\g C(\g G)$ de $\pi\circ i$ rencontre toutes les fibres $\g G_{\F_p}$ de $\g G$ pour $p$ premier.

\begin{prop}\label{3.9} Dans ces conditions $\pi$ identifie  $\Z[\g G]$ \`a $\Z[ \g E(\widetilde w_0)]$. Ainsi $\g G$ s'identifie \`a $\g B(w_0)$ lui m\^eme \'egal \`a $\g G^{pma}$.
\end{prop}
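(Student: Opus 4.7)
Mon plan est de prouver que $\pi^* : \Z[\g G] \to \Z[\g E(\widetilde w_0)]$ est un isomorphisme d'anneaux, ce dont d\'ecoulent les autres assertions. Comme $\g G$ est affine, la propri\'et\'e universelle de l'affinisation factorise $\pi$ par $\g B(w_0) = {\mathrm Spec}(\Z[\g E(\widetilde w_0)])$, et l'\'enonc\'e revient \`a dire que le morphisme induit $\g B(w_0) \to \g G$ est un isomorphisme.

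L'injectivit\'e de $\pi^*$ s'obtient en pr\'ecomposant avec l'immersion ouverte $i$: le compos\'e $(\pi \circ i)^* : \Z[\g G] \to \Z[\Omega(\widetilde w_0)] = \Z[\g C(\g G)]$ n'est autre que la restriction des fonctions r\'eguli\`eres \`a la grosse cellule. Comme $\g G$ est un sch\'ema en groupes r\'eductif d\'eploy\'e sur $\Z$, donc int\`egre, et que $\g C(\g G)$ est un ouvert non vide (dense puisque $\g G$ est irr\'eductible, et rencontrant toutes les fibres $\g G_{\F_p}$), cette restriction est injective, ce qui entra\^{\i}ne l'injectivit\'e de $\pi^*$.

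Pour la surjectivit\'e, qui est le point d\'elicat, j'identifierais $\pi$ \`a un tir\'e en arri\`ere de la r\'esolution de Bott-Samelson-Demazure $\bar\pi : \g D(\widetilde w_0) \to \g G/\g B$. En effet, $\pi$ est $\g B$-\'equivariant \`a droite et se descend en $\bar\pi$; le morphisme \'equivariant induit $\g E(\widetilde w_0) \to \g G \times_{\g G/\g B} \g D(\widetilde w_0)$ reliant deux $\g B$-torseurs sur $\g D(\widetilde w_0)$ est n\'ecessairement un isomorphisme. Or $\bar\pi$ est une r\'esolution propre et birationnelle de $\g G/\g B$, qui est lisse; le th\'eor\`eme classique de Kempf (valable sur $\Z$ via les scindages de Frobenius de Mehta-Ramanathan et le changement de base cohomologique) donne $\bar\pi_* \SHO_{\g D(\widetilde w_0)} = \SHO_{\g G/\g B}$ et l'annulation de $R^i \bar\pi_* \SHO$ pour $i>0$. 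Le changement de base plat le long du torseur $q : \g G \to \g G/\g B$ entra\^{\i}ne alors $\pi_* \SHO_{\g E(\widetilde w_0)} = \SHO_{\g G}$, et en passant aux sections globales on obtient $\Z[\g E(\widetilde w_0)] = \Z[\g G]$, donc $\g B(w_0) \cong \g G$.

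Pour l'\'egalit\'e avec $\g G^{pma}$, il suffit de remarquer que $W^v$ \'etant fini et $w_0$ son plus long \'el\'ement, tout $w \in W^v$ v\'erifie $w \leq w_0$ pour l'ordre de Bruhat-Chevalley; chaque $\g B(w)$ est donc un sous-sch\'ema ferm\'e de $\g B(w_0)$ et l'ind-sch\'ema $\g G^{pma} = \varinjlim_w \g B(w)$ co\"{\i}ncide avec $\g B(w_0)$. L'obstacle principal r\'eside dans l'assertion $\bar\pi_* \SHO_{\g D(\widetilde w_0)} = \SHO_{\g G/\g B}$ sur $\Z$, qui n'est pas formelle et s'appuie sur la th\'eorie classique de la r\'esolution de Bott-Samelson en caract\'eristique arbitraire.
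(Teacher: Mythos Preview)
Your argument is correct but takes a genuinely different route from the paper's.

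You deduce surjectivity of $\pi^*$ from the cohomological theory of the Bott--Samelson resolution: identifying $\g E(\widetilde w_0)$ with the pullback of the $\g B$-torsor $\g G \to \g G/\g B$ along $\bar\pi : \g D(\widetilde w_0) \to \g G/\g B$, you invoke $\bar\pi_*\SHO_{\g D(\widetilde w_0)} = \SHO_{\g G/\g B}$ over $\Z$ (Kempf, Mehta--Ramanathan, cohomology and base change) and conclude by flat base change. This is conceptually clean and in the spirit of Mathieu's own machinery for general Schubert varieties, but it rests on a nontrivial cohomological input that you rightly flag as the main obstacle.

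The paper instead gives a self-contained elementary argument. It first proves the result over $\C$: since $\g G$ is affine, $\pi$ factors through $\tilde\pi : \g B(w_0)_\C \to \g G_\C$; a big-cell density trick shows $\tilde\pi$ is injective, hence $\g G^{pma}_\C$ is a subgroup of $\g G_\C$ containing a dense open, so equals $\g G_\C$. Then it descends to $\Z$ by a denominator-clearing argument: if some $\varphi \in \Z[\Omega(\widetilde w_0)] \cap \C[\g G]$ lay in $\Q[\g G]$ but not $\Z[\g G]$, one could choose a prime $p$ with $p\varphi \in \Z[\g G]$, and then $\overline{p\varphi}$ would be a nonzero function on $\g G_{\F_p}$ vanishing on the dense open $\Omega(\widetilde w_0)_{\F_p}$, a contradiction. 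This avoids all cohomological machinery at the price of being specific to the case where the target $\g G/\g B$ is smooth and the big cell is fibrewise dense.
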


\par{\bf N.B.} Si $w\not=w_0$, on trouve de la m\^eme mani\`ere que $\g B(w)$ est la sous-vari\'et\'e de Schubert de $\g G$ associ\'ee \`a $w$ (au moins sur $\C$).

\begin{proof} Regardons d'abord sur $\C$. Comme $\g G$ est un groupe affine, $\pi$ se factorise par $\g B(\widetilde w_0)$. On a donc la suite de morphismes:

\qquad \xymatrix{ \Omega(\widetilde w_0)_\C\ar@{^{(}->}[r]^i&\g E(\widetilde w_0)_\C\ar@/_1pc/[rr]_\pi\ar[r]^(.4){\Aff}&\g B(\widetilde w_0)_\C=\g G_\C^{pma}\ar[r]^(.7){\tilde\pi}&\g G_\C\\}

\par\noindent Ce compos\'e est une immersion ouverte d'image dense. Pour les alg\`ebres de fonctions, comme $i$ est d'image dense $i^*$ est injectif et $\Aff^*$ est l'identit\'e par d\'efinition. Donc $(\Aff\circ i)^*$ est injectif et $\Aff\circ i$ est dominant. Ainsi l'image de $\Aff\circ i$ contient un ouvert dense $\Omega'$ de $\g B(\widetilde w_0)(\C)$. Soit $\gamma\in{\mathrm Ker}\widetilde \pi$, $\gamma\Omega'\cap\Omega'\not=\emptyset$, il existe donc $g',g''\in\Omega'$ tels que $\gamma g'=g''$, ainsi $\widetilde \pi(g')=\widetilde \pi(g'')$. Mais $\widetilde\pi\circ{\Aff}\circ i$ est injectif, on en d\'eduit que $g'=g''$, $\gamma=1$ \ie $\widetilde \pi$ est injectif. Ainsi $\g G^{pma}$ est un sous-groupe de $\g G$ contenant un ouvert dense: on a $\g G^{pma}=\g G$.

\par On a le diagramme:\quad
\xymatrix{
\Z[ \Omega(\widetilde w_0)]\ar@{^{(}->}[d] &\; \Z[ \g E(\widetilde w_0)] \ar@{^{(}->}[d] \ar@{_{(}->}[l]_{i^*} & \;\Z[\g G]\ar@{^{(}->}[d] \ar@{_{(}->}[l]_(.4){\pi^*} \cr
\C[ \Omega(\widetilde w_0)] &\; \C[ \g E(\widetilde w_0)] \ar@{_{(}->}[l]_{i^*}& \; \C[\g G]\ar@{_{(}->}[l]_(.4){=} \cr
}


\par Les fl\`eches horizontales sont injectives  car $i^*$ et $(\pi\circ i)^*$ le sont. Le sch\'ema $ \Omega(\widetilde w_0)$ est produit de groupes additifs et multiplicatifs, la fl\`eche verticale de gauche est donc injective. Ainsi toutes les fl\`eches sont injectives.

\par Pour montrer que $\Z[ \g E(\widetilde w_0)] =\pi^* \Z[\g G]$ il suffit donc de montrer que $\Z[ \Omega(\widetilde w_0)] \cap \C[\g G]=\Z[\g G]$. Ceci est clairement vrai si on remplace $\Z$ par $\Q$. Donc si ce n'est pas vrai pour $\Z$, il existe $n\geq{}2$ et $\varphi\in\Z[ \Omega(\widetilde w_0)] \cap \Q[\g G]$ tels que $\varphi\notin\Z[\g G]$ mais $n\varphi\in\Z[\g G]$. Soit $p$ un diviseur premier de $n$; quitte \`a modifier $\varphi$ on peut supposer $n=p$. Mais alors $\overline{p\varphi}$ est une fonction non identiquement nulle sur $\g G_{\F_p}$ alors qu'elle est nulle sur $ \Omega(\widetilde w_0)_{\F_p}$. C'est absurde car $ \Omega(\widetilde w_0)_{\F_p}$ est un ouvert dense de $\g G_{\F_p}$.
\end{proof}

\subsection{Application au cas non classique}\label{3.10}

\par Soit $J\subset I$. On note $\Delta(J)=\Delta\cap(\oplus_{i\in J}\Z\alpha_i)$, $\Delta^\pm(J)=\Delta(J)\cap\Delta^\pm$, $\Delta^\pm_J=\Delta^\pm\setminus\Delta^\pm(J)$, $\g U^{ma+}(J)=\g U^{ma}_{\Delta^+(J)}$ et $\g U^{ma+}_J=\g U^{ma}_{\Delta^+_J}$. D'apr\`es \ref{3.3} on a $\g U^+=\g U^{ma+}(J)\ltimes\g U^{ma+}_J$.

\par Supposons $J$ de type fini et notons $\g G(J)$ le groupe r\'eductif d\'eploy\'e sur $\Z$ de SGR $\shs(J)=(A\vert_J,Y,(\alpha_i)_{i\in J},(\alpha^\vee_i)_{i\in J})$. Il est clair que son groupe unipotent maximal positif est $\g U^{ma+}(J)=:\g U^{+}(J)$ et son sous-groupe de Borel positif $ \g B^{+}(J)=\g T_Y\ltimes \g U^{+}(J)$. Enfin, pour $j\in J$, son parabolique minimal associ\'e \`a $j$ est $\g P_j(J)=\g A_j\ltimes\g U^{ma}_{\Delta^+(J)\setminus\{j\}}$, il v\'erifie $\g P_j=\g P_j(J)\ltimes\g U^{ma+}_J$.

\par Le groupe de Weyl (fini) $W^v(J)=\langle s_i\mid i\in J\rangle\subset W^v$ admet un  \'el\'ement de plus grande longueur $w_0^J$.

\begin{coro*} $\g B(w_0^J)$ est isomorphe au sous-groupe parabolique $\g P(J)=\g G(J)\ltimes\g U^{ma+}_J$ de $\g G^{pma}$.
\end{coro*}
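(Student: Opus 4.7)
Mon plan est d'extraire le facteur pro-unipotent $\g U^{ma+}_J$ de la construction de $\g B(w_0^J)$, puis d'appliquer la proposition \ref{3.9} au groupe r\'eductif $\g G(J)$ (bien d\'efini puisque $J$ est de type fini et $A\vert_J$ est de Cartan classique).

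Je commence par v\'erifier que $\Delta^+_J$ est clos et est un id\'eal de $\Delta^+$ : toute racine de $\Delta^+_J$ poss\`ede dans son support un indice hors de $J$, propri\'et\'e pr\'eserv\'ee par somme avec toute racine de $\Delta^+$. Par le lemme \ref{3.3} c), $\g U^{ma+}_J$ est donc distingu\'e dans $\g U^{ma+}$. Ensuite, pour $i \in J$, j'\'etablis la d\'ecomposition cl\'e $\g P_i = \g P_i(J) \ltimes \g U^{ma+}_J$ : partant de $\g P_i = \g A_i \ltimes \g U^{ma}_{\Delta^+ \setminus \{\alpha_i\}}$, l'ensemble $\Delta^+ \setminus \{\alpha_i\}$ se d\'ecompose en union disjointe des parties closes $\Delta^+(J) \setminus \{\alpha_i\}$ et $\Delta^+_J$, et le lemme \ref{3.3} b) donne $\g U^{ma}_{\Delta^+ \setminus \{\alpha_i\}} = \g U^{ma}_{\Delta^+(J) \setminus \{\alpha_i\}} \ltimes \g U^{ma+}_J$. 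Il reste \`a voir que $\g A_i$ normalise $\g U^{ma+}_J$ : le tore le normalise via les caract\`eres, $\widetilde s_i$ le normalise car $s_i$ pr\'eserve $\Delta^+_J$, et $\g U_{\alpha_i}$ le normalise par \ref{3.3} f) appliqu\'ee \`a $\Psi = \{\alpha_i\} \cup \Delta^+_J$ et $\Psi' = \Psi'' = \Delta^+_J$ ; donc $\g U_{-\alpha_i} = \widetilde s_i \g U_{\alpha_i} \widetilde s_i^{-1}$ aussi. Le m\^eme raisonnement donne $\g B = \g B^+(J) \ltimes \g U^{ma+}_J$.

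Soit alors $\widetilde w = s_{i_1} \cdots s_{i_n}$ une d\'ecomposition r\'eduite de $w_0^J$ (avec tous les $i_k \in J$), et posons $\g E_J(\widetilde w) = \g P_{i_1}(J) \times^{\g B^+(J)} \cdots \times^{\g B^+(J)} \g P_{i_n}(J)$. Gr\^ace aux d\'ecompositions pr\'ec\'edentes et \`a la normalit\'e de $\g U^{ma+}_J$ dans chaque $\g P_{i_k}$ et dans $\g B$, on construit un isomorphisme de sch\'emas
$$\varphi \;:\; \g E_J(\widetilde w) \times \g U^{ma+}_J \to \g E(\widetilde w),\quad \bigl((q_1, \ldots, q_n), u\bigr) \longmapsto [q_1, \ldots, q_n u],$$
l'inverse se construisant en d\'ecomposant chaque $p_k \in \g P_{i_k}$ sous la forme $q_k u_k$ puis en poussant successivement chaque $u_k$ vers la droite.

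Enfin, la proposition \ref{3.9} appliqu\'ee \`a $\g G(J)$ donne $\g G(J) = Spec(\Z[\g E_J(\widetilde w)])$. Par affinisation de $\varphi$ (l'affinisation commute avec le produit par un sch\'ema affine tel que $\g U^{ma+}_J$), on obtient l'isomorphisme de sch\'emas $\g B(w_0^J) \cong \g G(J) \times \g U^{ma+}_J$. Cet isomorphisme identifie $\g B(w_0^J)$ avec le produit semi-direct $\g P(J) = \g G(J) \ltimes \g U^{ma+}_J$ dans $\g G^{pma}$, car $\g G(J)$, engendr\'e par les $\g A_i$ pour $i \in J$, normalise $\g U^{ma+}_J$ par ce qui pr\'ec\`ede, et $\g G(J) \cap \g U^{ma+}_J = \{1\}$. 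L'\'etape la plus d\'elicate sera la v\'erification que $\varphi$ est effectivement un isomorphisme de sch\'emas (et non seulement une bijection sur les points \`a valeurs dans un corps), vu la complexit\'e des quotients par $\g B$ et $\g B^+(J)$.
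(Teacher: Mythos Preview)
Your proof is correct and follows essentially the same approach as the paper's: decompose $\g P_{i_k}=\g P_{i_k}(J)\ltimes\g U^{ma+}_J$ and $\g B^{ma+}=\g B^+(J)\ltimes\g U^{ma+}_J$, deduce $\g E(\widetilde w)\simeq\g E_J(\widetilde w)\times\g U^{ma+}_J$, then take the affinis\'e (product of affinis\'es) and invoke Proposition~\ref{3.9} for $\g G(J)$. You have spelled out several details (notably the normalisation of $\g U^{ma+}_J$ by $\g A_i$ and the explicit construction of $\varphi$) that the paper leaves as ``on en d\'eduit facilement''; your closing caveat about checking $\varphi$ at the level of schemes is exactly the point the paper glosses over.
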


\begin{rema*} Si $J$ n'est pas forc\'ement de type fini, la d\'emonstration ci-dessous prouve que le ind-sch\'ema en groupes limite inductive des $\g B(w)$ pour $w\in W^v(J)$ est le sous-groupe parabolique $\g P(J)=\g G^{pma}(J)\ltimes\g U^{ma+}_J$ produit semi-direct par $\g U^{ma+}_J$ du groupe de Kac-Moody $\g G^{pma}(J)=\g G^{pma}_{\shs(J)}$.
\end{rema*}

\begin{proof} On a $\g E(\widetilde w_0)=\g P_{j_1}\times^{\g B^{ma+}}\g P_{j_2}\times^{\g B^{ma+}}\cdots\times^{\g B^{ma+}}\g P_{j_n}$, $\g P_{j_k}=\g P_{j_k}(J)\ltimes\g U^{ma+}_J$ et $\g B^{ma+}= \g B^{+}(J)\ltimes \g U^{ma+}_J$. On en d\'eduit facilement que $\g E(\widetilde w_0)=\g P_{j_1}(J)\times^{\g B^{+}(J)}\g P_{j_2}(J)\times^{\g B^{+}(J)}\cdots\times^{\g B^{+}(J)}\g P_{j_n}(J)\ltimes \g U^{ma+}_J$, et, comme l'affinis\'e d'un produit est le produit des affinis\'es, $\g B(w_0^J)=\g G(J)\ltimes\g U^{ma+}_J$.
\end{proof}

\subsection{Sous-groupes radiciels n\'egatifs}\label{3.11}

\par On a d\'efini $\g U_\alpha$ et $\g x_{\alpha}:\g{Add}\rightarrow\g U_\alpha$ pour $\alpha\in\Phi^+$ (en \ref{3.2}.2) ou pour $-\alpha=\alpha_i$ simple (en \ref{3.5}).

\par Pour $i\in I$, $\widetilde s_i=\g x_{\alpha_i}(1).\g x_{-\alpha_i}(1).\g x_{\alpha_i}(1)\in\g A_{\alpha_i}^Y(\Z)\subset\g P_{\alpha_i}^{pma}(\Z)\subset\g G^{pma}(\Z)$ est  un \'el\'ement du normalisateur dans $\g G^{pma}$ de $\g T_Y$; il induit sur $\g T_Y$ la r\'eflexion $s_i$. De plus $\widetilde s_i$ agit sur $\g g_\Z$ comme $s_i^*$ (\cf \ref{1.4} et \ref{3.7}.3).

\par Si $\alpha\in\Phi$ et $e_\alpha$ est une base de $\g g_{\alpha\Z}$, il existe $w\in W^v$ tel que $w^{-1}\alpha=\alpha_j$ est une racine simple. On consid\`ere une d\'ecomposition  $w=s_{i_1}.\cdots.s_{i_n}$, $\overline w=\widetilde s_{i_1}.\cdots.\widetilde s_{i_n}$ et on a $e_\alpha=\overline w(\epsilon e_j)$ avec $\epsilon=\pm1$. On pose alors $\g U_\alpha=\overline w.\g U_{\alpha_j}.\overline w^{-1}$ et $\g x_\alpha(r)=\overline w.\g x_{\alpha_j}(\epsilon r).\overline w^{-1}$, pour $r$ dans un anneau $R$.

\begin{lemm*}Ces d\'efinitions de $\g U_\alpha$ et $\g x_\alpha$ ne d\'ependent que de $\alpha$ et $e_\alpha$ (et non des autres choix effectu\'es). Pour $\alpha\in\Phi^+$ (ou $-\alpha$ simple) elles co\"{\i}ncident avec celles de \ref{3.1},  \ref{3.2} ou  \ref{3.5}.
\end{lemm*}

\begin{proof} Supposons que $e_\alpha=\widetilde s_{i_1}.\cdots.\widetilde s_{i_n}(\epsilon e_{\alpha_j})=\widetilde s_{i'_1}.\cdots.\widetilde s_{i'_m}(\epsilon' e_{\alpha_{j'}})$, donc $e_{\alpha_{j'}}=\overline w(\epsilon\epsilon'e_{\alpha_{j}})$ pour $\overline w=\widetilde s_{i'_m}^{-1}.\cdots.\widetilde s_{i'_1}^{-1}.\widetilde s_{i_1}.\cdots.\widetilde s_{i_n}$. On doit montrer que $\g x_{\alpha_j}(r)$ est conjugu\'e de $\g x_{\alpha_{j'}}(\epsilon\epsilon'r)$ par $\overline w$. Mais on sait que les $s^*_i=\widetilde s_i$ v\'erifient les relations de tresse (\cite[(d) p551]{T-87b} ou calcul classique dans le groupe r\'eductif $\g G(\{i,j\})$ quand $s_is_j$ est d'ordre fini), que $\widetilde s^2_i=\alpha_i^\vee(-1)\in\g T_Y(\Z)$ et on conna\^{\i}t les relations de commutation entre les $\widetilde s_i$ et les \'el\'ements du tore. Ainsi, pour toute expression r\'eduite $w=s_{j_1}.\cdots.s_{j_p}$ de $ w= s_{i'_m}.\cdots. s_{i'_1}. s_{i_1}.\cdots. s_{i_n}$, on peut r\'eduire l'expression ci-dessus de $\overline w$ sous la forme $\overline w=\widetilde s_{j_1}.\cdots.\widetilde s_{j_p}.t$ avec $t\in\g T_Y(\Z)$.

\par Il suffit donc de montrer que si $w(\alpha_j)=\alpha_{j'}$, il existe une d\'ecomposition r\'eduite $w=s_{j_1}.\cdots.s_{j_p}$ de $w$  pour laquelle un $\overline w$ comme ci-dessus v\'erifie  la relation $e_{\alpha_{j'}}=\overline w\epsilon''e_{\alpha_j}$ et $\g x_{\alpha_{j'}}(r)=\overline w.\g x_{\alpha_j}(\epsilon''r).\overline w^{-1}$. D'apr\`es \cite[3.3.1]{T-87b} ou, pour un \'enonc\'e plus pr\'ecis \cite[2.17 p85]{He-91}, on est ramen\'e \`a l'un des deux cas suivants: $j_1,\cdots,j_p\in\{j,j'\}=J$ de type fini ou $j=j'$ et $j_1,\cdots,j_p\in\{j,j''\}=J$ de type fini. Tout se passe alors dans le groupe r\'eductif $\g G(J)$ et le r\'esultat est connu.

\par Si $-\alpha$ est simple, la co\"{\i}ncidence des deux d\'efinitions de $\g x_\alpha$ et $\g U_\alpha$ est claire. Si $\alpha\in\Phi^+$, il existe une suite $i_1,\cdots,i_n\in I$ telle que $s_{i_n}.\cdots.s_{i_1}\alpha$ est une racine simple et $\forall j$ $s_{i_j}.\cdots.s_{i_1}\alpha\in\Phi^+$. Pour v\'erifier la co\"{\i}ncidence des deux d\'efinitions de $\g x_\alpha$ et $\g U_\alpha$, on doit v\'erifier que, si $i\in I$, $\alpha\in\Phi^+$, $\beta=s_i\alpha\in\Phi^+$ et $e_\beta=\widetilde s_i\epsilon e_\alpha$, alors $\exp(re_\beta)=\widetilde s_i.\exp(\epsilon re_\alpha).\widetilde s_i^{-1}$; c'est la d\'efinition pr\'ecise du produit semi-direct $\g P_i^{pma}=\g A_i^Y\ltimes\g U^{ma}_{\Delta^+\setminus\{\alpha_i\}}$ de \ref{3.5}.
\end{proof}

\subsection{Comparaison avec le groupe \`a la Tits}\label{3.12}

\par On a d\'efini en \ref{1.6} le groupe de Kac-Moody minimal $\g G=\g G_\shs$. On choisit toujours $\shs$ comme \`a la remarque \ref{3.5} et on prend les $e_\alpha$, $f_\alpha$ de \ref{1.4} dans la base $\shb$ de \ref{2.6}.

\par Pour un anneau $k$, les g\'en\'erateurs de $\g G(k)$ sont les $\g x_\alpha(r)$, $\alpha\in\Phi$, $r\in k$ et $t\in\g T_Y(k)$, on les envoie sur les \'el\'ements de m\^eme nom dans $\g G^{pma}(k)$.

\begin{prop*} On obtient ainsi un homomorphisme de foncteurs en groupes $i:\g G\rightarrow\g G^{pma}$.
\end{prop*}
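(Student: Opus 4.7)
The plan is to define the morphism on generators and verify that the defining presentation of $\g G_\shs(k)$ recalled in \ref{1.6} is already satisfied by the corresponding elements of $\g G^{pma}(k)$; the universal property of the free product quotient will then produce $i$. The element $t\in\g T_Y(k)$ goes to its canonical image under $\g T_Y\hookrightarrow\g B^{ma+}\subset\g G^{pma}$ (\ref{3.5}); the element $\g x_\alpha(r)$ with $\alpha\in\Phi^+$ goes to the image of $r$ under the isomorphism $\g x_\alpha\colon\g{Add}\xrightarrow{\sim}\g U_\alpha\subset\g U^{ma+}$ of \ref{3.2}.2; the case $-\alpha$ simple is handled in \ref{3.5}; and the remaining real roots are handled by the conjugation procedure of \ref{3.11}, whose independence of choices is the content of the lemma there.

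The first step is to handle the Steinberg part. Additivity $\g x_\alpha(r+r')=\g x_\alpha(r)\g x_\alpha(r')$ is built into the very definition of $\g x_\alpha$ for $\alpha\in\Phi^+$, and it propagates to every real root by conjugation. For the commutation relation (KMT3) attached to a prenilpotent pair $\{\alpha,\beta\}\subset\Phi$, I would pick $w\in W^v$ with $w\{\alpha,\beta\}\subset\Phi^+$ and a Weyl lift $\overline w=\widetilde s_{i_1}\cdots\widetilde s_{i_n}$; conjugation by $\overline w$ reduces the identity to the analogous one for $\{w\alpha,w\beta\}$ inside the pro-unipotent group $\g U^{ma}_{[w\alpha,w\beta]}$. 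There both sides are explicit elements of the completed enveloping algebra $\widehat\shu_k^+$, and the identity holds because the constants $C^{\alpha,\beta}_{p,q}$ appearing in (KMT3) are, by their very definition (\ref{1.4}), those which govern the commutator of $\exp(ad\,re_\alpha)$ with $\exp(ad\,r'e_\beta)$ in $\shu_\C(\g g_\shs)$, and this is exactly the multiplicative structure of $\g U^{ma}_{[\alpha,\beta]}$ provided by \ref{3.2}.

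The second step is to verify (KMT4)--(KMT7). Relation (KMT4) for a simple positive $\alpha$ is just the definition of the action of $\g T_Y$ on $\g U^{ma+}$ inside $\g B^{ma+}=\g T_Y\ltimes\g U^{ma+}$ of \ref{3.5}; for $-\alpha_i$ simple it is the analogous action inside $\g P_i^{pma}$; the general real root case follows by conjugation. Relations (KMT5), (KMT6) and the case $\beta=\pm\alpha_i$ of (KMT7) live entirely inside the rank-one reductive group $\g A_{\alpha_i}^Y$ of \ref{3.5}, where they are standard $SL_2$-type identities. For (KMT7) with a simple $\alpha=\alpha_i$ and $\beta\in\Phi\setminus\{\pm\alpha_i\}$, one conjugates $\beta$ into a rank-two sub-root system $\Phi(\{i,j\})$ and reduces the verification to a computation inside the finite-type reductive subgroup $\g G(\{i,j\})\subset\g G^{pma}$ provided by the corollary of \ref{3.10}, where the relation is classical; a last conjugation handles the case of an arbitrary real $\alpha$.

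The main obstacle will be the careful bookkeeping of the signs $\epsilon=\pm 1$ appearing in (KMT7) and in the conjugation formulas of \ref{3.11}. Consistency is however ensured a priori, because $\widetilde s_i\in\g G^{pma}(\Z)$ has been shown to act on $\g g_\Z$ by precisely the same automorphism $s_i^*$ of \ref{1.4} which is used to set up $\g G_\shs$ (see \ref{3.7}.3 and the end of \ref{3.5}); the sign of $s_i^*(e_\beta)=\pm e_{s_i\beta}$ entering (KMT7) on the $\g G_\shs$ side therefore matches the sign produced by $\widetilde s_i\g x_\beta(r)\widetilde s_i^{-1}$ inside $\g G^{pma}$. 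Once every relation is checked, the map $i$ extends uniquely to a morphism of group functors $\g G\to\g G^{pma}$, and it is clear from the construction that it sends $\g T_\shs$, $\g U_\alpha$ and $\g N_\shs$ to the subgroups of $\g G^{pma}$ bearing the same names.
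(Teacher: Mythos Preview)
Your overall plan matches the paper's: check (KMT3)--(KMT7) on the images of the generators and invoke the presentation of $\g G_\shs(k)$. Your treatment of (KMT3) by conjugating the prenilpotent pair into $\Phi^+$ and computing in $\g U^{ma}_\Psi$ is exactly the paper's argument, and your handling of (KMT4)--(KMT6) inside the rank-one group $\g A_{\alpha_i}^Y$ agrees with the paper as well.

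The one genuine gap is your argument for (KMT7). You propose to conjugate $\beta$ into a rank-two system $\Phi(\{i,j\})$ and appeal to the corollary of \ref{3.10} about the reductive subgroup $\g G(\{i,j\})$. But that corollary requires $\{i,j\}$ to be of \emph{finite type}, which is in general false for a Kac--Moody matrix (whenever $a_{ij}a_{ji}\geq 4$ the pair is already of indefinite or affine type); moreover it is not clear that an arbitrary real $\beta$ can be conjugated into some $\Phi(\{i,j\})$ while leaving $\alpha_i$ fixed. So this reduction does not go through.

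The paper's argument is simpler and is in fact the one you sketch in your final paragraph: (KMT7) is essentially the \emph{definition} of $\g x_\beta$ given in \ref{3.11}. For $\beta\in\Phi^+\setminus\{\alpha_i\}$ the semi-direct product $\g P_i^{pma}=\g A_i^Y\ltimes\g U^{ma}_{\Delta^+\setminus\{\alpha_i\}}$ means that $\widetilde s_i$ acts on $\g U^{ma}_{\Delta^+\setminus\{\alpha_i\}}$ through its adjoint action on $\g g_\Z$, which is $s_i^*$; hence $\widetilde s_i\,\exp(re_\beta)\,\widetilde s_i^{-1}=\exp(r\,s_i^*e_\beta)=\g x_{s_i\beta}(\epsilon r)$. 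For $\beta\in\Phi^-$ the formula holds by construction, since $\g x_\beta$ was defined via exactly such a conjugation and the lemma of \ref{3.11} guarantees the signs are consistent. Replace the rank-two detour by this direct observation and the proof is complete.
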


\begin{proof} Il s'agit de v\'erifier les relations (KMT3) \`a (KMT7) dans $\g G^{pma}(k)$. La relation (KMT3) esp\'er\'ee entre $\g x_\alpha(r)$ et $\g x_\beta(r')$ provient de la relation constat\'ee entre $\exp(ad\,re\alpha)$ et $\exp(ad\,r'e_\beta)$ dans Aut$(\g g_A)$. Comme $\{\alpha,\beta\}$ est une paire pr\'enilpotente, il existe $\Psi$ finie close dans $\Phi$ contenant $\alpha$ et $\beta$. D'apr\`es \ref{3.11} on peut supposer $\Psi\subset\Phi^+$. D'apr\`es la remarque \ref{3.2}.1 cette relation est en fait constat\'ee dans $\g U_\Psi^{ma}$, d'o\`u le r\'esultat.

\par Les relations (KMT4) \`a (KMT6) n'impliquent que des \'el\'ements du groupe r\'eductif $\g A_\alpha^Y$ (qui est de rang $1$), elles sont trivialement satisfaites.

\par Enfin (KMT7) est clair d'apr\`es \ref{3.11}.
\end{proof}

\begin{remas*}
\par 1) L'homomorphisme $i$ envoie aussi $\widetilde s_\alpha$ sur l'\'el\'ement de m\^eme nom de $\g G^{pma}(\Z)$ (pour $\alpha$ racine simple). L'image $i(\g N)$ de $\g N$ est donc engendr\'ee par $\g T_\shs$ et les $\widetilde s_\alpha$ pour $\alpha$ racine simple. Par construction $i$ est un isomorphisme sur $\g T_\shs$ et $\widetilde s_{i_1}.\cdots.\widetilde s_{i_n}\not=1$ si la d\'ecomposition $ s_{i_1}.\cdots. s_{i_n}$ est r\'eduite (second alin\'ea de \ref{3.8}). Donc $i$ est un isomorphisme de $\g N$ sur $i(\g N)$ (que l'on note encore $\g N$).

\par 2) En conjuguant par un \'el\'ement $n$ de $\g N(\Z)$ tel que $^v\nu(n)=w\in W^v$, on trouve dans $\g G^{pma}$ un sous-groupe analogue \`a $\g U^{ma+}$ mais associ\'e \`a $w\Delta^+$. Ainsi $\g G^{pma}$ ne d\'epend pas du choix de $\Delta^+$ dans sa classe de $W^v-$conjugaison. Par contre l'alg\`ebre $\widehat\shu^p$ d\'epend a priori de ce choix et si on change $\Delta^+$ en $\Delta^-$, le groupe obtenu $\g G^{nma}$ est diff\'erent.

\par 3) L'homomorphisme $i$ induit un isomorphisme  du groupe $\g U_\alpha$ sur le sous-groupe de m\^eme nom de $\g G^{pma}$: c'est clair par d\'efinition pour $\alpha\in\Phi^+$, on s'y ram\`ene pour $\alpha\in\Phi^-$ gr\^ace \`a \ref{3.11}.
\end{remas*}

\begin{prop}\label{3.13} Si $k$ est un corps, alors $i_k$ est injectif.
\end{prop}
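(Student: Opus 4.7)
Pour montrer l'injectivit\'e de $i_k$ on suppose $g\in\g G_\shs(k)$ avec $i_k(g)=1$ dans $\g G^{pma}(k)$ et on veut conclure $g=1$. On dispose d\'ej\`a de l'injectivit\'e de $i$ sur $\g N_\shs$ (et donc $\g T_\shs$) par la remarque~1 de \ref{3.12}, et sur chaque groupe radiciel $\g U_\qa$, $\qa\in\Phi$, par la remarque~3. Le plan est d'utiliser la d\'ecomposition de Bruhat $\g G_\shs(k)=\bigsqcup_{w\in W^v}\g B^+_\shs(k)\,\widetilde w\,\g B^+_\shs(k)$ rappel\'ee en \ref{1.6}.5 pour r\'eduire la question \`a ces cas favorables.

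Si $g$ appartient \`a la cellule de $w$, alors $1=i_k(g)$ s'\'ecrit $b\,\widetilde w\,b'$ dans $\g G^{pma}(k)$ avec $b,b'\in\g B^{ma+}(k)$; il s'agit de d\'eduire $w=1$. Pour cela on fait agir $\g G^{pma}(k)$ (\ref{3.7}.1) sur un module int\'egrable \`a plus haut poids $L_k(\ql)$ o\`u $\ql\in X^+$ est strictement dominant, \ie $\ql(\qa_i^\vee)>0$ pour tout $i\in I$; de tels $\ql$ existent dans le cadre de Mathieu de \ref{3.5} o\`u $\shs$ est libre, colibre, sans cotorsion. \'Ecrivant $b=tu$, $b'=t'u'$ avec $t,t'\in\g T_\shs(k)$ et $u,u'\in\g U^{ma+}(k)$, on \'evalue en $v_\ql$: ce vecteur est annul\'e par $\g U^{ma+}(k)$ donc $u'v_\ql=v_\ql$, puis $t'v_\ql=\ql(t')v_\ql$ et $\widetilde w\,v_\ql\neq0$ est de poids $w\ql$, tandis que $u$ agit comme l'identit\'e modulo augmentation stricte du poids. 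Il vient $v_\ql=\ql(t')(w\ql)(t)\widetilde w\,v_\ql+(\text{contributions de poids}>w\ql)$. La composante de poids $w\ql$ du membre de droite est non nulle (car $\ql(t'),\,(w\ql)(t)\in k^*$ et $\widetilde w\,v_\ql\neq0$), alors que celle du membre de gauche vaut $0$ sauf si $w\ql=\ql$, d'o\`u $w=1$ par dominance stricte de $\ql$. Ainsi $g\in\g B^+_\shs(k)$.

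On a donc $g=t\,u$ de mani\`ere unique avec $t\in\g T_\shs(k)$, $u\in\g U^+_\shs(k)$. Dans le produit semi-direct $\g B^{ma+}(k)=\g T_\shs(k)\ltimes\g U^{ma+}(k)$, l'\'egalit\'e $i(t)\,i(u)=1$ impose $i(t)=1$ et $i(u)=1$. La premi\`ere donne $t=1$ par la remarque~1 de \ref{3.12}. Pour la seconde, $u$ s'\'ecrit comme produit fini de $\g x_\qa(r_\qa)$ pour $\qa$ dans un sous-ensemble fini nilpotent $\Psi\subset\Phi^+$ (fourni par la structure RGD); sur $\g U_\Psi(k)$, $i$ envoie chaque $\g x_\qa(r_\qa)$ sur $\exp(r_\qa\,e_\qa)\in\g U_\qa(k)$, et l'unicit\'e de l'\'ecriture exponentielle ordonn\'ee donn\'ee par la proposition~\ref{3.2}, combin\'ee \`a la remarque~3 de \ref{3.12} appliqu\'ee \`a chaque facteur, force $u=1$.

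Le principal obstacle de ce plan est l'argument de disjonction des cellules de Bruhat de $\g G^{pma}(k)$ via le module $L_k(\ql)$: il repose de mani\`ere essentielle sur l'existence de poids strictement dominants (assur\'ee par le cadre de Mathieu) et sur le fait que l'action de $\g U^{ma+}(k)$ n'\'el\`eve jamais les poids dans un module \`a plus haut poids. La r\'eduction de l'injectivit\'e sur $\g U^+_\shs(k)$ aux groupes radiciels n\'ecessite par ailleurs une analyse soign\'ee des produits ordonn\'es d'exponentielles, mais ne pr\'esente pas de difficult\'e conceptuelle gr\^ace \`a la proposition~\ref{3.2}.
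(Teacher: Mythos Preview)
Ta r\'eduction \`a $\mathrm{Ker}(i_k)\subset B^+$ par la d\'ecomposition de Bruhat et l'action sur $L_k(\lambda)$ (pour $\lambda$ strictement dominant) est correcte; l'article proc\`ede diff\'eremment en observant que $B^+\cdot\mathrm{Ker}(i_k)$ est un sous-groupe parabolique de $G$ ne contenant aucun $U_{-\alpha_i}$, donc \'egal \`a $B^+$. En revanche, ta derni\`ere \'etape est d\'efaillante: l'affirmation que tout $u\in U^+$ appartient \`a un $\g U_\Psi(k)$ avec $\Psi\subset\Phi^+$ fini et nilpotent est fausse en g\'en\'eral. Une partie nilpotente est en particulier pr\'enilpotente, ce qui requiert l'existence de $w'\in W^v$ avec $w'\Psi\subset\Delta^-$; or en rang~$2$ de type ind\'efini ($W^v$ di\'edral infini) aucun $w'$ ne rend $w'\alpha_1$ et $w'\alpha_2$ simultan\'ement n\'egatives, donc $\g x_{\alpha_1}(1)\,\g x_{\alpha_2}(1)\in U^+$ n'est dans aucun $\g U_\Psi(k)$ avec $\Psi$ nilpotent. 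La structure RGD ne fournit pas la d\'ecomposition que tu invoques.

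La correction est imm\'ediate d\`es qu'on exploite la normalit\'e de $\mathrm{Ker}(i_k)$ dans $G$ (noyau d'un homomorphisme): de $\mathrm{Ker}(i_k)\subset B^+$ on tire $\mathrm{Ker}(i_k)\subset\bigcap_{g\in G}gB^+g^{-1}=T$ d'apr\`es \cite[1.5.4 et 1.2.3.v]{Ry-02a}, et l'injectivit\'e de $i$ sur $T$ (remarque \ref{3.12}.1) conclut. C'est pr\'ecis\'ement ainsi que l'article termine; ta d\'ecomposition $B^+=T\ltimes U^+$ et l'analyse de $U^+$ deviennent superflues une fois la normalit\'e invoqu\'ee apr\`es ta r\'eduction \`a $B^+$.
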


\begin{NB} Si $k$ est un corps, on notera souvent avec la lettre romaine correspondante l'ensemble des points sur $k$ d'un sch\'ema. La proposition nous permet donc d'identifier $G=\g G(k)$ \`a un sous-groupe de $G^{pma}=\g G^{pma}(k)$.
\end{NB}

\begin{proof} D'apr\`es \cite[prop. 8.4.1]{Ry-02a} $G$ est muni d'une donn\'ee radicielle jumel\'ee enti\`ere et donc d'un syst\`eme de Tits $(G,B^+,N,S)$ avec $B^+\cap N=T$. Alors $B^+.{\mathrm Ker}(i)$ est un sous-groupe parabolique de $G$, mais par construction il ne peut contenir aucun $U_{-\alpha}$ pour $\alpha$ simple, donc $B^+.{\mathrm Ker}(i)=B^+$ et Ker$(i)$ est contenu dans $B^+$ et tous ses conjugu\'es. D'apr\`es \cite[1.5.4 et 1.2.3.v]{Ry-02a}, l'intersection de tous ces conjugu\'es est r\'eduite \`a $T$. Mais par construction $i\vert_T$ est injectif, d'o\`u le r\'esultat.
\end{proof}

\subsection{Comparaison des repr\'esentations}\label{3.14}

\par 1) On a vu en \ref{3.7}.3 que, pour un anneau $k$, l'alg\`ebre $\widehat\shu^p_k$ est un $\g B^{ma+}(k)-$module. D'apr\`es \ref{2.1}.4 la restriction \`a $\g B^+(k)$ de cette repr\'esentation stabilise $\shu_k$ (et $\g g_k$) et y induit la m\^eme repr\'esentation que la repr\'esentation adjointe de $\g G(k)$.

\par 2) Pour $\lambda\in X^+$, on a construit en \ref{3.7}.1 le module \`a plus haut poids $M=L(\lambda)$ pour $\g G^{pma}$. Il est clair que la restriction \`a $\g G$ (via $i$) de cette repr\'esentation est la repr\'esentation classique \`a plus haut poids de $\g G$.

\par 3) En  \ref{3.7}.2 on a construit une repr\'esentation de $\g U^{ma-}$ sur le compl\'et\'e n\'egatif $\widehat M^n_k$ de $M$. Il est clair que la restriction \`a $\g U^-$ (via $i$) de cette repr\'esentation est le compl\'et\'e de la repr\'esentation de $\g G$ ci-dessus (restreinte \`a $\g U^-$).

\begin{prop}\label{3.15} Soit $\widetilde w=s_{i_1}.\cdots.s_{i_m}$ une d\'ecomposition r\'eduite de  $w\in W^v$ et $\mu : \g F(\widetilde w):=\g P_{\alpha_{i_1}}\times\cdots\times\g P_{\alpha_{i_m}}\rightarrow \g B(w)$ le morphisme de multiplication. Si $k$ est un corps, alors $\mu_k$ est surjectif.
\end{prop}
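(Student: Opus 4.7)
Le plan est de factoriser $\mu$ sous la forme $\g F(\widetilde w)\xrightarrow{q}\g E(\widetilde w)\xrightarrow{\Aff}\g B(w)$, o\`u $q$ est la projection canonique vers le produit contract\'e et $\Aff$ le morphisme d'affinisation, puis d'\'etablir s\'epar\'ement la surjectivit\'e des deux facteurs sur les points $k$-rationnels. Pour $q_k$, $\g E(\widetilde w)$ s'obtient de $\g F(\widetilde w)$ par $m-1$ quotients successifs qui sont des $\g B^{ma+}-$torseurs Zariski-localement triviaux ; comme $\g B^{ma+}=\g T_\shs\ltimes\g U^{ma+}$ est pro-r\'esoluble d\'eploy\'e, tout tel torseur sur $\mathrm{Spec}(k)$ se trivialise, et $q_k$ est surjectif sur les $k$-points.

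Pour la surjectivit\'e de $\Aff_k$, je m'appuie sur la d\'ecomposition de Bruhat de $\g B(w)(k)$~:
$$\g B(w)(k)\;=\;\bigsqcup_{v\leq w}\;\g B^{ma+}(k)\,\widetilde v\,\g B^{ma+}(k),$$
o\`u $\leq$ d\'esigne l'ordre de Bruhat-Chevalley sur $W^v$. \'Etant donn\'e $g=b_1\widetilde v b_2$ avec $v\leq w$, la propri\'et\'e de sous-mot de l'ordre de Bruhat fournit une d\'ecomposition r\'eduite $\widetilde v=s_{i_{k_1}}\cdots s_{i_{k_p}}$ extraite de $\widetilde w$ (avec $1\leq k_1<\cdots<k_p\leq m$). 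On rel\`eve alors $g$ en un \'el\'ement $(x_1,\ldots,x_m)\in\g F(\widetilde w)(k)$ en posant $x_{k_1}=b_1\widetilde s_{i_{k_1}}$, $x_{k_\ell}=\widetilde s_{i_{k_\ell}}$ pour $2\leq\ell\leq p$, $x_j=1$ pour $j\notin\{k_1,\ldots,k_p\}$ sauf en une unique position (par exemple $j=m$) o\`u l'on absorbe $b_2$ ($x_m=b_2$ si $m>k_p$, ou $x_m=\widetilde s_{i_m}b_2$ si $m=k_p$). Chaque $x_j$ est bien dans $\g P_{i_j}(k)$ puisque $\g B^{ma+}(k)$ et $\widetilde s_{i_j}$ y sont, et le produit t\'elescopique vaut $b_1\widetilde v b_2=g$.

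Le point d\'elicat est la justification de la d\'ecomposition de Bruhat ci-dessus. L'inclusion $\supset$ est imm\'ediate, $\g B(w)$ \'etant stable bilat\'eralement sous $\g B^{ma+}=\g B(1)\subset\g B(w)$ et contenant $\widetilde v$ pour tout $v\leq w$ via l'immersion ferm\'ee $\g B(v)\hookrightarrow\g B(w)$ de \ref{3.6}. Pour l'inclusion inverse, on se ram\`ene via Cor.~\ref{3.10} au cas o\`u $J=\{i_1,\ldots,i_m\}$ est de type fini~: dans ce cas $\g B(w)\subset\g B(w_0^J)=\g G(J)\ltimes\g U^{ma+}_J$, et la d\'ecomposition de Bruhat classique dans le groupe r\'eductif $\g G(J)$ (tensoris\'ee par $\g U^{ma+}_J$) fournit le r\'esultat. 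Le cas g\'en\'eral s'en d\'eduit par limite inductive sur les sous-syst\`emes de type fini contenant le support de $w$.
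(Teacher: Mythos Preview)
Votre factorisation $\mu=\Aff\circ q$ est la bonne, et l'argument pour la surjectivit\'e de $q_k$ (trivialisation des $\g B^{ma+}$-torseurs, $\g B^{ma+}=\g T_\shs\ltimes\g U^{ma+}$ \'etant pro-r\'esoluble d\'eploy\'e) est correct modulo la locale trivialit\'e de Zariski, qui d\'ecoule du recouvrement de $\g E(\widetilde w)$ par $2^m$ ouverts affines mentionn\'e dans la preuve du papier.

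La lacune essentielle concerne la d\'ecomposition de Bruhat de $\g B(w)(k)$. Votre r\'eduction au cas o\`u $J=\{i_1,\ldots,i_m\}$ serait de type fini \'echoue~: le support de $w$ est certes un ensemble \emph{fini}, mais il n'engendre pas en g\'en\'eral un sous-syst\`eme de type fini (c'est-\`a-dire de groupe de Weyl fini) --- pensez \`a $A$ affine de rang $2$ et $w=s_0s_1s_0$, o\`u $J=\{0,1\}=I$. Votre \og limite inductive sur les sous-syst\`emes de type fini contenant le support de $w$\fg{} est alors vide. Plus fondamentalement, la d\'ecomposition $\g B(w)(k)=\bigsqcup_{v\leq w}\g B^{ma+}(k)\widetilde v\,\g B^{ma+}(k)$ est \emph{\'equivalente} \`a la surjectivit\'e de $\mu_k$ (l'autre sens s'obtenant via $\g P_i(k)=\g B^{ma+}(k)\cup\g B^{ma+}(k)\widetilde s_i\g B^{ma+}(k)$)~; et la d\'ecomposition de Bruhat de $G^{pma}(k)$ elle-m\^eme (\ref{3.16}) n'est \'etablie dans le papier qu'\emph{apr\`es} et \emph{au moyen de} \ref{3.15}. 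Votre argument est donc circulaire.

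Le papier \'evite cette circularit\'e par une voie g\'eom\'etrique~: une repr\'esentation $L(\lambda)$ de plus haut poids r\'egulier fournit un morphisme $\xi:\g B(w)\to\Sigma_w\subset E_w(\lambda)$ pour lequel la non-annulation de la coordonn\'ee sur $e_{w\lambda}$ caract\'erise un ouvert de la grosse cellule $\g C(w)=\g B(w)\setminus\bigcup_{v<w}\g B(v)$. Un argument d'alg\`ebre commutative sur les ouverts principaux montre alors que l'affinisation $\nu$ se restreint en un isomorphisme $\g E^c(\widetilde w)\xrightarrow{\sim}\g C(w)$, et une r\'ecurrence sur la longueur de $w$ donne la surjectivit\'e.
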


\begin{proof} Il faut revenir un peu sur les d\'efinitions de \ref{3.6}. Le groupe $(\g B^{ma+})^{m-1}$ agit sur $\g F(\widetilde w)$ par $(b_1,\cdots,b_{m-1}).(p_1,\cdots,p_m)=(p_1b_1^{-1},b_1p_2b_2^{-1},\cdots,b_{m-1}p_m)$ et $\g E(\widetilde w)$ est le quotient sch\'ematique; on note $\pi$ le morphisme de passage au quotient. Le sch\'ema $\g B(w)$ est l'affinis\'e de $\g E(\widetilde w)$, on note $\nu :\g E(\widetilde w)\rightarrow\g B(w)$ le morphisme naturel. On a donc $\mu=\nu\circ\pi
$. On se place maintenant sur le corps $k$ (sans forc\'ement le noter dans les noms).

\par Soit $\lambda\in X^+$ un poids dominant  entier r\'egulier et $L(\lambda)$  la repr\'esentation int\'egrable de plus haut poids $\lambda$ (\cf \cite[p246]{M-88a} ou \ref{3.7} ci-dessus); les groupes $\g B^{ma+}$ et $\g P_i$ agissent dessus. On choisit un vecteur de plus haut poids $e_\lambda$; pour $w\in W^v$, $e_{w\lambda}=we_\lambda\in L(\lambda)_{w\lambda}$ est d\'efini au signe pr\`es et le sous-espace vectoriel $E_w(\lambda)=\shu^+_k.e_{w\lambda}$ est de dimension finie. De plus $E_v(\lambda)\subset E_w(\lambda)$ si $v\leq{}w$ pour l'ordre de Bruhat-Chevalley, mais si $v<w$, $E_v(\lambda)$ a une composante nulle sur $ke_{w\lambda}$.
On note $\Sigma_w$ l'adh\'erence de $\g B^{ma+}(k).ke_{w\lambda}$ dans $E_w(\lambda)$; c'est une sous-vari\'et\'e affine ferm\'ee de $E_w(\lambda)$. On a $\Sigma_w=\g B(w)(k).ke_\lambda$
il contient donc $\Sigma_v$ si $v\leq{}w$ (\cf \cite[p64]{M-88a} formul\'e en caract\'eristique $0$, mais qui se g\'en\'eralise). Ainsi $\Sigma^c_w:= \Sigma_w\setminus \bigcup_{v<w}\Sigma_v$,
contient l'ouvert de $\Sigma_w$ d\'efini par la non nullit\'e de la coordonn\'ee sur $e_{w\lambda}$.

\par On a un morphisme $\zeta$ de $\g F(\widetilde w)$ dans $\Sigma_w$ donn\'e par la "formule na\"{\i}ve" $\zeta(p_1,\cdots,p_m)=p_1.p_2.\cdots.p_m.e_\lambda$, il se factorise en un morphisme $\eta$ de $\g E(\widetilde w)$ dans $\Sigma_w$ (\cf \cite[p65, 66]{M-88a} en caract\'eristique $0$) puis, comme $\Sigma_w$ est affine et $\nu$ une affinisation, en un morphisme $\xi:\g B(w)\rightarrow\Sigma_w$, simplement donn\'e par l'action de $\g B(w)\subset\g G^{pma}$ sur $e_\lambda$ \cf \ref{3.7}.1. On a donc le diagramme:

\qquad\qquad \xymatrix{\zeta\,:\; \g F(\widetilde w)\ar[r]^(.6)\pi\ar@/_1pc/[rr]_\mu&\g E(\widetilde w)\ar[r]^(.5){\nu}\ar@/^1pc/[rr]^\eta&\g B( w)\ar[r]_(.5){\xi}&\Sigma_w\\}

\par Si $v\leq{}w$, il existe une d\'ecomposition r\'eduite $\widetilde v$ de $v$ "extraite" de $\widetilde w$. Le sous-sch\'ema ferm\'e de $\g F(\widetilde w)$ form\'e des \'el\'ements dont le j-\`eme facteur est $1$ si $s_{i_j}$ est omis dans $\widetilde v$ est isomorphe \`a $\g F(\widetilde v)$. On obtient ainsi un diagramme dont toutes les fl\`eches verticales sont injectives:

\qquad\qquad\xymatrix{\g F(\widetilde w)\ar[r]^(.5)\pi\ar@/^1pc/[rr]^\mu&\g E(\widetilde w)\ar[r]^(.5){\nu}&\g B( w)\ar[r]_(.5){\xi}&\Sigma_w\\
\g F(\widetilde v)\ar@{^{(}->}[u]\ar[r]^(.5)\pi&\g E(\widetilde v)\ar@{^{(}->}[u]\ar[r]^(.5){\nu}&\g B( v)\ar@{^{(}->}[u]\ar[r]_(.5){\xi}&\Sigma_v\ar@{^{(}->}[u]\\}

\par On note $\g C(w)=\g B(w)\setminus\bigcup_{v<w}\g B(v)$ qui contient $\xi^{-1}(\Sigma^c_w)$. Par r\'ecurrence il suffit de montrer que $\mu(\g F(\widetilde w))\supset\g C(w)$.

\par On note $\g F^c(\widetilde w)=\g C_{i_1}\times\cdots\times\g C_{i_m}$ ouvert de $\g F(\widetilde w)$ satur\'e pour le quotient par $(\g B^{ma+})^{m-1}$. Mais $\g C_{i_j}=\g B^{ma+}.s_{i_j}.\g B^{ma+}=\g U_{i_j}.s_{i_j}.\g B^{ma+}$ et cette derni\`ere d\'ecomposition est unique. Donc $\g U_{i_1}.s_{i_1}\times\cdots\times\g U_{i_m}.s_{i_m}.\g B^{ma+}$ est un syst\`eme de repr\'esentants de $\g E^c(\widetilde w):=\g F^c(\widetilde w)/(\g B^{ma+})^{m-1}=\pi(\g F^c(\widetilde w))$ et $(\g B^{ma+})^{m-1}$ agit librement sur $\g F^c(\widetilde w)$. Ainsi $\g E^c(\widetilde w)\simeq\g U_{i_1}\times\cdots\times\g U_{i_m}\times\g B^{ma+}$ est un ouvert affine de $\g E(\widetilde w)$.

\par D'autre part $\zeta(u_{i_1}.s_{i_1},\cdots,u_{i_m}.s_{i_m}.b)=s_{i_1}.\cdots.s_{i_m}.u'_1.\cdots.u'_mbe_\lambda$ avec $u'_j\in\g U_{\beta_j}$ pour $\beta_j=s_{i_1}.\cdots.s_{i_j}(\alpha_{i_j})\in\Phi$. En particulier la coordonn\'ee de cet \'el\'ement sur $e_{w\lambda}$ est non nulle; il appartient donc \`a $\Sigma_w^c$: on a $\zeta(\g F^c(\widetilde w))=\eta(\g E^c(\widetilde w))\subset\Sigma^c_w$; donc $\mu(\g F^c(\widetilde w))\subset\xi^{-1}(\Sigma^c_w)\subset\g C(w)$.

\par Le compl\'ementaire de $\g F^c(\widetilde w)$ est r\'eunion de sous-sch\'emas ferm\'es obtenus en rempla\c{c}ant dans $\g F(\widetilde w)$ l'un des facteurs $\g P_i$ par $\g B^{ma+}$, ce qui, dans l'image par $\pi$ revient \`a supprimer ce facteur. Alors le processus de multiplication d\'ecrit dans \cite[p45]{M-89} montre que l'image par $\mu$ de ce sous-sch\'ema ferm\'e est dans un $\g B(v)$ pour $v<w$. Ainsi $\g F^c(\widetilde w)\supset\mu^{-1}(\g C(w))$ et finalement $\g F^c(\widetilde w)=\mu^{-1}(\g C(w))$. De plus
$\pi(\g F^c(\widetilde w))=\g E^c(\widetilde w)$. Donc $\g E^c(\widetilde w)=\nu^{-1}(\g C(w))$.

\par Soit $\g B(w)_f$ un ouvert principal de $\g B(w)$ contenu dans $\g C(w)$. Alors $\nu^{-1}(\g B(w)_f)=\g E(\widetilde w)_{f\circ \nu}=\g E^c(\widetilde w)_{f\circ \nu}$, ouvert principal donc affine de $\g E^c(\widetilde w)$. On a donc $k[\g B(w)_f]=k[\g B(w)][f^{-1}]$ $=k[\g E(\widetilde w)][(f\circ\nu)^{-1}]\subset k[\g E^c(\widetilde w)][(f\circ\nu)^{-1}]=k[\g E^c(\widetilde w)_{f\circ\nu}]=k[\g E(\widetilde w)_{f\circ\nu}]$. Mais $\g E(\widetilde w)$ est r\'eunion de $2^m$ ouverts affines (d\'emonstration par r\'ecurrence selon l'esquisse des pages 54, 55 de \cite{M-88a}); donc $k[\g E(\widetilde w)_{f\circ\nu}]\subset k[\g E(\widetilde w)][(f\circ\nu)^{-1}]$. Ainsi $k[\g B(w)_f]=k[\g E(\widetilde w)_{f\circ\nu}]$. Cela prouve que $\nu$ induit un isomorphisme de $\nu^{-1}(\g B(w)_f)$ sur $\g B(w)_f$ et donc aussi de $\g E^c(\widetilde w)=\nu^{-1}(\g C(w))$ sur $\g C(w)$. On sait de plus que $\pi$ induit un isomorphisme du sous-sch\'ema ferm\'e $\g U_{i_1}.s_{i_1}\times\cdots\times\g U_{i_m}.s_{i_m}.\g B$ de $\g F(\widetilde w)$ sur $\g E^c(\widetilde w)$. Donc $\pi_k$ est surjectif.
\end{proof}

\subsection{Structure de $BN-$paire raffin\'ee sur $\g G^{pma}$}\label{3.16}

\par On note $\g U^+$ (resp. $\g U^-$) le sous-foncteur en groupes de $\g G^{pma}$ engendr\'e par les sous-groupes $\g U_\alpha$ pour $\alpha\in\Phi^+$ (resp. $\alpha\in\Phi^-$); il s'identifie via $i$ (au moins sur un corps) avec le sous-groupe de m\^eme nom de $\g G$. On a $\g U^+\subset\g U^{ma+}$. On rappelle que $S=\{s_i\mid i\in I\}$ est le syst\`eme de g\'en\'erateurs de $W^v$ et que $\g N$ est l'image par $i$ du groupe de m\^eme nom de $\g G$ (remarque \ref{3.12}.1).

\begin{prop*} Si $k$ est un corps, le sextuplet $(G^{pma},N,U^{ma+},U^-,T,S)$ est une $BN-$paire raffin\'ee (refined Tits system).
\end{prop*}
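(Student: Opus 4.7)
The plan is to verify the axioms of a refined Tits system by combining two inputs from the paper. First, Proposition \ref{3.13} identifies the minimal group $G=\g G_\shs(k)$ with a subgroup of $G^{pma}$, which imports its twin root datum structure (\ref{1.6}.5) and thereby supplies all the axioms pertaining to $(G,N,U^+,U^-,T,S)$. Second, Proposition \ref{3.15}, which is specific to the maximal group, will yield a Bruhat decomposition of $G^{pma}$ with respect to $B^{ma+}=T\ltimes U^{ma+}$.

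I would first establish the decomposition $G^{pma}=\bigsqcup_{w\in W^v}U^{ma+}\,\widetilde w\,B^{ma+}$. For the covering, \ref{3.15} writes every $g\in G^{pma}$ as a product $p_{i_1}\cdots p_{i_m}$ with $p_{i_j}\in P_{i_j}(k)$; using the big-cell decomposition $P_i=B^{ma+}\sqcup U_{\alpha_i}\widetilde{s_i}B^{ma+}$ from \ref{3.5} together with the fact that $B^{ma+}$ normalizes each $U_\alpha$ for $\alpha\in\Phi^+$, one rewrites this product in the desired form for some $w'\leq w$ (collapsing occurrences of $B^{ma+}$-factors by the standard Tits/deletion procedure). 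Disjointness follows from the action on an integrable highest weight module $L_k(\lambda)$ (\ref{3.7}.1) with $\lambda\in X^+$ regular: the $\Sigma_w^c$ computation in the proof of \ref{3.15} shows that $u\widetilde w b\cdot v_\lambda$ has a nonzero coordinate on the line $k e_{w\lambda}$, so $w$ is determined by the weight $w\lambda$.

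Next I would establish the new compatibility $U^{ma+}\cap B^-=\{1\}$. Any $u\in U^{ma+}$ lies in $1+\sum_{\alpha\in Q^+\setminus\{0\}}\widehat{\shu}^+_k$ and therefore fixes the highest weight vector $v_\lambda$ of $L_k(\lambda)$ for every $\lambda\in X^+$. Writing such a $u\in B^-$ as $u=tu^-$ with $t\in T$, $u^-\in U^-$, the identity $uv_\lambda=v_\lambda$ combined with the expansion $tu^-v_\lambda=\lambda(t)v_\lambda+(\text{strictly lower weight terms})$ forces $\lambda(t)=1$ for all regular $\lambda$, hence $t=1$, and $u^-v_\lambda=v_\lambda$. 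The orbit map $U^-\to L_k(\lambda)$, $u^-\mapsto u^-v_\lambda$, is injective for $\lambda$ regular (a standard consequence of the twin root datum structure on $G$, via the free action of $U^-$ on the big Birkhoff cell), so $u^-=1$ and $u=1$. The same highest weight argument gives $N\cap U^{ma+}=\{1\}$, hence $N\cap B^{ma+}=T$.

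Finally, the remaining refined Tits system axioms are routine: $N$ normalizes $T$ with $N/T=W^v=\langle S\rangle$ (inherited from $G$ via \ref{3.13}); inside the rank-one group $\g A^Y_{\alpha_i}$ of \ref{3.5}, conjugation by $\widetilde{s_i}$ swaps $U_{\alpha_i}$ and $U_{-\alpha_i}$, whence $\widetilde{s_i}U^-\widetilde{s_i}^{-1}\not\subset U^-$ and $\widetilde{s_i}U^{ma+}\widetilde{s_i}^{-1}\not\subset U^{ma+}$ (the latter because it contains $U_{-\alpha_i}$). The main technical obstacle is the disjointness in the Bruhat decomposition together with the triviality of $U^{ma+}\cap B^-$; both rely on careful use of the integrable highest weight representations of $G^{pma}$ from \ref{3.7} and the geometric material around $\Sigma_w$ in the proof of \ref{3.15}. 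Once these key points are secured, the remaining axioms follow from weight bookkeeping and transport of structure from $G$ through \ref{3.13}.
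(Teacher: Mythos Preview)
Your approach is genuinely different from the paper's, and as written it has one false claim and one real gap.

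The claim that ``$B^{ma+}$ normalizes each $U_\alpha$ for $\alpha\in\Phi^+$'' is false: conjugating $U_{\alpha_i}$ by an element of $U_{\alpha_j}$ (with $j\neq i$) typically produces factors in $U_{\alpha_i+\alpha_j}$, etc. What actually makes the Bruhat reduction work is the factorization $U^{ma+}=U_{\alpha_i}\ltimes \g U^{ma}_{\Delta^+\setminus\{\alpha_i\}}(k)$ together with the fact that $\widetilde s_i\in\g A^Y_{\alpha_i}$ normalizes the second factor (both from \ref{3.5}, resting on \ref{3.3}). That same factorization is precisely axiom (RT2c), namely $U^{ma+}=U_s\cdot(U^{ma+}\cap sU^{ma+}s)$, and you never address it; nor do you identify $U_s:=U^{ma+}\cap s_iU^-s_i$ with $U_{\alpha_i}$, which is needed for (RT2a). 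These are structural facts about the \emph{completed} group $U^{ma+}$ that cannot be imported from $G$ via \ref{3.13} (since $U^+\subsetneq U^{ma+}$), so ``the remaining axioms are routine'' does not cover them. Note also that your (RT3) argument only treats the case $n\in T$; the case $\nu^v(n)\neq1$ needs an extra weight-support step.

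By contrast, the paper verifies (RT1)--(RT3) directly and avoids almost all of your representation-theoretic work. For (RT3) the key observation is one line: if $u^-nu^+=1$ then $u^+=(u^-n)^{-1}\in G$ because $U^-,N\subset G$, so the whole equation lives in $G$, where the refined Tits system is already known (\ref{1.6}.5, \cite[1.5.4]{Ry-02a}). For (RT2) the paper observes that $U_s\subset s_iU^-s_i\subset G$, so $U_s$ is computed inside $G$ and equals $U_{\alpha_i}$; then (RT2a,b) are rank-one facts in $\g A^Y_{\alpha_i}$ and (RT2c) is read off from \ref{3.3}. The Bruhat and Birkhoff decompositions you establish are then \emph{consequences} of the axioms, not inputs.
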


\begin{rema*} On a donc (\cf \eg \cite[1.2]{Ry-02a}) un syst\`eme de Tits $(G^{pma},B^{ma+}=TU^{ma+},N,S)$, une d\'ecomposition de Bruhat $G^{pma}=\coprod_{n\in N}U^{+}nU^{ma+}=\coprod_{n\in N}U^{ma+}nU^{ma+}$, une d\'ecomposition de Birkhoff $G^{pma}=\coprod_{n\in N}U^{-}nU^{ma+}$ et $U^-\cap NU^{ma+}=N\cap U^{ma+}=\{1\}$.
\end{rema*}

\begin{proof} V\'erifions les axiomes de \cite{KP-85}, voir aussi \cite{Ry-02a}.

\par (RT1): le groupe $G^{pma}$ admet bien $N$, $U^{ma+}$, $U^-$ et $T$ comme sous-groupes. Le groupe $T$ normalise les $U_\alpha$ ($\alpha\in\Phi$) donc aussi $U^-$ et $U^{ma+}$, il est distingu\'e dans $N$ et $N/T=W^v$ est engendr\'e par les \'el\'ements de $S$ qui sont d'ordre $2$. Par construction $G^{pma}$ est r\'eunion des $B(w)$ qui sont engendr\'es par des sous-groupes $P_i$ (d'apr\`es \ref{3.15}). On a vu que $P_i$ est engendr\'e par $U_{-\alpha_i}=\widetilde s_i.U_{\alpha_i}.\widetilde s_i^{-1}$ et $B^{ma+}=TU^{ma+}$. Donc $G^{pma}$ est engendr\'e par $N$ et $U^{ma+}$.

\par (RT2) Pour $s=s_i\in S$, $U_s=U^{ma+}\cap sU^-s$ est dans $G$ qui est muni d'une donn\'ee radicielle jumel\'ee $(G,(U_\alpha)_{\alpha\in\Phi},T)$ \cite [prop 8.4.1]{Ry-02a} et d'apr\`es le th\'eor\`eme 1.5.4 de  \lc on a $U_{s_i}=U_{\alpha_i}$. Un r\'esultat classique dans le groupe $A_i$ donne donc (RT2a); tandis que (RT2b) est clair. Enfin (RT2c) r\'esulte de \ref{3.3} (voir aussi \ref{3.5}).

\par (RT3) Soient $u^-\in U^-$, $u^+\in U^{ma+}$ et $n\in N$ tels que $u^-nu^+=1$. Alors $n$ et $u^-$sont dans $G$ ainsi donc que $u^+$. On d\'eduit de l'axiome (RT3) dans $G$ \cite[1.5.4]{Ry-02a} que $u^-=u^+=n=1$.
\end{proof}

\begin{coro}\label{3.17} Si $k$ est un corps, $U^+=\g U^+(k)=\g G(k)\cap\g U^{ma+}(k)$ et $B^+=\g B^+(k)=\g G(k)\cap\g B^{ma+}(k)$.
\end{coro}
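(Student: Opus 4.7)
Les inclusions $U^+\subset G\cap U^{ma+}$ et $B^+\subset G\cap B^{ma+}$ sont imm\'ediates: on a $U^+\subset U^{ma+}$ (rappel\'e en \ref{3.16}), $T\subset T$, donc $B^+=TU^+\subset TU^{ma+}=B^{ma+}$, et ces sous-groupes sont contenus dans $G$ via $i$ (identifi\'e \`a une inclusion par \ref{3.13}).

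Pour l'inclusion r\'eciproque de la premi\`ere \'egalit\'e, je me propose d'utiliser conjointement la d\'ecomposition de Birkhoff dans $G$ et celle dans $G^{pma}$ fournies par la structure de $BN-$paire raffin\'ee. Soit donc $g\in G\cap U^{ma+}$. Comme $G$ est muni d'une donn\'ee radicielle jumel\'ee (\cf\ \cite[1.5.4]{Ry-02a}), on \'ecrit $g=u^-nu^+$ avec $u^-\in U^-$, $n\in N$, $u^+\in U^+\subset U^{ma+}$. Dans $G^{pma}$ on a alors deux d\'ecompositions de Birkhoff pour $g$: d'une part $g\in U^-\cdot n\cdot U^{ma+}$, et d'autre part, puisque $g\in U^{ma+}$, $g\in U^-\cdot 1\cdot U^{ma+}$. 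L'unicit\'e de la d\'ecomposition de Birkhoff dans $G^{pma}$ (remarque suivant \ref{3.16}) force $n\in T$, disons $n=t$.

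Ainsi $u^-t=g(u^+)^{-1}\in U^{ma+}$, d'o\`u $u^-=(u^-t)t^{-1}\in U^{ma+}\cdot T=B^{ma+}$. Mais $B^{ma+}=TU^{ma+}\subset NU^{ma+}$, donc $u^-\in U^-\cap NU^{ma+}=\{1\}$ par la propri\'et\'e (RT3) du sextuplet raffin\'e. On a donc $u^-=1$, puis $t=u^-t\in T\cap U^{ma+}=\{1\}$ (puisque $B^{ma+}=T\ltimes U^{ma+}$ est un produit semi-direct par d\'efinition en \ref{3.5}). Finalement $g=u^+\in U^+$, ce qui \'etablit la premi\`ere \'egalit\'e.

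La seconde \'egalit\'e s'en d\'eduit sans difficult\'e: si $g\in G\cap B^{ma+}$, on \'ecrit $g=tu$ avec $t\in T$ et $u\in U^{ma+}$ (d\'ecomposition unique dans le produit semi-direct $B^{ma+}$); comme $t\in T\subset G$, on obtient $u=t^{-1}g\in G\cap U^{ma+}=U^+$ par la premi\`ere partie, d'o\`u $g\in TU^+=B^+$. Le point le plus d\'elicat de l'argument est le passage par les deux d\'ecompositions de Birkhoff simultan\'ement; il repose essentiellement sur la proposition \ref{3.16}, dont la disjointness de la r\'eunion $G^{pma}=\coprod_{n\in N}U^-nU^{ma+}$ et la relation $U^-\cap NU^{ma+}=\{1\}$ sont les ingr\'edients cruciaux.
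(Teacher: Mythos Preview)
Your argument is correct. It differs from the paper's proof only in which decomposition from Proposition~\ref{3.16} it exploits: the paper compares the two \emph{Bruhat} decompositions $G=\coprod_{w}B^+wB^+$ and $G^{pma}=\coprod_{w}B^{ma+}wB^{ma+}$, deduces $G\cap B^{ma+}=B^+$ first, and then obtains $G\cap U^{ma+}=U^+$ from the semi-direct product structure. You instead use the \emph{Birkhoff} decomposition $G^{pma}=\coprod_{n\in N}U^-nU^{ma+}$, prove $G\cap U^{ma+}=U^+$ first, and then deduce the Borel statement. Both routes are short and rest on the same refined Tits system of \ref{3.16}.

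One remark on efficiency: the disjoint union $\coprod_{n\in N}U^-nU^{ma+}$ in the remark after \ref{3.16} encodes full uniqueness (this is exactly axiom (RT3)), so from $g=u^-nu^+=1\cdot 1\cdot g$ you may conclude directly $u^-=1$, $n=1$, $u^+=g$. Your detour via $n\in T$ and then $U^-\cap NU^{ma+}=\{1\}$ is harmless but unnecessary.
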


\begin{rema*} Le sextuplet $(G,N,U^{+},U^-,T,S)$ est aussi une $BN-$paire raffin\'ee (\ref{1.6}.5 et \cite[1.5.4]{Ry-02a}).
\end{rema*}

\begin{proof} On a deux syst\`emes de Tits $(G^{pma},B^{ma+},N,S)$ et $(G,B^{+},N,S)$, \cf \ref{1.6}.5. Donc $G^{pma}=\coprod_{w\in W^v}B^{ma+}wB^{ma+}$ et $G=\coprod_{w\in W^v}B^+wB^{+}$. Comme $B^+\subset B^{ma+}$, on a $B^{ma+}\cap G=B^+$. Mais $B^{ma+}=T\ltimes U^{ma+}$ et $B^{+}=T\ltimes U^{+}$ donc $B^+\cap U^{ma+}=U^{+}$.
\end{proof}

\begin{coro}\label{3.17b} Si $k$ est un corps, l'injection $i$ induit un isomorphisme simplicial $\g G(k)-$\'equi\-variant de l'immeuble $\shi^v_+$ associ\'e \`a $\g G$ sur $k$ sur l'immeuble $\widehat\shi^v_+$ associ\'e au syst\`eme de Tits $(G^{pma},B^{ma+},N,S)$. Plus pr\'ecis\'ement ces deux immeubles ont les m\^emes facettes et les appartements de $\shi^v_+$ sont des appartements de $\widehat\shi^v_+$, mais, en g\'en\'eral, $\widehat\shi^v_+$ a plus d'appartements.
\end{coro}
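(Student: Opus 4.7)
Le plan est de comparer directement les deux immeubles au niveau des facettes en exploitant les d\'ecompositions de Bruhat donn\'ees par les syst\`emes de Tits $(G,B^+,N,S)$ et $(G^{pma},B^{ma+},N,S)$. Rappelons que, pour chaque partie sph\'erique $J\subset S$, les facettes de type $J$ de l'immeuble d'un syst\`eme de Tits $(H,B,N,S)$ sont les classes $gP_J$ avec $P_J=BW_JB$ le parabolique standard associ\'e, et que le $N$-orbite de la facette fondamentale donne l'appartement fondamental, dont les transform\'es par $H$ sont les appartements.

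La premi\`ere \'etape est de montrer que, pour toute partie $J\subset S$, on a $G\cap P^{ma}_J=P_J$ (o\`u $P^{ma}_J=B^{ma+}W_JB^{ma+}$). En effet les d\'ecompositions de Bruhat $G^{pma}=\coprod_{w\in W^v}B^{ma+}wB^{ma+}$ et $G=\coprod_{w\in W^v}B^+wB^+$ (prop.~\ref{3.16} et \ref{1.6}.5), jointes \`a l'inclusion $B^+\subset B^{ma+}$, font que pour tout $g\in G$, l'unique $w$ tel que $g\in B^{ma+}wB^{ma+}$ est aussi l'unique $w$ tel que $g\in B^+wB^+$. Il en r\'esulte que $G\cap (B^{ma+}wB^{ma+})=B^+wB^+$ et, en prenant l'union pour $w\in W_J$, que $G\cap P^{ma}_J=P_J$.

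La seconde \'etape consiste \`a d\'eduire que l'application naturelle $G/P_J\to G^{pma}/P^{ma}_J$ induite par $i$ est une bijection, $G$-\'equivariante. L'injectivit\'e d\'ecoule imm\'ediatement de $G\cap P^{ma}_J=P_J$. Pour la surjectivit\'e on utilise la d\'ecomposition raffin\'ee $G^{pma}=\coprod_{n\in N}U^+nB^{ma+}$ (cons\'equence de la structure de $BN$-paire raffin\'ee de \ref{3.16}): tout $g\in G^{pma}$ s'\'ecrit $g=unb$ avec $u\in U^+\subset G$, $n\in N\subset G$ et $b\in B^{ma+}$, de sorte que $gB^{ma+}=unB^{ma+}$ avec $un\in G$. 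Ces bijections, compatibles avec les inclusions de facettes (car $P_J\subset P_{J'}$ ssi $J\subset J'$), fournissent un isomorphisme simplicial $G$-\'equivariant $\shi^v_+\to\widehat\shi^v_+$ envoyant facettes sur facettes de m\^eme type.

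Enfin, pour la correspondance des appartements, l'appartement fondamental de $\shi^v_+$ est le $N$-orbite de la facette fondamentale; comme $N$ est le m\^eme dans les deux syst\`emes de Tits et que la facette fondamentale de $\shi^v_+$ s'identifie, par la bijection pr\'ec\'edente, \`a la facette fondamentale de $\widehat\shi^v_+$, l'appartement fondamental de $\shi^v_+$ s'envoie sur celui de $\widehat\shi^v_+$. Les autres appartements de $\shi^v_+$, transform\'es par $G$, deviennent ainsi des appartements (transform\'es par $G\subset G^{pma}$) de $\widehat\shi^v_+$. En revanche $G^{pma}$ agit en g\'en\'eral strictement plus transitivement que $G$ sur $\widehat\shi^v_+$, ce qui produit des appartements suppl\'ementaires (transform\'es par des \'el\'ements de $G^{pma}\setminus G$, typiquement issus de $U^{ma+}\setminus U^+$); c'est la seule obstruction \`a ce que la correspondance soit aussi une bijection au niveau des appartements, et c'est la partie la plus d\'elicate \`a formuler pr\'ecis\'ement mais elle ne n\'ecessite aucun calcul suppl\'ementaire.
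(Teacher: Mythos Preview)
Your proof is correct and covers all the assertions of the statement. It differs from the paper's argument only in the surjectivity step. You obtain surjectivity of $G/B^+\to G^{pma}/B^{ma+}$ (and of $G/P_J\to G^{pma}/P^{ma}_J$) directly from the refined Bruhat decomposition $G^{pma}=\coprod_{n\in N}U^+nU^{ma+}$ stated in the remark after Proposition~\ref{3.16}: every coset $gB^{ma+}$ has a representative $un\in U^+N\subset G$. The paper instead argues geometrically: for each simple root $\alpha$, the group $U_\alpha\subset G$ already acts transitively on the chambers of $\widehat\shi^v_+$ that are $s_\alpha$-adjacent to the fundamental chamber, so these chambers lie in the image of $\shi^v_+$; an induction on gallery length then reaches every chamber of $\widehat\shi^v_+$. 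Your algebraic shortcut is slicker and uses the full force of the refined $BN$-pair; the paper's gallery argument is more elementary (it only needs the structure of the minimal parabolics $\g P_i=\g A_i^Y\ltimes\g U^{ma}_{\Delta^+\setminus\{\alpha_i\}}$) and keeps the building-theoretic content visible. Both routes are valid.
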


\begin{proof} Les appartements sont isomorphes car associ\'es au m\^eme groupe $N$. D'apr\`es \ref{3.17} $G/B^+$ s'injecte dans $G^{pma}/B^{ma+}$, d'o\`u une injection $\shi^v_+\hookrightarrow\widehat\shi^v_+$. Mais, par construction, si $\alpha$ est simple, c'est le m\^eme groupe $U_\alpha$ qui est transitif sur les chambres (de $\shi^v_+$ ou $\widehat\shi^v_+$) $s_\alpha-$adjacentes \`a la chambre fondamentale $C^v_f$ (associ\'ee \`a $B^+$), ce sont donc les m\^emes chambres. Par r\'ecurrence sur la longueur de galeries, on en d\'eduit que  $\shi^v_+=\widehat\shi^v_+$.
\end{proof}

\subsection{Changement de SGR}\label{3.18}

\par 1) On a d\'ecrit ci-dessus le groupe \`a la Mathieu $\g G_{\shs}^{pma}$ associ\'e \`a un SGR $\shs$ libre, colibre, sans cotorsion et v\'erifiant de plus une certaine condition de dimension (\cf \ref{3.5}). ll est clair que cette condition n'intervient pas dans les constructions de Mathieu, on peut donc s'en affranchir. On a d'ailleurs vu en \ref{1.3} que si $\shs$ est libre, colibre, sans cotorsion il est extension semi-directe d'un SGR $\shs^{mat}$ v\'erifiant de plus cette condition de dimension. On en a d\'eduit en \ref{1.11} que $\g G_{\shs}$ est produit semi-direct de $\g G_{\shs^{mat}}$ par un tore $\g T_1$. De m\^eme $\g G_{\shs}^{pma}$ est produit semi-direct de $\g G_{\shs^{mat}}^{pma}$ par ce tore $\g T_1$. Ces d\'ecompositions en produit semi-direct proviennent de la m\^eme d\'ecomposition $\g T_\shs=\g T_{\shs^{mat}}\times\g T_1$ et sont donc compatibles avec les morphismes de foncteurs $i$ de \ref{3.12}.

\par 2) Soit $\shs$ un SGR quelconque. D'apr\`es \ref{1.3} il existe une extension centrale torique $\shs^{sc}$ de $\shs$ et une extension semi-directe $\shs^{sc\ell}$ de $\shs^{sc}$ qui est libre, colibre et sans cotorsion. D'apr\`es les corollaires \ref{1.11} et \ref{1.12} $\g G_{\shs}$ est quotient de $\g G_{\shs^{sc}}$ par un sous-tore central $\g T_2$ de $\g T_{\shs^{sc}}$ et $\g G_{\shs^{sc\ell}}$ est produit semi-direct de $\g G_{\shs^{sc}}$ par un sous-tore $\g T_3$ de $\g T_{\shs^{sc\ell}}$. Inversement $\g G_{\shs^{sc}}$ est le noyau d'un homomorphisme $\theta$ de $\g G_{\shs^{sc\ell}}$ sur $\g T_3$.

\par L'homomorphisme $\theta$ s'\'etend au groupe $\g G_{\shs^{sc\ell}}^{pma}$ (construit ci-dessus): on prend $\theta$ trivial sur $\g U^{ma+}_{\shs^{sc\ell}}$, on conna\^{\i}t $\theta$ sur $\g T_{\shs^{sc\ell}}$ et les groupes $\g A_i$ de \ref{3.5}, donc $\theta$ est d\'efini sur $\g B^{ma+}_{\shs^{sc\ell}}$ et les paraboliques $\g P_i$; son extension \`a $\g G_{\shs^{sc\ell}}^{pma}$ d\'ecoule alors de la construction de \ref{3.6}. On d\'efinit donc $\g G_{\shs^{sc}}^{pma}$ comme le noyau de $\theta$. Enfin $\g G_{\shs}^{pma}$ est le quotient (comme foncteur en groupes) de $\g G_{\shs^{sc}}^{pma}$ par le tore $\g T_2$.

\par On a ainsi d\'efini $\g G_{\shs}^{pma}$ et un morphisme de foncteurs $i_\shs:\g G_\shs\rightarrow\g G_{\shs}^{pma}$ pour tout SGR $\shs$.

\par Si $\shs$ est libre, colibre et sans cotorsion on a ainsi deux d\'efinitions de $\g G_{\shs}^{pma}$. Mais alors, d'apr\`es \ref{1.3}, \ref{1.11} et \ref{1.12}, $\g G_{\shs^{sc}}$ est produit direct de $\g G_{\shs}$ par $\g T_2$ et $\g G_{\shs^{sc\ell}}$ produit semi-direct de $\g G_{\shs^{sc}}$ par $\g T_3$. Il est clair que $\g G_{\shs^{sc}}^{pma}$ et $\g G_{\shs^{sc\ell}}^{pma}$ comme d\'efinis en 1) s'\'ecrivent de la m\^eme mani\`ere comme produit semi-direct. D'o\`u l'identification avec les groupes d\'efinis comme ci-dessus.

\par 3) {\bf Fonctorialit\'e.} La construction de Mathieu est clairement fonctorielle en $\shs$ pour les SGR libres, colibres, sans cotorsion et les extensions commutatives de SGR. On a remarqu\'e en \ref{1.3}.1 que les constructions $\shs\mapsto\shs^{sc}$ et $\shs^{sc}\mapsto\shs^{sc\ell}$ sont fonctorielles. Une extension commutative $\varphi:\shs\rightarrow\shs_*$ d\'efinit donc un morphisme de foncteurs en groupes $\g G_{\shs^{sc\ell}}^{pma}\rightarrow\g G_{\shs_*^{sc\ell}}^{pma}$. Ce morphisme est compatible avec les homomorphismes $\theta$ sur les tores $\g T_3$ correspondants, il induit donc un morphisme $\g G_{\shs^{sc}}^{pma}\rightarrow\g G_{\shs_*^{sc}}^{pma}$. Ce dernier morphisme \'echange les tores $\g T_2$ correspondants, ce qui donne le morphisme $\g G_{\varphi}^{pma}:\g G_{\shs}^{pma}\rightarrow\g G_{\shs_*}^{pma}$ cherch\'e.

\par Le groupe $\g G_{\shs}^{pma}$ est donc fonctoriel en $\shs$ pour les extensions commutatives de SGR (de mani\`ere compatible avec la fonctorialit\'e des groupes $\g G_\shs$ et les morphismes $i_\shs$). Par construction, si $\varphi:\shs\rightarrow\shs'$ est un tel morphisme, $\g G_{\varphi}^{pma}$ induit un isomorphisme de $\g U_{\shs}^{ma+}$ sur $\g U_{\shs'}^{ma+}$ et le morphisme $\g T_\varphi$ de $\g T_\shs$ dans $\g T_{\shs'}$. Si $k$ est un corps, $\g G_{\varphi k}^{pma}$ induit l'isomorphisme $\g G_{\varphi k}$ de  $\g U_{\shs}^{\pm{}}(k)$ sur $\g U_{\shs'}^{\pm{}}(k)$ \cf \ref{1.10} et \ref{3.12}.

\begin{enonce*}{\quad4) Proposition} Les actions de $\g G_\shs(k)$ par automorphismes int\'erieurs sur $\g G_{\shs k}^{}$, $\g G_{\shs k}^{pma}$ ou $\g G_{\shs k}^{nma}$ et ses actions adjointes sur $\shu_{\shs k}^{}$,  $\widehat\shu_{\shs k}^{p}$ ou  $\widehat\shu_{\shs k}^{n}$ se factorisent via $\g G_{\qf^{ad}k}=:Ad^\ell_k$ en des actions du groupe adjoint minimal
$\g G_{\shs_{Am}}(k)=\g G_{\shs^{lad}}(k)=:\g G_{\shs}^{lad}(k)$.

\par Si $k$ est un corps, ces actions de $\g G_{\shs^{lad}}(k)$ sont fid\`eles, en particulier Ker$Ad^\ell_k$ est le centre de $\g G_\shs(k)$.
 \end{enonce*}

 \begin{proof} D'apr\`es \ref{1.10} le noyau de $Ad^\ell_k$ est central dans $\g G_\shs(k)$ et $\g G_{\shs}^{lad}(k)=Ad^\ell_k(\g G_{\shs}^{}(k))$ $.\g T_{\shs^{lad}}(k)$.
  Comme les actions de $\g T_{\shs_{}}(k)$ sur les groupes et alg\`ebres propos\'es se factorisent \'evidemment en des actions de $\g T_{\shs^{lad}}(k)=:\g T_{\shs}^{lad}(k)$, on a clairement des actions de $\g G_{\shs}^{lad}(k)$ qui permettent de factoriser via $Ad^\ell_k$ les actions de $\g G_\shs(k)$ (voir \ref{1.8}.2).

  \par Si $k$ est un corps, l'intersection des normalisateurs dans $\g G_\shs(k)$ des groupes radiciels $\g U_\qa(k)$ est $\g T_{\shs_{}}(k)$ \cite[8.4.1 ii)]{Ry-02a}.
  Donc le noyau de l'action par conjugaison de $\g G_{\shs}^{lad}(k)$ est dans $\g T_{\shs}^{lad}(k)$.
  Mais on conna\^{\i}t bien l'action de ce tore sur les groupes $\g U_{\qa_i}(k)$, pour $i\in I$.
  On en d\'eduit que les actions par conjugaison de $\g G_{\shs}^{lad}(k)$ sont fid\`eles.

  \par Pour les actions adjointes, le stabilisateur de $\shu_{\shs k}^{+}$ ou $\widehat\shu_{\shs k}^{+}$ contient $B^+_{\shs^{lad}}$ donc est un sous-groupe parabolique; comme il ne contient aucun \'el\'ement non trivial de $W^v$, il est r\'eduit \`a $B^+_{\shs^{lad}}$.
  En raisonnant de m\^eme avec $\shu_{\shs k}^{-}$ ou $\widehat\shu_{\shs k}^{-}$, on trouve que le noyau de l'action est dans $B^+_{\shs^{lad}}\cap B^-_{\shs^{lad}}=\g T_{\shs^{lad}}(k)$.
  Mais l'action de ce dernier tore sur les $e_i$ (pour $i\in I$) est bien connue, donc le noyau est trivial.
 \end{proof}

\subsection{Comparaison avec les groupes  de Kac-Moody \`a la Kumar}\label{3.19}

\par Dans \cite[6.1.6 et 1.1.2]{Kr-02} S. Kumar consid\`ere un SGR v\'erifiant les m\^emes conditions que celles demand\'ees par O. Mathieu (\cf \ref{3.5}). On reste donc dans ce cadre et
bien s\^ur on se place, comme Kumar, sur le corps $\C$.

\par Pour $\Theta$ clos dans $\Delta^+$, les groupes $U_\Theta$, $T$ et $N$ d\'efinis dans \lc 6.1 s'identifient clairement \`a $U_\Theta^{pma}$, $T$ et $N$ d\'efinis ci-dessus. De m\^eme pour une partie de type fini $J$ de $I$, le groupe parabolique $P_J$ de Kumar s'identifie au groupe $P(J)=G(J)\ltimes U^{ma+}_J$ de \ref{3.10}, en particulier les deux d\'efinitions des paraboliques minimaux $P_i$ co\"{\i}ncident.

\par On a vu que $(G^{pma},B^{ma+}=TU^{ma+},N,S)$ est un syst\`eme de Tits. D'apr\`es [\lc 5.1.7] $G^{pma}$ est le produit amalgam\'e des sous-groupes $N$ et $P_i$. Mais c'est exactement la d\'efinition de \lc 6.1.16 du groupe de Kac-Moody \`a la Kumar. Donc sur $\C$ les groupes \`a la Kumar et \`a la Mathieu co\"{\i}ncident.

\par L'injection $i_\C$ de \ref{3.12} permet d'identifier $\g G(\C)$ et le groupe minimal \`a la Kumar \cite[7.4.1]{Kr-02}.


\section{Appartement affine et sous-groupes parahoriques}\label{s4}

\par On consid\`ere dor\'enavant un SGR fixe  $\shs$ qui est libre et le groupe de Kac-Moody $\g G_\shs$ associ\'e sur un corps $K$. On a vu en \ref{1.3} et \ref{1.11} que si $\shs$ est un SGR non libre, $\g G_{\shs}$ est un sous-groupe d'un groupe $\g G_{\shs^{\ell}}$ avec $\shs^\ell$ libre. Toute action de $\g G_{\shs^\ell}(K)$ sur une masure induira donc une action de $\g G_{\shs}(K)$ sur celle-ci. On peut noter que l'essentialis\'e de la masure de $\g G_{\shs^\ell}(K)$ ne d\'epend pas du choix de $\shs^\ell$: les appartements sont des espaces affines sous Hom$_\Z(Q,\R)$. C'est le choix fait par B. R\'emy pour ses r\'ealisations coniques d'immeubles jumel\'es.

\par On abr\'egera fr\'equemment $\g G_{\shs}(K)$ en $G_\shs$ ou m\^eme $G$. On notera avec la lettre romaine correspondante les points sur $K$ d'un foncteur en groupes not\'e avec une lettre gothique; de m\^eme pour les morphismes de foncteurs.

\par \`A partir de \ref{4.2}, le corps $K$ sera suppos\'e muni d'une valuation r\'eelle $\omega$.

\subsection{Comparaison avec \cite{Ru-06} et \cite{Ru-10}}\label{4.1}

\par Le quintuplet $(V,W^v,(\alpha_i)_{i\in I},(\alpha_i^\vee)_{i\in I},\Delta_{im})$ de \ref{1.1} satisfait aux conditions de \cite[1.1 et 1.2.1]{Ru-10}, on est dans le cas Kac-Moody (mod\'er\'ement imaginaire). On peut en particulier d\'efinir dans $V$ des facettes vectorielles et des c\^ones de Tits positif ou n\'egatif $\pm\sht$  \cite[1.3]{Ru-10}. Ces notions sont \'egalement introduites dans \cite[1.1 \`a 1.3]{Ru-06} o\`u $\sht$ est not\'e $\A^v$. On note $C^v_f=\{\,v\in V\mid\alpha(v)>0,\forall\alpha\in\Phi^+\}$ la chambre vectorielle fondamentale, donc $\sht=W^v.\overline C^v_f$. Une facette vectorielle est dite sph\'erique si son fixateur (dans $W^v$) est fini \ie si elle est contenue dans $\pm\sht^\circ$ (int\'erieur du c\^one de Tits).

\par D'apr\`es \ref{1.6} le groupe de Kac-Moody $G$ contient un sous-groupe $T$ et des sous-groupes radiciels $U_\alpha$ pour $\alpha\in\Phi$, chacun isomorphe au groupe additif $(K,+)$ par un isomorphisme $x_\alpha$. Le triplet $(G,(U_\alpha)_{\alpha\in\Phi},T)$ est une donn\'ee radicielle de type $\Phi$ au sens de \cite[1.4]{Ru-06}. Le groupe $N$ d\'efini dans \cite[1.5.3]{Ru-06} co\"{\i}ncide avec celui de \ref{1.6}.4, il est muni d'une application surjective $\nu^v:N\rightarrow W^v$ de noyau $T$.

\par On reprend, sauf exception signal\'ee, les notations de \cite[{\S{}} 1]{Ru-06}. On a en particulier un immeuble $\shi^v_+$ dont les appartements sont isomorphes \`a $\A^v=\sht$, mais aussi un immeuble $\shi^v_-$ dont les appartements sont isomorphes \`a $-\sht$. Ces deux immeubles sont jumel\'es \cite{Ry-02a}; comme dans \cite{Ru-06} on ne garde que les facettes sph\'eriques dans $\shi^v_\pm$. On a d\'ej\`a vu ces immeubles de mani\`ere combinatoire en \ref{1.6}.5, mais on les verra d\'esormais sous la forme de ces r\'ealisations coniques. En \ref{3.17b} on a vu que $G^{pma}$ (resp $G^{nma}$) agit sur $\shi^v_+$ (resp. $\shi^v_-$).

\subsection{L'appartement affine t\'emoin $\A$}\label{4.2}

\par 1) On suppose dor\'enavant le corps $K$ muni d'une valuation r\'eelle non triviale $\omega$, et ainsi $\Lambda=\omega(K^*)$ est un sous-groupe non trivial de $\R$. On note $\sho$ l'anneau des entiers de $(K,\omega)$ et $ \g m$ son id\'eal maximal.

\par On note $\A$ l'espace vectoriel $V$ consid\'er\'e comme espace affine; on note cependant toujours $0$ l'\'el\'ement neutre de $V$.

\par Le groupe $T=\g T(K)$ agit sur $\A$ par translations: si $t\in T$, $\nu(t)$ est l'\'el\'ement de $V$ tel que $\chi(\nu(t))=-\omega(\chi(t))$, $\forall\chi\in X$. Cette action est $W^v-$\'equivariante.
D'apr\`es le raisonnement de \cite[2.9.2]{Ru-06} on a:

\begin{enonce*}{\quad\ 2) Lemme}
  Il existe une action affine $\nu$ de $N$ sur $\A$ qui induit la pr\'ec\'edente sur $T$ et telle que, pour $n\in N$, l'application affine $\nu(n)$ admet $\nu^v(n)$ comme application lin\'eaire associ\'ee.
\end{enonce*}

\par 3) L'image de $N$ est $\nu(N)=W_{Y\Lambda}=W^v\ltimes(Y\otimes\Lambda)$. Son noyau est $H=$Ker$(\nu)=\sho^*\otimes Y=\g T(\sho)$.

\medskip
\par 4) Par construction le point $0$ est fix\'e par les \'el\'ements $m(x_{\alpha_i}(\pm{}1))=\widetilde s_{-\alpha_i}^{\pm1}$ et donc (par conjugaison: (KMT7) ) par tous les $\widetilde s_{\alpha}$, pour $\alpha\in\Phi$. D'apr\`es (KMT6) pour $u\not=0$, $m(x_{\alpha}(u))=\widetilde s_{-\alpha}(u^{-1})=\widetilde s_{-\alpha}.(-\alpha)^\vee(u)=\alpha^\vee(u).\widetilde s_{-\alpha}$ et $\alpha(\nu(\alpha^\vee(u)))=-\omega(\alpha(\alpha^\vee(u)))=-\omega(u^2)=-2\omega(u)$. Donc $\nu(m(x_{\alpha}(u)))$ est la r\'eflexion $r_{\alpha,\omega(u)}$ par rapport \`a l'hyperplan $M(\alpha,\omega(u))=\{y\in V\mid \alpha(y)+\omega(u)=0\}$, c'est-\`a-dire l'application affine d'application lin\'eaire associ\'ee $s_\alpha$ et d'ensemble de points fixes $M(\alpha,\omega(u))$.

\medskip
\par 5) {\bf D\'efinitions.} L'{\it appartement affine t\'emoin} $\A$ est l'espace affine $\A$ (sous l'espace vectoriel $V$ muni de $W^v$, $\Phi$ et $\Delta$) muni de l'ensemble $\shm$ des {\it murs (r\'eels)} $M(\alpha,k)=\{y\in V\mid\alpha(y)+k=0\}$ pour $\alpha\in\Phi$ et $k\in\Lambda$ et de l'ensemble $\shm^i$ des {\it murs imaginaires} $M(\alpha,k)$ pour $\alpha\in\Delta_{im}$ et $k\in\Lambda$. L'espace $D(\alpha,k)=\{y\in V\mid\alpha(y)+k\geq{}0\}$, d\'efini pour $\alpha\in X$ et $k\in\R$ est un {\it demi-appartement} si $\alpha\in\Phi$ et $k\in\Lambda$. On note $D^\circ(\alpha,k)=\{y\in V\mid\alpha(y)+k>0\}$.

\par Ainsi $\A$ satisfait aux conditions de \cite[1.4]{Ru-10} (\`a une variante pr\`es pour $\shm^i$, \cf \lc 1.6.2); il est semi-discret si et seulement si la valuation $\omega$ est discr\`ete. On choisit $x_0=0$ comme point sp\'ecial privil\'egi\'e. On reprend les notations de \cite{Ru-10} (sauf pour $P^\vee$, $Q^\vee$ et $cl$, voir ci-dessous). On notera les ressemblances et diff\'erences avec \cite[{\S{}} 2]{Ru-06}. Le cas $\Lambda=\Z$ est trait\'e dans \cite[{\S{}} 2 et 3.1]{GR-08}.

\par On note $Q^\vee=\sum_{i\in I}\Z\alpha_i^\vee$ et $P^\vee=\{y\in V\mid \alpha_i(y)\in\Z,\forall i\}$. Donc $Q^\vee\subset Y\subset P^\vee\subset V$.

\par On consid\`ere des filtres de parties de $\A$ comme d\'efinis dans \cite[I {\S{}} 6]{B-TG} et utilis\'es dans \cite{Ru-10} ou \cite{GR-08}.
Pour un tel filtre $F$ de parties de $\A$, on a introduit dans \lc  plusieurs variantes d'enclos: si $\shp\subset X\setminus\{0\}$, $cl_\shp(F)$ est le filtre form\'e des sous-ensembles de $\A$ contenant un \'el\'ement de $F$ de la forme $\cap_{\alpha\in\shp}\,D(\alpha,k_\alpha)$ avec pour chaque $\alpha$, $k_\alpha\in\Lambda\cup\{+\infty\}$ (en particulier chaque $D(\alpha,k_\alpha)$ contient $F$). On note $cl(F)=cl_\Delta(F)$, $cl^{si}(F)=cl_\Phi(F)$. On d\'efinit aussi  (selon Charignon \cite{Cn-10}) $cl^{\#}(F)$ comme le filtre form\'e des sous-ensembles de $\A$ contenant un \'el\'ement de $F$ de la forme $\bigcap_{i=1}^nD(\alpha_i,k_i)$ o\`u $\alpha_1,\cdots,\alpha_n\in\Phi$ et $k_i\in\Lambda\cup\{\infty\}$. On a $cl^{\#}(F)\supset cl^{si}(F)\supset cl(F)\supset \overline{\conv}(F)$ (enveloppe convexe ferm\'ee de $F$); $cl(F)$ et $cl^{si}(F)$ co\"{\i}ncident avec ceux d\'efinis dans \cite{Ru-10}. Un filtre $F$ est dit {\it clos} si $F=cl(F)$ et, quand on ne pr\'ecise pas, l'enclos de $F$ d\'esigne $cl(F)$.

\par Une {\it facette-locale} de $\A$ est associ\'ee \`a un point $x$ de $\A$ et une facette vectorielle $F^v$ dans $V$; c'est le filtre $F^\ell(x,F^v)= \germ_x(x+F^v)$. La {\it facette} associ\'ee \`a $F^\ell(x,F^v)$ est le filtre $F(x,F^v)$ form\'e des ensembles contenant une intersection de demi-espaces $D(\alpha,k_\alpha)$ ou $D^\circ(\alpha,k_\alpha)$ [un seul $k_\alpha\in\Lambda\cup\{+\infty\}$ (resp. $k_\alpha\in\R\cup\{+\infty\}$) pour chaque $\alpha\in\QF$ (resp. $\alpha\in\QD^{im}$)], cette intersection devant contenir $F^\ell(x,F^v)$ \ie un voisinage de $x$ dans $x+F^v$. La facette ferm\'ee $\overline F(x,F^v)$ est l'adh\'erence de $F(x,F^v)$ et aussi $cl(F^\ell(x,F^v))=cl(F(x,F^v))$. Les facettes $F^\ell(x,F^v)$, $F(x,F^v)$ et $\overline F(x,F^v)$ sont dites {\it sph\'eriques} si $F^v$ l'est.

\par Ces d\'efinitions de facette, facette-locale ou facette ferm\'ee co\"{\i}ncident avec celles de \cite{Ru-10}.
S'il existe $\alpha\in\Delta_{im}$ avec $\alpha(x)\notin\Lambda$, il se pourrait que $F(x,F^v)$ soit l\'eg\`erement plus petite que la facette d\'efinie dans \cite{GR-08}; les facettes locales et ferm\'ees sont les m\^emes, \cf \cite[1.6.2]{Ru-10}.

\medskip
\par 6) {\bf Le groupe de Weyl.} Pour $\alpha\in\Phi$, il est clair que le groupe de Weyl (affine) $W$, engendr\'e par les r\'eflexions par rapports aux murs,  contient les translations de vecteur dans $\Lambda\alpha^\vee$. Ainsi $Q^\vee\otimes\Lambda$ (not\'e $Q^\vee$ dans \cite{Ru-10}) est un groupe de translations de $V$, contenu dans $W$ et invariant par $W^v$. On a $W=W^v\ltimes(Q^\vee\otimes\Lambda)$.

\par Le groupe $P^\vee_\Lambda=\Lambda.P^\vee=\{y\in V\mid \alpha_i(y)\in\Lambda,\forall i\}$ est not\'e $P^\vee$ dans  \cite{Ru-10}. Ainsi $W_P=W^v\ltimes P_\Lambda^\vee$ est le plus grand sous-groupe de $W^e_\R=W^v\ltimes V$ stabilisant $\shm$ (et $\shm^i$).

\par Comme $N$ est engendr\'e par $T$ et les $\widetilde s_\alpha$, il est clair que $\nu(N)=W_Y=W^v\ltimes(Y\otimes\Lambda)$. On a : $W\subset W_Y\subset W_P \subset W_\R^e$.

\medskip
\par 7) {\bf Valuation des groupes radiciels.} Pour $\alpha\in\Phi$ et $u\in U_\alpha$, on pose $\varphi_\alpha(u)=\omega(r)$ si $u=x_\alpha(r)$ avec $r\in K$. Les $\varphi_\alpha$ forment une valuation de la donn\'ee radicielle $(G,(U_\alpha),T)$ \cite[2.2]{Ru-06}. Voir aussi \cite{Cn-10}

\par On consid\`ere le mono\"{\i}de ordonn\'e $\widetilde\R$ de \cite[6.4.1]{BtT-72} form\'e d'\'el\'ements $r$, $r^+$ (pour $r\in\R$) et $\infty$ v\'erifiant $r<r^+<s<s^+<\infty$ si $r<s$. On note $\widetilde\Lambda$ l'ensemble des \'el\'ements de $\widetilde\R$ qui sont borne inf\'erieure d'une partie minor\'ee de $\Lambda$. On a $\widetilde\Lambda=\Lambda\cup\{\infty\}$ si $\Lambda$ est discret et $\widetilde\Lambda=\Lambda\cup\{\infty\}\cup\{r^+\mid r\in\R\}$ sinon.
 Les id\'eaux fractionnaires de $\sho$ sont index\'es par $\widetilde\Lambda$:  \`a $\lambda\in\widetilde\Lambda$ on associe $K_\lambda=\{r\in K\mid\omega(r)\geq{}\lambda\}$.

 \par Pour $\lambda\in\widetilde\Lambda$ et $\alpha\in\Phi$, $U_{\alpha,\lambda}=\{u\in U_\alpha\mid\varphi_\alpha(u)\geq\lambda\}$ est un sous-groupe de $U_\alpha$; on a $U_{\alpha,\infty}=\{1\}$.

 \subsection{Premiers groupes associ\'es \`a un filtre $\Omega$ de parties de $\A$}\label{4.3}

 \par On consid\`ere un filtre $\Omega$ de parties de $\A$, comme en \ref{4.2}.5.

 \par 1) Pour $\alpha\in X$, soit $f_\Omega^\Lambda(\alpha)=f_\Omega(\alpha)=
 inf\{\lambda\in\Lambda\mid\Omega\subset D(\alpha,\lambda)\}=
inf\{\lambda\in\Lambda\mid\alpha(\Omega)+\lambda\subset[0,+\infty[\}\in\widetilde\Lambda$. Pour $\alpha\in\Delta$ (resp. $\Phi$, $X$) $f_\Omega(\alpha)$ ne d\'epend que de $cl(\Omega)$ (resp. $cl^{si}(\Omega)$ ou ${cl^\#(\Omega)}$, $\overline{\conv}(\Omega)\supset\overline\Omega$).
Si $\Omega$ est un ensemble, $f_\Omega(\alpha)\not=\lambda^+$, $\forall\alpha\in X$, $\forall\lambda\in\Lambda$. Cette fonction $f_\Omega$  est {\it concave} \cite{BtT-72}: $\forall\alpha,\beta\in X$, $f_\Omega(\alpha+\beta)\leq f_\Omega(\alpha)+f_\Omega(\beta)$ et  $f_\Omega(0)=0$.

\par On dit que $\Omega$ est {\it presque-ouvert} si $\forall\alpha\in\Phi$, $f_\Omega(\alpha)+f_\Omega(-\alpha)>0$. On dit que $\Omega$ est {\it \'etroit} si aucun mur ne s\'epare $\Omega$ en deux, autrement dit si $\forall\alpha\in\Phi$, $f_\Omega(\alpha)+f_\Omega(-\alpha)=0$ ou $0^+$ ou (dans le cas discret) $inf\{\lambda\in\Lambda\mid\lambda>0\}$ et s'il n'existe pas de $\lambda\in\Lambda$ avec $f_\Omega(\alpha)=\lambda^+$ et $f_\Omega(-\alpha)=(-\lambda)^+$.

\par 2) Pour $\alpha\in\Phi$, on note $U_{\alpha,\Omega}=U_{\alpha,f_\Omega(\alpha)}$; $U_\Omega^{(\alpha)}$ est le groupe engendr\'e par $U_{\alpha,\Omega}$ et $U_{-\alpha,\Omega}$
 et on note $N_\Omega^{(\alpha)}=N\cap U_\Omega^{(\alpha)}$.

\par On d\'efinit $U_\Omega$ comme le groupe engendr\'e par les  $U_{\alpha,\Omega}$ (pour $\alpha\in\Phi$) et $U^\pm_\Omega=U_\Omega\cap U^\pm$; on verra (\ref{4.12}) que $U^{+}_\Omega$ (resp. $U^{-}_\Omega$)n'est pas forc\'ement \'egal au groupe $U^{++}_\Omega$ (resp. $U^{--}_\Omega$) engendr\'e par les  $U_{\alpha,\Omega}$ pour $\alpha\in\Phi^+$ (resp. $\alpha\in\Phi^-$).

\par Le groupe $N^u_\Omega$ ($\subset N\cap U_\Omega$) est engendr\'e par tous les $N_\Omega^{(\alpha)}$ pour $\alpha\in\Phi$.

\par Tous ces groupes sont normalis\'es par $H$ et on peut donc d\'efinir $N_\Omega^{min}=HN_\Omega^{u}$ et $P_\Omega^{min}=HU_\Omega$. Ces groupes ne d\'ependent que de l'enclos $cl(\Omega)$ de $\Omega$ et m\^eme que de $cl^\#(\Omega)$. Si $\Omega\subset\Omega'$, on a $U_{\Omega}\supset U_{\Omega'}$, etc.

 \begin{enonce*}{\quad3) Lemme}
\par Soient $\alpha\in\Phi$ et $\Omega$ un filtre comme ci-dessus.

\par a)$U_\Omega^{(\alpha)}=U_{\alpha,\Omega}.U_{-\alpha,\Omega}.N_\Omega^{(\alpha)}=U_{-\alpha,\Omega}.U_{\alpha,\Omega}.N_\Omega^{(\alpha)}$.

\par b) Si $f_\Omega(\alpha)+f_\Omega(-\alpha)>0$, alors $N_\Omega^{(\alpha)}\subset H$. Si $f_\Omega(\alpha)=-f_\Omega(-\alpha)=k\in\Lambda$, alors $\nu(N_\Omega^{(\alpha)})=\{r_{\alpha,k},1\}$.

\par c) $N_\Omega^{(\alpha)}$ fixe $\Omega$, \ie $\forall n\in N_\Omega^{(\alpha)}$, il existe $S\in\Omega$ fixe point par point par $\nu(n)$.
\end{enonce*}

\begin{proof}
\cf \cite[lemma 3.3]{GR-08}
\end{proof}

\par 4) Le groupe $W_\Omega^{min}=\nu(N_\Omega^{min})=N_\Omega^{min}/H$ est engendr\'e par les r\'eflexions $r_{\alpha,k}$ pour $\alpha\in\Phi$ et $k\in\Lambda$ tels que $\Omega\subset M(\alpha,k)$. Il est contenu dans le fixateur $W_\Omega$ de $\Omega$ dans $W$.
 Il y a bien s\^ur \'egalit\'e si $\Omega$ est r\'eduit \`a un point sp\'ecial, mais aussi si $\Omega$ est une facette sph\'erique et si $\Lambda$ est discret. En effet dans ce dernier cas l'image de $W_\Omega$ dans $W^v$ fixe une facette vectorielle sph\'erique et on est ramen\'e au cas classique o\`u $W$ est produit semi-direct d'un groupe de Weyl fini par un groupe discret de translations; ce cas est trait\'e dans \cite[7.1.10]{BtT-72} et ne se g\'en\'eralise pas si $\QL$ est dense.
  D'apr\`es \ref{4.12}.4 ci-dessous il ne se g\'en\'eralise pas non plus au cas non classique (\ie non sph\'erique).

 \subsection{Alg\`ebres et modules associ\'es au filtre $\Omega$ }\label{4.4}

\par 1) Pour $\alpha\in\Delta$ et $\lambda\in\widetilde \Lambda$, on pose $\g g_{\alpha,\lambda}=\g g_{\alpha\Z}\otimes K_\lambda$ et $\g g_{\alpha,\Omega}=\g g_{\alpha,f_\Omega(\alpha)}$. De m\^eme $\g h_\sho=\g h_\Z\otimes\sho$. Par concavit\'e de $f_\Omega$, il est clair que $\g g_\Omega=\g h_\sho\oplus(\oplus_{\alpha\in\Delta}\,\g g_{\alpha,\Omega})$ est une sous$-\sho-$alg\`ebre de Lie de $\g g_K=\g g_\Z\otimes K$.

\par On peut de m\^eme consid\'erer les compl\'etions positive et n\'egative: $\widehat{\g g}_\Omega^p=(\oplus_{\alpha<0}\,\g g_{\alpha,\Omega})\oplus\g h_\sho\oplus(\prod_{\alpha>0}\,\g g_{\alpha,\Omega})$ et $\widehat{\g g}_\Omega^n=(\oplus_{\alpha>0}\,\g g_{\alpha,\Omega})\oplus\g h_\sho\oplus(\prod_{\alpha<0}\,\g g_{\alpha,\Omega})$ si $\Omega$ est un ensemble, sinon $\widehat{\g g}_{\Omega}^{p}=\cup_{\Omega'\in\Omega}\,\widehat{\g g}_{\Omega'}^p$ et $\widehat{\g g}_{\Omega}^n=\cup_{\Omega'\in\Omega}\,\widehat{\g g}_{\Omega'}^n$.

\par Ces alg\`ebres ne d\'ependent que de $cl(\Omega)$. Par contre $cl^{si}(\Omega)$ ou $cl^\#(\Omega)$ pourraient \^etre insuffisants pour les d\'eterminer.

\medskip
\par 2) L'alg\`ebre enveloppante enti\`ere $\shu_\shs$ est gradu\'ee par $Q\subset X$. On peut donc d\'efinir $\shu_\Omega=\bigoplus_{\alpha\in Q}\,\shu_{\alpha,\Omega}$ avec $\shu_{\alpha,\Omega}=\shu_{\shs,\alpha}\otimes_\Z\,K_{f_\Omega(\alpha)}$. C'est une sous$-\sho-$big\`ebre gradu\'ee de $\shu_{\shs,K}=\shu_\shs\otimes_\Z\,K$. Le terme de niveau $1$ de la filtration induite par celle de $\shu_{\shs,K}$ est $\sho\oplus\g g_\Omega$. On peut avoir $\shu_\Omega\not=\shu_{cl(\Omega)}$.

\par On peut consid\'erer la compl\'etion positive $\widehat\shu^p_\Omega$ (resp. n\'egative $\widehat\shu^n_\Omega$) de $\shu_\Omega$ pour le degr\'e total (resp. son oppos\'e); c'est une sous-alg\`ebre de $\widehat\shu^p_K$ (resp.  $\widehat \shu^n_K$).

\medskip
\par 3) Soit $\lambda$ un poids dominant de $\g T_\shs$ (\ie $\lambda\in X^+$). On a construit en \ref{2.14} une forme enti\`ere $L_\Z(\lambda)$ du module int\'egrable $L(\lambda)$ de plus haut poids $\lambda$. Tout poids de $M=L(\lambda)$ est de la forme $\mu=\lambda-\nu$ avec $\nu\in Q^+$. Pour $\mu\in X$ et $r\in\widetilde\Lambda$, on pose $M_{\mu,r}=L_\Z(\lambda)_\mu\otimes_\Z\,K_{r}$ puis $M_{\mu,\Omega}=M_{\mu,f_\Omega(\mu)}$ et $M_\Omega=\bigoplus_{\mu\in X}\,M_{\mu,\Omega}$;  en particulier $M_{\lambda,\Omega}=K_{f_\Omega(\lambda)}.v_\lambda$.
Par concavit\'e de $f_\Omega$, il est clair que $M_\Omega$ est un sous$-\widehat\shu^p_\Omega-$module gradu\'e de $M\otimes K$. On peut avoir $M_\Omega\not=M_{cl(\Omega)}$, mais $M_\Omega$ ne d\'epend que de $cl_{\shp(M)}(\Omega)$ o\`u $\shp(M)$ est l'ensemble des poids de $M$.

\par On peut consid\'erer la compl\'etion n\'egative de $M_\Omega$: $\widehat M^n_\Omega=\prod_{\nu\in Q^+}\,M_{\lambda-\nu,\Omega}$ si $\Omega$ est un ensemble et $\widehat M^n_\Omega=\cup_{\Omega'\in\Omega}\,\widehat M^n_{\Omega'}$ sinon. Il est clair que $\widehat M^n_\Omega$ est un sous$-\widehat\shu^p_\Omega-$module gradu\'e de $\widehat{M\otimes K}$.

\par Bien s\^ur on construit aussi $M_\Omega$ et sa compl\'etion positive $\widehat M^p_\Omega$ pour un module int\'egrable $M$ \`a plus bas poids $\lambda\in-X^+$.

 \subsection{Groupes (pro-)unipotents associ\'es au filtre $\Omega$ }\label{4.5}

 \par 1) Soit $\Psi\subset\Delta^+$ un ensemble clos de racines. On lui a associ\'e une $\Z-$big\`ebre $\shu(\Psi)$ (\ref{2.13}.2 et \ref{3.1}). On peut donc d\'efinir $\shu(\Psi)_\Omega=\bigoplus _{\alpha\in Q^+}\,\shu(\Psi)_{\alpha,\Omega}$ (avec $\shu(\Psi)_{\alpha,\Omega}=\shu(\Psi)_\alpha\otimes K_{f_\Omega(\alpha)}$); c'est une sous$-\sho-$big\`ebre de $\shu_\Omega$ et de $\shu(\Psi)\otimes K$ (plus pr\'ecis\'ement leur intersection).

 \medskip
 \par 2) Le plus simple pour associer \`a $\shu(\Psi)_\Omega$ un sous-groupe de $\g U^{ma}_\Psi(K)$ semble \^etre de proc\'eder comme suit:

 \par Si $\Omega$ est un ensemble on consid\`ere la compl\'etion $\widehat\shu(\Psi)_\Omega=\prod_{\alpha\in Q^+}\,\shu(\Psi)_{\alpha,\Omega}$; c'est une sous-alg\`ebre de $\widehat\shu^p_\Omega$ et de $\widehat\shu_K(\Psi)$ et elle contient $[\exp]\lambda x$ pour $x\in\g g_{\alpha\Z}$ et $\lambda\in K_{f_\Omega(\alpha)}$. Le mono\"{\i}de $U_\Omega^{ma}(\Psi)$ est l'intersection de $\widehat\shu(\Psi)_\Omega$ ou $\widehat\shu^p_\Omega$ et de $\g U^{ma}_\Psi(K)$ (qui est un sous-groupe des \'el\'ements inversibles de $\widehat\shu_K(\Psi)$). On sait que l'inverse d'un \'el\'ement de $\g U^{ma}_\Psi(K)$ est son image par la co-inversion $\tau$ et il est clair que $\widehat\shu(\Psi)_\Omega$ est stable par $\tau$ (car $\tau$ est gradu\'ee et stabilise $\shu(\Psi)$). Ainsi $U_\Omega^{ma}(\Psi)$ est un sous-groupe de $\g U^{ma}_\Psi(K)$. D'apr\`es \ref{3.2} il est clair que $U_\Omega^{ma}(\Psi)$ est form\'e des produits $\prod_{x\in\shb_\Psi}\,[\exp]\lambda_x.x$ pour $\lambda_x\in K_{f_\Omega(pds(x))}$ (il suffit de raisonner par r\'ecurrence sur le degr\'e total).

 \par Si $\Omega$ est un filtre, le groupe $U_\Omega^{ma}(\Psi)$ est la r\'eunion des $U_{\Omega'}^{ma}(\Psi)$ pour $\Omega'\in\Omega$, donc encore l'intersection de  $\g U^{ma}_\Psi(K)$ et de $\widehat\shu(\Psi)_\Omega=\cup_{\Omega'\in\Omega}\,\widehat\shu(\Psi)_{\Omega'}$.

\medskip
\par 3) On sait que $G$ et $\g U^{ma}_\Psi(K)$ s'injectent dans $\g G^{pma}(K) $ (\ref{3.3}a, \ref{3.6} et \ref{3.13}). On note donc $U_\Omega^{pm}(\Psi)=G\cap U_\Omega^{ma}(\Psi)$. On a aussi $U_\Omega^{pm}(\Psi)=U^+\cap U_\Omega^{ma}(\Psi)$ puisque $U^+=G\cap U^{ma+}$ (\ref{3.17}). Pour un filtre on a $U_\Omega^{pm}(\Psi)=\cup_{\Omega'\in\Omega}\,U_{\Omega'}^{pm}(\Psi)$. Bien s\^ur $U_\Omega^{pm+}:=U_\Omega^{pm}(\Delta^+)$ contient $U_\Omega^{+}$ (et $U_\Omega^{nm-}:=U_\Omega^{nm}(\Delta^-)\supset U_\Omega^{-}$).

\medskip
\par 4) {\bf Remarques}: a) Le sous-groupe $H=\g T(\sho)$ de $T=\g T(K)$ stabilise les alg\`ebres et modules construits en \ref{4.4} ou ci-dessus. Ainsi $H$ normalise $U^{ma}_\Omega(\Psi)$ et $U^{pm}_\Omega(\Psi)$.

\par b) Comme $\widehat\shu(\Psi)_\Omega$ est une sous-alg\`ebre de $\widehat\shu^p_\Omega$ et de $\widehat\shu_K(\Psi)$, il est clair que les groupes $U_\Omega^{ma}(\Psi)$ et $U_\Omega^{pm}(\Psi)$ stabilisent $\widehat{\shu}^p_\Omega$ et $\widehat{\g g}^p_\Omega$ pour l'action adjointe de $U^{ma}_\Psi(K)$ sur $\widehat\shu^p_K$, \cf \ref{3.7}.3.

\par c) Pour $\alpha\in\Phi$ et $\Psi=\{\alpha\}$, le groupe $U_\Omega^{ma}(\Psi)$ est \'egal \`a $U_{\alpha,\Omega}\subset U_\alpha\subset G$ et isomorphe \`a $K_{f_\Omega(\alpha)}$ par $x_\alpha:r\mapsto \exp(re_\alpha)$ (d\'efini en \ref{4.2}.7).

\par d) Si $\Omega\subset\Omega'$, on a $U^{ma}_{\Omega}(\Psi)\supset U^{ma}_{\Omega'}(\Psi)$ et $U^{pm}_{\Omega}(\Psi)\supset U^{pm}_{\Omega'}(\Psi)$. Si $\Omega$ est la r\'eunion d'une famille $(\Omega_i)_{i\in N}$ de filtres, alors $f_{\Omega}$ est le sup (pris dans $\widetilde\Lambda$) des $f_{\Omega_i}$ et $\shu(\Psi)_{\Omega}=\cap_{i\in N}\,\shu(\Psi)_{\Omega_i}$. Ainsi $U^{ma}_{\Omega}(\Psi)=\cap_{i\in N}\,U^{ma}_{\Omega_i}(\Psi)$ et $U^{pm}_{\Omega}(\Psi)=\cap_{i\in N}\,U^{pm}_{\Omega_i}(\Psi)$.

\par e) Pour $\Omega=\{0\}\subset\A$, on a $\shu_\Omega=\shu_\shs\otimes_\Z\sho$, $\shu(\Psi)_\Omega=\shu_\shs(\Psi)\otimes_\Z\sho$, $U_\Omega^{ma}(\Psi)=\g U^{ma}_\Psi(\sho)$.

\par f)  $U_\Omega^{ma}(\Psi)$ et $U_\Omega^{pm}(\Psi)$ ne d\'ependent que de $cl(\Omega)$ (cela pourrait \^etre faux pour $cl^{si}(\Omega)$ ou $cl^\#(\Omega)$).

 \begin{enonce*}{\quad5) Lemme}
Soient $\Psi'\subset\Psi\subset\Delta^+$ des sous-ensembles clos de racines.

\par a) $ U^{ma}_\Omega({\Psi'})$ (resp. $ U^{pm}_\Omega({\Psi'})$) est un sous-groupe  de $U^{ma}_\Omega({\Psi})$ (resp. $ U^{pm}_\Omega({\Psi})$).

\par b) Si $\Psi\setminus\Psi'$ est \'egalement clos, alors on a des d\'ecompositions uniques $ U^{ma}_\Omega({\Psi})= U^{ma}_\Omega({\Psi'}). U^{ma}_\Omega({\Psi\setminus\Psi'})$ et $ U^{pm}_\Omega({\Psi})= U^{pm}_\Omega({\Psi'}). U^{pm}_\Omega({\Psi\setminus\Psi'})$.

\par c) Si $\Psi'$ est un id\'eal de $\Psi$, alors $ U^{ma}_\Omega({\Psi'})$ (resp. $ U^{pm}_\Omega({\Psi'})$)  est un sous-groupe distingu\'e de  $ U^{ma}_\Omega({\Psi})$ (resp. $ U^{pm}_\Omega({\Psi})$) et on a un produit semi-direct si, de plus, $\Psi\setminus\Psi'$ est clos.

\par d) Si  $\alpha\in\Delta^+$ est une racine simple, on a $ U^{ma+}_\Omega=U_{\alpha,\Omega}\ltimes U^{ma}_\Omega(\Delta^+\setminus\{\alpha\})$ et $ U^{pm+}_\Omega=U_{\alpha,\Omega}\ltimes  U^{pm}_\Omega(\Delta^+\setminus\{\alpha\})$. Les groupes $ U^{ma}_\Omega(\Delta^+\setminus\{\alpha\})$ et  $ U^{pm}_\Omega(\Delta^+\setminus\{\alpha\})$ sont normalis\'es par $HU_\Omega^{(\alpha)}$.
\end{enonce*}

\begin{proof} Le a) est clair. Pour le b) et le c) on a des d\'ecompositions analogues dans $\g U^{ma}_\Psi(K)$ (\ref{3.3}) et, par exemple, $U^{ma}_\Omega(\Psi')=\g U^{ma}_{\Psi'}(K)\cap\widehat\shu^p_\Omega$, d'o\`u les r\'esultats par intersection (puisque $\shu^p_\Omega$ est gradu\'ee). La premi\`ere partie de d) r\'esulte de c). On sait donc que $U^{ma}_\Omega(\Delta^+\setminus\{\alpha\})$ et $U^{pm}_\Omega(\Delta^+\setminus\{\alpha\})$ sont normalis\'es par $H$ et $U_{\alpha,\Omega}$. Le m\^eme raisonnement avec $\Delta^+$ remplac\'e par $s_\alpha(\Delta^+)=\Delta^+\cup\{-\alpha\}\setminus\{\alpha\}$ montre qu'ils sont aussi normalis\'es par $U_{-\alpha,\Omega}$, d'o\`u la conclusion.
\end{proof}

\par 6) On a vu en \ref{3.5} que $\g U^{ma}_{\Delta^+\setminus\{\alpha\}}(K)$ est normalis\'e par $G^{(\alpha)}=\g A_\alpha^Y(K)=\langle T,U_\alpha,U_{-\alpha}\rangle$. Cela signifie en particulier que $\g U_{-\alpha}\ltimes\g U^{ma}_{\Delta^+\setminus\{\alpha\}}=s_\alpha(\g U^{ma}_{\Delta^+})$. On peut donc ainsi d\'efinir dans $\g G^{pma}$ un groupe $\g U^{ma}(w\Delta^+)$ pour tout $w\in W^v$. 

\medskip
\par 7) Dans toutes les notations pr\'ec\'edentes un $+$ (resp. un $-$) peut remplacer $\Psi$ quand $\Psi=\Delta^+$ (resp. $\Psi=\Delta^-$).

\par On consid\`ere aussi le groupe de Kac-Moody n\'egativement maximal $\g G^{nma}$  construit comme $\g G^{pma}$ en \'echangeant $\Delta^+$ et $\Delta^-$. Plus g\'en\'eralement on peut changer $p$ en $n$ et $\pm$ en $\mp$ dans ce qui pr\'ec\`ede pour obtenir des groupes similaires (avec des propri\'et\'es similaires) dans le cas n\'egatif.

\begin{prop}\label{4.6} Soit $\Omega$ un filtre de parties de $\A$. Il y a trois sous-groupes de $G$ associ\'es \`a $\Omega$ et ind\'ependants du choix de $\Delta^+$ dans sa classe de $W^v-$conjugaison.

\par 1) Le groupe $U_\Omega$ (engendr\'e par tous les $U_{\alpha,\Omega}$) est \'egal \`a $U_\Omega^{}=U_\Omega^{-}.U_\Omega^{+}.N_\Omega^{u}=U_\Omega^{+}.U_\Omega^{-}.N_\Omega^{u}$.

\par 2) Le groupe $U_\Omega^{pm}$ (engendr\'e par  les groupes $U_\Omega^{pm+}$ et $U_\Omega$) est \'egal \`a $U_\Omega^{pm}=U_\Omega^{pm+}.U_\Omega^{-}.N_\Omega^{u}$.

\par 3) Sym\'etriquement le groupe $U_\Omega^{nm}$ (engendr\'e par  les groupes $U_\Omega^{nm-}$ et $U_\Omega$) est \'egal \`a $U_\Omega^{nm}=U_\Omega^{nm-}.U_\Omega^{+}.N_\Omega^{u}$.

\par 4) On a:
$$
\begin{array}{ll}
i)\ \ \,U_\Omega\cap N=N_\Omega^u & ii)\ U_\Omega^{pm}\cap N=N_\Omega^u\\
iii)\ U_\Omega\cap (N.U^\pm)=N_\Omega^u.U^\pm _\Omega & iv)\ U_\Omega^{pm}\cap (N.U^+)=N_\Omega^u.U_\Omega^{pm+}\\
v)\ \ \,U_\Omega\cap U^\pm =U^\pm _\Omega & vi)\ U_\Omega^{pm}\cap U^+=U_\Omega^{pm+}
\end{array}
$$ et sym\'etriquement pour $U_\Omega^{nm}$.

\end{prop}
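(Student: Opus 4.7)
The strategy is the classical Bruhat--Tits one, adapted to the Kac--Moody setting: show that the candidate product sets on the right-hand sides are stable under left- and right-multiplication by each generator $U_{\alpha,\Omega}$, $\alpha\in\Phi$ (and $U_\beta\cap U^{pm+}_\Omega$ for suitable $\beta$ in part~2). Let me denote $P=U_\Omega^{-}.U_\Omega^{+}.N_\Omega^{u}$; since $P$ contains all the $U_{\alpha,\Omega}$ by construction and $N^u_\Omega\supset N^{(\alpha)}_\Omega$, it will suffice to show $P$ is a subgroup, and the reverse inclusion $U_\Omega\subset P$ is then automatic.

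The two ingredients I would use are: (i) the prenilpotent commutation relation (KMT3), combined with the concavity of $f_\Omega$ (that is, $f_\Omega(p\alpha+q\beta)\leq pf_\Omega(\alpha)+qf_\Omega(\beta)$), which guarantees that for a prenilpotent pair $\{\alpha,\beta\}\subset\Phi$ one has $(U_{\alpha,\Omega},U_{\beta,\Omega})\subset \prod_{\gamma=p\alpha+q\beta\in\,]\alpha,\beta[}U_{\gamma,\Omega}$; and (ii) the rank-one decomposition of Lemma~\ref{4.3}.a--c giving $U^{(\alpha)}_\Omega=U_{-\alpha,\Omega}.U_{\alpha,\Omega}.N^{(\alpha)}_\Omega=U_{\alpha,\Omega}.U_{-\alpha,\Omega}.N^{(\alpha)}_\Omega$ with $N^{(\alpha)}_\Omega\subset N^u_\Omega$. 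Using (i), any commutator of two elements of $U^+_\Omega$ (resp.\ $U^-_\Omega$) lies again in $U^+_\Omega$ (resp.\ $U^-_\Omega$); using (ii) and the fact that $N_\Omega^{(\alpha)}$ normalizes each $U_{\beta,\Omega}$ with $\beta\neq\pm\alpha$ (which in turn rests on the known action of $\nu(N^{(\alpha)}_\Omega)$ being either trivial on $\A$ or $r_{\alpha,k}$ fixing $\Omega$), one swaps an element of $U_{-\alpha,\Omega}$ past an element of $U^+_\Omega$ at the cost of additional factors in $U^-_\Omega$ and $N^u_\Omega$. This yields the two equivalent orderings $U^-_\Omega.U^+_\Omega.N^u_\Omega=U^+_\Omega.U^-_\Omega.N^u_\Omega$ and proves part~1).

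For part~2), the same argument works with $U^+_\Omega$ replaced by $U^{pm+}_\Omega$, provided one checks that the commutator of an element of $U^{pm+}_\Omega\subset\g U^{ma+}(K)$ with an element of $U_{-\alpha,\Omega}$ (for $\alpha$ simple, say) lies in the product $U^{pm+}_\Omega.U^-_\Omega.N^u_\Omega$. This is a computation in the parabolic $\g P^{pma}_\alpha=\g A^Y_\alpha\ltimes\g U^{ma}_{\Delta^+\setminus\{\alpha\}}$ of \ref{3.5}: decompose an element of $U^{pm+}_\Omega$ according to \ref{4.5}.5.d as $U_{\alpha,\Omega}\ltimes U^{pm}_\Omega(\Delta^+\setminus\{\alpha\})$, use that $U^{pm}_\Omega(\Delta^+\setminus\{\alpha\})$ is normalized by $U_{-\alpha,\Omega}$ (which is \ref{4.5}.5.d combined with the fact that $\g U^{ma}_{\Delta^+\setminus\{\alpha\}}$ is normalized by $\g A^Y_\alpha$), and handle the $U_{\alpha,\Omega}$ factor via (ii). Part~3) is obtained by swapping $+$ and $-$ throughout. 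The delicate point here is to ensure that the intersection $U^{pm}_\Omega(\Delta^+\setminus\{\alpha\})\cap G$ really is $U^{pm}_\Omega(\Delta^+\setminus\{\alpha\})$, which comes from \ref{3.17} and the stability of the $\widehat\shu^p_\Omega$-filtration under the action of~$\g A^Y_\alpha$.

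For part~4), once the three decompositions are established, the intersection statements follow from the uniqueness portion of the Bruhat and Birkhoff decompositions of the refined Tits system $(G,N,U^+,U^-,T,S)$ (Remark after Proposition~\ref{3.16} and~\cite[1.5.4]{Ry-02a}). Concretely, for (i) and (v), write $u\in U_\Omega$ as $u^-u^+n$ with $u^\pm\in U^\pm_\Omega$ and $n\in N^u_\Omega$; if $u\in N$ then $u^-u^+=un^{-1}\in N$ forces $u^-=u^+=1$ by uniqueness in Birkhoff; if $u\in U^+$ a similar argument gives $u^-=1$ and $n\in T\cap N^u_\Omega\subset H$, hence (since $H\cap N^u_\Omega\subset N^u_\Omega$) the claim. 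The assertions (ii), (iv), (vi) for $U^{pm}_\Omega$ are proved identically, replacing $U^+_\Omega$ by $U^{pm+}_\Omega$ and using that $U^{pm+}_\Omega\subset U^{ma+}$ injects into $G^{pma}$, so the Bruhat decomposition of $(G^{pma},N,U^{ma+},U^-,T,S)$ of Proposition~\ref{3.16} applies. The main technical obstacle throughout is to legitimise the ``pro-unipotent'' rearrangements inside $U^{pm+}_\Omega$: one must constantly appeal to the filtration $\g U^{ma+}\supset\g U^{ma}_{\Delta^+(d)}$ of~\ref{3.4} to reduce each intermediate identity to a finite-dimensional computation inside a quotient $\g U^{ma+}/\g U^{ma}_{\Delta^+(d)}$, where the commutation relations of \ref{3.3}.f apply directly.
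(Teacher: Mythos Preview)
Your choice of tools (the decomposition of Lemma~4.5.5.d, the rank-one Lemma~4.3.3, and the refined Tits system of the remark after Proposition~3.16) is exactly that of the paper, which refers to~\cite[prop.~3.4]{GR-08}; your treatment of parts~2), 3) and~4) is essentially the intended argument.

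There is, however, a genuine gap in your direct approach to part~1). You propose to swap $U_{-\alpha,\Omega}$ past an element of $U^+_\Omega$ using (KMT3) and the rank-one relation, but this cannot be carried out as written. First, $U^+_\Omega=U_\Omega\cap U^+$ is defined as an intersection and is in general strictly larger than the group $U^{++}_\Omega$ generated by the positive $U_{\beta,\Omega}$ (see~4.12.3.a), so a general element of $U^+_\Omega$ has no product expression in root groups to which (KMT3) could apply; and even inside $U^{++}_\Omega$, a pair $\{-\alpha,\beta\}$ with $\alpha$ simple and $\beta\in\Phi^+$ is typically \emph{not} prenilpotent in the Kac-Moody setting, so (KMT3) gives nothing. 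Second, your assertion that $N^{(\alpha)}_\Omega$ normalizes each $U_{\beta,\Omega}$ for $\beta\ne\pm\alpha$ is false: a nontrivial element of $N^{(\alpha)}_\Omega$ has $\nu^v$-image $s_\alpha$ and sends $U_{\beta,\Omega}$ to $U_{s_\alpha\beta,\Omega}$, not to itself. What is true, and what actually drives the proof, is the statement you correctly cite for part~2): $HU^{(\alpha)}_\Omega$ normalizes $U^{pm}_\Omega(\Delta^+\setminus\{\alpha\})$ and $U^{nm}_\Omega(\Delta^-\setminus\{-\alpha\})$ (Lemma~4.5.5.d and its negative analogue).

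The fix is to reverse the logical order. Run your part~2) computation first in its symmetric ``pm/nm'' form: combining 4.5.5.d with 4.3.3 shows that $U^{pm+}_\Omega\cdot U^{nm-}_\Omega\cdot N^u_\Omega$ is unchanged when $\Delta^+$ is replaced by $s_\alpha\Delta^+$, hence by any $w\Delta^+$ (this is precisely the independence assertion in the proposition and Remark~c after it). From this $W^v$-invariance one deduces that the product is a group containing $U_\Omega$, $U^{pm}_\Omega$ and $U^{nm}_\Omega$; the Birkhoff uniqueness argument you give for part~4) then identifies its intersections with $U^+$, $U^-$ and $N$, from which parts~1)--3) follow (e.g.\ for 1): if $g\in U_\Omega$ decomposes as $u^+u^-n$ via part~2), then $u^-,n\in U_\Omega$ force $u^+\in U_\Omega\cap U^+=U^+_\Omega$).
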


\begin{proof} On refait la preuve de \cite[prop. 3.4]{GR-08} en utilisant \ref{4.5}.5, \ref{4.3}.3 et la remarque \ref{3.16}.
\end{proof}

\begin{remas*} a) Le groupe $H$ normalise $U_\Omega$, $U_\Omega^+$, $U_\Omega^-$, $N_\Omega^u$, $U_\Omega^{pm+}$, $U_\Omega^{nm-}$, $U_\Omega^{pm}$ et $U_\Omega^{nm}$. Le groupe $N_\Omega^{min}=HN_\Omega^{u}$ est contenu dans le fixateur $\widehat{N}_\Omega$ de $\Omega$. On note $P_\Omega^{min}=HU_\Omega^{}$, $P_\Omega^{pm}=HU_\Omega^{pm}$ et $P_\Omega^{nm}=HU_\Omega^{nm}$. Ces trois groupes v\'erifient les m\^emes d\'ecompositions que 1), 2) et 3) ci-dessus en rempla\c{c}ant $N^u_\Omega$ par $N_\Omega^{min}$.  On verra en \ref{5.11}.2 que $P_\Omega^{pm}=P_\Omega^{nm}$ quand $\Omega$ est un point sp\'ecial ou quand $\Omega$ est une facette sph\'erique et la valuation est discr\`ete.

\par b)  L'\'egalit\'e $U^{+}_\Omega=U^{pm+}_\Omega$ est \'equivalente \`a $U^{pm}_\Omega=U^{}_\Omega$; on verra en \ref{5.7}.3 qu'elle n'est pas toujours satisfaite.
 En dehors du cas fini ci-dessous, il ne semble pas facile de d\'eterminer quand cette \'egalit\'e a lieu.
 Voir aussi \cite[Remark 3.4]{GR-12} pour le cas affine non tordu.

\par c) On montre \'egalement au cours de la preuve de \ref{4.6} que $U_\Omega^{pm+}.U_\Omega^{nm-}.N_\Omega^{u}$ ou $U_\Omega^{nm-}.U_\Omega^{pm+}.N_\Omega^{u}$ ne d\'epend pas du choix de $\Delta^+$ dans sa classe de $W^v-$conjugaison.

\par d) Dans le cas classique des groupes r\'eductifs, on a $G= G^{pma}=G^{nma}$, $U^{++}_\Omega=U^{+}_\Omega=U^{ma+}_\Omega=U^{pm+}_\Omega$ et $U^{--}_\Omega=U^{-}_\Omega=U^{ma-}_\Omega=U^{nm-}_\Omega$. De plus $U^{}_\Omega$ ($=U^{pm}_\Omega=U^{nm}_\Omega$) est le m\^eme que le groupe d\'efini en \cite[6.4.2 et 6.4.9]{BtT-72}. Le groupe $P^{min}_\Omega$ est not\'e $P_\Omega$ par Bruhat et Tits.
\end{remas*}

\begin{prop}\label{4.7} D\'ecomposition d'Iwasawa

\par Supposons $\Omega$ \'etroit, alors $G=U^+NU_\Omega$.

\par Supposons de plus $\Omega$ presque-ouvert, alors l'application naturelle de $\nu(N)=W_{Y\Lambda}=N/H$ sur $U^+\backslash G/HU_\Omega$ est bijective
\end{prop}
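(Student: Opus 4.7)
Le plan est de suivre la strat\'egie classique de d\'ecomposition d'Iwasawa \`a la Bruhat-Tits (\cite[7.3.1]{BtT-72}), telle qu'adapt\'ee au cadre Kac-Moody dans \cite[prop.~3.5]{Ru-06} et \cite[prop.~3.5]{GR-08}, en exploitant la structure de $BN$-paire raffin\'ee sur $G$ (remarque suivant \ref{3.17}), la d\'ecomposition $U_\Omega = U_\Omega^- U_\Omega^+ N_\Omega^u$ de la proposition \ref{4.6}, et une r\'eduction au rang un dans chaque groupe r\'eductif $\g A_\alpha^Y$ (\cf \ref{3.5}).

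Pour l'existence $G = U^+ N U_\Omega$, je pars d'un \'el\'ement $g \in G$ et de sa d\'ecomposition de Birkhoff $g = u^+ n u^-$ avec $u^+ \in U^+$, $n \in N$ et $u^- \in U^-$; il s'agit de r\'e\'ecrire $u^-$ dans $U_\Omega$ au prix d'une modification de $n$ et $u^+$. Tout $u^- \in U^-$ appartient \`a un sous-groupe unipotent $U_\Psi$ associ\'e \`a une partie nilpotente finie $\Psi \subset \Phi^-$ (\cf \ref{1.1}.5), ce qui autorise une r\'ecurrence finie sur le nombre de facteurs \'el\'ementaires. L'ingr\'edient fondamental est le cas du rang un: pour $x_{-\alpha}(r) \in U_{-\alpha}$ avec $\omega(r) < f_\Omega(-\alpha)$, l'identit\'e dans $\g A_\alpha^Y(K)$
$$x_{-\alpha}(r) = x_\alpha(-r^{-1}) \cdot \tilde s_\alpha(r^{-1}) \cdot x_\alpha(-r^{-1})$$
place $x_{-\alpha}(r)$ dans $U_{\alpha,\Omega} \cdot N^{(\alpha)} \cdot U_{\alpha,\Omega}$: l'hypoth\`ese \'etroite (correctement interpr\'et\'ee dans $\widetilde\Lambda$ selon les trois cas permis, $0$, $0^+$, ou $\inf\Lambda_{>0}$ dans le cas discret) entra\^{\i}ne $-\omega(r) \geq f_\Omega(\alpha)$, donc $x_\alpha(-r^{-1}) \in U_{\alpha,\Omega} \subset U_\Omega$, d'o\`u $x_{-\alpha}(r) \in U^+ \cdot N \cdot U_\Omega$.

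Le principal obstacle est de cha\^{\i}ner ces r\'eductions au rang un: apr\`es chaque \'etape il faut pousser les nouveaux facteurs $x_\alpha(\cdot) \in U^+$ \`a travers les facteurs $x_{-\beta}(\cdot)$ restants gr\^ace aux formules de commutation (KMT3) de \ref{1.5} (finies pour les paires pr\'enilpotentes), puis reconjuguer par les \'el\'ements $\tilde s_\alpha(\cdot) \in N$ accumul\'es, ce qui d\'eplace les facteurs n\'egatifs vers d'autres conjugu\'es de $\g U^{ma-}$ (\cf remarque \ref{3.12}.2). On argumente par r\'ecurrence descendante sur une partie nilpotente contenant les facteurs de $u^-$, ou par un contr\^ole sur le degr\'e total dans $\widehat\shu^n$ (\cf \ref{3.4}), pour garantir la terminaison de la proc\'edure et placer finalement tous les facteurs restants dans $U_\Omega$.

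Pour la seconde partie, la surjectivit\'e de $\nu(N) \to U^+\backslash G / HU_\Omega$ d\'ecoule directement de la premi\`ere. Pour l'injectivit\'e sous l'hypoth\`ese presque-ouverte, je consid\`ere deux repr\'esentants $u_1^+ n_1 h_1 v_1 = u_2^+ n_2 h_2 v_2$ avec $u_i^+ \in U^+$, $n_i \in N$, $h_i \in H$ et $v_i \in U_\Omega$. L'hypoth\`ese presque-ouverte $f_\Omega(\alpha) + f_\Omega(-\alpha) > 0$ pour tout $\alpha \in \Phi$ donne, via le lemme \ref{4.3}.3.b, l'inclusion $N_\Omega^{(\alpha)} \subset H$, donc $N_\Omega^u \subset H$; combin\'ee \`a \ref{4.6}.4.i ($U_\Omega \cap N = N_\Omega^u$), ceci fournit $H U_\Omega = H \cdot U_\Omega^- U_\Omega^+$ et $N \cap H U_\Omega = H$. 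En substituant cette d\'ecomposition dans l'\'egalit\'e de d\'epart et en identifiant suivant la d\'ecomposition de Birkhoff $G = \coprod_{w \in W^v} U^+ n U^-$ (dont l'unicit\'e modulo $H$ r\'esulte de la $BN$-paire raffin\'ee), on obtient $\nu(n_1) = \nu(n_2)$, d'o\`u l'injectivit\'e voulue.
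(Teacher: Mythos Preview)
Your existence argument has a genuine gap that is specific to the Kac-Moody setting. The claim that every $u^-\in U^-$ lies in some $\g U_\Psi$ with $\Psi\subset\Phi^-$ finite nilpotent is false: in type $\tilde A_1$ the pair $\{-\alpha_0,-\alpha_1\}$ is not prenilpotent, since no $w\in W^v$ sends both simple roots into $\Phi^-$. Likewise, the commutation relations (KMT3) that you invoke to push an $x_{\gamma_k}(\cdot)\in U^+$ leftwards through the remaining $x_{-\gamma_j}(\cdot)$ are only available for prenilpotent pairs, and your procedure readily produces non-prenilpotent ones (e.g.\ $\{\alpha_1+\delta,-\alpha_1\}$ in $\tilde A_1$); so neither the recursion nor its termination is well-defined. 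The proof in \cite[prop.~3.6]{GR-08} (following \cite[7.3.1]{BtT-72}) avoids all this: one shows instead that $U^+NU_\Omega$ is stable under left multiplication by each $\tilde s_i$. Writing $U^+=(U^+\cap s_iU^+s_i^{-1})\cdot U_{\alpha_i}$ and using that $U_{-\alpha_i}$ normalises the first factor (cf.~\ref{3.5} or \ref{4.5}.5d), this reduces to $U_{-\alpha_i}\,n\,U_\Omega\subset U^+NU_\Omega$; conjugating $x_{-\alpha_i}(r)$ through $n$ yields a \emph{single} $x_\beta(r')$ with $\beta=-\nu^v(n)^{-1}\alpha_i$, and your rank-one flip (which is correct, narrowness giving $x_{-\beta}(-r'^{-1})\in U_{-\beta,\Omega}$) handles the case $\omega(r')<f_\Omega(\beta)$, while $n\,x_{-\beta}(-r'^{-1})\,n^{-1}\in U_{\alpha_i}\subset U^+$. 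No chaining is required.

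For injectivity your ingredients are right ($N_\Omega^u\subset H$, whence $HU_\Omega\subset U^+TU^-$; Birkhoff uniqueness), but the execution misses a step. Substituting $HU_\Omega=HU_\Omega^\mp U_\Omega^\pm$ into $u_1^+n_1h_1v_1=u_2^+n_2h_2v_2$ does \emph{not} yield expressions in the form $U^+nU^-$: a factor $U_\Omega^+$ remains trapped to the right of $n_i$, and moving it across $n_i$ conjugates it into $n_iU^+n_i^{-1}\ne U^+$, so Birkhoff uniqueness cannot be read off directly. The remedy is to first conjugate by $n_2$: since \'etroitesse and presque-ouverture are preserved by $\nu(N)$ (by \ref{4.10}.1 one has $f_{\nu(n)\Omega}(\alpha)+f_{\nu(n)\Omega}(-\alpha)=f_\Omega(w^{-1}\alpha)+f_\Omega(-w^{-1}\alpha)$) and $n_2HU_\Omega n_2^{-1}=HU_{\nu(n_2)\Omega}$, one may replace $\Omega$ by $\nu(n_2)\Omega$ and assume $n_2=1$. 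Then $n_1\in U^+HU_\Omega=U^+HU_\Omega^+U_\Omega^-\subset U^+TU^-$, so Birkhoff uniqueness forces $n_1\in T$; writing $n_1=u^+hv$ with $v\in U_\Omega$ one gets $v\in U_\Omega\cap B^+$, hence (using $U^-\cap B^+=\{1\}$ and \ref{4.6}.4) $v\in U_\Omega^+\cdot N_\Omega^u$, and finally $n_1\in U^+\cap T\cdot H=H$.
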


\begin{proof} Voir la preuve de \cite[prop. 3.6]{GR-08}
\end{proof}

\begin{remas*}

\par 1) On a aussi $G=U^-NU_\Omega$ et, de la m\^eme fa\c{c}on avec les groupes maximaux $G^{pma}=U^{ma+}NU_\Omega$ et $G^{nma}=U^{ma-}NU_\Omega$.

\par 2) Ainsi quand $\Omega$ est \'etroit, tout sous-groupe $P$ de $G$ contenant $U_\Omega$ peut s'\'ecrire $P=(P\cap U^+N).U_\Omega$. Si de plus $P\cap U^+N=U^+_P.N_P$ avec $U^+_P=P\cap U^+$ et $N_P=P\cap N$ on a $P=U^+_P.N_P.U_\Omega$ et, si $N_P\subset\widehat N_\Omega$,  $P=U^+_P.U^-_\Omega.N_P$  \cf \ref{4.10}.

\end{remas*}

 \subsection{Repr\'esentations des groupes associ\'es \`a $\Omega$ }\label{4.8}

\par 1) Supposons que $\Omega$ est un ensemble. D'apr\`es \ref{4.5}.4b
le groupe $U_\Omega^{ma+}$ stabilise $\widehat\shu^p_\Omega$ et  $\widehat{\g g}^p_\Omega$ pour l'action de $U^{ma+}$ sur $\widehat\shu^p_K$. Le groupe $U^+$ stabilise $\shu_K$ et $\g g_K$ pour l'action adjointe de $G$ et $U^+$ \cf \ref{2.1}.4. Les deux actions co\"{\i}ncident sur $U^+\subset G\cap U^{ma+}$. Donc $U_\Omega^{pm+}=G\cap U_\Omega^{ma+}=U^+\cap U_\Omega^{ma+}$ (\ref{4.5}.3) stabilise $\shu_\Omega=\widehat\shu^p_\Omega\cap\shu_K$ et $\g g_\Omega=\widehat{\g g}^p_\Omega\cap\g g_K$ pour l'action adjointe de $G$.

\par De m\^eme $U_\Omega^{nm-}$ stabilise $\shu_\Omega$ et $\g g_\Omega$. Ainsi $\shu_\Omega$ et $\g g_\Omega$ sont stables par $U_{\alpha,\Omega}$ pour tout $\alpha\in\Phi$ et donc par $U_\Omega$.

\par Finalement on sait que $\shu_\Omega$ et $\g g_\Omega$ sont stables par les groupes $U_\Omega$, $U_\Omega^{pm}$, $U_\Omega^{nm}$ et $H$ (qui normalise les trois autres) (\ref{4.5}.4a et \ref{4.6}a). Ce r\'esultat est encore vrai si $\Omega$ est un filtre, car alors tous ces objets sont r\'eunion filtrante des objets correspondant \`a $\Omega'\in\Omega$.

\par 2) Soit $M=L_\Z(\lambda)$ un $\shu-$module \`a plus haut poids comme en \ref{4.4}.3. Supposons dans un premier temps que $\Omega$ est un ensemble. Comme $M_\Omega$ est un $\widehat\shu^p_\Omega-$module, le groupe $U_\Omega^{ma+}\subset(\widehat\shu^p_\Omega)^*$ stabilise $M_\Omega$ pour l'action de $G^{pma}$ \cf \ref{3.7}. Ce module $M_\Omega$ est en particulier stable par $U_\Omega^{pm+}$ et les groupes $U_{\alpha,\Omega}$ pour $\alpha\in\Phi^+$.

\par Comme $\widehat{M}^n_\Omega$ est un $\widehat\shu^n_\Omega-$module, le groupe $U_\Omega^{ma-}$ stabilise $\widehat{M}^n_\Omega$, pour la repr\'esentation de $U^{ma-}$ sur $\widehat{M}^n_K$ (\cf \ref{3.7}) qui induit sur $U^-$ la m\^eme action que $G$ (\ref{3.14}.3). Comme $G$ stabilise $M_K$, le groupe $U_\Omega^{nm-}=G\cap U_\Omega^{ma-}=U^-\cap U_\Omega^{ma-}$ stabilise $M_\Omega=M_K\cap\widehat{M}^n_\Omega$.

\par Ainsi le sous-module $M_\Omega$ de $M_K$ est stable par l'action de $U_\Omega^{pm+}$ et $U_\Omega^{nm-}$. Ce r\'esultat est encore vrai quand $\Omega$ est un filtre.

\par Le groupe $U_\Omega$ est engendr\'e par les $U_{\alpha,\Omega}$ pour $\alpha\in\Phi$ qui sont dans $U_\Omega^{pm+}$ ou $U_\Omega^{nm-}$. Ainsi $M_\Omega$ est stable par $U_\Omega^{}$, $U_\Omega^{\pm}$, $U_\Omega^{pm}$ et $U_\Omega^{nm}$. On sait aussi qu'il est stable par $H$.

\par 3) Ces r\'esultats sont encore vrais, mutatis mutandis, pour des modules \`a plus bas poids.

\begin{defi}\label{4.9}  Si $\Omega$ est un ensemble, on consid\`ere le sous-groupe $\widetilde{P}_\Omega$ de $G$ form\'e des \'el\'ements stabilisant $\shu_\Omega$ et $M_\Omega$ pour tout module $M$ \`a plus haut ou plus bas poids (comme en \ref{4.4}.3). On note $\widetilde{N}_\Omega=N\cap\widetilde{P}_\Omega$.

\end{defi}

\par D'apr\`es \ref{4.8} le groupe $\widetilde{P}_\Omega$ contient $U_\Omega^{}$, $U_\Omega^{\pm}$, $U_\Omega^{pm}$,  $U_\Omega^{nm}$ et $H$. D'apr\`es la remarque \ref{4.7}.2 il est utile de d\'eterminer $\widetilde{P}_\Omega\cap U^\pm N$, voir \ref{4.11}.

\par On note $\shp$ la r\'eunion de $\Delta$ et de tous les ensembles de poids des modules $M$ ci-dessus. Le groupe $\widetilde{P}_\Omega$ ne d\'epend que de $cl_\shp(\Omega)$ (et non seulement de $cl(\Omega)$).

 \subsection{Conjugaison par $N$}\label{4.10}

 \par Soit $n\in N$, on peut \'ecrire $n=n_0t$ avec $n_0$ dans $N_0=\g N(\Z)$ (le groupe engendr\'e par les $m(x_\alpha(\pm1))$ qui fixe $0$) \cf \ref{4.2}.4, $\nu^v(n)=w\in W^v$ et $t\in T$.

 \par 1) Pour $M=L_K(\lambda)$ un module \`a plus haut ou plus bas poids ou $M=\g g_K$ ou $M=\shu_K$, $\mu\in X$ et $r\in \R$, on a $nM_{\mu,r}=M_{w\mu,r+\omega(\mu(t))}$. Consid\'erons \'egalement l'action sur $\A$: $nD(\mu,r)=n_0D(\mu,r+\omega(\mu(t)))=D(w\mu,r+\omega(\mu(t)))$. Donc $r\geq{}f_\Omega(\mu)\Leftrightarrow\Omega\subset D(\mu,r)\Leftrightarrow\nu(n).\Omega\subset D(w\mu,r+\omega(\mu(t)))\Leftrightarrow r+\omega(\mu(t))\geq{}f_{\nu(n).\Omega}(w\mu)$ et ainsi $f_{\nu(n).\Omega}(w\mu)=f_\Omega(\mu)+\omega(\mu(t))$. Finalement $nM_\Omega =M_{\nu(n)\Omega}$.

 \par Ainsi $\widetilde{P}_{\nu(n).\Omega}=n\widetilde{P}_\Omega n^{-1}$; en particulier le fixateur $\widehat{N}_\Omega$ de $\Omega$ dans $N$ (pour l'action $\nu$) normalise $\widetilde{P}_\Omega$. On a vu en \ref{4.3}.4 que $N_\Omega^{min}=HN_\Omega^{u}\subset\widehat N_\Omega$.

 \par 2) On a aussi $nU_{\alpha, r}n^{-1}=U_{w\alpha,r+\omega(\mu(t))}$, donc $nU_\Omega n^{-1}=U_{\nu(n).\Omega}$, $nN^u_\Omega n^{-1}=N^u_{\nu(n).\Omega}$ et $\widehat{N}_\Omega$ normalise les groupes $U_\Omega$, $P_\Omega$, $N^u_\Omega$ et $N_\Omega^{min}$.

 \par 3) D'apr\`es 1) on a $Ad(n)\shu_\Omega(\Psi)=\shu_{\nu(n).\Omega}(w\Psi)$ pour $\Psi\subset\Delta^+$. Pour $n=m(x_\alpha(\pm1))$ ($\alpha$ racine simple) et $\Psi_\alpha=\Delta^+\cap s_\alpha(\Delta^+)=\Delta^+\setminus\{\alpha\}$, on peut passer aux compl\'et\'es: $Ad(n).\widehat\shu^p_\Omega(\Psi_\alpha)=\widehat\shu^p_{\nu(n).\Omega}(\Psi_\alpha)$. Donc $nU_\Omega^{ma}(\Psi_\alpha)n^{-1}=U_{\nu(n).\Omega}^{ma}(\Psi_\alpha)$; plus pr\'ecis\'ement pour $\beta\in\Psi_\alpha$, $x\in\g g_{\beta\Z}$ et $\lambda\in K$, $n.[\exp]\lambda x.n^{-1}$ est une exponentielle tordue associ\'ee \`a $\lambda'=\lambda\beta(t)\in K$ et $Ad(n_0).x\in\g g_{s_\alpha(\beta)\Z}$ et cela passe aux produits infinis.

 \par D'apr\`es 2) et \ref{4.5}.6 on a $nU_\Omega^{ma}(\Delta^+)n^{-1}=U_{\nu(n).\Omega}^{ma}(w\Delta^+)$ si $w=s_\alpha$. Ce r\'esultat s'\'etend alors au cas g\'en\'eral pour $w$. Par intersection avec $G$, on a $nU_\Omega^{pm}(\Delta^+)n^{-1}=U_{\nu(n).\Omega}^{pm}(w\Delta^+)$.

 \par D'apr\`es \ref{4.6}, on a $nU_\Omega^{pm}n^{-1}=U_{\nu(n).\Omega}^{pm}$ et $nU_\Omega^{nm}n^{-1}=U_{\nu(n).\Omega}^{nm}$. En particulier $\widehat{N}_\Omega$ normalise $U_\Omega^{pm}$ et $U_\Omega^{nm}$.

 \begin{lemm}\label{4.11} Si $\Omega$ est un ensemble, $\widetilde{P}_\Omega\cap U^+ N=U_\Omega^{pm+}.\widetilde{N}_\Omega$ et $\widetilde{P}_\Omega\cap U^- N=U_\Omega^{nm-}.\widetilde{N}_\Omega$. De plus $\widetilde{N}_\Omega$ est le stabilisateur (dans $N$ pour l'action $\nu$ sur $\A$) du $\shp-$enclos $cl_\shp(\Omega)$ de $\Omega$, il normalise $U_\Omega$, $U_\Omega^{pm+}$, $U_\Omega^{nm-}$,$\cdots$
\end{lemm}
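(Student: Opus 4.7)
My strategy is to first identify $\widetilde N_\Omega$ as the stabilizer of $cl_\shp(\Omega)$ in $N$, derive the normalization statements, and then decompose a general element of $\widetilde P_\Omega \cap U^+N$ by exploiting the asymmetric way $U^+$ and $N$ act on highest-weight vectors. For the characterization, I write $n = n_0 t$ with $\nu^v(n) = w$ and $t \in T$; the formula $nM_{\mu,r} = M_{w\mu, r+\omega(\mu(t))}$ of \ref{4.10}.1, applied to every module $M$ appearing in the definition of $\widetilde P_\Omega$ (and to $\shu_\Omega$), shows that $n \in \widetilde N_\Omega$ is equivalent to $f_\Omega(w\mu) = f_\Omega(\mu) + \omega(\mu(t))$ for every $\mu \in \shp$, i.e.\ to $\nu(n)$ stabilizing $cl_\shp(\Omega)$. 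Combined with the conjugation formulas of \ref{4.10}.2--3, this immediately gives the normalization of $U_\Omega$, $U_\Omega^{pm+}$, $U_\Omega^{nm-}$, etc.\ by $\widetilde N_\Omega$.

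For the main decomposition I take $g \in \widetilde P_\Omega \cap U^+N$, written uniquely as $g = un$ with $u \in U^+$ and $n = n_0 t \in N$, and aim to show successively that $n \in \widetilde N_\Omega$ (whence $u = gn^{-1} \in \widetilde P_\Omega$) and then $u \in U_\Omega^{pm+}$. The observation driving the first step is that $u \in U^+$ fixes every highest-weight vector $v_\lambda$ (since $e_\alpha \cdot v_\lambda = 0$ for all $\alpha \in \Phi^+$); hence for a dominant $\lambda \in X^+$ and $\pi \in K^*$ with $\omega(\pi) = f_\Omega(\lambda)$ one has
\[
g(\pi v_\lambda) \;=\; \pi\lambda(t)\, n_0 v_\lambda \;=\; \pm\,\pi\lambda(t)\, v_{w\lambda},
\]
a pure weight vector of weight $w\lambda$. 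Its membership in $M_\Omega$ forces $f_\Omega(w\lambda) \leq f_\Omega(\lambda) + \omega(\lambda(t))$; the reverse inequality comes from running the same argument with $g^{-1}$ on the highest-weight vector of an appropriate module (or, dually, from lowest-weight modules). Collecting these equalities over all modules in the definition of $\widetilde P_\Omega$ and over $\shu$ yields $n \in \widetilde N_\Omega$.

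For the remaining assertion $\widetilde P_\Omega \cap U^+ \subset U_\Omega^{pm+}$, I expand $u \in U^+ \subset U^{ma+}$ in the normal form $u = \prod_{x \in \shb_{\Delta^+}} [\exp]\lambda_x\, x$ of Proposition \ref{3.2} and prove $\omega(\lambda_x) \geq f_\Omega(pds(x))$ for every $x$. The plan is an induction on the total degree with $\shb_{\Delta^+}$ ordered by increasing degree: if $x$ is the smallest basis element for which the bound fails and $\alpha = pds(x)$, I select a highest-weight module $L(\lambda)$ and a weight vector $v_\mu \in L(\lambda)$ such that acting by $u$ on $\pi_\mu v_\mu$ produces a component on $v_{\mu+\alpha}$ proportional to $\pi_\mu\lambda_x$ modulo contributions from already-controlled basis elements; the preservation of $M_\Omega$ then forces $\omega(\lambda_x) \geq f_\Omega(\alpha)$, so that $[\exp]\lambda_x\, x \in U_\Omega^{pm+}$ can be absorbed and the induction continued. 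The ``$-$'' version of the lemma follows from the same argument applied to $G^{nma}$ and $\widehat\shu^n$, with lowest-weight modules in place of highest-weight ones.

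The genuinely delicate step will be this inductive argument for imaginary roots $\alpha \in \Delta_{im}$: several basis elements of $\g g_\alpha$ then share the same weight, and the twisted exponentials $[\exp]$ do not obey clean multiplicative rules (cf.\ \ref{2.8}--\ref{2.9}), so extracting the individual $\lambda_x$ will require a carefully chosen matrix coefficient in $L(\lambda)$ or in the adjoint module $\widehat\shu^p$, possibly relying on Mitzman's explicit formulas from \ref{2.12} in the affine case. One must also accommodate elements of $\widetilde\Lambda$ of the form $r^+$ when $f_\Omega(\alpha) \notin \Lambda$, which forces working with the ordered monoid $\widetilde\R$ rather than with strict equalities.
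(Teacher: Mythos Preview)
Your overall architecture (characterize $\widetilde N_\Omega$, then split $g=un$ into $n\in\widetilde N_\Omega$ and $u\in U_\Omega^{pm+}$) matches the paper, and your treatment of $\widetilde N_\Omega$ as the stabilizer of $cl_\shp(\Omega)$ is exactly right. But both of your two main steps have gaps that the paper resolves by working with the graded structure of $\shu$ rather than with special vectors of highest-weight modules.

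For the step $n\in\widetilde N_\Omega$: your argument via $v_\lambda$ only yields the equality $f_\Omega(w\lambda)=f_\Omega(\lambda)+\omega(\lambda(t))$ for \emph{dominant} $\lambda$, whereas membership in $\widetilde N_\Omega$ requires it for every $\mu\in\shp$ (all weights of all modules, and all of $\Delta$). Since $w\lambda$ is generally not dominant, your ``reverse inequality from $g^{-1}$'' gives information about $w^{-1}\lambda$, not $w\lambda$, and there is no additivity of $f_\Omega$ to propagate from highest weights to arbitrary ones. The paper's fix is a one-line strengthening of your own observation that $u$ fixes $v_\lambda$: since $u\in U^+$ only raises weights, for \emph{every} weight $\mu$ the component ${}_{w\mu}|un|_\mu$ equals ${}_{w\mu}|n|_\mu$. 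Hence the weight-$w\mu$ projection of $(un)(M_{\mu,\Omega})\subset M_\Omega$ is exactly $n(M_{\mu,\Omega})$, forcing $n(M_{\mu,\Omega})\subset M_{w\mu,\Omega}$ for all $\mu$; this gives $n\in\widetilde N_\Omega$ directly, without ever singling out a highest-weight vector.

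For the step $u\in U_\Omega^{pm+}$: the difficulty you anticipate for imaginary roots is real, and highest-weight modules do not disentangle the several basis elements $e_{\alpha,1},\dots,e_{\alpha,s}$ of $\g g_{\alpha\Z}$ in any uniform way. The paper avoids this entirely by using the \emph{adjoint} action on $\shu_\Omega$. After peeling off the already-controlled factors, one computes the weight-$\alpha$ component of $Ad(u)\binom{h}{n}$ for $h\in Y$ with $\alpha(h)=m>0$: it is $-\sum_{i=1}^s t_{\alpha i}\,e_{\alpha i}\sum_{q<n}\binom{h}{q}\binom{m}{n-q}$. The point is that the products $e_{\alpha i}\binom{h}{q}$ are \emph{distinct} members of the PBW-basis of $\shu_\Z$, so the condition that this component lie in $\shu_{\alpha,\Omega}$ reads off $\omega(t_{\alpha i})\geq f_\Omega(\alpha)$ for each $i$ separately (take $n=m$, $q=0$). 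This works uniformly for all $\alpha\in\Delta^+$, real or imaginary, with no need for Mitzman's formulas or case analysis; it is precisely why $\shu_\Omega$ is built into the definition of $\widetilde P_\Omega$.
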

\par {\bf N.B.} En particulier $\widetilde{N}_\Omega\supset\widehat{N}_\Omega\supset N^{min}_\Omega$.
\begin{proof} On refait la preuve de \cite[3.10]{GR-08} car il y a quelques changements substantiels.

\par a) Soient $n\in N$ et $u\in U^+$ tels que $un\in\widetilde{P}_\Omega$ et $w=\nu^v(n)$. Pour $\shm=M_\Omega$ ou $\g g_\Omega$ ou $\shu_\Omega$, $g\in \widetilde{P}_\Omega$ et $\mu,\mu'\in X$, on d\'efinit $_{\mu'}\vert g\vert_\mu$ comme la restriction de $g$ \`a $\shm_\mu$ suivie de la projection sur $\shm_{\mu'}$ (parall\`element aux autres espaces de poids). Pour tout $\mu\in X$, $_{w\mu}\vert un\vert_\mu=\,_{w\mu}\vert n\vert_\mu$ et $n=\oplus_\mu\,_{w\mu}\vert n\vert_\mu$ (en un sens \'evident), donc $n\in\widetilde{N}_\Omega$. Ainsi $\widetilde{P}_\Omega\cap U^+ N=(\widetilde{P}_\Omega\cap U^+).\widetilde{N}_\Omega$. Il reste \`a d\'eterminer $\widetilde{N}_\Omega$ et $\widetilde{P}_\Omega\cap U^+$ (ainsi que $\widetilde{P}_\Omega\cap U^-$).

\par b) On a vu en \ref{4.8} que $U_\Omega^{pm+}\subset\widetilde{P}_\Omega\cap U^+ $. Inversement soit $u\in U^{ma+}$ stabilisant $\shu_\Omega$ et donc $\widehat\shu^p_\Omega$. On peut \'ecrire $u=\prod_{\alpha\in\Delta^+}\,u_\alpha$ avec les conventions suivantes: l'ordre des facteurs $u_\alpha$ est tel que la hauteur cro\^{\i}t de droite \`a gauche et, pour $\alpha\in\Delta^+$, $u_\alpha=[\exp]t_{\alpha1}e_{\alpha1}.\cdots.[\exp]t_{\alpha s}e_{\alpha s}$ avec $e_{\alpha1},\cdots,e_{\alpha s}$ une base de $\g g_{\alpha\Z}$ et les $t_{\alpha i}$ dans $K$ (si $\alpha\in\Phi^+$ on a $s=1$ et $[\exp]=\exp$). Nous allons montrer que $u\in U_\Omega^{ma+}$ et pour cela que, $\forall\alpha\in\Delta^+$, $\omega(t_{\alpha i})\geq{}f_\Omega(\alpha)$. Par r\'ecurrence on peut supposer que c'est vrai pour $u_{\alpha'}$ \`a droite de $u_\alpha$, alors ces $u_{\alpha'}$ sont dans $U_\Omega^{ma+}$ et stabilisent $\widehat\shu^p_\Omega$, on peut donc les supposer  \'egaux \`a $1$. Donc $u=(\prod^{\beta\not=\alpha}_{ht(\beta)\geq{}ht(\alpha)}\,u_\beta).u_\alpha$ et ainsi $_\alpha\vert u\vert_0=\,_\alpha\vert u_\alpha\vert_0$.

\par Choisissons un \'el\'ement $h\in\shb_0$ (base de $\g h_\Z$) tel que $\alpha(h)=m\not=0$; quitte \`a changer $h$ en $-h$ et donc \`a utiliser une autre base de $\shu_\Z$, on peut supposer $m>0$. Par ailleurs $Ad([\exp]t_{\alpha i}e_{\alpha i})=\sum^\infty_{p=0}\,t_{\alpha i}^p ad(e_{\alpha i}^{[p]})$ et  $Ad(u_\alpha)\left(\begin{array}{c}h \\ n\end{array}\right)$ a pour terme de poids $\alpha$ l'expression $\sum_{i=1}^s\,t_{\alpha i}ad(e_{\alpha i})\left(\begin{array}{c}h \\ n\end{array}\right)=\sum_{i=1}^s\,t_{\alpha i}(e_{\alpha i}\left(\begin{array}{c}h \\ n\end{array}\right)-\left(\begin{array}{c}h \\ n\end{array}\right)e_{\alpha i})=-\sum_{i=1}^s\,t_{\alpha i}e_{\alpha i}(\left(\begin{array}{c}h+\alpha(h) \\ n\end{array}\right)-\left(\begin{array}{c}h \\ n\end{array}\right))=-\sum_{i=1}^s\,t_{\alpha i}e_{\alpha i}(\sum_{q=0}^{n-1}\,\left(\begin{array}{c}h \\ q\end{array}\right)\left(\begin{array}{c}\alpha(h) \\ n-q\end{array}\right))$ et doit \^etre dans $\shu_{\alpha,\Omega}$. Comme les $e_{\alpha i}\left(\begin{array}{c}h \\ q\end{array}\right)$ pour $1\leq i\leq s$ et $q\in\N$ font partie d'une base de $\shu_\Z$, on doit avoir $\omega(t_{\alpha i}\left(\begin{array}{c}\alpha(h) \\ n-q\end{array}\right))\geq f_\Omega(\alpha)$ pour $1\leq i\leq s$ et $q\leq n-1$. Pour $n=m$ et $q=0$ on obtient le r\'esultat cherch\'e $\omega(t_{\alpha i})\geq f_\Omega(\alpha)$

\par c) Reprenons les notations de \ref{4.10}:  $nM_{\mu,r}=M_{w\mu,r+\omega(\mu(t))}$ et $nD(\mu,r)=D(w\mu,r+\omega(\mu(t)))$. Donc $n\in\widetilde{P}_\Omega$ si et seulement si, $\forall\mu\in\shp$, $f_\Omega(\mu)+\omega(\mu(t))=f_\Omega(w\mu)$ c'est-\`a-dire
$nD(\mu,f_\Omega(\mu))=D(w\mu,f_\Omega(w\mu))$; c'est \'equivalent au fait que $n$ stabilise l'ensemble $cl_\shp(\Omega)$.

\par Le groupe $\widetilde{N}_\Omega$  normalise $U_\Omega$, $U_\Omega^{pm+}$, $U_\Omega^{nm-}$,$\cdots$ car ces groupes ne d\'ependent que de $cl_\shp(\Omega)$.
\end{proof}

\begin{exems}\label{4.12}  1) Soit $x$ un point sp\'ecial de $\A$ et $\Omega=\{x\}$, alors $cl_\shp(x)$ est une partie born\'ee de $\{y\in\A\mid\alpha(y)=\alpha(x),\forall\alpha\in\Phi\}=cl(x)$, car tout \'el\'ement de $X$ est combinaison lin\'eaire \`a coefficients dans $\Q^+$ d'\'el\'ements de $\shp$. Donc $\widetilde{N}_x=\widehat{N}_x=N_x^{min}$ (car $\nu^v(N_x^{min})=W^v$ \cf \ref{4.3}). D'apr\`es \ref{4.7} et \ref{4.11}, on a $\widetilde{P}_x=
U_x^{pm+}.N_x^{min}.U_x^{}=U_x^{pm+}.U_x.N_x^{min}=P_x^{pm}=
P_x^{nm}$.

\medskip
\par 2) Si de plus $x=0$ est l'origine de $\A$, on a $\g g_0=\g g_\Z\otimes\sho$ et $M_0=M_\Z\otimes\sho$ pour tout module \`a plus haut ou plus bas poids. Il est clair que ces modules sont stables par $\g G(\sho)$ (\ref{3.7} et \ref{3.14}). Donc $\widetilde{P}_0\supset\g G(\sho)$. On a $H,U_0\subset\g G(\sho)$ par construction et aussi $U_0^{++}=\g U^+(\sho)$. Comme $U_0^{ma+}= \g U^{ma+}(\sho)$ (\ref{3.2} et \ref{4.5}.2), on a $U_0^{pm+}=\g U^+(K)\cap\g U^{ma+}(\sho)$.

\par On se demande si cette derni\`ere intersection est \'egale \`a $\g U^+(\sho)\subset\g U^+_\#(\sho)$ (resp. $\g U^+_\#(\sho)$) (voir \ref{1.6}.5). Cela prouverait que $\widetilde{P}_0=P_0^{pm}=P_0^{nm}=\g G(\sho)$ (resp. peut-\^etre $\g G_\#(\sho)$). Un r\'esultat analogue a \'et\'e prouv\'e dans \cite[3.14]{GR-08} quand le corps r\'esiduel de $K$ contient $\C$. Mais il concerne le groupe minimal $\g G$ \`a la Kumar avec sa structure de ind-groupe (sur $\C$ mais \'etendue aux $\C-$alg\`ebres). Il pourrait donc ne pas avoir de rapport avec le r\'esultat cherch\'e ici, car \ref{3.19} ne donne une identification que pour les points rationnels sur $\C$ des groupes maximaux ou minimaux. Le foncteur en groupes minimal $\g G$ n'est pas forc\'ement le meilleur choix sur tous les anneaux, un foncteur $\g G_\#$, comme en \ref{1.6}.1, peut \^etre plus adapt\'e, voir 3) ci-dessous.

\par D'apr\`es \ref{3.17} et \ref{1.6}.1, \ref{1.6}.5 (KMG7), on a toujours $\g G(K)\cap\g U^{ma+}(K)=\g U^+(K)$ et $\g U^+_\#(\sho)=\g U^+_\#(K)\cap\g G_\#(\sho)$ mais cela ne permet pas de r\'epondre aux questions.

\par 3) Cas de $\widetilde{SL}_2$ : On consid\`ere le SGR $\shs=(\left(\begin{matrix}2&-2  \cr -2&2\cr \end{matrix}\right),Y=\Z h,\overline\alpha_0=-\overline\alpha_1,\alpha^\vee_0=-\alpha^\vee_1=-h)$. L'alg\`ebre de Kac-Moody correspondante est $\g g=\g{sl}_2(\C)\otimes_\C\C[t,t^{-1}]$ (qui est ni libre, ni colibre, voir \ref{2.12} o\`u cette alg\`ebre est not\'ee $\g g'$). Si on pose $\delta=\alpha_0+\alpha_1\in Q$, on a $\Delta^+_{im}=\N^*\delta$, $\Delta^+_{re}=\{\alpha_1+n\delta, \alpha_0+n\delta\mid n\in\N\}$ et les espaces propres sont, pour $n\in\Z$, $\g g_{n\delta}=\C\left(\begin{matrix}t^n&0  \cr 0&-t^n\cr \end{matrix}\right)$, $\g g_{\alpha_1+n\delta}=\C\left(\begin{matrix}0&t^n \cr 0&0\cr \end{matrix}\right)$ et $\g g_{\alpha_0+n\delta}=\C\left(\begin{matrix}0&0  \cr t^{n+1}&0\cr \end{matrix}\right)$.

\par Comme $\shs$ est non libre on consid\`ere la masure essentielle et l'appartement $\A$ correspondant du d\'ebut de cette section: $\qa_0$ et $\qa_1$ forment donc une base de l'espace vectoriel r\'eel $V^*$.

\par a) La repr\'esentation naturelle $\pi$ de $\g g_\Z$ sur $\End_{\Z[t,t^{-1}]}({\Z[t,t^{-1}]}^2)$ (\ref{2.12}) induit une identification de $G=\g G(K)$ et $SL_2({K[t,t^{-1}]})$ qui envoie $U^+=\g U^+(K)$ sur le groupe des matrices de $SL_2(K[t])$ qui sont triangulaires sup\'erieures strictes modulo $t$. Le foncteur en groupes $\g G_\#$ de \ref{1.6}.1 peut \^etre choisi tel que $\g G_\#(k)=SL_2({k[t,t^{-1}]})$ pour tout anneau $k$.
 On sait que le groupe $U^+$ est produit libre des sous-groupes $u^s(K[t])=\left(\begin{matrix}1&K[t] \cr 0&1\cr \end{matrix}\right)=\langle U_{\alpha_1+n\delta}(K)\mid n\in\N\rangle$
 et $u^i(tK[t])=\left(\begin{matrix}1&0\cr tK[t]&0\cr \end{matrix}\right)=\langle U_{\alpha_0+n\delta}(K)\mid n\in\N\rangle$, \cf \cite[3.10d]{T-87b}; et on a :

 \par\noindent$\left(\begin{matrix}1&0  \cr \varpi&1\cr \end{matrix}\right)\left(\begin{matrix}1&t  \cr 0&1\cr \end{matrix}\right)\left(\begin{matrix}1&0  \cr -\varpi&1\cr \end{matrix}\right)=\left(\begin{matrix}1-\varpi t&t  \cr -\varpi^2t&1+\varpi t\cr \end{matrix}\right)=\left(\begin{matrix}1&\varpi^{-1}  \cr 0&1\cr \end{matrix}\right)\left(\begin{matrix}1&0  \cr -\varpi^2t&1\cr \end{matrix}\right)\left(\begin{matrix}1&-\varpi^{-1}  \cr 0&1\cr \end{matrix}\right)$

 \par\noindent o\`u on note $\varpi$ un \'el\'ement non nul de l'id\'eal maximal $\g m$ de $\sho$ (\eg une uniformisante dans le cas discret).

 \par Cet \'el\'ement $g\in G$ est dans $U_0$ (d'apr\`es l'expression de gauche) et dans $U^+$ (expression du milieu ou de droite) donc dans $U^+_0$. Cependant l'expression de droite montre que $g$ n'est pas dans $U_0^{++}$, puisqu'il y a unicit\'e des d\'ecompositions dans le produit  libre $U^+$. On a donc une inclusion stricte: $\g U^+(\sho)=U_0^{++}\subsetneqq U_0^{+}$.

 \par b) La repr\'esentation naturelle $\pi$ permet d'identifier $G^{pma}=\g G^{pma}(K)$ \`a $SL_2(K(\!(t)\!))$; voir \cite[13.2.8]{Kr-02} pour un r\'esultat voisin. Dans cette identification $\g U^{ma+}(K)$ (resp. $\g U^{ma+}(\sho)$) devient le groupe des matrices de $SL_2(K[[t]])$ (resp. $SL_2(\sho[[t]])$) qui sont triangulaires sup\'erieures strictes modulo $t$ (l'ingr\'edient essentiel de la d\'emonstration  a \'et\'e expliqu\'e en \ref{2.12}). On s'est demand\'e en 2) si $U_0^{pm+}=\g U^{ma+}(\sho)\cap\g U^+(K)=\g U^{ma+}(\sho)\cap SL_2({K[t,t^{-1}]})$ est \'egal \`a  $\g U^+(\sho)$ (c'est impossible d'apr\`es a)\,) ou \'eventuellement \`a  $\g U^+_\#(\sho)=\g U^+(K)\cap\g G_\#(\sho)$. Ce dernier groupe est form\'e des matrices de $SL_2(\sho[t])$ qui sont triangulaires sup\'erieures strictes modulo $t$, il est donc bien  \'egal \`a $U_0^{pm+}=\g U^{ma+}(\sho)\cap\g G(K)$.

 \par On a donc:
 \par\noindent $\g U^+(\sho)=U_0^{++}\subsetneqq U_0^{+}=U_0\cap U^+\subset\g G(\sho)\cap U^+\subset \g G_\#(\sho)\cap U^+= \g U^+_\#(\sho)=U_0^{pm+}$.

\par En particulier $\g U^+\not=\g U^+_\#$. Mais en fait $U_0= \g G_\#(\sho)$ et  $U_0^{pm+}=U_0^+=\g U^+_\#(\sho)$, car $U_0$ est le groupe engendr\'e par les matrices \'el\'ementaires de $SL_2({\sho[t,t^{-1}]})$ et on sait que celui-ci est \'egal \`a $SL_2({\sho[t,t^{-1}]})= \g G_\#(\sho)$ \cite[th. 3.1]{Cu-84} (\footnotemark[2]). \footnotetext[2]{Merci \`a Leonid Vaserstein pour cette r\'ef\'erence}

\par c) On consid\`ere $\QO=\{0,z\}\subset\A$ avec $z$ d\'etermin\'e par $\qd(z)=0$ et $\qa_1(z)=p\in\N$.
 Alors $U_\QO^{ma+}$ est topologiquement engendr\'e par $u^s(\sho[[t]])$ et $u^i(\varpi^pt\sho[[t]])$ dans $SL_2(K[[t]])$. On en d\'eduit assez facilement que $U_\QO^{ma+}$ (resp. $U_\QO^{pm+}$) est contenu dans  (en fait \'egal \`a) l'ensemble des matrices
$ \left(\begin{matrix}a&b  \cr c&d\cr \end{matrix}\right)\in SL_2(\sho[[t]])$ (resp. $SL_2(\sho[t])$) telles que $a\equiv1$, $d\equiv1$ et $c\equiv0$ modulo $\varpi^pt$.
 De m\^eme $U_\QO^{ma-}$ est topologiquement engendr\'e par $u^s(t^{-1}\sho[[t^{-1}]])$ et $u^i(\varpi^p\sho[[t^{-1}]])$ dans $SL_2(K[[t^{-1}]])$; donc $U_\QO^{ma-}$ (resp. $U_\QO^{nm-}$) est contenu dans  (en fait \'egal \`a) l'ensemble des matrices
$ \left(\begin{matrix}a&b  \cr c&d\cr \end{matrix}\right)\in SL_2(\sho[[t^{-1}]])$ (resp. $SL_2(\sho[t])$) telles que $a,d\equiv1$ modulo $\varpi^pt^{-1}$, $b\equiv0$ modulo $t^{-1}$ et $c\equiv0$ modulo $\varpi^p$.
 Ainsi $U_\QO^{pm+}$ et $U_\QO^{nm-}$ sont contenus dans le groupe $V_\QO$ des matrices
 $ \left(\begin{matrix}a&b  \cr c&d\cr \end{matrix}\right)\in SL_2(\sho[t,t^{-1}])$ telles que  $a,d\equiv1$ et  $c\equiv0$ modulo $\varpi^p$.
  D'autre part (si $p\geq{}1$) $\widehat N_\QO$ est form\'e des matrices diagonales avec $d^{-1}=a=u.t^n$ pour $u\in\sho^*$ et $n\in\Z$.
  On voit facilement que, si $p\geq{}2$, la matrice
  $ \left(\begin{matrix}1+\varpi^{p-1}t&1  \cr -\varpi^{2p-2}t^2&1-\varpi^{p-1}t\cr \end{matrix}\right)$
   est dans $G_\QO$ mais pas dans $V_\QO.\widehat N_\QO$.
   Donc $\QO$ ne satisfait \`a aucune des conditions (GF$\pm$) de \ref{5.3} ci-dessous (si $p\geq{}2$).

\par 4) On consid\`ere toujours $\widetilde{SL}_2$ avec son appartement $\A$ d'espace vectoriel associ\'e $V$ et $\qa_1$, $\qa_0=\qd-\qa_1$ base du dual $V^*$. On suppose $\QL=\Z$.
 Les murs ont alors pour \'equations $(\pm{}\qa_1+n\qd)(x)+m=0$ avec $n,m\in\Z$ et les coracines v\'erifient $\qa_0^\vee=-\qa_1^\vee$.
  Le groupe de Weyl (affine) associ\'e est $W=W^v\ltimes\Z.\qa_1^\vee$. Le groupe de Weyl vectoriel $W^v$ contient la transvection $\qs$ donn\'ee par $\qs(x)=x-\qd(x).\qa_1^\vee$.
  Si l'on consid\`ere la translation $\qt_{\qa_1^\vee}$ de vecteur $\qa_1^\vee$, l'\'el\'ement $\qt_{\qa_1^\vee}\circ\qs$ de $W$ fixe tous les points de l'hyperplan affine d'\'equation $\qd(x)=1$.
  Cependant le point $y$ tel que $\qd(y)=1$ et $\qa_1(y)=\frac{1}{2}$ n'est dans aucun mur. Ainsi $W_y^{min}=\{1\}$ alors que $W_y$ est infini.
  Ce ph\'enom\`ene dispara\^{\i}t si on consid\`ere un espace vectoriel $V$ o\`u $\qa_0^\vee$ et $\qa_1^\vee$ sont ind\'ependants.

\end{exems}

\begin{defi}\label{4.13} Soit $\Omega$ un sous-ensemble de $\A$. On d\'efinit le groupe $\widehat{P}_\Omega$ comme l'intersection des groupes $\widetilde{P}^L_{\Omega'}$ o\`u $\Omega'$ est une partie non vide de $\overline\Omega$, $L/K$ une extension valu\'ee et  $\widetilde{P}^L_{\Omega'}$ est le groupe construit de mani\`ere analogue \`a $\widetilde{P}_{\Omega'}$ dans $\g G(L)$. (On sait que $\g G(K)$ s'injecte fonctoriellement dans $\g G(L)$ \cf \ref{1.6}.)

\par Si $\Omega$ est un filtre de parties de $\A$, on d\'efinit $\widehat{P}_\Omega=\cup_{\Omega'\in\Omega}\,\widehat{P}_{\Omega'}$.

\par Si $\Omega$ est un point ou une facette de $\A$, on dira que $\widehat{P}_\Omega$ est le "fixateur" de $\Omega$,  voir ci-dessous \ref{5.1} et \ref{5.7}.2.
\end{defi}

\begin{enonce*}[definition]{Propri\'et\'es}
\par 1) On a une fonctorialit\'e \'evidente en le corps des diff\'erents groupes ou alg\`ebres  d\'efinis jusqu'en \ref{4.6} (\ie sauf $\widetilde{P}_\Omega$ et $\widehat{P}_\Omega$). En particulier le sous-groupe $\widehat{P}_\Omega$ de $\widetilde{P}^L_{\Omega}$ contient les groupes $P_\Omega^{pm}$, $P_\Omega^{nm}$, $P_\Omega$ et $\widehat{N}_\Omega$. D'apr\`es \ref{4.10}, pour $n\in N$ on a $\widehat{P}_{\nu(n)\Omega}=n\widehat{P}_{\Omega}n^{-1}$. Bien s\^ur si $\Omega\subset\Omega'$, on a $\widehat{P}_{\Omega}\supset \widehat{P}_{\Omega'}$.

\par 2) Soient $\Omega$ un sous-ensemble de $\A$ et $x$ un point de $\overline\Omega$. Il existe une extension valu\'ee $L/K$ telle que $x$ soit un point sp\'ecial de $\A^L$ (\eg si $\omega(L^*)=\R$).
D'apr\`es \ref{4.12}.1 on a $N(L)\cap\widetilde{P}^L_{x}=\widehat{\g N(L)}_x$, donc $N\cap\widehat{P}_\Omega=\widehat{N}_{\overline\Omega}=\widehat{N}_{\Omega}$. Cette relation est encore valable si $\Omega$ est un filtre. Par contre, m\^eme pour un ensemble, on ne sait pas si l'inclusion $\cap_{x\in\Omega}\,\widehat{P}_x\subset \widehat{P}_\Omega$ est toujours une \'egalit\'e.

\par 3) Si $\Omega$ est un ensemble, $U_\Omega^{pm+}.\widehat{N}_\Omega\subset \widehat{P}_\Omega\cap U^+N\subset \cap_{\Omega',L}\, \widetilde{P}^L_{\Omega'}\cap\g U^+(L)\g N(L)= \cap_{\Omega',L}\, \g U_{\Omega'}^{pm+}(L)\widetilde{N}^L_{\Omega'}=  (\cap_{\Omega',L}\, \g U_{\Omega'}^{pm+}(L)).(\cap_{\Omega',L}\, \widetilde{N}^L_{\Omega'})=U_\Omega^{pm+}.\widehat{N}_\Omega$, d'apr\`es 1), 2) ci-dessus, \ref{4.11} et l'unicit\'e dans les  d\'ecompositions de \ref{3.16}. Donc $\widehat{P}_\Omega\cap U^+N=U_\Omega^{pm+}.\widehat{N}_\Omega$, $\widehat{P}_\Omega\cap U^+=U_\Omega^{pm+}$ et ceci est encore valable pour un filtre.

\par 4) Si $\Omega$ est \'etroit on a, d'apr\`es \ref{4.7}, $\widehat{P}_\Omega=U_\Omega^{pm+}.\widehat{N}_\Omega.U_\Omega=U_\Omega^{pm+}.U_\Omega.\widehat{N}_\Omega=U_\Omega^{pm+}.U_\Omega^-.\widehat{N}_\Omega=U_\Omega^{nm-}.U_\Omega^+.\widehat{N}_\Omega$. Ainsi $\widehat{P}_\Omega=P_\Omega^{pm}.\widehat{N}_\Omega=P_\Omega^{nm}.\widehat{N}_\Omega$.

\par 5) On ne va utiliser $\widehat{P}_\Omega$ que si $\Omega$ est un point ou une facette et, dans ce cas, on ne d\'efinira que plus tard (\ref{5.11}) le sous-groupe parahorique $P_\Omega$ (avec $P_\Omega^{pm}=P_\Omega^{nm}=P_\Omega\subset \widehat{P}_\Omega$).

\par Les propri\'et\'es des groupes $\widehat{P}_x$ sont r\'esum\'ees dans la proposition suivante. Il n'est pas exclus que, pour tout $x$ dans $\A$, $\widehat{P}_x=U_x.\widehat{N}_x$ \ie $U_x^{pm+}=U_x^{+}$ et $U_x^{nm-}=U_x^{-}$ \cf \ref{4.12}.3b. En \ref{5.7}.3 on verra que ceci ne peut se g\'en\'eraliser \`a tout ensemble $\QO$ \`a la place de $x$.
\end{enonce*}

\begin{prop}\label{4.14} Les groupes $\widehat{P}_x$ associ\'es aux points de $\A$ satisfont aux propri\'et\'es suivantes:

\par (P1)  $\widehat{P}_x\cap N=\widehat{N}_x$ (le fixateur de $x$ dans $N$).

\par (P2) $n\widehat{P}_xn^{-1}=\widehat{P}_{\nu(n)x}$.

\par (P3) $\widehat{P}_x=U_x^{pm+}.U_x^{nm-}.\widehat{N}_x=U_x^{nm-}.U_x^{pm+}.\widehat{N}_x$ avec $U_x^{pm+}=\widehat{P}_x\cap U^+$ et $U_x^{nm-}=\widehat{P}_x\cap U^-$.
\end{prop}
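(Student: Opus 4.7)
The plan is to show that Proposition \ref{4.14} is essentially a direct consequence of the properties of $\widehat P_\Omega$ collected in \ref{4.13}, specialised to the case where $\Omega=\{x\}$ is a single point. The crucial observation is that a singleton $\{x\}$ is an étroit (narrow) filter, since $f_{\{x\}}(\alpha)+f_{\{x\}}(-\alpha)=-\alpha(x)+\alpha(x)=0$ for every $\alpha\in\Phi$, so the strong decomposition statement in item 4 of \ref{4.13} applies.

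First I would treat (P1) and (P2): (P1) is the content of item 2 of \ref{4.13} specialised to the point $\Omega=\{x\}$, where $\overline{\{x\}}=\{x\}$, giving directly $N\cap\widehat P_x=\widehat N_{\{x\}}=\widehat N_x$. Property (P2) is item 1 of \ref{4.13}, since $\nu(n)\{x\}=\{\nu(n)x\}$.

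Next I would establish (P3). Item 4 of \ref{4.13} yields, for the étroit filter $\{x\}$, the two decompositions
$$\widehat P_x=U_x^{pm+}.U_x^-.\widehat N_x=U_x^{nm-}.U_x^+.\widehat N_x.$$
Since property 1 of \ref{4.13} gives the inclusions $P_x^{nm}\subset\widehat P_x$ and $P_x^{pm}\subset\widehat P_x$, we have $U_x^{nm-}\subset\widehat P_x$ and $U_x^{pm+}\subset\widehat P_x$. Using $U_x^-\subset U_x^{nm-}$ and $U_x^+\subset U_x^{pm+}$, the above decompositions yield the chain
$$\widehat P_x=U_x^{pm+}.U_x^-.\widehat N_x\subset U_x^{pm+}.U_x^{nm-}.\widehat N_x\subset\widehat P_x,$$
forcing equality, and symmetrically for the reversed factorisation. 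The identifications $U_x^{pm+}=\widehat P_x\cap U^+$ and $U_x^{nm-}=\widehat P_x\cap U^-$ are then read off from item 3 of \ref{4.13}, which has already been proved using the uniqueness of the $U^{pm+}\cdot\widetilde N$ decomposition established in lemma \ref{4.11} and the functoriality of all the constructions in the valued extension $L/K$.

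There is no serious obstacle here since all the substantive work — construction of $\widehat P_\Omega$, the decomposition $G=U^+NU_\Omega$ of \ref{4.7} for étroit $\Omega$, the identification of $\widetilde P_\Omega\cap U^\pm N$ in \ref{4.11}, and functoriality in the base field — has been done in the preceding subsections. The proposition is simply a convenient packaging of these facts in the case of a point, isolating the axiomatic properties (P1)--(P3) that will be used in the next section to compare $\widehat P_x$ with the parahoric subgroup $P_x$ and to build the hovel.
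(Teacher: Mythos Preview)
Your proposal is correct and matches the paper's approach exactly: the paper itself presents Proposition~\ref{4.14} as a summary (see item~5 of \ref{4.13}: ``Les propri\'et\'es des groupes $\widehat{P}_x$ sont r\'esum\'ees dans la proposition suivante''), with no separate proof, and the line following the statement records precisely your intermediate form (P3'). One very minor point: your computation $f_{\{x\}}(\alpha)+f_{\{x\}}(-\alpha)=-\alpha(x)+\alpha(x)=0$ is only literally correct when $\alpha(x)\in\Lambda$; in general the sum may equal the smallest positive element of $\Lambda$ (discrete case) or $0^+$, but a singleton is still \'etroit by definition since no wall can separate it, so your use of \ref{4.13}.4 is justified.
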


\par On a en fait (P3') $\widehat{P}_x=U_x^{pm+}.U_x^{-}.\widehat{N}_x=U_x^{nm-}.U_x^{+}.\widehat{N}_x$

\subsection{Comparaison avec les raisonnements de \cite{GR-08}}\label{4.15}

\par La g\'en\'eralisation des r\'esultats de \cite{GR-08} (en particulier le lemme \ref{4.11}) \`a la caract\'eristique r\'esiduelle positive a n\'ecessit\'e, sans surprise, le remplacement de l'alg\`ebre de Lie $\g g_\Omega$ par l'alg\`ebre "enveloppante" $\shu_\Omega$.

\par Les raisonnements de  \cite[sect. 3.7]{GR-08} sont compliqu\'es, entach\'es d'une ou deux erreurs et utilisent la sym\'etrisabilit\'e. C'\'etait sans doute trop ambitieux de d\'efinir \`a ce stade les groupes $P_\Omega$ au lieu des groupes $\widehat{P}_\Omega$.  On a pu simplifier et g\'en\'eraliser en restreignant l'ambition et en utilisant des extensions de corps (comme sugg\'er\'e par les travaux de Cyril Charignon \cite{Cn-10} ou \cite{Cn-10b}).


\section{La masure affine ordonn\'ee}\label{s5}

\par Dans cette section on ne va utiliser que les propri\'et\'es (P1) \`a (P3) des groupes $\widehat{P}_x$ indiqu\'ees dans la proposition \ref{4.14}. La mention qui sera faite des groupes $\widehat{P}_\Omega$ plus g\'en\'eraux n'est pas indispensable au raisonnement.

\begin{defi}\label{5.1} La {\it masure} $\shi=\shi(\g G,K)$ de $\g G$ sur $K$ est le quotient de l'ensemble $G\times\A$ par la relation:

\par $(g,x)\sim(h,y)\Leftrightarrow$ il existe $n\in N$ tel que $y=\nu(n).x$ et $g^{-1}hn\in\widehat{P}_x$.

\par Il est clair que $\sim$ est une relation d'\'equivalence \cite[7.4.1]{BtT-72}.
\end{defi}

\par L'application $\A\rightarrow\shi$, $x\mapsto cl(1,x)$ est injective d'apr\`es (P1). Elle identifie $\A$ \`a son image $A(T)=A(T,K)$, l'{\it appartement} de $T$ dans $\shi$.

\par L'action \`a gauche de $G$ sur $G\times\A$ induit une action de $G$ sur $\shi$. Les appartements de $\shi$ sont les $g.A(T)$ pour $g\in G$. L'action de $N$ sur $A(T)$ se fait via $\nu$; en particulier $H$ fixe (point par point) $A(T)$. Par construction le fixateur de $x\in\A$ est $G_x=\widehat{P}_x$ et, pour $g\in G$ on a $gx\in\A\Leftrightarrow g\in N\widehat{P}_x$.

\par Comme $\widehat{P}_x$ contient $U_x$, il est clair que, $\forall\alpha\in\Phi$, $\forall r\in K$, $x_\alpha(r)$ fixe $D(\alpha,\omega(r))$. Donc, pour $k\in\R$, le groupe $HU_{\alpha,k}$ fixe $D(\alpha,k)$.

\subsection{Les groupes $G_\Omega$}\label{5.2}

\par Pour $\Omega$ une partie de $\A$, on note $G_\Omega=\cap_{x\in\Omega}\,\widehat{P}_x$ et pour $\Omega$ un filtre, $G_\Omega=\cup_{\Omega'\in\Omega}\,G_{\Omega'}$. Le groupe $G_\Omega$ est le fixateur de $\Omega$ (pour l'action de $G$ sur $\shi$ qui contient $\A$).

\par Par construction de  $\widehat{P}_\Omega$, on a  $G_\Omega\supset\widehat{P}_\Omega$ (donc $G_\Omega\supset U_\Omega^{pm+},U_\Omega^{nm-}$) et $G_\Omega\cap N= \widehat{N}_\Omega=\widehat{P}_\Omega\cap N$.

\par D'apr\`es \ref{4.5}.4d et (P3) on a $G_\Omega\cap U^+= U_\Omega^{pm+}$ et $G_\Omega\cap U^-= U_\Omega^{nm-}$.

\begin{lemm*} Le sous-ensemble $G(\Omega\subset\A)$ de $G$ consistant en les $g\in G$ tels que $g\Omega\subset\A$ est $G(\Omega\subset\A)=\cup_{\Omega'\in\Omega}(\cap_{x\in\Omega'}\,N\widehat{P}_x)$.
\end{lemm*}

\begin{proof}
$g\Omega\subset\A\Leftrightarrow\exists\Omega'\in\Omega, g\Omega'\subset\A \Leftrightarrow\exists\Omega'\in\Omega, \forall x\in\Omega', gx\in\A\Leftrightarrow\exists\Omega'\in\Omega, \forall x\in\Omega', g\in N\widehat{P}_x$.
\end{proof}

\begin{defi}\label{5.3} On consid\`ere les conditions suivantes:

\par\qquad (GF+) $G_\Omega=U_\Omega^{pm+}.U_\Omega^{nm-}.\widehat{N}_\Omega$.

\par\qquad (GF$-$) $G_\Omega=U_\Omega^{nm-}.U_\Omega^{pm+}.\widehat{N}_\Omega$.

\par\qquad\; (TF) \;$G(\Omega\subset\A)=NG_\Omega$.

\par On dit que $\Omega$ a un {\it fixateur transitif} s'il satisfait (TF).

\par On dit que $\Omega$ a un {\it assez bon fixateur} s'il satisfait (TF) et (GF+) ou (GF$-$).

\par On dit que $\Omega$ a un {\it  bon fixateur} s'il satisfait (TF), (GF+) et (GF$-$).

\end{defi}

\par D'apr\`es la remarque \ref{4.6}c, cette d\'efinition ne d\'epend pas du choix de $\Delta^+$ dans sa classe de $W^v-$conjugaison. La condition (GF+) ou (GF$-$) implique que $G_\Omega=\widehat{P}_\Omega$ (\ref{4.13}.1); le groupe $N$ permute les filtres satisfaisant (GF+), (GF$-$) ou (TF) et les fixateurs correspondants  (\cf \ref{4.13}.1).

\par D'apr\`es (P3) et \ref{5.1} un point a toujours un bon fixateur. Mais il existe des ensembles ne v\'erifiant ni (GF+) ni (GF$-$), \cf  \ref{4.12}.3c.

\begin{lemm}\label{5.4} Soit $\Omega$ un filtre de parties de $\A$. Si $\Omega$ a un fixateur transitif, alors $G_\Omega$ est transitif sur les appartements contenant $\Omega$.
\end{lemm}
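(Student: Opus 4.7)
The plan is as follows.

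An apartment of $\shi$ has the form $A'=gA(T)$ for some $g\in G$, and to say that $A'$ contains $\Omega$ is to say that some element of the filter $\Omega$ is contained in $A'$, which, since $A(T)$ is identified with $\A$ via the injection $x\mapsto cl(1,x)$, is equivalent to the condition that $g^{-1}\Omega\subset\A$ (meaning there exists $\Omega'\in\Omega$ with $g^{-1}\Omega'\subset\A$). By the explicit description of $G(\Omega\subset\A)$ given in the lemma of \ref{5.2}, this is exactly the condition $g^{-1}\in G(\Omega\subset\A)$. So the apartments containing $\Omega$ are precisely the $gA(T)$ with $g^{-1}\in G(\Omega\subset\A)$.

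First I would fix two apartments $A_1=g_1A(T)$ and $A_2=g_2A(T)$ both containing $\Omega$, so that $g_1^{-1},g_2^{-1}\in G(\Omega\subset\A)$. The hypothesis (TF) then yields decompositions
\[
g_i^{-1}=n_ip_i\qquad (i=1,2)
\]
with $n_i\in N$ and $p_i\in G_\Omega$, hence $g_i=p_i^{-1}n_i^{-1}$.

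Next I would exploit the fact that $N$ acts on $A(T)$ via $\nu$ and in particular stabilises $A(T)$ setwise, so $n_i^{-1}A(T)=A(T)$ and therefore $g_iA(T)=p_i^{-1}A(T)$. Setting
\[
h:=p_2^{-1}p_1\in G_\Omega,
\]
one then gets $h\,g_1A(T)=p_2^{-1}p_1p_1^{-1}A(T)=p_2^{-1}A(T)=g_2A(T)$, which proves the transitivity.

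The proof is thus very short once the translation between "apartment containing $\Omega$" and membership in $G(\Omega\subset\A)$ is secured; there is no genuine obstacle beyond making sure the filter interpretation of inclusion is unpacked correctly and the invariance of $A(T)$ under $N$ is invoked to absorb the $N$-factor. All substantive content sits in the already-established hypothesis (TF) and in the lemma of \ref{5.2}.
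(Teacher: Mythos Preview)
Your argument is correct and is exactly the classical argument the paper has in mind: the paper itself does not spell out the proof but simply writes ``C'est classique'' and refers to \cite[Rem.~4.2]{GR-08}. Your unpacking of ``apartment containing $\Omega$'' via $G(\Omega\subset\A)$, the use of (TF) to write $g_i^{-1}=n_ip_i$, and the absorption of the $N$-factor into $A(T)$ is precisely that standard reasoning.
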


\begin{proof} C'est classique et dans \cite[Rem. 4.2]{GR-08}.
\end{proof}

\begin{enonce*}[definition]{Cons\'equence}
 Alors $G_\Omega$ et tous ses sous-groupes normaux ne d\'ependent pas du choix de l'appartement contenant $\Omega$.
\end{enonce*}

\begin{prop} \label{5.5}

\par 1)  Supposons $\Omega\subset\Omega'\subset
cl(\Omega)$. Si $\Omega$ dans $\mathbb A$ a un bon (ou assez bon) fixateur, alors c'est \'egalement vrai pour $\Omega'$  et $G_{\Omega}=\widehat{N}_{\Omega}.G_{\Omega'}$,
$N.G_{\Omega}=N.G_{\Omega'}$. En particulier tout appartement contenant
$\Omega$ contient aussi son enclos $cl(\Omega)$.

\par Inversement si $\mathbb A=supp(\Omega)$, le plus petit espace affine contenant $\Omega$, (ou si $supp(\Omega')=supp(\Omega)$, donc $\widehat{N}_{\Omega'}=\widehat{N}_{\Omega}$), $\Omega$ a un assez bon fixateur et  $\Omega'$ a un bon fixateur, alors $\Omega$ a un bon fixateur.

\par 2) Si un filtre $\Omega$ dans $\mathbb A$ est engendr\'e par une famille $\mathcal F$ de filtres (\ie $S\in\Omega\Leftrightarrow\exists F\in\shf,S\in F$)
avec des bons (ou assez bons) fixateurs, alors  $\Omega$ a un bon (ou assez bon) fixateur $G_{\Omega}=\bigcup_{F\in\mathcal F}\;G_{F}$.

\par 3) Supposons que le filtre $\Omega$ dans $\mathbb A$ est la r\'eunion d'une suite croissante $(F_i)_{i\in\mathbb N}$  (\ie $S\in\Omega\Leftrightarrow S\in F_i,\forall i$) de filtres avec de bons (ou assez bons) fixateurs et que, pour un certain $i$, l'espace ${\mathrm supp}(F_i)$ a un fixateur fini  $W_0$ dans $\nu(N)=W_{Y\Lambda}$, alors
$\Omega$ a un bon (ou assez bon) fixateur $G_{\Omega}=\bigcap_{i\in\mathbb N}\;G_{F_i}$.

\par 4)Soient $\Omega$ et $\Omega'$ deux filtres dans $\mathbb A$. Supposons que $\Omega'$ satisfait \`a
(GF+) (resp. (GF+) et (TF)) et qu'il existe un nombre fini de chambres vectorielles ferm\'ees positives $\overline {C^v_1},\cdots$, $\overline {C^v_n}$ telles que:
$\Omega\subset\cup_{i=1,n}\;\Omega'+\overline {C^v_i}$. Alors $\Omega\cup\Omega'$ satisfait \`a
(GF+) (resp. (GF+) et (TF)) et $G_{\Omega\cup\Omega'}=G_{\Omega}\cap G_{\Omega'}$.

\end{prop}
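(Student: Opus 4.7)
The plan is to base all four parts on three ingredients already in place: (i) the dependence of $U^{pm\pm}_\Omega$ and $U^{nm\pm}_\Omega$ on $cl(\Omega)$ alone (remark \ref{4.5}.4f); (ii) the filter formulas $G_\Omega = \bigcup_{\Omega'\in\Omega}G_{\Omega'}$ and $U^{ma}_\Omega(\Psi) = \bigcap_i U^{ma}_{F_i}(\Psi)$ when $\Omega$ is a r\'eunion in the sense of \ref{4.5}.4d; and (iii) the decomposition of $G_F$ given by (GF$\pm$) as hypothesis.

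For Part 1, the equality $cl(\Omega)=cl(\Omega')$ forces $U^{pm\pm}_\Omega=U^{pm\pm}_{\Omega'}$ and $U^{nm\pm}_\Omega=U^{nm\pm}_{\Omega'}$; these fix $cl(\Omega)\supset\Omega'$, hence lie in $G_{\Omega'}\subset G_\Omega$. From (GF+) for $\Omega$ I rewrite $G_\Omega = U^{pm+}_{\Omega'}U^{nm-}_{\Omega'}\widehat{N}_\Omega = G_{\Omega'}\widehat{N}_\Omega$; both sides being subgroups, this equals $\widehat{N}_\Omega G_{\Omega'}$ as well, giving $NG_\Omega = NG_{\Omega'}$. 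Intersecting with $G_{\Omega'}$ forces the $N$-factor into $\widehat{N}_{\Omega'}$, yielding (GF+) for $\Omega'$; (TF) transfers because $G(\Omega'\subset\mathbb A)\subset G(\Omega\subset\mathbb A)=NG_{\Omega'}$. The converse uses $\widehat{N}_\Omega=\widehat{N}_{\Omega'}$ (which follows from $\text{supp}(\Omega)=\mathbb A$, forcing $\widehat{N}_\Omega=H$, or from the equality of supports directly): then $G_\Omega=G_{\Omega'}\widehat{N}_{\Omega'}=G_{\Omega'}$, and (GF$-$) for $\Omega'$ transfers verbatim.

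Part 2 is almost formal: the generating property gives $G_\Omega=\bigcup_F G_F$ and likewise for each of the factors $U^{pm+}, U^{nm-}, \widehat{N}$, so the decompositions at each $F$ assemble into one at $\Omega$; the identity $G(\Omega\subset\mathbb A)=\bigcup_F G(F\subset\mathbb A)=NG_\Omega$ handles (TF). Part 3 is the main obstacle: here $\Omega$ is a r\'eunion, so $U^{ma\pm}_\Omega=\bigcap_i U^{ma\pm}_{F_i}$ and $G_\Omega\subset\bigcap_i G_{F_i}$ is immediate. For the reverse inclusion I take $g\in\bigcap_i G_{F_i}$ and decompose $g=u^+_i u^-_i n_i$ via (GF+) in each $G_{F_i}$. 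Each $n_i\in\widehat{N}_{F_i}\subset\widehat{N}_{F_{i_0}}$, so $\nu(n_i)\in W_0$, a finite set by hypothesis. Extracting a constant subsequence $\nu(n_i)=w_0$ and writing $n_i=h_i n_0$ for a fixed $n_0$ and $h_i\in H$, the identity $gn_0^{-1}=u^+_i h_i (h_i^{-1}u^-_i h_i)$ combined with the uniqueness of the $U^+HU^-$-factorization (from $U^+\cap B^-=\{1\}$ in the $BN$-pair of $G^{pma}$, remark after \ref{3.16}) forces $u^+_i$, $u^-_i$, and $h_i$ each to be independent of $i$; the resulting factors land in $\bigcap_i U^{pm+}_{F_i}=U^{pm+}_\Omega$, $\bigcap_i U^{nm-}_{F_i}=U^{nm-}_\Omega$, and $\bigcap_i \widehat{N}_{F_i}=\widehat{N}_\Omega$.

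For Part 4, I apply (GF+) to $\Omega'$: writing $g\in G_{\Omega\cup\Omega'}=G_\Omega\cap G_{\Omega'}$ as $g=u^+u^-n$ with $u^+\in U^{pm+}_{\Omega'}$, etc. The geometric hypothesis $\Omega\subset\bigcup_i(\Omega'+\overline{C^v_i})$ ensures every $U_{\alpha,\Omega'}$ for $\alpha\in\Phi^+$ fixes each $\Omega'+\overline{C^v_i}$, since $\alpha\geq 0$ on $\overline{C^v_i}$ implies $\Omega'+\overline{C^v_i}\subset D(\alpha, f_{\Omega'}(\alpha))$; hence $U^{pm+}_{\Omega'}$ fixes $\Omega$ and equals $U^{pm+}_{\Omega\cup\Omega'}$. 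The remainder $u^-n=(u^+)^{-1}g\in G_{\Omega\cup\Omega'}\cap U^{nm-}N$, and an argument analogous to the proof of \ref{4.11} identifies $u^-\in U^{nm-}_{\Omega\cup\Omega'}$ and $n\in\widehat{N}_{\Omega\cup\Omega'}$, concluding (GF+) for $\Omega\cup\Omega'$; (TF) is added by the same absorption of $\widehat{N}$ into $N$. The single most delicate step across the proposition is Part 3's stabilization argument, which crucially exploits both the finiteness of $W_0$ and the Birkhoff-type uniqueness inherited from the maximal Kac-Moody group.
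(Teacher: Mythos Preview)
Your argument is essentially the one in \cite[prop.~4.3]{GR-08}, to which the paper defers, and the main ideas are correct: dependence of $U^{pm\pm}$ on $cl(\Omega)$ for Part~1, assembly of decompositions for Part~2, the finiteness-extraction plus $U^+TU^-$-uniqueness for Part~3, and the identity $U^{pm+}_{\Omega'}=U^{pm+}_{\Omega\cup\Omega'}$ (valid for all $\alpha\in\Delta^+$, not just $\Phi^+$ as you wrote) for Part~4.

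Two places deserve more than you gave them. In Part~3 you treat only (GF+); (TF) needs its own extraction: for $g\in\bigcap_i G(F_i\subset\A)=\bigcap_i NG_{F_i}$ write $g=m_ig_i$, observe that $\nu(m_i)$ lies in the finite coset $\nu(m_{i_0})W_0$ for $i\geq i_0$, extract, and conclude $m_0^{-1}g\in\bigcap_i G_{F_i}=G_\Omega$. In Part~4 your phrase ``the same absorption of $\widehat N$ into $N$'' hides the real work for (TF): writing $g=m\,v^-v^+$ with $v^+\in U^{pm+}_{\Omega'}=U^{pm+}_{\Omega\cup\Omega'}$ on the right, one gets $v^-\in U^-$ with $v^-(\Omega\cup\Omega')\subset\A$; the point is then that $U^-\cap N\widehat P_x=U^{nm-}_x$ for every $x$, which follows from (P3) together with $U^-\cap NU^+=\{1\}$ (axiom (RT3) of the refined $BN$-pair, \ref{3.16}). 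Your reference to ``an argument analogous to \ref{4.11}'' for the (GF+) half of Part~4 is accurate, since $G_{\Omega\cup\Omega'}\cap U^-N$ decomposes pointwise via \ref{4.13}.3.
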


\begin{proof} Voir \cite[prop. 4.3]{GR-08}
\end{proof}

\begin{rema}\label{5.6}

\par Dans 4) ci-dessus, les m\^emes r\'esultats sont vrais si on change $+$ en $-$.

\par  Si $\Omega'$ a un bon fixateur, $\Omega\subset\cup_{i=1,n}\;\Omega'+\overline
{C^v_i}$ et $\Omega\subset\cup_{i=1,n}\;\Omega'-\overline {C^v_i}$, alors $\Omega\cup\Omega'$ a un bon fixateur.

 Si $\Omega$ satisfait \`a (GF$-$), $\Omega'$ satisfait \`a (GF+), $\Omega$ ou $\Omega'$ satisfait \`a (TF), $\Omega\subset\cup_{i=1,n}\;\Omega'+\overline {C^v_i}$ et
$\Omega'\subset\cup_{i=1,n}\;\Omega-\overline {C^v_i}$, alors $\Omega\cup\Omega'$ a un bon fixateur.

\end{rema}

\subsection{Exemples de filtres avec de bons fixateurs}\label{5.7}
Pour les preuves manquantes ci-dessous, voir \cite[sect. 4.2]{GR-08}.

\par 1) Si $x\leq{}y$ ou $y\leq{}x$ dans $\A$ \ie si $y-x\in\pm{}\sht$ (c\^one de Tits), alors $\{x,y\}$, $[x,y]$ et $cl(\{x,y\})$ ont de bons fixateurs ( \ref{5.6} et (P3)). De plus, si $x\not=y$, $]x,y]=[x,y]\setminus\{x\}$ a un bon fixateur et le germe de segment $[x,y)=\germ_x([x,y])$ ou le germe d'intervalle $ ]x,y)=\germ_x(]x,y])$ est dit {\it pr\'eordonn\'e} et a un bon fixateur (\ref{5.5}.2).

\par Si $x\,{\bc\le}\,y$ ou $y\,{\bc\le}\,x$ dans $\A$ \ie si $y-x\in\pm{}\sht^\circ$ (int\'erieur du c\^one de Tits), la demi-droite $\delta$ d'origine $x$ et contenant $y$ est dite {\it g\'en\'erique} et a un bon fixateur. En effet $\delta$ est r\'eunion croissante des segments $[x,x+n(y-x)]$ pour $n\in\N$ et, si $n>0$, ce segment a un fixateur fini dans $W$ (car $y-x\in\pm{}\sht^\circ$); on peut donc appliquer \ref{5.5}.3. De m\^eme la droite contenant $x$ et $y$ a aussi un bon fixateur.

\par 2) Une facette locale $F^\ell(x,F^v)$
, une facette $F(x,F^v)$ ou une facette ferm\'ee $\overline F(x,F^v)$
 a un bon fixateur. Ici et dans la suite $F^v$ d\'esigne une facette vectorielle et $C^v$ une chambre vectorielle de $V$.

\par 3) Un quartier $\g q=x+C^v$ a un bon fixateur.

\par On notera que le fixateur du quartier $\g q=\g q_{x,+\infty}=x+C^v_f$ est $G_\g q=HU^{pm}_{\g q}=HU^{pm+}_{\g q}$, alors que $HU^{nm}_{\g q}=HU^{+}_{\g q}=HU^{++}_{x}$, car $U^{nm-}_{\g q}=\{1\}$ et $\widehat N_\g q=H$. On a vu en \ref{4.12}.3 que $U^{++}_{x}$ peut \^etre plus petit que $U^{+}_{x}\subset U^{pm+}_{\g q}$. On peut donc avoir $\widehat P_\g q=G_\g q=HU^{pm}_{\g q}\not=HU^{nm}_{\g q}=HU_{\g q}=HU_{\g q}^+=U_{\g q}^+.\widehat N_\g q$.

\par 4) Un germe de quartier $\g Q=\germ_\infty(x+C^v)$ a un bon fixateur (\ref{5.5}.2). Le fixateur de $\g Q_{\pm\infty}=\germ_\infty(x\pm{}C^v_f)$ est $HU^\pm$ car tout \'el\'ement de $U^\pm$ est un produit fini d'\'el\'ements de groupes $U_\alpha$ pour $\alpha\in\Phi^\pm$. Par contre $U^{ma+}$ n'est pas la r\'eunion des $U^{ma+}_\Omega$ pour $\Omega\in\g Q_{\infty}$, on n'a sans doute pas d'action de $G^{pma}$ sur $\shi$.

\par 5) L'appartement $\A$ lui-m\^eme a un bon fixateur $G_\A=H$. Par d\'efinition le stabilisateur de $\A$ est donc $G(\A\subset\A)=N$. Comme le corps $K$ est infini, il r\'esulte alors de \ref{1.6}.4 que les appartements de $\shi$ sont en bijection avec les sous-tores d\'eploy\'es maximaux de $\g G$.

\par 6) Un mur $M(\alpha,k)$ a un bon fixateur: soit $x\in M(\alpha,k)$ et $\xi$ dans une cloison vectorielle de Ker$\alpha$, alors $M(\alpha,k)$ est r\'eunion croissante des $cl(\{x-n\xi,x+n\xi\})$ dont le support $M(\alpha,k)$ a un fixateur fini ($\{1,r_{\alpha,k}\}$) dans $W_{Y\Lambda}$, on conclut gr\^ace \`a 1) ci-dessus et \ref{5.5}.3. Ce r\'esultat s'\'etend au cas du support d'une facette sph\'erique. Le (bon) fixateur de $M(\alpha,k)$ est $U_{\alpha,k}.U_{-\alpha,k}.\{1,r_{\alpha,k}\}.H$.

\par 7) Un demi-appartement $D(\alpha,k)$ est l'enclos de deux quartiers dont des cloisons sont dans $M(\alpha,k)$ et oppos\'ees, d'apr\`es 3) ci-dessus et \ref{5.5}.1,  \ref{5.5}.4, il a un bon fixateur, qui est \'egal \`a $U_{\alpha,k}.H$.

\par 8) Si $x_1\,{\bc\le}\,x_2$ \ie si $x_2-x_1\in\sht^\circ$, alors $cl(F(x_1,F_1^v),F(x_2,F_2^v))$ a un bon fixateur pour toutes facettes vectorielles $F_1^v$ et $F_2^v$ de signes quelconques: d'apr\`es 2) ci-dessus on peut appliquer la remarque \ref{5.6} aux facettes locales $F^\ell(x_1,F_1^v)$ et $F^\ell(x_2,F_2^v)$, on conclut gr\^ace \`a \ref{5.5}.1.

\par 9) L'enclos $cl(F(x,F_1^v),F(x,F_2^v))$ a un bon fixateur  d\`es que les facettes  vectorielles $F_1^v$ et $F_2^v$ sont de signe oppos\'es ou si l'une d'elles est sph\'erique. En effet le premier cas est clair par 2) ci-dessus et la remarque \ref{5.6}. Si la facette vectorielle $F_1^v$ est sph\'erique, elle est contenue dans le c\^one de Tits ouvert $\sht^\circ$ (ou $-\sht^\circ$), il existe donc une chambre $C^v$ et un $\xi\in F_1^v$ tels que, $\forall t>0$, $\overline{C^v}$ contient $t\xi-\shv_t$ pour un voisinage $\shv_t$ de $0$ dans $F_2^v$. Alors $F^\ell(x,F_1^v)\subset x+\overline{C^v}$ et $F^\ell(x,F_2^v)\subset F^\ell(x,F_1^v)-\overline{C^v}$. D'apr\`es 2) ci-dessus et la remarque \ref{5.6} $F^\ell(x,F_2^v)\cup F^\ell(x,F_1^v)$ a un bon fixateur et on conclut par \ref{5.5}.1.

\subsection{Intersection d'appartements}\label{5.7b}

\par Soient $A=g.A(T)=g.\A$ un appartement de $\shi$ et $x,y\in A$, la relation $g^{-1}x\leq{}g^{-1}y$ (resp. $g^{-1}x\,{\bc\leq{}}\,g^{-1}y$) dans $\A$ ne d\'epend pas du choix de $g$ d'apr\`es \ref{5.7}.5, car $N$ stabilise le c\^one de Tits et son int\'erieur; on note cette relation $x\leq{}_A\, y$ (resp. $x\,{\bc\leq{}}_A \,y$).

\par Soient $A_1$, $A_2$ deux appartements de $\shi$ et $x,y\in A_1\cap A_2$ tels que $x\leq{}_{A_1}y$. Alors $x\leq{}_{A_2}y$ (car $\{x,y\}$ a un bon fixateur). On d\'efinit donc ainsi une relation $\leq{}$ (et aussi $\bc\leq{}$) sur $\shi\times\shi$; on verra en \ref{5.14} que c'est un pr\'eordre.  Il r\'esulte de \ref{5.5}.1 que $A_1\cap A_2$ contient $cl(\{x,y\})$ (calcul\'e dans $A_1$ ou $A_2$) et en particulier le segment $[x,y]$ (qui est le m\^eme dans $A_1$ et $A_2$); on dit qu'une intersection d'appartements est {\it convexe pour le pr\'eordre} $\leq{}$.

\subsection{Extension centrale finie du groupe}\label{5.8}

\par Contrairement aux conventions depuis 4.0, on va modifier le SGR.

\par Soit $\varphi:\shs\rightarrow\shs'$ un morphisme de SGR libres qui est une extension centrale finie. On abr\'egera $\g G_\shs$ en $\g G$, $\g G_{\shs'}$ en $\g G'$, $\g G_\varphi$ en $\varphi$, etc.

\par Alors $Y$ est un sous-module de $Y'$ de m\^eme rang, donc $V=V'$ et, comme $I=I'$, on a $\g g=\g g'$, $\Phi=\Phi'$. Le morphisme $\varphi: \g G\rightarrow\g G'$ est d\'ecrit en \ref{1.10} et \ref{1.13}, en particulier $G'=T'.\varphi(G)$, $\varphi^{-1}(N')=N$ et le noyau de $\varphi$ est dans $T$ et m\^eme dans $H$ puisqu'il est fini.

\par Par construction (\ref{4.2}) les appartements affines $\A$ et $\A'$ sont identiques, les actions $\nu$ et $\nu'$ de $N$ et $N'$ sont compatibles et on a les m\^emes murs puisque pour $\alpha\in\Phi$, $\g G_\varphi\circ\g x_\alpha=\g x'_\alpha$ et $\g G_\varphi(\g N)\subset\g N'$. Les images $\nu(N)=W_{Y\Lambda}=W^v\ltimes(Y\otimes\Lambda)$ et $\nu'(N')=W_{Y'\Lambda}=W^v\ltimes(Y'\otimes\Lambda')$ sont en g\'en\'eral diff\'erentes (\ref{4.2}.6), mais on a $T=\varphi^{-1}(T')$ et $N'=\varphi(N)T'$.

\par Pour $\Omega\subset\A$, les alg\`ebres de Lie $\g g_{\Omega}$ et $\g g_{\Omega}'$ ont les m\^emes composantes de poids non nul, \cf \ref{2.2} (ce n'est pas vrai en poids nul: si $K$ est de caract\'eristique positive l'application de $\g h_K$ dans $\g h'_K$ peut m\^eme ne pas \^etre injective).
Ainsi l'isomorphisme $\varphi$ de $U^{ma+}$ sur $U'^{ma+}$ (\ref{3.18}.3) induit un isomorphisme  de $U^{ma+}_\Omega$ sur $U'^{ma+}_\Omega$ et aussi de $U^{pm+}_\Omega$ sur $U'^{pm+}_\Omega$, on identifie ces groupes.
On a les r\'esultats sym\'etriques pour $U^{nm-}_\Omega$ et $U'^{nm-}_\Omega$, etc. Comme les actions de $N$ et $N'$ sur $\A$ sont compatibles et Ker$\varphi\subset T$, on a $\widehat{N}_{\Omega}=\varphi^{-1}(\widehat{N}'_{\Omega})$.
Pour $x\in \A$, on a donc $\widehat{P}'_x=U_x^{pm+}.U_x^{nm-}.\widehat{N}'_x=\varphi(\widehat{P}_x).\widehat{N}'_x$ et $\varphi^{-1}(\widehat{P}'_x)=\widehat{P}_x$ car Ker$\varphi\subset H\subset \widehat{N}_x$.

\par Les relations ci-dessus permettent facilement de montrer que l'application $\varphi\times Id:G\times\A\rightarrow G'\times\A$ passe au quotient en une bijection de la masure $\shi$ sur la masure $\shi'$. Cette bijection est compatible avec $\varphi$ et les actions de $G$ et $G'$; elle \'echange les facettes. Comme $G'=\varphi(G)T'$ et $N=\varphi^{-1}(N')$ les appartements de $\shi$ et $\shi'$ sont les m\^emes. On identifie ces deux masures.

\begin{prop}\label{5.9} Soient $\varphi:\shs\rightarrow\shs'$  une extension centrale finie de SGR libres et $\Omega$ un filtre de parties de $\A$. Alors $\Omega$ a un bon (ou assez bon) fixateur $G_\Omega$ pour $G=G_\shs$ si et seulement si il en a un $G'_\Omega$ pour $G'=G_{\shs'}$. Dans ce cas  $G'_\Omega=\varphi(G_\Omega).\widehat{N}'_\Omega$.
\end{prop}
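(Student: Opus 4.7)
The proof rests on the identifications established in 5.8: the morphism $\varphi:\g G\to\g G'$ induces a $G$-equivariant bijection of masures $\shi\cong\shi'$ (with $G$ acting on $\shi'$ via $\varphi$), under which the appartement $\A$ is common, the radical subgroups satisfy $U^{pm+}_\Omega=U'^{pm+}_\Omega$ and $U^{nm-}_\Omega=U'^{nm-}_\Omega$, while $\varphi^{-1}(\widehat{N}'_\Omega)=\widehat{N}_\Omega$, $\varphi^{-1}(N')=N$, and $\ker\varphi\subset H\subset\widehat{N}_\Omega$. The $G$-equivariance together with $\ker\varphi\subset G_\Omega$ yields $\varphi^{-1}(G'_\Omega)=G_\Omega$ and $\varphi(G_\Omega)=G'_\Omega\cap\varphi(G)$.

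I would first treat the direction ``bon fixateur in $G'$ implies bon fixateur in $G$'' by pullback. Assuming (GF+) for $G'$, one expands $\varphi(g)=u'^+u'^-n'$ for each $g\in G_\Omega$, lifts the $u'^\pm\in U'^{pm\pm}_\Omega$ uniquely to $u^\pm\in U^{pm\pm}_\Omega$ through the isomorphisms $\varphi|_{U^\pm}\colon U^\pm\cong U'^\pm$ of \ref{1.10}, and observes that $(u^+u^-)^{-1}g$ maps to $N'$, hence lies in $\varphi^{-1}(N')=N$, and since it fixes $\Omega$ is in $\widehat{N}_\Omega$. The same pullback argument handles (GF$-$). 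For (TF) assuming (TF)+(GF+) in $G'$: given $g\in G(\Omega\subset\A)$, write $\varphi(g)\in G'(\Omega\subset\A)=N'\cdot U'^{pm+}_\Omega U'^{nm-}_\Omega$, lift the unipotent factors, and conclude $g(u^+u^-)^{-1}\in \varphi^{-1}(N')=N$, hence $g\in N\cdot G_\Omega$.

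For the harder direction ``bon fixateur in $G$ implies bon fixateur in $G'$ with the formula'', the inclusion $\varphi(G_\Omega)\cdot\widehat{N}'_\Omega\subset G'_\Omega$ is immediate, and applying (GF+) in $G$ together with the $U$-identifications and $\varphi(\widehat{N}_\Omega)\subset\widehat{N}'_\Omega$ expands it to $U'^{pm+}_\Omega\cdot U'^{nm-}_\Omega\cdot\widehat{N}'_\Omega$. For the reverse inclusion, given $g'\in G'_\Omega$ I use $G'=T'\cdot\varphi(G)$ (since $T'$ normalizes $\varphi(G)$) to decompose $g'=t'\cdot\varphi(g)$ with $t'\in T'$, $g\in G$, and set $v=\nu'(t')\in Y'\otimes\Lambda$. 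The relation $g'.y=g.y+v=y$ on some $S\in\Omega$ shows $g.S=S-v\subset\A$, placing $g$ in $G(\Omega\subset\A)$; by (TF) for $G$, decompose $g=n_0h$ with $n_0\in N$, $h\in G_\Omega$. Then $\nu(n_0).y=y-v$ on $S$, so $(t'\varphi(n_0)).y=\nu(n_0)(y)+v=y$ on $S$, placing $t'\varphi(n_0)$ in $N'\cap G'_\Omega=\widehat{N}'_\Omega$. Consequently $g'=(t'\varphi(n_0))\cdot\varphi(h)\in\widehat{N}'_\Omega\cdot\varphi(G_\Omega)=\varphi(G_\Omega)\cdot\widehat{N}'_\Omega$, the last equality because $\widehat{N}'_\Omega$ normalizes $\varphi(G_\Omega)$ via the $T'$-action on $\varphi(G)$. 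The (TF) property for $G'$ follows by the same decomposition: $g'=t'\varphi(g)\in G'(\Omega\subset\A)$ forces $g.\Omega\subset t'^{-1}.\A=\A$, hence $g\in NG_\Omega$ by (TF) for $G$, giving $g'\in N'\cdot\varphi(G_\Omega)\subset N'G'_\Omega$. The (GF$-$) case is symmetric.

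The principal technical subtlety lies in choosing the ordering $g'=t'\cdot\varphi(g)$ rather than $\varphi(g)\cdot t'$: with this ordering, the translation $v$ acts after $g$, so the identity $(t'\varphi(n_0)).y=y$ drops directly out of the decomposition $g=n_0h$ that realizes $\nu(n_0).y=y-v$. The passage from a representative subset $S$ to the full filter $\Omega$ is routine using $G_\Omega=\bigcup_{S\in\Omega}G_S$ and the analogous formula for $G'_\Omega$.
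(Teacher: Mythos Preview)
Your proof is correct and follows essentially the same strategy as the paper's. Both rely on the identifications of \ref{5.8}, the relation $\varphi^{-1}(G'_\Omega)=G_\Omega$, and the factorisation $G'=T'\cdot\varphi(G)$. The only notable difference in execution is in the direction $G\Rightarrow G'$: the paper first left-multiplies $g'$ by an element of $T'$ to reduce to $g'\in\varphi(G)$, and then uses the pointwise description $G'(\Omega\subset\A)=\bigcup_{\Omega'\in\Omega}\bigcap_{x\in\Omega'}N'\widehat P'_x$ from Lemma~\ref{5.2} together with $\widehat P'_x=\varphi(\widehat P_x)\cdot\widehat N'_x$ to pull back to $G$. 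You instead keep track of the translation $v=\nu'(t')$ explicitly and use $t'^{-1}\A=\A$ to see directly that $g\in G(\Omega\subset\A)$. Your variant is slightly more geometric and avoids invoking Lemma~\ref{5.2}; the paper's is slightly more algebraic. The content is the same.
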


\begin{proof} Comme les actions de $G$ et $G'$ sur $\shi$ sont compatibles, on a $\varphi^{-1}(G'_\Omega)=G_\Omega$.

\par Supposons que $G'_\Omega$ est un (assez) bon fixateur (pour le signe $+$). On a $G'_\Omega=U_\Omega^{pm+}.U_\Omega^{nm-}.\widehat{N}'_\Omega$, donc $G_\Omega=\varphi^{-1}(G'_\Omega)=U_\Omega^{pm+}.U_\Omega^{nm-}.\varphi^{-1}(\widehat{N}'_\Omega)=U_\Omega^{pm+}.U_\Omega^{nm-}.\widehat{N}_\Omega$, d'o\`u (GF+) pour $\Omega$ et $G$.
Soit $\Omega'\in\Omega$, $G(\Omega'\subset\A)=\cap_{x\in\Omega'}\,N\widehat{P}_x\subset\varphi^{-1}(\cap_{x\in\Omega'}\,N'\widehat{P}'_x)\subset\varphi^{-1}(N'G'_{\Omega})=\varphi^{-1}(N'.U_\Omega^{nm-}.U_\Omega^{pm+})=\varphi^{-1}(N').U_\Omega^{nm-}.U_\Omega^{pm+}=N.G_{\Omega}$. Donc (TF) est satisfait par le filtre $\Omega$ et le groupe $G$.

\par Supposons que $G_\Omega$ est un (assez) bon fixateur (pour le signe $+$). Soit $g'\in G'(\Omega\subset\A)$, quitte \`a multiplier \`a gauche $g'$ par un \'el\'ement de $T'$ on peut supposer $g'\in\varphi(G)$. Donc il existe
$\Omega'\in \Omega$ tel que $g'\in (\cap_{x\in\Omega'}\,N'\widehat{P}'_x)\cap\varphi(G)=\cap_{x\in\Omega'}\,(N'\widehat{P}'_x\cap\varphi(G))=\cap_{x\in\Omega'}\,(N'\cap\varphi(G)).\varphi(\widehat{P}_x)=\cap_{x\in\Omega'}\,\varphi(N).\varphi(\widehat{P}_x)=\varphi(\cap_{x\in\Omega'}\,N\widehat{P}_x)=\varphi(N.G_\Omega)=\varphi(N).U_\Omega^{nm-}.U_\Omega^{pm+}\subset N'G'_\Omega$. D'o\`u (TF) pour $\Omega$ et $G'$.

\par Pour $g'\in G'_\Omega$, ce calcul montre que $g'\in T'.\varphi(N).U_\Omega^{nm-}.U_\Omega^{pm+}=N'.U_\Omega^{nm-}.U_\Omega^{pm+}$. Mais $U_\Omega^{nm-}$, $U_\Omega^{pm+}$ sont dans $G'_\Omega$, donc $g'\in(N'\cap G'_\Omega).U_\Omega^{nm-}.U_\Omega^{pm+}=\widehat{N}'_\Omega.U_\Omega^{nm-}.U_\Omega^{pm+}=\widehat{N}'_\Omega.\varphi(G_\Omega)$. D'o\`u (GF+) pour $\Omega$ et $G'$, plus la derni\`ere assertion de l'\'enonc\'e.
\end{proof}

\subsection{Passage au simplement connexe}\label{5.10}

\par 1) D'apr\`es \ref{1.3} on a une suite d'extensions commutatives de SGR $\shs_A\rightarrow \shs^1\hookrightarrow \shs^s\rightarrow\shs$ qui sont successivement centrale torique, semi-directe et centrale finie. On note $G^A$, $G^1$, $G^s$, $G$ les groupes correspondants et $\psi$ le compos\'e des morphismes $G^A\rightarrow G^1\hookrightarrow G^s\rightarrow G$.

\par D'apr\`es \ref{5.8}, $G^s$ et $G$ ont la m\^eme masure $\shi$. Par contre $\shs_A$ et $\shs^1$ ne sont en g\'en\'eral pas libres, on consid\`ere les actions de $G^A$ et $G^1$ sur $\shi$ via leurs morphismes dans $G$.

\par 2) On a $G=\psi(G^A).T$ (\ref{1.8}.3); on en d\'eduit aussit\^ot que le groupe simplement connexe $G^A$ permute transitivement les appartements de $\shi$. Le stabilisateur de $\A$ dans $G^A$ est le groupe $\psi^{-1}(N)=N^A=N_{\shs_A}$ (\ref{1.10}) et, d'apr\`es la construction de \ref{4.2}, $\nu(N^A)=W=W^v\ltimes(Q^\vee\otimes\Lambda)$. Ainsi tout appartement de $\shi$ est muni d'une unique structure d'appartement de type $\A$ telle que $G^A$ induise des isomorphismes d'appartements (au sens de \cite[1.13]{Ru-10}).

\par 3) Dans \cite[6.1 et 6.2]{Ru-10} on se place dans le cas $\shs=\shs^s$ (libre) et le groupe $G_1$ qui y est d\'efini est \'egal \`a $G^1.H=\psi(G^A).H$.

\subsection{Paires conviviales}\label{5.7c}

\par 1) On dit qu'une paire $(F_1,F_2)$ form\'ee de deux filtres de parties de $\shi$ est {\it conviviale} s'il existe un appartement  contenant ces deux filtres et si deux appartements $A,A'$ contenant $F_1,F_2$ sont isomorphes par un isomorphisme fixant l'enclos de $F_1$ et $F_2$ (calcul\'e dans $A$ ou $A'$).

\par On ne consid\'erera ici que des paires $G-${\it conviviales}, \ie  telles que les isomorphismes d'appartements soient induits par des \'el\'ements de $G$.
Une paire $(F_1,F_2)$ est donc $G-$conviviale d\`es que $\QO=F_1\cup F_2$ est contenu dans un appartement et y a un assez bon fixateur (car $G_\QO=U_\QO^{pm+}.U_\QO^{nm-}.\widehat N_\QO$ et $U_\QO^{pm+},\,U_\QO^{nm-}\subset\psi(G^A)$ induisent des isomorphismes d'appartements).

\begin{enonce*}{\quad2) Lemme} Soient $F_1,F_2$ deux filtres de parties de $\A$.

\par Supposons $G=G_{F_1}.N.G_{F_2}$, alors pour tous $g_1,g_2\in G$, $g_1F_1$ et $g_2F_2$ sont contenus dans un m\^eme appartement.

\par Si $G=G_{F_1}.N.G_{F_2}$ ou si $F_1$ ou $F_2$ a un fixateur transitif, alors $F_1$ et $F_2$ sont conjugu\'es par $G$ si et seulement si ils le sont par $N$.
\end{enonce*}

\begin{proof} Les cons\'equences de $G=G_{F_1}.N.G_{F_2}$ sont classiques, \cf \eg \cite[3.6 et 3.7]{Ru-06}. La derni\`ere assertion r\'esulte de \ref{5.4} et \ref{5.7}.5.
\end{proof}

\par 3) Il r\'esulte donc de la d\'ecomposition d'Iwasawa \ref{4.7} et de \ref{5.7}.4 qu'un filtre \'etroit dans un appartement et un germe de quartier sont toujours dans un m\^eme appartement. D'apr\`es \ref{5.7} et \ref{5.5}.4 une facette (ou un germe de segment, un germe d'intervalle, une facette locale, une facette ferm\'ee) et un germe de quartier forment une paire $G-$conviviale.

\par 4) Par convexit\'e ordonn\'ee un appartement contenant un point $x$ et un germe de quartier $\g Q=\germ_\infty(y+C^v)$, contient le quartier $\g q=x+C^v$ et son enclos donc son adh\'erence $x+\overline C^v$. On en d\'eduit qu'une facette $F(x,F^v)$ et un filtre \'etroit $F$ contenant $x$ sont toujours dans un m\^eme appartement. Ainsi, d'apr\`es \ref{5.7}.9, deux facettes $F(x,F_1^v)$ et $F(x,F_2^v)$ en le m\^eme point forment une paire $G-$conviviale d\`es que $F_1^v$ et $F_2^v$ sont de signes oppos\'es ou si l'une d'elles est sph\'erique.

\par 5) Soient $C=F(x,C^v)$ une chambre de $\A$ et $M(\alpha,k)$ l'un de ses murs (avec $C\subset D(\alpha,k)$). Alors $U_{\alpha,k}=U_{\alpha,C}$ agit transitivement sur les chambres $C'\not=C$ adjacentes \`a $C$ le long de $M(\alpha,k)$; en particulier n'importe laquelle de ces chambres $C'$ est dans un m\^eme appartement que le demi-appartement $D(\alpha,k)$, \cf \cite[4.3.4]{GR-08}. De plus $C'$ et $D(\alpha,k)$ forment une paire conviviale: si $C'\subset\A$, $C'\cup D(\alpha,k)$ a un bon fixateur $H.U_{\alpha,k^+}$ (\ref{5.7}.7).

\par 6) On a vu en \cite[6.10]{GR-08} que deux points de la masure $\shi$ ne sont pas toujours  contenus dans un m\^eme appartement. C'est pour cette raison que l'on a abandonn\'e le nom d'immeuble pour $\shi$; une d\'efinition abstraite des masures affines n'est pas possible dans des termes approchant ceux de \cite{Ru-08}, d'o\`u la d\'efinition de \cite{Ru-10} inspir\'ee de \cite{T-86a}. On reproduit ci-dessous cette d\'efinition dans une formulation utilisant la notion de paire conviviale.

Une paire de points ou une paire de facettes de $\shi$ n'est pas toujours conviviale. De plus dans l'exemple \ref{4.12}.3c de $\widetilde{SL}_2$, on a trouv\'e deux points de $\A$ dont le fixateur n'est pas assez bon. Par contre il est peut-\^etre possible que le fixateur de deux points de $\A$ soit toujours transitif (donc qu'une paire de points d'un m\^eme appartement soit toujours conviviale).

\begin{defi}\label{5.7d}
Une {\it masure affine} de type $\A$ est un ensemble $\mathcal I$ muni d'un recouvrement par un ensemble $\mathcal A$ de sous-ensembles appel\'es {\it appartements} tel que:

\par (MA1) Tout $A\in \mathcal A$ est muni d'une structure d'appartement de type $\A$ (au sens de \cite[1.13]{Ru-10} \ie est "isomorphe" \`a $\A$).

\par (MA2) Si $F$ est un point, un germe d'intervalle pr\'eordonn\'e, une demi-droite g\'en\'erique ou une chemin\'ee solide d'un appartement $A$, alors $(F,F)$ est une paire conviviale.

\par (MA3+4) Si $\mathfrak R$ est un germe de chemin\'ee \'evas\'ee, si $F$  est une facette  ou un germe de chemin\'ee solide, alors $(\mathfrak R,F)$ est une paire conviviale.

\qquad(La notion de chemin\'ee, utilis\'ee ci-dessus, ne sera d\'efinie qu'en \ref{5.12}.1.)

\noindent La masure affine $\mathcal I$ est dite {\it ordonn\'ee} si elle v\'erifie l'axiome suppl\'ementaire suivant:

\par (MAO) Soient $x,y$ deux points de $\mathcal I$ et $A,A'$ deux appartements les contenant; si $x\le  y$ (dans $A$), alors les segments $[x,y]_A$ et $[x,y]_{A'}$ d\'efinis par $x$ et $y$ dans $A$ et $A'$ sont \'egaux.

\par La masure affine $\mathcal I$ est dite {\it \'epaisse} si toute cloison de $\mathcal I$ est dans l'adh\'erence d'au moins trois chambres.

\end{defi}

\subsection{Sous-groupes parahoriques et types de facettes}\label{5.11}

\par 1) Si $\Omega\subset\A$ a un assez bon fixateur, on a $G_\Omega=(\psi(G^A)\cap G_\Omega)\widehat N_\Omega$, puisque $U_\Omega^{pm+}$, $U_\Omega^{nm-}$, ... ne d\'ependent pas du SGR \`a extension commutative pr\`es. En particulier le fixateur $G_\Omega^A$ de $\Omega$ dans $G^A$ agit transitivement sur les appartements de $\shi$ contenant $\Omega$. On note $\widehat P^{sc}_\Omega=\psi(G^A_\Omega)H$. Ainsi $G_\Omega=\widehat P_\Omega=\widehat P^{sc}_\Omega.\widehat N_\Omega$. Le groupe $\widehat N^{sc}_\Omega=N\cap \widehat P^{sc}_\Omega=\widehat N_\Omega\cap \widehat P^{sc}_\Omega$ a pour image via $\nu$ le fixateur $W_\Omega$ de $\Omega$ dans $W$. On a aussi $\widehat P^{sc}_\Omega=P_\Omega^{pm}.\widehat N^{sc}_\Omega=P_\Omega^{nm}.\widehat N^{sc}_\Omega$.

\par  Il est clair que les facettes $F=F(x,F^v)$ et $\overline F=\overline F(x,F^v)$ ont le m\^eme fixateur dans $W$, donc $\widehat P^{sc}_{F}=\widehat P^{sc}_{\overline F}$.

\par 2) Quand $W_\Omega=W_\Omega^{min}$, on note $P_\QO=\widehat P^{sc}_\Omega$ (qui est alors aussi \'egal \`a $P_\Omega^{pm}$ et $P_\Omega^{nm}$). Cela se produit si $\Omega$ est r\'eduit \`a un point sp\'ecial ou si $\Omega$ est une facette sph\'erique et si la valuation est discr\`ete (d'apr\`es  \ref{4.3}.4); c'est a priori rare en dehors de ces deux cas d'apr\`es \cite[7.1.10.2]{BtT-72} et \ref{4.12}.4.

\par Pour g\'en\'eraliser la d\'efinition de \cite[1.5.1]{BtT-72}, on d\'efinit un {\it sous-groupe parahorique} comme le groupe $P_F=P_{\overline F}$ associ\'e \`a une facette sph\'erique $F$ ou $\overline F$. On parle de {\it sous-groupe d'Iwahori} si $F$ ou $\overline F$ est une chambre.



\par 3) Le {\it type global affine} d'un filtre de $\shi$ qui a un assez bon fixateur (en particulier une facette) est son orbite sous $G^A$. Dans un appartement $A$ de $\shi$ deux filtres (assez bien fix\'es) ont m\^eme type si et seulement si ils sont conjugu\'es par le groupe de Weyl $W_A$ de $A$ (\cf le 1), \ref{5.7c}.2 et \ref{5.10}.2).
 Pour les facettes (ou les filtres \'etroits assez bien fix\'es) la relation "avoir le m\^eme type global affine" est la relation engendr\'ee par transitivit\'e \`a partir des relations pr\'ec\'edentes dans les appartements. En effet, \'etant donn\'ees deux facettes $F_1,F_2$ il existe toujours un quartier $\g q$ et des appartements $A_1,A_2$ contenant $\g q$ et respectivement $F_1,F_2$ (\cf \ref{5.7c}.3); de plus $\g q$ contient forc\'ement un $W_{A_i}-$conjugu\'e de $F_i$.

 \par Le type global affine d'une facette co\"{\i}ncide donc avec le type habituel dans le cas d'un immeuble affine discret. Mais en valuation dense et si on suppose $\A$ essentiel, le type global affine d'une facette locale $F^\ell(x,F^v)$ est la donn\'ee de l'orbite $W.x$ de $x$ et de l'orbite de $F^v$ sous $W_x$ (c'est-\`a-dire du type (vectoriel) de $F^v$ si $x$ est sp\'ecial).

\subsection{Chemin\'ees}\label{5.12}

\par 1) Une {\it chemin\'ee} dans $\A$ est associ\'ee \`a une facette $F=F(x,F^v_0)$ (sa base) et une facette vectorielle $F^v$ (sa direction), c'est le filtre $\g r(F,F^v)=cl(F+F^v)=cl(\overline F+F^v)=cl(F^\ell(x,F^v_0)+F^v)\supset \overline F+\overline F^v$.

\par La chemin\'ee $\g r(F,F^v)$ est dite {\it \'evas\'ee}  si $F^v$ est sph\'erique, son signe est alors celui de $F^v$. Cette chemin\'ee est dite {\it solide} (resp. {\it pleine}) si la direction de tout sous-espace affine la contenant a un fixateur fini dans $W^v$ (resp. est $V$). Une chemin\'ee \'evas\'ee est solide. L'enclos d'un quartier est une chemin\'ee pleine.

\par Si $F_0^v=F^v$ la chemin\'ee $\g r(F(x,F^v),F^v)$ est l'enclos de la face de quartier $x+F^v$; cette face est dite sph\'erique si $F^v$ est sph\'erique..

\par Un raccourci de la chemin\'ee $\g r(F,F^v)$ est d\'efini par un \'el\'ement $\xi\in\overline F^v$, c'est la chemin\'ee $cl(F+\xi+F^v)$. Le {\it germe} de la chemin\'ee $\g r(F,F^v)$ est le filtre $\g R(F,F^v)=\germ_\infty(\g r(F,F^v))$ form\'e des parties de $\A$ contenant un de ses raccourcis.

\par Si $F^v$ est la facette vectorielle minimale, on a $\g r(F,F^v)=\g R(F,F^v)=F$.

\medskip
\par 2) \`A la facette vectorielle $F^v$ est associ\'e un sous-groupe parabolique $P(F^v)$ de $G$ avec une d\'ecomposition de Levi $P(F^v)=M(F^v)\ltimes U(F^v)$. Le groupe $M(F^v)$ est engendr\'e par $T$ et les $U_\alpha$ pour $\alpha\in\Phi$ et $\alpha(F^v)=0$, \cf \cite[6.4]{Ru-10} ou \cite[6.2]{Ry-02a}.

\par Si $F^v=F^v(J)=\{v\in \overline C^v_f\mid \alpha_j(v)=0,\forall j\in J\}$ pour $J\subset I$ (\cf \ref{4.1}), le groupe $M(F^v)$ est clairement un quotient du groupe de Kac-Moody minimal $\g G_{\shs(J)}(K)=G(J)$ (notations de \ref{3.10}). D'apr\`es la remarque \ref{3.10} et \ref{3.13} on a en fait \'egalit\'e: $M(F^v)=G(J)$.
Ce groupe $G(J)$ induit sur $\A$ une structure d'appartement $\A(J)$ dont les murs sont les $M(\alpha,k)$ pour $\alpha\in \Delta(J)$ et $k\in\Lambda$.

\par L'enclos correspondant $cl_J(F)$ de la facette $F$ est une facette ferm\'ee de $\A(J)$, on note $M(F,F^v)$ son fixateur dans $M(F^v)=G(J)$. Cette construction se r\'ealise aussi si $F^v$ est une autre facette vectorielle ou si $F$ est r\'eduit \`a un point $F=\{x\}$ (ou $F=F(x,F^v)$).

\par Le sous-groupe "parahorique" de $G$ associ\'e \`a $\g r$ est le produit semi-direct $P^\mu(\g r)=M(F,F^v)\ltimes U(F^v)$. Il ne d\'epend en fait que du germe $\g R(F,F^v)$, on le note donc aussi $P^\mu(\g R)$.

\par Les r\'esultats suivants se d\'emontrent comme dans \cite[{\S{}} 6]{Ru-10}.

\begin{enonce*}[plain]{\quad3) Proposition} Le groupe $P^\mu(\g R)$ fixe (point par point) le germe de chemin\'ee $\g R$.
\end{enonce*}

\begin{enonce*}[plain]{\quad4) Proposition} Soient $\g R_1$ et $\g R_2$ deux germes de chemin\'ees de $\A$ avec $\g R_1$ \'evas\'e, alors  $G=P^\mu(\g R_1).N.P^\mu(\g R_2)$.
\end{enonce*}

\begin{enonce*}[plain]{\quad5) Proposition} Une chemin\'ee solide et son germe ont de bons fixateurs. Il en est de m\^eme pour une face de quartier sph\'erique $x+F^v$ et son germe (\`a l'infini); le fixateur (point par point) de $\germ_\infty(x+F^v)$ est $M(x,F^v).U(F^v)$.
\end{enonce*}

\begin{enonce*}[plain]{\quad6) Proposition} Soient $\g R_1$ un germe de chemin\'ee \'evas\'ee et $\g R_2$ un germe de chemin\'ee solide ou une facette dans $\A$, alors $\g R_1\cup\g R_2$ a un assez bon fixateur (et m\^eme un bon fixateur si $\g R_2$ est \'evas\'e).
\end{enonce*}

\begin{enonce*}[definition]{\quad7) Remarque} On obtient donc de nouvelles paires $G-$conviviales dans $\shi$: un germe de chemin\'ee \'evas\'ee et un germe de chemin\'ee solide ou un germe de chemin\'ee \'evas\'ee et une facette. Cela permet en particulier de d\'efinir des r\'etractions de $\shi$ sur un appartement $A$ avec pour centre un germe de chemin\'ee \'evas\'ee pleine de $A$ (par exemple un germe de quartier) \cite[2.6]{Ru-10}.
\end{enonce*}

\begin{theo}\label{5.13} La masure affine $\shi$ construite en \ref{5.1} est une masure affine ordonn\'ee \'epaisse au sens de \cite{Ru-10} ou \ref{5.7d}. Elle est semi-discr\`ete si la valuation $\omega$ de $K$ est discr\`ete.
\end{theo}

\begin{proof} L'axiome (MA1) r\'esulte de \ref{5.10}.2 et (MA2) r\'esulte de ce que l'on a vu en \ref{5.7} ou \ref{5.12}.5: les filtres impliqu\'es dans (MA2) ont de bons fixateurs. Enfin (MA3+4) r\'esulte   de \ref{5.12}.7. Les derni\`eres assertions se montrent comme dans \cite[6.11]{Ru-10}. On notera cependant que l'\'epaisseur d'une cloison est le cardinal du corps r\'esiduel de $K$ et donc peut \^etre finie dans notre cas, plus g\'en\'eral que celui de \cite{GR-08}.
\end{proof}

\begin{remas}\label{5.14}
 1) La masure $\shi$ a donc toutes les propri\'et\'es d\'emontr\'ees dans \cite{Ru-10}, un certain nombre d'entre elles ont d\'ej\`a \'et\'e prouv\'ees ci-dessus. On obtient cependant au moins deux propri\'et\'e int\'eressantes suppl\'ementaires: les relations $\leq{}$ et $\bc\le$ de \ref{5.7b} sont des pr\'eordres (elles sont transitives) et les r\'esidus en chaque point de $\shi$ ont une structure d'immeubles jumel\'es.

\par La transitivit\'e de la relation $\leq{}$ et son invariance par $G$ permettent de d\'efinir un sous-semi-groupe $G^+$ de $G$, qui admet une d\'ecomposition de Cartan, voir \cite[1.6]{GR-12} pour plus de d\'etails.
 Ceci constitue une g\'en\'eralisation des d\'ecompositions de Cartan de \cite{Gd-95} \'etablies pour les groupes de lacets.

 \par 2) On notera les propri\'et\'es de l'action du groupe $G$: il est transitif sur les appartements et tous les isomorphismes entre appartements dont l'existence est exig\'ee par les axiomes de \cite{Ru-10} ou \ref{5.7d} sont induits par des \'el\'ements de $G$. On peut dire que l'action de $G$ est {\it fortement transitive}. Dans le cas classique d'une action de groupe sur un immeuble affine discret \'epais, cette notion de forte transitivit\'e entra\^{\i}ne clairement la notion classique (et lui est sans doute \'equivalente).
\end{remas}

\begin{prop}\label{5.15} Les immeubles jumel\'es \`a l'infini associ\'es \`a la masure $\shi$ en \cite[{\S{}} 3]{Ru-10} s'identifient (avec leurs appartements et leur action de $G$) aux immeubles jumel\'es de $G$ d\'efinis par J. Tits (\ref{1.6}.5).

\par L'immeuble microaffine positif (resp. n\'egatif) \`a l'infini associ\'e \`a la masure $\shi$ en \cite[{\S{}} 4]{Ru-10} s'identifie (avec ses appartements et son action de $G$) \`a l'immeuble microaffine de \cite{Ru-06} (dans sa r\'ealisation de Satake) associ\'e \`a $G$ et \`a la valuation $\omega$ de $K$ (resp. l'analogue obtenu en changeant le c\^one de Tits en son oppos\'e).

\end{prop}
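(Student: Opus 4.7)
Le plan est de construire explicitement les identifications appartement par appartement puis de les globaliser par la forte transitivit\'e du groupe $G$ sur $\shi$ (\ref{5.14}.2).

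Pour la premi\`ere assertion, j'utiliserai le fait que l'immeuble jumel\'e \`a l'infini $\shi^v_\pm(\shi)$ construit dans \cite[\S\,3]{Ru-10} a pour facettes (dans sa r\'ealisation \`a l'infini) les germes \`a l'infini de faces de quartiers sph\'eriques, et pour chambres les germes de quartiers. Je commencerai par envoyer l'appartement vectoriel $\sht$ (resp. $-\sht$) de \ref{4.1} sur l'ensemble des germes $\germ_\infty(0+F^v)$ pour $F^v\subset\sht^\circ$ (resp. $F^v\subset-\sht^\circ$) dans $\shi^v_\pm(\shi)$, en v\'erifiant que cette application est une bijection $W^v-$\'equivariante sur un appartement. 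Comme $G$ est transitif sur les appartements de $\shi$ (\ref{5.10}.2), et comme les immeubles combinatoires $\shi^v_\pm$ de Tits (\ref{1.6}.5) sont eux aussi transitivement agis par $G$, on en d\'eduira la bijection globale sur les appartements.

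Deuxi\`eme \'etape: comparer les fixateurs. Le fixateur dans $G$ du germe \`a l'infini $\germ_\infty(0+C^v_f)$ est $HU^+=T\ltimes U^+=B^+$ (\ref{5.7}.4, \ref{5.12}.5), qui est pr\'ecis\'ement le sous-groupe de Borel associ\'e \`a la chambre fondamentale de $\shi^v_+$. Plus g\'en\'eralement, pour $F^v=F^v(J)$, le fixateur $M(0,F^v).U(F^v)=G(J).U(F^v)$ du germe de face de quartier est exactement le sous-groupe parabolique standard de type $J$ de la donn\'ee radicielle jumel\'ee de $G$. Ceci produit une bijection $G-$\'equivariante entre facettes qui respecte l'incidence et le jumelage; la correspondance des appartements et des facettes implique alors l'isomorphisme cherch\'e.

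Pour la seconde assertion, la strat\'egie est similaire mais plus d\'elicate. L'immeuble microaffine $\shi^{\mu a}_+$ abstraitement construit dans \cite[\S\,4]{Ru-10} a pour appartements des copies de $\sht$ munies d'une topologie li\'ee \`a la valuation, et pour facettes microaffines des germes \`a l'infini de certains filtres (essentiellement des paires (facette sph\'erique, fonction concave)); l'immeuble microaffine de \cite{Ru-06} dans sa r\'ealisation de Satake est par construction bas\'e sur les m\^emes donn\'ees. J'identifierai d'abord l'appartement t\'emoin en associant \`a un rayon g\'en\'erique ferm\'e $[x,\infty F^v)$ dans $\A$ sa direction $F^v$ et ses "coordonn\'ees de Satake" $(\omega\circ\alpha(x))_\alpha$, puis j'\'etendrai l'identification \`a tout $\shi^{\mu a}_+$ par $G-$\'equivariance, en utilisant la bijection \'etablie en \ref{5.7}.5 entre les appartements de $\shi$ et les sous-tores d\'eploy\'es maximaux de $\g G$. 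Les stabilisateurs se correspondent via les groupes parahoriques $P^\mu(\g R)$ de \ref{5.12}.2.

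La partie la plus d\'elicate sera la comparaison fine des structures topologiques et des ensembles de facettes microaffines entre la d\'efinition abstraite de \cite{Ru-10} et la r\'ealisation de Satake concr\`ete de \cite{Ru-06}, en particulier pour les facettes associ\'ees \`a des directions non sph\'eriques sur le bord du c\^one de Tits, o\`u les conventions de topologie (non s\'epar\'ee) et d'adh\'erence jouent un r\^ole essentiel. Ce point se traite en revenant \`a la description explicite en termes de faces de quartiers dans chacune des deux r\'ealisations, en utilisant \ref{5.12}.4, \ref{5.12}.5 et \ref{5.12}.6 pour s'assurer que les fixateurs co\"{\i}ncident sur un syst\`eme de repr\'esentants.
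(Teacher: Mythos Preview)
Your overall plan (identifier les appartements, comparer les fixateurs, globaliser par $G$-\'equivariance) est bien celui de l'article, mais l'ex\'ecution contient une erreur r\'eelle qui fait \'echouer l'argument tel qu'\'ecrit.

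Tu affirmes que le fixateur de $\germ_\infty(0+C^v_f)$ est $HU^+=T\ltimes U^+=B^+$. Or $H=\g T(\sho)\subsetneq T=\g T(K)$ (voir \ref{4.2}.3), donc $HU^+\ne B^+$. Le fixateur du germe de quartier est bien $HU^+$ (\ref{5.7}.4), mais ce n'est pas le Borel. De m\^eme, pour $F^v=F^v(J)$, tu \'ecris $M(0,F^v)=G(J)$; c'est faux~: par d\'efinition (\ref{5.12}.2), $M(0,F^v)$ est le fixateur dans $G(J)$ du $J$-enclos de $\{0\}$, c'est-\`a-dire un sous-groupe parahorique de $G(J)$, strictement plus petit que $G(J)$ en g\'en\'eral. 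L'origine de l'erreur est que les facettes de l'immeuble jumel\'e \`a l'infini de \cite[\S 3]{Ru-10} sont des \emph{classes de parall\'elisme} de faces de quartier, pas des germes \`a l'infini d'une face bas\'ee en un point fix\'e; le fixateur d'un germe d\'epend du point base (modulo $\langle F^v\rangle$), pas celui d'une classe de parall\'elisme.

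L'article contourne justement ce calcul d\'elicat~: il ne montre qu'\emph{une} inclusion de fixateurs (la facile~: $P(F^v)$ envoie toute face de quartier de direction $F^v$ sur une face parall\`ele, donc fixe la classe de parall\'elisme; et de m\^eme le fixateur de $x^s=(F^v,\widetilde y)$ dans l'immeuble de \cite{Ru-06} stabilise le germe de $y+F^v$). Cette inclusion donne une application $G$-\'equivariante surjective de l'immeuble de Tits (resp. microaffine de \cite{Ru-06}) vers celui de \cite{Ru-10}. L'injectivit\'e --- et donc l'\'egalit\'e des fixateurs --- s'obtient ensuite par un argument purement combinatoire~: si $gx=x'$ dans l'immeuble cible, alors $x$ et $x'=gx$ sont dans un m\^eme appartement de l'immeuble source, et comme l'application est d\'ej\`a injective sur chaque appartement (identification des appartements t\'emoins et de l'action de $N$), on a $x=x'$. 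C'est cet argument qu'il te manque; ton calcul direct des fixateurs dans l'immeuble abstrait de \cite{Ru-10} n'aboutit pas tel quel. Enfin, ta remarque finale sur les ``directions non sph\'eriques sur le bord du c\^one de Tits'' est hors sujet~: l'immeuble microaffine ne consid\`ere que des facettes vectorielles sph\'eriques.
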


\begin{rema*} Cyril Charignon construit directement dans \cite{Cn-10} un objet immobilier contenant $\shi$, les immeubles microaffines ci-dessus et d'autres masures affines associ\'ees aux sous-groupes paraboliques non sph\'eriques de $G$. C'est la g\'en\'eralisation au cas Kac-Moody de la compactification de Satake (ou compactification poly\'edrique) des immeubles de Bruhat-Tits. Cette derni\`ere a \'et\'e construite abstraitement (sans l'aide d'un groupe) dans \cite{Cn-08}.
\end{rema*}

\begin{proof} On identifie facilement l'appartement canonique (et son action de $N$) d'un immeuble de \cite{Ru-10} avec celui qui doit lui correspondre, \cf \cite[3.3.3 et 4.4]{Ru-10}. Il reste donc \`a identifier les fixateurs de points de ces appartements.

\par Le fixateur de la facette vectorielle (sph\'erique) $F^v$ dans l'immeuble de \ref{1.6}.5 est le sous-groupe parabolique $P(F^v)$ engendr\'e par $T$ et les $U_\alpha$ pour $\alpha(F^v)\geq{}0$. Il est clair qu'un \'el\'ement de chacun de ces groupes transforme une face de quartier de $\A$ de direction $F^v$ en une face parall\`ele (dans $\shi$). Ainsi $P(F^v)$ fixe la classe de parall\'elisme des faces de quartier correspondant \`a $F^v$.

\par Un point de l'appartement $\A^s$ de l'immeuble de \cite{Ru-06} est de la forme $x^s=(F^v,\widetilde y)$ avec $F^v$ une facette vectorielle sph\'erique positive de $V$, $\langle F^v\rangle$ l'espace vectoriel engendr\'e et $\widetilde y\in V/\langle F^v\rangle$ [\lc, 4.2]. On lui associe le germe (\`a l'infini) de la face de quartier $y+F^v$ pour $y\in\widetilde y$.
Le fixateur de $x^s$ est engendr\'e par $U(F^v)$, le groupe $M(x,F^v)$ et le sous-groupe de $T$ induisant dans $\A$ des translations de vecteur dans $\langle F^v\rangle$ [\lc, 4.2.3]. Les deux premiers groupes engendrent le fixateur point par point du germe de la face de quartier $y+F^v$ (\ref{5.12}.5) et le troisi\`eme groupe stabilise \'evidemment ce germe.

\par Ces inclusions entre les fixateurs de points permettent de d\'efinir des applications surjectives et $G-$\'equivariantes des immeubles de \ref{1.6}.5 ou \cite{Ru-06} vers les immeubles correspondants de \cite{Ru-10}. Mais un point $x$ et son transform\'e $gx$ sont toujours dans un m\^eme appartement et les applications ci-dessus sont injectives sur les appartements. Les inclusions entre fixateurs sont donc des \'egalit\'es et on a bien les identifications d'immeubles annonc\'ees.
\end{proof}

\subsection{Immeubles et groupe de Kac-Moody maximal}\label{5.16}

\par Le groupe $G^{pma}$ n'agit pas sur la masure affine $\shi$. Par contre, d'apr\`es \ref{3.17b} il agit sur l'immeuble vectoriel $\shi^v_+$.

\par Le groupe $G$ agit sur l'immeuble microaffine positif $\shi_+^{\qm s}$ de \cite[{\S{}} 4]{Ru-06} (dans sa r\'ealisation de Satake) qui est r\'eunion disjointe d'immeubles index\'es par les facettes sph\'eriques positives.
 Plus pr\'ecis\'ement, \`a une telle facette $F^v$ on associe l'immeuble de Bruhat-Tits (non \'etendu) $\shi(F^v)$ du groupe r\'eductif $M(F^v)$ sur $K$. Le groupe $U(F^v)$ agit trivialement sur $\shi(F^v)$ et $G$ permute les $\shi(F^v)$ selon son action sur $\shi^v_+$:
 Un germe de chemin\'ee ou de face de quartier $\g R=\g R(F,F^v)$ correspond \`a une facette ou un point $\g R_\infty$ de $\shi(F^v)$. Le fixateur (point par point) de cette facette ou ce point $\g R_\infty$ pour l'action de $G$ sur $\shi_+^{\qm s}$ est $ZM(F^v).M(F,F^v).U(F^v)$ avec les notations de \ref{5.12}.2 et $ZM(F^v)$ le centre de $M(F^v)$.
 Ce fixateur de $\g R_\infty$ est donc plus grand que $P^\qm(\g R)$ qui est le fixateur (point par point) de $\g R$ (\ref{5.12}.3) et (par d\'efinition) le {\it fixateur strict} de $\g R_\infty$.

 \par Le fixateur de la facette sph\'erique positive  $F^v$ pour l'action de $G^{pma}$ sur $\shi^v_+$ est $P^{pma}(F^v)=M(F^v)\ltimes U^{ma+}(F^v)$ (\cf \ref{3.10} et \ref{3.17b}).
 On peut donc prolonger l'action de $G$ sur $\shi_+^{\qm s}$ en une action de $G^{pma}$:

 \par $P^{pma}(F^v)$ agit sur $\shi(F^v)$ via $M(F^v)$ \ie $U^{ma+}(F^v)$ agit trivialement.
 Le fixateur dans $G^{pma}$ de $\g R_\infty$ comme ci-dessus est $ZM(F^v).M(F,F^v).U^{ma+}(F^v)$; il contient le groupe $P^\qm_{pma}(\g R)$ $=M(F,F^v).U^{ma+}(F^v)$
que l'on appelle encore  fixateur strict de $\g R_\infty$ pour l'action de $G^{pma}$ (m\^eme si ce n'est pas justifi\'e par une action sur $\shi$ qui contient le filtre $\g R$).

 \par Pour deux germes de chemin\'ees \'evas\'ees positives $\g R_1$ et $\g R_2$, la d\'emonstration dans \cite[6.7]{Ru-10} de la proposition \ref{5.12}.4 se g\'en\'eralise: \qquad$G^{pma}=P^\qm_{pma}(\g R_1).N.P^\qm_{pma}(\g R_2)$.

 \begin{NB} Dans  \cite[4.2.1]{Ru-06} on peut modifier  le  quotient en envoyant  $F^v\times Y_\R$ sur $Y_\R$ par la seconde projection. 
  On obtient  une r\'ealisation g\'eom\'etrique $\shi_+^{\qm S}$, de Satake au sens fort, de l'immeuble microaffine, sur laquelle $G$ et $G^{pma}$ agissent.
  Alors le fixateur strict $P^\qm_{}(\g R)$ (resp. $P^\qm_{pma}(\g R)$) est le fixateur  de $\g R\subset Y_\R\subset \shi_+^{\qm S}$ pour l'action de $G$ (resp. $G^{pma}$).
 \end{NB}


\section{Appendice: Comparaison et simplicit\'e}\label{s6}

Pour la construction des masures nous avons plong\'e le groupe de Kac-Moody $\g G_\shs$ dans le groupe de Kac-Moody maximal \`a la Mathieu $\g G_\shs^{pma}$. Pla\c{c}ons nous sur un corps quelconque $k$ et notons $G=\g G_\shs(k)$, $G^{pma}=\g G_\shs^{pma}(k)$, etc. Alors $G^{pma}$ appara\^{\i}t comme le compl\'et\'e de $G$ pour une certaine filtration. D'autres groupes maximaux ont \'et\'e d\'efinis par R\'emy et Ronan \cite{RR-06} ou Carbone et Garland \cite{CG-03} \`a l'aide d'autres filtrations, on va essayer de les comparer en tirant parti des r\'esultats des sections \ref{s2} et \ref{s3}.

Par ailleurs  Moody, dans un preprint non publi\'e \cite{My-82}, a d\'emontr\'e la simplicit\'e d'un groupe analogue \`a $G^{pma}$ en caract\'eristique $0$. L\`a encore on va utiliser les sections \ref{s2} et \ref{s3} pour montrer dans certains cas la simplicit\'e d'un sous-quotient de $G^{pma}$  (th\'eor\`eme \ref{6.19}).

\subsection{Le groupe compl\'et\'e \`a la R\'emy-Ronan}\label{6.1}

\par 1) Ce groupe $G^{rr}$  est l'adh\'erence de l'image de $G$ dans le groupe des automorphismes de son immeuble positif $\shi^v_+$ \cite{RR-06}. Il contient donc le quotient de $G$ par le noyau $Z'(G)=\cap_{g\in G}\,gB^+g^{-1}$ de l'action de $G$ sur cet immeuble. Ce groupe $Z'(G)$ est
en fait le centre de $G$: $Z'(G)=Z(G)=\{t\in T\mid\qa_i(t)=1,\forall i\in I\,\}$ (\ref{1.6}.5 et \cite[lemma 1B1]{RR-06}).

\par On va s'int\'eresser plut\^ot \`a une variante de $G^{rr}$ d\'efinie par Caprace et R\'emy \cite[1.2]{CR-09}. Ce groupe $G^{crr}$ contient $G$ et le groupe $G^{rr}$ en est un quotient.

\par 2) Pour $r\in\N$, soit $D(r)$ (resp. $C(r)$) la r\'eunion des chambres ferm\'ees de l'immeuble $\shi^v_+$ (resp. de son appartement standard $\A^v$) qui sont \`a distance num\'erique $\leq{}r$ de la chambre fondamentale $C_0$.
Les fixateurs $U^+_{D(r)}$ et $U^+_{C(r)}$ de $D(r)$ et $C(r)$ dans $U^+=\g U^+(k)$ forment deux filtrations de $U^+$ qui sont exhaustives ($U^+_{D(0)}=U^+_{C(0)}=U^+$) et s\'epar\'ees (le syst\`eme de Tits $(G,B^+,N,S)$ est satur\'e \ie le fixateur de $\A^v$ dans $G$ est $T$ \cite[1.9]{Ru-06} et $T\cap U^+=\{1\}$:   \ref{1.6}.5). De plus $U^+_{D(r)}$ est le plus grand sous-groupe distingu\'e de $U^+$ contenu dans $U^+_{C(r)}$.

\par Les deux filtrations sont en fait tr\`es li\'ees car, d'apr\`es \cite[2.1]{CR-09}, il existe une fonction croissante $r:\N\to\N$ tendant vers l'infini telle que, pour $\qa\in\QF^+$, $U_\qa\subset U^+_{C(R)}\Rightarrow U_\qa\subset U^+_{D(r(R))}$.

\par 3) On d\'efinit de la m\^eme mani\`ere des fixateurs $U^{ma+}_{D(r)}$ et $U^{ma+}_{C(r)}$, pour l'action de $U^{ma+}$ sur $\shi^v_+$ (\ref{3.17b}). Les filtrations correspondantes de $U^{ma+}$ sont plus diff\'erentes: l'intersection des $U^{ma+}_{C(r)}$ contient les groupes $\g U^{ma}_{\N^*\qa}(k)$ pour $\qa\in\QD^+_{im}$ ( le syst\`eme de Tits $(G^{pma},B^{ma+},N,S)$ est rarement satur\'e) et celle des $U^{ma+}_{D(r)}$ est souvent triviale (\cf  \ref {6.5}).

\par 4) Le groupe $U^{rr+}$ est le compl\'et\'e de $U^+$ pour sa filtration par les $U^{+}_{D(r)}$. Plus g\'en\'eralement le groupe $G^{crr}$ est le compl\'et\'e de $G$ pour sa filtration (non exhaustive) par les $U^{+}_{D(r)}$; il est muni de la topologie correspondante. En particulier  $U^{rr+}$ est ouvert dans $G^{crr}$.

Le groupe $G^{rr}$ est le s\'epar\'e-compl\'et\'e de $G$ pour la filtration par les fixateurs $G_{D(r)}\subset B^+$. C'est un quotient de $G^{crr}$ par un sous-groupe $Z'(G^{crr})$ contenant $Z(G)$ (\cf 1) ); il  contient $U^{rr+}$. Si le corps $k$ est fini, $Z'(G^{crr})=Z(G)$ \cite[prop.1]{CR-09}.

\subsection{Le groupe compl\'et\'e \`a la Carbone-Garland}\label{6.2}

Soit $\ql\in X^+$ un poids dominant r\'egulier (\ie $\ql(\qa^\vee_i)>0$, $\forall i\in I$). On consid\`ere la repr\'esentation $\qp_\ql$ de $G$ ou $G^{pma}$ (de plus haut poids $\ql$) dans $V^\ql=L_\Z(\ql)\otimes k$ (\ref{3.7}.1). Le groupe $G^{cg\ql}$ d\'efini par Carbone et Garland \cite{CG-03} est le s\'epar\'e-compl\'et\'e de $G$ pour la filtration d\'efinie par les fixateurs de parties finies de $V^\ql$, \ie par les fixateurs de sous-espaces vectoriels de dimension finie de $V^\ql$.
Pour $n\in\N$ soit $V(n)$ la somme des sous-espaces de $V^\ql$ de poids $\ql-\qa$ avec $\qa\in Q^+$ et $\deg(\qa)\leq{}n$. Ainsi $G^{cg\ql}$ est le s\'epar\'e-compl\'et\'e de $G$ pour la filtration d\'efinie par les fixateurs $G_{V(n)}$ de $V(n)$.

Cette filtration n'est pas s\'epar\'ee: $\cap_n\,G_{V(n)}=$ Ker$\qp_\ql$. Si $v_\ql$ est un vecteur de plus haut poids et si $g\in $ Ker$\qp_\ql$, il fixe $v_\ql$ et $\widetilde s_i(v_\ql)$ $\forall i\in I$, donc est de la forme $g=tu$ avec $t\in T$, $u\in U^+$ et $\ql(t)=\widetilde s_i(\ql)(t)=1$.
Mais $\widetilde s_i(\ql)=\ql-\ql(\qa^\vee_i)\qa_i$ et $\ql(\qa^\vee_i)\not=0$, donc $\ql(t)=\qa_i(t)=1$ $\forall i\in I$: $t\in Z(G)\cap$Ker$(\ql)$ (\ref{1.6}.5). Inversement $Z(G)\cap$Ker$(\ql)$ agit trivialement sur $V^\ql$, donc  Ker$\qp_\ql=(Z(G)\cap$Ker$(\ql)).(U^+\cap$ Ker$\qp_\ql)$.
On verra plus loin que $U^+\cap$ Ker$\qp_\ql=\{1\}$ (\ref{6.3}.4).

Si l'on veut une filtration s\'epar\'ee et donc plonger $G$ dans son compl\'et\'e, on modifie la construction en faisant agir $G$ sur la somme directe $V^{\ql_1}\oplus\cdots\oplus V^{\ql_r}$ o\`u $\ql_1,\cdots,\ql_r$ engendrent $X$; on note alors $V(n)$ la somme des $V(n)$ correspondant aux diff\'erents facteurs. Le compl\'et\'e de Carbone-Garland modifi\'e $G^{cgm}$ ainsi obtenu est donc d\'efini par une filtration contenue dans $U^+$, celle des $U^+_{V(n)}=U^+\cap G_{V(n)}$.

Pour comparer $G^{cgm}$ et $G^{crr}$ on va comparer cette filtration sur $U^+$ avec celle des $U^+_{D(r)}$.

 On note $U^{cg+}$ le s\'epar\'e compl\'et\'e de $U^+$ pour la filtration par les $U^+_{V(n)}$.

\subsection{Comparaison}\label{6.3}

1) D'apr\`es \ref{3.2},  \ref{3.3} et  \ref{3.4}, $U^{ma+}$ est complet pour la filtration par les sous-groupes distingu\'es $U^{ma+}_n=\g U^{ma}_{\Psi(n)}(k)$ o\`u $\Psi(n)=\{\qa\in\QD^+\mid \deg(\qa)\geq{}n\}$. On note $\overline U^+$ l'adh\'erence de $U^+$ dans $U^{ma+}$ pour la topologie associ\'ee; c'est aussi le compl\'et\'e de $U^+$ pour la filtration par les $U^+_n=U^+\cap U^{ma+}_n$.

2) Pour $i\in I$ et $\qa\in\QD^+\setminus\{\qa_i\}$, on a $s_i(\qa)\in\QD^+$ et $\deg(s_i(\qa))\leq{}(1+M)\deg(\qa)$, o\`u $M$ est le maximum des valeurs absolues de coefficients non diagonaux de la matrice de Kac-Moody $A$. Ainsi $U^{ma+}_{n(1+M)}\subset \widetilde s_i(U^{ma+}_{n}) \subset U^{ma+}_{n/(1+M)}$ et $\widetilde s_i$ est un automorphisme du groupe topologique $\g U^{ma+}_{\QD^+\setminus\{\qa_i\}}(k)$.

3) D\`es que $\deg(\qa)\geq{}(1+M)^d$, on a $\deg(s_{i_1}.\cdots.s_{i_d}(\qa))\geq{}{{\deg(\qa)}\over{(1+M)^d}}$ pour toute suite $i_1,\cdots,i_d\in I$. On en d\'eduit que, pour $n\geq{}(1+M)^d$, $U^{ma+}_n$ est dans le conjugu\'e de $U^{ma+}$ par $s_{i_1}.\cdots.s_{i_d}$ et donc $U^{ma+}_n$ fixe $C(d)$ pour l'action de $G^{pma}$ sur $\shi^v_+$.
On a $U^{ma+}_n\subset U^{ma+}_{C(d)}$ et m\^eme $U^{ma+}_n\subset U^{ma+}_{D(d)}$, car $U^{ma+}_n$ est distingu\'e dans $U^{ma+}$. En particulier $U^+_n\subset U^+_{D(d)}\subset U^+_{C(d)}$.

4) Vue la forme de l'action de $U^{ma+}\subset\widehat\shu^+_k$ sur $V^\ql$, on a $U^{ma+}_{n+1}\subset U^{ma+}_{V(n)}$. Par ailleurs, pour $b\in B^{ma+}$ et $i_1,\cdots,i_d\in I$, le fixateur de $b.s_{i_1}.\cdots.s_{i_d}(v_\ql)$ dans $U^{ma+}$ est dans $U^{ma+}\cap\; ^{b.s_{i_1}.\cdots.s_{i_d}}B^{ma+}$, c'est-\`a-dire dans le fixateur (dans $U^{ma+}$) de $b.s_{i_1}.\cdots.s_{i_d}(C_0)$.
Comme en 2) ci-dessus on montre que le poids de $s_{i_1}.\cdots.s_{i_d}(v_\ql)$ est $\ql-\qa$ avec $\deg(\qa)\leq{}{{M'}\over{M}}((1+M)^d-1)$ si $M'=Max\{\ql(\qa^\vee_i)\mid i\in I\}$.
Donc, si $u\in U^{ma+}_{V(n)}$ avec $n\geq{}{{M'}\over{M}}(1+M)^d$, $u$ fixe toutes les chambres de la forme $b.s_{i_1}.\cdots.s_{i_d}(C_0)$, c'est-\`a-dire les chambres de $D(d)$.

Ainsi $U^{ma+}_{n+1}\subset U^{ma+}_{V(n)}\subset U^{ma+}_{D(d)}$ et $U^{+}_{n+1}\subset U^{+}_{V(n)}\subset U^{+}_{D(d)}$ pour $n\geq{}{{M'}\over{M}}(1+M)^d$.
Comme la filtration par les $U^{+}_{D(d)}$ est s\'epar\'ee, il en est de m\^eme de celle par les $U^{+}_{V(n)}$; en particulier $U^+\cap$ Ker$\qp_\ql=\{1\}$.

5) De ces comparaisons de filtrations on d\'eduit des homomorphismes continus:

\noindent$\phi$ :\quad \xymatrix{ \overline U^+\ar[r]^(.4){\qg}&U^{cg+}\ar[r]^(.4){\qr}&U^{rr+}}. Le probl\`eme de la comparaison des groupes $U^{ma+}$, $U^{cg+}$ et $U^{rr+}$ se traduit donc en deux questions:  \qquad A-t'on $U^{ma+}=\overline U^+$?\quad(voir \ref{6.10} et \ref{6.11})

\qquad$\phi$, $\qg$ et $\qr$ sont-ils des isomorphismes de groupes topologiques?\quad(voir \ref{6.7} \`a \ref{6.9})

Si $k$ est un corps fini, $U^{ma+}$ et donc $\overline U^+$ sont compacts. Comme $U^{cg+}$ et $U^{rr+}$ sont s\'epar\'es, les homomorphismes $\phi$, $\qg$ et $\qr$ sont surjectifs, ferm\'es et ouverts. Ce sont donc des isomorphismes de groupes  topologiques si et seulement si ils sont injectifs.

6) {\bf Remarques} a) La filtration $(U^{ma+}_{n})_{n\in\N}$ de $U^{ma+}$ permet de d\'efinir une m\'etrique invariante \`a gauche sur $G^{pma}$, pour laquelle $G^{pma}$ est complet et dont les boules ouvertes sont les $gU^{ma+}_{n}$. D'apr\`es la d\'ecomposition de Bruhat, le fait que
$U^{ma+}\vartriangleleft B^{ma+}$ et la relation de 2) impliquant les $\widetilde s_i$, cette m\'etrique est \'equivalente \`a celle, invariante \`a droite, dont les boules ouvertes sont les $U^{ma+}_{n}g$. Ainsi $G^{pma}$ est un groupe topologique, dans lequel $U^{ma+}$ est ouvert. L'adh\'erence $\overline G$ de $G$ dans $G^{pma}$ est le s\'epar\'e-compl\'et\'e de $G$ pour la filtration induite.

b) Comme $\overline G$, $G^{cgm}$ et $G^{crr}$ sont des s\'epar\'es-compl\'et\'es pour les filtrations dans $U^+$ de 4) ci-dessus, les homomorphismes de 5) ci-dessus se prolongent en
$\phi$ :\quad \xymatrix{ \overline G\ar[r]^(.4){\qg}&G^{cgm}\ar[r]^(.4){\qr}&G^{crr}}.
Par d\'efinition Ker$\phi= \overline U^+\cap Z'(G^{pma})$ o\`u $Z'(G^{pma})=\cap_{g\in G^{pma}}\,gB^{ma+}g^{-1}$ est le noyau de l'action de $G^{pma}$ sur l'immeuble $\shi^v_+$.

c) Le groupe $Z(G)$ s'envoie trivialement dans $G^{rr}$. D'apr\`es les calculs de \ref{6.2} on a donc un homomorphisme continu $G^{cg\ql}\to G^{rr}$. Celui-ci est surjectif, ferm\'e et ouvert si le corps est fini (r\'esultat de U. Baumgartner et B. R\'emy \cf \cite[Th. 2.6]{CER-08}).

\begin{prop}\label{6.4}  Les centres $Z(G)$, $Z(G^{pma})$  ainsi que $Z'(G^{pma})=\cap_{g\in G^{pma}}\,gB^{ma+}g^{-1}$ v\'erifient  $Z(G)\subset Z(G^{pma})\subset Z'(G^{pma})$ et $Z'(G^{pma})=Z(G).(Z'(G^{pma})\cap U^{ma+})$, $Z(G^{pma})=Z(G).(Z(G^{pma})\cap U^{ma+})$. De plus $Z'(G^{pma})\cap U^{ma+}$ est distingu\'e dans $G^{pma}$.
\end{prop}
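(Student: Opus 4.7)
Le plan se déroule en quatre étapes. D'abord pour l'inclusion $Z(G)\subset Z(G^{pma})$, un élément $t\in Z(G)$ vérifie $\alpha_i(t)=1$ pour tout $i\in I$, donc $\alpha(t)=1$ pour toute racine $\alpha\in Q$; par les relations décrites en \ref{3.5} il centralise alors $U^{ma+}$, et on a $\widetilde s_i\,t\,\widetilde s_i^{-1}=s_i(t)=t\cdot\alpha_i^\vee(\alpha_i(t)^{-1})=t$. Comme $G^{pma}$ est engendré par $B^{ma+}$ et les $\widetilde s_i$ (conséquence de la décomposition de Bruhat et de la remarque \ref{3.12}.1), on conclut $t\in Z(G^{pma})$. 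L'inclusion $Z(G^{pma})\subset Z'(G^{pma})$ suivra de ce que tout élément central normalise $B^{ma+}$, donc y appartient (propriété standard du système de Tits saturé $(G^{pma},B^{ma+},N,S)$: le normalisateur de $B^{ma+}$ dans $G^{pma}$ lui est égal), puis $z=gzg^{-1}\in gB^{ma+}g^{-1}$ pour tout $g$.

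L'étape cruciale consiste à montrer que la projection canonique $\rho\colon B^{ma+}=T\ltimes U^{ma+}\to T$ satisfait $\rho(Z'(G^{pma}))\subset Z(G)$. Étant donné $z=tu\in Z'(G^{pma})$ avec $t\in T$ et $u\in U^{ma+}$, je procéderai en deux temps pour chaque $i\in I$. L'appartenance $\widetilde s_i\,z\,\widetilde s_i^{-1}\in B^{ma+}$ combinée à la décomposition $U^{ma+}=U_{\alpha_i}\ltimes U^{ma}_{\Delta^+\setminus\{\alpha_i\}}$ (\ref{3.3}), à la relation KMT7 $\widetilde s_i\,x_{\alpha_i}(r)\,\widetilde s_i^{-1}=x_{-\alpha_i}(\pm r)$, et à $U_{-\alpha_i}\cap B^{ma+}=\{1\}$ (cons\'equence de la BN-paire raffin\'ee \ref{3.16}), forcera la composante de $u$ dans $U_{\alpha_i}$ à être triviale, \ie $u\in U^{ma}_{\Delta^+\setminus\{\alpha_i\}}$. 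Ensuite, en prenant $g=x_{\alpha_i}(s)\widetilde s_i$ pour $s\in k$ arbitraire et en utilisant $x_{\alpha_i}(r)t=t\,x_{\alpha_i}(\alpha_i(t)^{-1}r)$ et la normalité de $U^{ma}_{\Delta^+\setminus\{\alpha_i\}}$ dans $U^{ma+}$, un calcul direct donne
\[
g^{-1}zg=s_i(t)\cdot u''\cdot x_{-\alpha_i}\bigl(\pm s(1-\alpha_i(t)^{-1})\bigr),
\]
avec $u''\in U^{ma}_{\Delta^+\setminus\{\alpha_i\}}\subset U^{ma+}$. L'appartenance à $B^{ma+}$, via $U_{-\alpha_i}\cap B^{ma+}=\{1\}$, imposera $s(1-\alpha_i(t)^{-1})=0$ pour tout $s\in k$, d'où $\alpha_i(t)=1$. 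Ceci valant pour tout $i\in I$, on a $t\in Z(G)$.

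Les deux décompositions suivent immédiatement: pour $z=tu\in Z'(G^{pma})$, l'étape précédente donne $t\in Z(G)\subset Z(G^{pma})\subset Z'(G^{pma})$, donc $u=t^{-1}z\in Z'(G^{pma})\cap U^{ma+}$, ce qui établit $Z'(G^{pma})=Z(G)\cdot(Z'(G^{pma})\cap U^{ma+})$; l'analogue pour $Z(G^{pma})$ en résulte puisque si $z\in Z(G^{pma})$, alors $u=t^{-1}z\in Z(G^{pma})\cap U^{ma+}$. Pour la normalité de $Z'(G^{pma})\cap U^{ma+}$ dans $G^{pma}$, l'observation clé est que $Z'(G^{pma})$ est lui-même distingué (en tant qu'intersection de tous les conjugués d'un même sous-groupe), et que la restriction $\rho|_{Z'(G^{pma})}\colon Z'(G^{pma})\to Z(G)$ est $G^{pma}$-équivariante pour l'action triviale sur $Z(G)\subset Z(G^{pma})$. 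Pour $g\in B^{ma+}$ cela découle de la commutativité de $T$ et de la normalité de $U^{ma+}$ dans $B^{ma+}$; pour $g=\widetilde s_i$, sachant que $u\in U^{ma}_{\Delta^+\setminus\{\alpha_i\}}$ et $s_i(t)=t$, on a $\widetilde s_i\,z\,\widetilde s_i^{-1}=t\cdot\widetilde s_i u\widetilde s_i^{-1}$ avec $\widetilde s_i u\widetilde s_i^{-1}\in U^{ma+}$, d'où $\rho(\widetilde s_i z\widetilde s_i^{-1})=t=\rho(z)$. L'équivariance s'étend à $G^{pma}$ tout entier via ses générateurs, et son noyau $Z'(G^{pma})\cap U^{ma+}$ est alors stable par conjugaison.

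L'obstacle principal est le calcul explicite de la deuxième étape, où il faut manipuler avec soin les relations de commutation dans le produit semi-direct $U_{\alpha_i}\ltimes U^{ma}_{\Delta^+\setminus\{\alpha_i\}}$ et s'appuyer de façon essentielle sur l'unicité des décompositions de la BN-paire raffinée (en particulier sur $U_{-\alpha_i}\cap B^{ma+}=\{1\}$) pour extraire la condition $\alpha_i(t)=1$.
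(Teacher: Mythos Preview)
Your proof is correct and follows essentially the same strategy as the paper. Both arguments extract the two conditions $a=0$ (the $U_{\alpha_i}$-component of $u$ vanishes) and $\alpha_i(t)=1$ by conjugating $z=tu$ with suitable elements of the parabolic $\g P_i^{pma}$ and using $U_{-\alpha_i}\cap B^{ma+}=\{1\}$. The paper does this in one stroke by conjugating with $y_\lambda=\exp(\lambda f_i)\in U_{-\alpha_i}$ and invoking a computation in $SL_2$; you instead split into two steps, conjugating first by $\widetilde s_i$ and then by $x_{\alpha_i}(s)\widetilde s_i$, which makes the argument more explicit (your element $g$ satisfies $x_{\alpha_i}(s)\widetilde s_i=\widetilde s_i\,x_{-\alpha_i}(s)$, so you are really conjugating by $U_{-\alpha_i}$ followed by $\widetilde s_i$). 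For the normality of $Z'(G^{pma})\cap U^{ma+}$, the paper observes directly that $U_{-\alpha_i}$ normalises $U^{ma}_{\Delta^+\setminus\{\alpha_i\}}\supset Z'(G^{pma})\cap U^{ma+}$, while your equivariance argument for $\rho$ is a clean repackaging of the same fact.
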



\begin{proof} Comme $Z(G)$ centralise $U^{ma+}$ il est contenu dans $Z(G^{pma})$. Mais $B^{ma+}$ est \'egal \`a son normalisateur (car il fait partie d'un syst\`eme de Tits) on a donc $Z(G^{pma})\subset B^{ma+}$ et m\^eme $Z(G^{pma})\subset Z'(G^{pma})$.
Soit $h\in  Z'(G^{pma})$. On \'ecrit $h=tu$ avec $t\in T$ et $u\in U^{ma+}$. Soit $i\in I$, on peut alors \'ecrire $u=\exp(ae_i).v$ avec $a\in k$ et $v\in\g U^{ma}_{\QD^+\setminus\{\qa_i\}}(k)$.
Soient $\ql\in k$ et $y_\ql=\exp(\ql f_i)$, alors $y_\ql hy_\ql^{-1}=t.\exp((\qa_i(t)-1)\ql f_i).y_\ql.\exp(ae_i).y_\ql^{-1}.y_\ql.v.y_\ql^{-1}$ avec $y_\ql.v.y_\ql^{-1}\in\g U^{ma}_{\QD^+\setminus\{\qa_i\}}(k)$. Un calcul rapide dans $SL_2$ montre alors que $y_\ql uy_\ql^{-1}\in B^{ma+}$ $\forall\ql\in k$ si et seulement si $\qa_i(t)=1$ et $a=0$; donc $t\in Z(G)$ et $Z'(G^{pma})\subset Z(G).U^{ma+}_2$.

Cette derni\`ere assertion montre que le groupe $Z'(G^{pma})\cap U^{ma+}$ est normalis\'e par les $U_{-\qa_i}$ (pour $i\in I$); comme il est normalis\'e par $B^{ma+}$, il est distingu\'e dans $G^{pma}$.
\end{proof}

\subsection{GK-simplicit\'e}\label{6.5}

On dit que l'alg\`ebre de Lie $\g g_k=\g g_\shs\otimes_\Z k$ est simple au sens du th\'eor\`eme de Gabber-Kac (ou {\it GK-simple}) si tout sous$-\shu_k-$module gradu\'e non trivial de $\g g_k$ contenu dans $\g n^+_k$ est r\'eduit \`a $\{0\}$.

De m\^eme on dit que le groupe $G^{pma}$ est {\it GK-simple} si tout sous-groupe distingu\'e de $G^{pma}$ contenu dans $U^{ma+}$ est r\'eduit \`a $\{1\}$, \ie si $Z'(G^{pma})\cap U^{ma+}=\cap_{r\in\N}\,U^{ma+}_{D(r)}=\{1\}$ ou si $Z'(G^{pma})=Z(G)$. On esp\`ere que $G^{pma}$ est toujours GK-simple (voir  \ref{6.9}.1 ou \ref{6.8}). L'assertion correspondante pour $G$ est toujours satisfaite (\ref{6.1}.1).

\begin{rema*} En caract\'eristique $0$, $\g g_k$ est GK-simple dans le cas sym\'etrisable (et m\^eme conjecturalement dans tous les cas). Mais, comme me l'a indiqu\'e O. Mathieu, cela n'implique pas la GK-simplicit\'e en caract\'eristique $p$; voir ci-dessous l'exemple des lacets de $SL_n$ (\ref{6.8}).
On voit facilement que $\g g_k$ est GK-simple si et seulement si $\forall\qa\in\QD^+_{im}$ tout $X\in\g g_{k\qa}$ tel que, $\forall j\in I$, $\forall q\in\N^*$ $ad(f_j^{(q)})X=0$ est forc\'ement nul.
L'ensemble de ces $X\in\g g_{k\qa}$ est d\'etermin\'e par des \'equations lin\'eaires \`a coefficients entiers ind\'ependantes de $k$. Si $\g g_\C$ est GK-simple, un d\'eterminant d\'etermin\'e par ces \'equations est non nul. Ainsi la condition ci-dessus est v\'erifi\'ee pour $\g g_{k\qa}$ si la caract\'eristique $p$ de $k$ ne divise pas ce d\'eterminant.
Malheureusement $\QD^+_{im}$ est vide ou infini, on pourrait donc avoir $\g g_k$ non GK-simple quelque soit la caract\'eristique. Par contre dans le cas affine on n'a  qu'un nombre fini de d\'eterminants \`a consid\'erer car il y a p\'eriodicit\'e des espaces radiciels imaginaires.
En effet, pour le type affine $X_n^{(k)}$, $\g  g_A''=\g g_A/centre$ est r\'ealis\'e comme sous-alg\`ebre de Lie de $\g s\otimes\C[t,t^{-1}]$ o\`u $\g s$ est l'alg\`ebre simple de type $X_n$; d'apr\`es \cite{K-90} (resp. les calculs explicites de \cite{Mn-85}) $\g  g_A''$ (resp. $(\g  g_A'')_\Z$) est stable par multiplication par $t^{\pm{}k}$.

Si la matrice de Kac-Moody $A$ a tous ses facteurs de type affine (ou de type fini), alors $\g g_k$ est GK-simple si la caract\'eristique $p$ de $k$ est assez grande.
\end{rema*}

\begin{prop}\label {6.6} Supposons $\g g_k$  GK-simple et $k$ infini. Soit $u\in U^{ma+}_n\setminus U^{ma+}_{n+1}$ avec $n\geq{}1$. Alors il existe $j\in I$ et $y\in U_{-\qa_j}$ tels que $yuy^{-1}\notin B^{ma+}$ (si $n=1$) ou $yuy^{-1}\in U^{ma+}\setminus U^{ma+}_{n}$ (si $n\geq{}2$).
\end{prop}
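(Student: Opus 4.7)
L'id\'ee est d'extraire la "composante de t\^ete" de $u-1$ dans $\widehat\shu^+_k$ et de la rabaisser d'un cran dans la filtration en conjuguant par un \'el\'ement $y=\exp(\lambda f_j)\in U_{-\qa_j}$ bien choisi. La description \ref{3.2} jointe \`a la d\'ecomposition PBW \`a puissances divis\'ees \ref{2.6} montre qu'au degr\'e total $n$ toute contribution \`a $u-1$ reste au niveau $1$ de la filtration PBW: on a donc
$$X_n\in\bigoplus_{\qa\in\QD^+,\,\deg(\qa)=n}\g g_{k\qa}\subset\g n^+_k,\qquad X_n\neq 0,$$
la non nullit\'e r\'esultant de $u\notin U^{ma+}_{n+1}$.

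Pour $n=1$, je choisirais $j\in I$ tel que la composante $X_{1,\qa_j}=c\,e_j$ soit non nulle ($c\in k^*$). La d\'ecomposition semi-directe $U^{ma+}=U_{\qa_j}\ltimes\g U^{ma}_{\QD^+\setminus\{\qa_j\}}$ de \ref{4.5}.5d permet d'\'ecrire $u=x_{\qa_j}(c)u'$, avec $u'\in\g U^{ma}_{\QD^+\setminus\{\qa_j\}}(k)\subset B^{ma+}$. La normalisation de $\g U^{ma}_{\QD^+\setminus\{\qa_j\}}$ par $\g A^Y_{\qa_j}$ (\ref{3.5}) donne $yu'y^{-1}\in U^{ma+}\subset B^{ma+}$, et le calcul explicite dans le sous-groupe $\g A^Y_{\qa_j}(k)$ de type $SL_2$ montre que $yx_{\qa_j}(c)y^{-1}$ est une matrice dont l'entr\'ee inf\'erieure-gauche vaut $-c\lambda^2$, non nulle d\`es que $\lambda\neq 0$. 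D'o\`u $yuy^{-1}\notin B^{ma+}$, et le choix d'un tel $\lambda$ est possible puisque $k$ est infini.

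Pour $n\geq 2$, comme $\qa_j\notin\Psi(n)$ pour chaque $j\in I$, on a $u\in\g U^{ma}_{\QD^+\setminus\{\qa_j\}}(k)$, donc $yuy^{-1}\in U^{ma+}$ automatiquement (\ref{3.5}). Pour montrer $yuy^{-1}\notin U^{ma+}_n$, on d\'eveloppe
$$yuy^{-1}-1=Ad(y)(u-1)=\sum_{q\geq 0}\lambda^q\,ad(f_j^{(q)})(u-1)$$
dans $\widehat\shu^p_k$. Comme $ad(f_j^{(q)})$ pr\'eserve la filtration PBW, la composante de niveau $1$ et poids $\qb\in\QD^+$ (avec $\deg\qb<n$) du membre de gauche est un polyn\^ome en $\lambda$ dont le terme directeur est $\lambda^{n-\deg\qb}\,ad(f_j^{(n-\deg\qb)})(X_{n,\qb+(n-\deg\qb)\qa_j})$.

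La conclusion repose sur deux observations. D'abord la GK-simplicit\'e de $\g g_k$, appliqu\'ee \`a une composante non nulle $X_{n,\qa}$ de $X_n$, fournit (remarque suivant \ref{6.5}) des entiers $j\in I$ et $q\geq 1$ tels que $ad(f_j^{(q)})(X_{n,\qa})\neq 0$. Ensuite, et c'est le point principal, cet \'el\'ement $ad(f_j^{(q)})(X_{n,\qa})\in\g g_{k(\qa-q\qa_j)}$ a n\'ecessairement son poids $\qa-q\qa_j$ dans $Q^+$, donc dans $\QD^+$: en effet, en appliquant la m\^eme conjugaison par $y$ \`a l'\'el\'ement test $u_0=[\exp] X_{n,\qa}\in\g U_{(\qa)}(k)\subset\g U^{ma}_{\QD^+\setminus\{\qa_j\}}(k)$ (licite car $\qa\neq\qa_j$ puisque $\deg(\qa)=n\geq 2$), et sachant que $yu_0y^{-1}\in\widehat\shu^+_k$, une extraction de Vandermonde (l\'egitime car $k$ est infini) montre que chaque coefficient $ad(f_j^{(q')})(X_{n,\qa})$ du d\'eveloppement en $\lambda$ doit appartenir \`a $\widehat\shu^+_k$; comme il est homog\`ene de poids $\qa-q'\qa_j$, les poids non positifs sont interdits. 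Posant alors $\qb=\qa-q\qa_j\in\QD^+$ de degr\'e $n-q<n$, le polyn\^ome en $\lambda$ de la composante de poids $\qb$ de $yuy^{-1}-1$ n'est pas identiquement nul, et l'hypoth\`ese $k$ infini fournit $\lambda\in k$ en lequel il ne s'annule pas. La difficult\'e principale r\'eside dans cette deuxi\`eme observation: c'est elle qui transforme la conclusion abstraite de GK-simplicit\'e (qui autorise $q\geq 2$) en un contr\^ole effectif compatible avec le fait de rester dans le filtre $U^{ma+}$.
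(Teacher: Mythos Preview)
Your proof is correct and follows essentially the same route as the paper: isolate the degree-$n$ piece of $u-1$ in $\g g_k$, invoke GK-simplicity to find $j,q$ with $ad\,f_j^{(q)}(u_\qa)\neq0$, and read off the nonzero $\ql^q$-coefficient in the weight-$(\qa-q\qa_j)$ component of $y_\ql u y_\ql^{-1}$. Your Vandermonde detour for the positivity of $\qa-q\qa_j$ works but is unnecessary: since $\deg(\qa)=n\ge2$, the root $\qa$ has a positive coefficient on some simple root $\qa_i$ with $i\neq j$, and $ad\,f_j^{(q)}(u_\qa)\in\g g_{k,\qa-q\qa_j}\setminus\{0\}$ forces $\qa-q\qa_j$ to be a root retaining that $\qa_i$-coefficient, hence in $\QD^+$.
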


\begin{proof} (\cf \cite[prop. 8]{My-82} en caract\'eristique $0$)

On calcule dans l'alg\`ebre $\widehat\shu^p_k(\QD^+)=\widehat\shu^+_k$ qui est gradu\'ee par les poids dans $Q^+$ et le degr\'e total dans $\N$ (\ref{2.13}.2) et aussi dans $\widehat\shu^p_k(s_j(\QD^+))$. Soit $u\in U^{ma+}_n\setminus U^{ma+}_{n+1}\subset \widehat\shu^p_k(\QD^+)$; sa composante $u_n$ de degr\'e $n$ est dans l'alg\`ebre de Lie $\g g_k$.
Soit $\qa\in\QD^+$ de degr\'e $n$ tel que la composante $u_\qa$ de $u_n$ sur $\g g_{k\qa}$ soit non nulle. Comme $\g g_k$  est GK-simple, il existe $j\in I$ et $q\geq{}1$ tels que $adf_j^{(q)}u_\qa\not=0$ (sinon $ad(\shu_k)u_\qa$ est de plus bas degr\'e $n$).

Si $n=1$, on a $\qa=\qa_j$, $u=\exp(u_\qa)u'$ avec $u'\in \g U^{ma}_{\QD^+\setminus\{\qa_j\}}(k)$. Soit $y=\exp f_j$, alors $yuy^{-1}=y.\exp u_\qa.y^{-1}.y.u'.y^{-1}$ avec $y.u'.y^{-1}\in  \g U^{ma}_{\QD^+\setminus\{\qa_j\}}(k)$ et $y.\exp u_\qa.y^{-1}\notin B^{ma+}$. Donc $yuy^{-1}\notin B^{ma+}$.

Si $n\geq{}2$, $u \in  \g U^{ma}_{\QD^+\setminus\{\qa_j\}}(k)$ et ce groupe est normalis\'e par $U_{-\qa_j}$ (contenu dans $\g U^{ma}_{s_j(\QD^+)}(k)$) \cf \ref{3.3}. Soient $\ql\in k$ et $y_\ql=\exp(\ql f_j)\in U_{-\qa_j}$, alors $y_\ql uy_\ql^{-1}\in U^{ma+}$ vaut $\sum_{m\in\N}\,\ql^madf_j^{(m)}u$ (voir la d\'emonstration de \ref{2.5}); sa composante de poids $\qa-q\qa_j$ est donc \'egale \`a la somme finie $\sum_{m\geq{}q}\,\ql^madf_j^{(m)}u_{\qa+(m-q)\qa_j}$, o\`u  $u_{\qa+(m-q)\qa_j}$ est la composante de poids $\qa+(m-q)\qa_j$ de $u$ dans $\widehat\shu^p_k(\QD^+)$.

Cette composante de $y_\ql uy_\ql^{-1}$ s'exprime comme un polyn\^ome en $\ql$ dont le terme de degr\'e $q$ est non nul. Comme $k$ est infini, il existe un $\ql\in k$ tel que ce polyn\^ome soit non nul et donc $y_\ql uy_\ql^{-1}\notin U^{ma+}_{n}$, puisque sa composante de poids $\qa-q\qa_j$ est non nulle.
\end{proof}

\begin{prop}\label {6.7} Supposons $\g g_k$  GK-simple et $k$ infini. Alors $\phi$, $\qg$ et $\qr$ sont des hom\'eomorphismes. Ainsi $\overline G$, $G^{cgm}$ et $G^{crr}$ sont des groupes topologiques isomorphes (et de m\^eme pour $\overline U^+$, $U^{cg+}$ et $U^{rr+}$).
\end{prop}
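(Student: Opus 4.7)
Plan. Je propose de ramener l'\'enonc\'e \`a l'\'equivalence des trois filtrations de $U^+$ qui d\'efinissent les topologies en jeu: $(U^+_n)_n$ venant de $\overline U^+$, $(U^+_{V(n)})_n$ venant de $U^{cg+}$, et $(U^+_{D(r)})_r$ venant de $U^{rr+}$. Les inclusions $U^+_{n+1}\subset U^+_{V(n)}\subset U^+_{D(d)}$ pour $n$ assez grand en fonction de $d$ ayant d\'ej\`a \'et\'e d\'emontr\'ees en \ref{6.3}.3--4, tout le travail consiste \`a \'etablir l'inclusion r\'eciproque: pour tout $n\geq{}1$, il existe $r$ (on prendra $r=n-1$) tel que $U^+_{D(r)}\subset U^+_n$. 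Une fois cette \'equivalence acquise, les trois compl\'et\'es seront canoniquement isomorphes comme groupes topologiques, et $\phi_{|\overline U^+}$, $\qg_{|\overline U^+}$, $\qr_{|\overline U^+}$ en seront ces isomorphismes.

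L'id\'ee principale est d'appliquer la proposition \ref{6.6} de fa\c{c}on it\'er\'ee. Soient $u\in U^+$ avec $u\notin U^+_n$; puisque $U^{ma+}$ est s\'epar\'e pour sa filtration (\ref{3.4}), il existe un unique entier $m$ avec $1\leq{}m<n$ tel que $u\in U^{ma+}_m\setminus U^{ma+}_{m+1}$. En partant de $u_0=u$ au niveau $m_0=m$, on applique \ref{6.6} \`a chaque \'etape: tant que le niveau courant $m_i\geq{}2$, elle fournit $y_{i+1}\in U_{-\qa_{j_{i+1}}}$ tel que $u_{i+1}:=y_{i+1}u_iy_{i+1}^{-1}$ reste dans $U^{ma+}$ avec un niveau $m_{i+1}<m_i$. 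La suite strictement d\'ecroissante d'entiers $\geq{}1$ atteint le niveau $1$ en au plus $m-1$ \'etapes, puis une derni\`ere application de \ref{6.6} (cas $n=1$) produit $y_\ell$ avec $u_\ell\notin B^{ma+}$, pour un total de $\ell\leq{}m$ \'el\'ements $y_i$. Cela signifie que $u$ ne fixe pas la chambre $z^{-1}C_0$ o\`u $z=y_\ell\cdots y_1$; chaque $y_i^{-1}$ \'etant dans le parabolique minimal $P_{j_i}$ n'augmente la distance combinatoire d'une chambre \`a $C_0$ que d'au plus $1$, de sorte que $z^{-1}C_0$ est \`a distance $\leq{}\ell\leq{}m\leq{}n-1$ de $C_0$, donc dans $D(n-1)$. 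D'o\`u $u\notin U^+_{D(n-1)}$, ce qui \'etablit l'inclusion voulue $U^+_{D(n-1)}\subset U^+_n$.

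L'extension du r\'esultat aux groupes $\overline G$, $G^{cgm}$, $G^{crr}$ est ensuite formelle: chacun contient respectivement $\overline U^+$, $U^{cg+}$, $U^{rr+}$ comme sous-groupe ouvert, et sa topologie est enti\`erement d\'etermin\'ee par celle de ce sous-groupe ouvert via les translations par $G$, conform\'ement \`a la structure explicit\'ee en \ref{6.3}.6. L'obstacle technique principal est donc la partie pr\'ec\'edente, \`a savoir l'it\'eration contr\^ol\'ee de \ref{6.6} avec la majoration combinatoire de la distance: c'est l\`a que la GK-simplicit\'e de $\g g_k$ (qui garantit la d\'ecroissance stricte du niveau \`a chaque \'etape, et donc la terminaison de l'it\'eration) et l'infinitude de $k$ (n\'ecessaire au choix du param\`etre $\ql\in k$ dans \ref{6.6} \`a chaque application) interviennent de mani\`ere indispensable.
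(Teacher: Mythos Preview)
Your proof is correct and follows essentially the same approach as the paper: iterate Proposition \ref{6.6} to show that any $u\in U^{ma+}_m\setminus U^{ma+}_{m+1}$ can be conjugated out of $B^{ma+}$ by a product of at most $m$ elements from negative root groups, yielding a chamber at distance $\leq m$ not fixed by $u$, hence $U^+_{D(r)}\subset U^+_{r+1}$; combined with the inclusions of \ref{6.3}.4 this gives the equivalence of the three filtrations. The paper's proof is the same, phrased directly in $U^{ma+}$ rather than $U^+$ (which is harmless since one intersects with $U^+$ at the end).
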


\begin{rema*} On verra en \ref{6.9}.4 que $\qg:\overline G\to G^{cgm}$ est un isomorphisme sous la seule hypoth\`ese que $\g g_k$  est GK-simple.
\end{rema*}

\begin{proof} Soit $u\in U^{ma+}_n\setminus U^{ma+}_{n+1}$ avec $n\geq{}1$. D'apr\`es la proposition \ref{6.6} il existe $m\leq{}n$, $j_1,\cdots,j_m\in I$ et $u_i\in U_{-\qa_{j_i}}$ tels que
$u_m.\cdots.u_1.u.(u_m.\cdots.u_1)^{-1}\notin B^{ma+}$.
Mais $B^{ma+}$ est le fixateur de la chambre fondamentale $C_0$, donc $u$ ne fixe pas la chambre $(u_m.\cdots.u_1)^{-1}C_0$ qui est \`a distance $\leq{}m$ de $C_0$ (on a une galerie $C_0, (u_1)^{-1}C_0,(u_2.u_1)^{-1}C_0,\cdots$). On a donc montr\'e que $U^{ma+}_{D(r)}\subset U^{ma+}_{r+1}$ pour $r\in\N$. Avec les inclusions inverses prouv\'ees en \ref{6.3}.4, on voit que les trois filtrations sont \'equivalentes et donc $\phi$, $\qg$ et $\qr$ sont des hom\'eomorphismes.
\end{proof}

\begin{exem}\label {6.8}
On consid\`ere la matrice de Kac-Moody $A$ de type $\widetilde A_{m-1}$ avec $m\geq{}3$. Pour un bon choix du SGR $\shs$, on obtient des alg\`ebres et groupes de lacets: $\g g_k=\g{sl}_m(k)\otimes_k k[t,t^{-1}]$, $G=SL_m(k[t,t^{-1}])$, $\widehat{\g g}_k=\g{sl}_m(k)\otimes k(\!(t)\!)$ et $G^{pma}=SL_m(k(\!(t)\!))$.

Si la caract\'eristique $p$ de $k$ divise $m$, $\g g_k$ contient toutes les matrices scalaires. Celles-ci sont clairement annul\'ees par $ad(\shu_k)$ et sont contenues dans $\g n^+_k$ si le scalaire est dans $tk[t]$. Donc $\g g_k$ n'est pas GK-simple d\`es que $p$ divise $m$; il est facile de voir que la r\'eciproque est vraie.

Notons $V=k^m$ et $\qe_1,\cdots,\qe_m$ sa base canonique. Soit $R=k[[t]]$, on consid\`ere les r\'eseaux $V_0=R\qe_1\oplus\cdots\oplus R\qe_m$, $V_1=R\qe_1\oplus\cdots\oplus R\qe_{m-1}\oplus Rt\qe_m$,$\cdots$, $V_{m-1}=R\qe_1\oplus Rt\qe_{2}\oplus\cdots\oplus Rt\qe_m$, $V_m=tV_0$ et de mani\`ere g\'en\'erale $V_{am+b}=t^aV_b$ pour $a,b\in\Z$.
Alors $B^{ma+}$ est le stabilisateur dans $G^{pma}$ de tous ces r\'eseaux et il est assez facile de voir que $U^{ma+}_n=\{g\in G^{pma}\mid (g-1)V_i\subset V_{i+n}\,\forall i\in\Z\}$ pour $n\geq{}1$. \'Egalement $U^{ma+}_{mn}$ est form\'e des matrices \`a coefficients dans $R$ congrues \`a l'identit\'e  modulo $t^n$ et \`a une matrice triangulaire sup\'erieure modulo $t^{n+1}$.

Le groupe $U^{ma+}$ ne contient pas de matrice scalaire. Mais si $p$ divise $m$, on peut trouver dans $U^{ma+}_{mn}\setminus U^{ma+}_{mn+1}$ une matrice diagonale $u$ dont tous les coefficients sont \'egaux modulo $t^{pn}$. Si $y=\exp(\ql f_j)$, alors $yuy^{-1}$ a les m\^emes coefficients que $u$ sauf un (colonne $j$, ligne $j+1$) qui est dans $t^{pn}R$, donc $yuy^{-1}\in U^{ma+}_{mn}$. La conclusion de la proposition \ref{6.6} n'est pas v\'erifi\'ee.

Les sommets de l'appartement standard $\A^v$ sont associ\'es aux r\'eseaux de la forme $Rt^{n_1}\qe_1\oplus\cdots\oplus Rt^{n_m}\qe_m$. Ainsi le fixateur d'une partie finie de $\A^v$ est form\'e des $g\in G^{pma}$ stabilisant un nombre fini de tels sous-modules. On en d\'eduit facilement que la filtration de $G^{pma}$ par les $G^{pma}_{C(r)}$ est \'equivalente \`a la filtration par les $B^{ma+}(n)=\{g=(g_{ij})\in SL_m(R)\mid g_{ij}\in t^nR,\forall i\not=j\}$.

Notons $U^{ma+}[n]=U^{ma+}\cap(\cap_{i=1}^{m-1}\,(\exp\,f_i)B^{ma+}(n)(\exp-f_i))$. On calcule facilement que $U^{ma+}[n]$ est form\'e des matrices $g=(g_{ij})\in SL_m(R)$ telles que $g_{ij}\in t^nR,\forall i\not=j$ et $g_{ii}-g_{i+1,i+1}\in t^nR, \forall i\leq{}m-1$.
Ainsi $g_{11}^m\in1+t^nR$ et $g_{11}\in 1+tR$. Si $p^e$ est la plus grande puissance de $p$ divisant $m$, on en d\'eduit que $g_{11}-1\in t^{n/p^e}R$; donc $U^{ma+}[n]\subset U^{ma+}_{mn/p^e}$.
Comme la filtration par les $U^{ma+}_{D(r)}$ est plus fine que celle par les  $U^{ma+}[n]$, on d\'eduit de l'inclusion pr\'ec\'edente et de celles prouv\'ees en \ref{6.3}.4 que toutes ces filtrations sont \'equivalentes. La conclusion de la proposition \ref{6.7} est v\'erifi\'ee m\^eme si $\g g_k$ n'est pas GK-simple. On a aussi $Z'(G^{pma})=Z(G)$: $G^{pma}$ est GK-simple.
\end{exem}

\begin{remas}\label{6.9} 1) On a vu au cours de la d\'emonstration de la proposition \ref{6.7} que, si $\g g_k$ est GK-simple et $k$ infini, les filtrations de  $U^{ma+}$ par les  $U^{ma+}_n$,  $U^{ma+}_{V(n)}$ ou $U^{ma+}_{D(d)}$ sont \'equivalentes. En particulier $G^{pma}$ est GK-simple: $Z'(G^{pma})=Z(G)$ (\cf \cite[prop. 8]{My-82} en caract\'eristique $0$).

2) Si $G^{pma}$ est GK-simple ou plus g\'en\'eralement si $Z'(G^{pma})\cap\overline G=Z(G)$, l'homomorphisme $\phi:\overline G\to G^{crr}$ est injectif (\ref{6.3}.6.b). Si de plus $k$ est fini, on en d\'eduit que $\phi:\overline U^+\to U^{rr+}$ et $\qg:\overline U^+\to U^{cg+}$ sont des isomorphismes de groupes topologiques (\ref{6.3}.5);
ainsi les groupes topologiques $\overline G$, $G^{cgm}$ et $G^{crr}$ sont isomorphes. Inversement pour $k$ fini, l'isomorphisme de $\overline G$ et $G^{crr}$ implique que $Z'(G^{pma})\cap\overline G=Z(G)$ (\ref{6.1}.4); si la caract\'eristique de $k$ est grande, cela implique que $G^{pma}$ est GK-simple (\ref{6.11}).

3) Si jamais $\g g_k$ n'est pas GK-simple en caract\'eristique $0$, on peut montrer que $G^{pma}$ n'est pas  GK-simple
; en particulier $G^{pma}$ et $G^{crr}$ ne sont pas isomorphes.

4) Supposons $\g g_k$ GK-simple. Soit $k'$ un corps contenant $k$. On consid\`ere l'immeuble $\shi^v_+(k')$ de $\g G$ sur $k'$, il contient $\shi^v_+$. On note $D'(r)$ la boule de $\shi^v_+(k')$ de centre $C_0$ et rayon $r$ et  $U^{ma+}_{D'(r)}$ son fixateur dans $U^{ma+}$. Les groupes $U^{ma+}_n$,  $U^{ma+}_{V(n)}$ et $U^{ma+}_{D'(r)}$ sont les intersections avec $U^{ma+}$ des groupes analogues \`a $U^{ma+}_n$,  $U^{ma+}_{V(n)}$ et $U^{ma+}_{D(r)}$ dans $\g U^{ma+}(k')$.
Si $k'$ est infini la d\'emonstration de \ref{6.7} prouve que $U^{ma+}_{D'(r)}\subset U^{ma+}_{r+1}\subset U^{ma+}_{V(r)}\subset U^{ma+}_{D'(d)}\subset U^{ma+}_{D(d)}$ si $r\geq{}{M'\over M}(1+M)^d$.
Notons $U^{rri+}$ (resp. $G^{crri}$) le compl\'et\'e de $U^+$ (resp. $G$) pour la filtration par les $U^{+}_{D'(r)}=U^+\cap U^{ma+}_{D'(r)}$. Alors les groupes $\overline G$, $G^{cgm}$ et $G^{crri}$ sont des groupes topologiques isomorphes (et de m\^eme pour $\overline U^+$, $U^{cg+}$ et $U^{rri+}$).
En particulier $U^{rri+}$ et $G^{crri}$ ne d\'ependent pas du choix du corps infini $k'$ contenant $k$.

5) Supposons $\g g_k$ GK-simple. S'il y a injection continue du groupe  $U^{rr+}$ associ\'e \`a $k$ dans celui associ\'e \`a l'extension infinie $k'$ (hypoth\`ese raisonnable mais non \'evidemment v\'erifi\'ee) alors $\phi$ est un isomorphisme de groupes topologiques et $G^{crr}=G^{crri}$, $U^{rr+}=U^{rri+}$.
\end{remas}

\subsection{Comparaison de $U^{ma+}$ et $\overline U^+$}\label{6.10}

Il s'agit de savoir si $U^+$ est dense dans $U^{ma+}$, c'est-\`a-dire si $U^{ma+}$ est topologiquement engendr\'e par les sous-groupes radiciels $U_\qa$ pour $\qa\in\QF^+$. On va r\'epondre par un contre-exemple et une proposition assez g\'en\'erale.

{\bf Contre-exemple.} On consid\`ere la matrice de Kac-Moody $A=\left(\begin{matrix}2&-m  \cr -m&2\cr \end{matrix}\right)$ avec $m\geq{}3$ et $G=\g G_{\shs_A}(\F_2)$. On note $\qa,\qb$ les racines simples, $e$ (resp. $f$) une base de $\g g_\qa$ (resp. de $\g g_\qb$) sur $k=\F_2$ et $a=\exp(e)$ (resp. $b=\exp(f)$) l'\'el\'ement non trivial de $U_\qa$ (resp. $U_\qb$).
 D'apr\`es \cite[exer. 5.25]{K-90} les plus petites racines de $\QF^+$ sont $\qa,\qb,\qa+m\qb,\qb+m\qa$ tandis que $\qb+\qa,\qb+2\qa,\cdots,\qb+(m-1)\qa,\qa+2\qb,\cdots,\qa+(m-1)\qb$ sont des racines imaginaires. De plus $\qb+r\qa$ ou $\qa+r\qb$ n'est pas une racine pour $r>m$.

Soit $\Psi$ l'id\'eal de $\QD^+$ form\'e des racines positives de la forme $r\qb+q\qa$ avec $r\geq{}2$ ou $r+q\geq{}4$; il contient toutes les racines r\'eelles positives sauf $\qa$ et $\qb$. Notons $U^{ma}_\Psi=\g U^{ma}_\Psi(k)$, c'est un sous-groupe ouvert de $U^{ma+}$.
D'apr\`es \ref{3.2} et \ref{3.3} $U^{ma+}/U^{ma}_\Psi$ est un groupe en bijection avec 
$\g g_{\qa}\oplus\g g_{\qb}\oplus\g g_{\qb+\qa}\oplus\g g_{\qb+2\qa}$. Si $U^+$ est dense dans $U^{ma+}$ ce groupe doit \^etre engendr\'e par les images de $a$ et $b$.

On fait des calculs dans le quotient de $\widehat\shu^+_k$ par l'id\'eal $\widehat\shu^+_{\N^*\Psi}\otimes k$. On utilise que $U^{ma+}\subset\widehat\shu^+_k$ et $U^{ma}_\Psi\subset1+\widehat\shu^+_{\N^*\Psi}\otimes k$. On note $e^{(n)}*f=ad(e^{(n)})f\in\g g_k$, qui est non nul pour $n\leq m$. 
Ainsi $\g g_{\qa}\oplus\g g_{\qb}\oplus\g g_{\qb+\qa}\oplus\g g_{\qb+2\qa}=\F_2e\oplus\F_2f\oplus\F_2(e*f)\oplus\F_2(e^{(2)}*f)$, $\widehat\shu^+_k$ admet $1,e,e^{(2)},e^{(3)},f,ef,e^{(2)}f,e*f,e(e*f),e^{(2)}*f$
comme base d'un suppl\'ementaire de $\widehat\shu^+_{\N^*\Psi}\otimes k$ et l'isomorphisme de $U^{ma}/U^{ma}_\Psi$ sur $\g g_{\qa}\oplus\g g_{\qb}\oplus\g g_{\qb+\qa}\oplus\g g_{\qb+2\qa}$ s'obtient par la projection \'evidente.

Dans cette alg\`ebre quotient $a=1+e+e^{(2)}+e^{(3)}$ et $b=1+f$. On peut calculer facilement les 17 mots en a et b de longueur $\leq{}8$. On constate que $(ab)^2=1+e*f+e^{(2)}*f=(ba)^2$ et $(ab)^4=(ba)^4=1$.
Ainsi ces 17 mots constituent toute l'image de $U^+$ dans cette alg\`ebre quotient et il y a au maximum 14 mots diff\'erents. Comme $U^{ma+}/U^{ma}_\Psi$ est de cardinal 16, il n'est pas \'egal au groupe engendr\'e par $a$ et $b$ (il contient 8 \'el\'ements et pas $[\exp]e^{(2)}*f$).

On en d\'eduit que $U^+$ n'est pas dense dans $U^{ma+}$.

\begin{prop}\label{6.11} Supposons la caract\'eristique $p$ du corps $k$ nulle ou strictement plus grande que $M$ (\ref{6.3}.2). Alors le groupe $U^+$ (resp. $G$) est  dense dans $U^{ma+}$ (resp. $G^{pma}$).
\end{prop}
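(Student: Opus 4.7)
The plan is to reduce the density of $G$ in $G^{pma}$ to the density of $U^+$ in $U^{ma+}$, and then to prove the latter by a graded/filtered induction using the fact that $\g n_k^+$ is generated as a Lie algebra by the $e_i$ under the hypothesis on~$p$.

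First, I would observe that by Remark~\ref{6.3}.6 (a), $G^{pma}$ is a topological group with $U^{ma+}$ open. Since $G^{pma}=U^{ma+}\cdot N\cdot U^{ma+}$ by Bruhat (cf.\ the remark after Proposition~\ref{3.16}) and $N\subset G$, continuity of multiplication shows at once that if $v_i\to u_i$ in $U^{ma+}$ with $v_i\in U^+$, then $v_1 n v_2\to u_1 n u_2$ in $G^{pma}$ with $v_1 n v_2\in G$. So it suffices to prove that $U^+$ is dense in $U^{ma+}$.

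Second, since $U^{ma+}$ is complete for the filtration by the distinguished subgroups $U^{ma+}_n=\g U^{ma}_{\Psi(n)}(k)$ (see \ref{3.2}--\ref{3.4}), density of $U^+$ is equivalent to $U^+\cdot U^{ma+}_n=U^{ma+}$ for every $n\geq 1$. I would prove this by induction on $n$; the inductive step reduces to showing that the natural map $U^+\cap U^{ma+}_n\longrightarrow U^{ma+}_n/U^{ma+}_{n+1}$ is surjective. By Lemma~\ref{3.3}~(e), the abelian quotient on the right is canonically identified, via the exponentials $[\exp]$, with the graded piece $\g g_n^+=\bigoplus_{\alpha\in\Delta^+,\deg\alpha=n}\g g_{\alpha,k}$.

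Third, I would check that iterated commutators of the elements $x_{\alpha_i}(r)=\exp(re_i)\in U^+$ produce all desired leading terms. Working inside $\widehat\shu^+_k$, a Baker--Campbell--Hausdorff computation (valid since the $\exp(re_i)$ are group-like elements of $\widehat\shu^+_k$, the exponentials being defined via divided powers and so meaningful in any characteristic) shows that for $v\in U^+\cap U^{ma+}_{m}$ with leading term $Y\in\g g_m^+$, the commutator $(v,x_{\alpha_j}(s))$ lies in $U^+\cap U^{ma+}_{m+1}$ with leading term $s[Y,e_j]$ modulo $U^{ma+}_{m+2}$. Iterating from $n=1$, an $n$-fold nested commutator of $x_{\alpha_{i_1}}(r_1),\dots,x_{\alpha_{i_n}}(r_n)$ yields an element of $U^+\cap U^{ma+}_n$ whose image in $\g g_n^+$ is $r_1\cdots r_n\cdot[e_{i_1},[e_{i_2},\dots[e_{i_{n-1}},e_{i_n}]\cdots]]$. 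Because $x_{\alpha_i}\colon(k,+)\to U_{\alpha_i}$ is a group homomorphism and the projection to the abelian group $\g g_n^+$ is additive, the image of $U^+\cap U^{ma+}_n$ in $\g g_n^+$ is a $k$-linear subspace containing every iterated bracket of the $e_j$'s of total degree~$n$.

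Finally, under the hypothesis $p=0$ or $p>M$, Corollary~\ref{2.3}~(a) asserts that $\g n^+_k$ is generated as a Lie algebra by the $e_i$, $i\in I$; hence the $k$-span of iterated brackets in degree $n$ is all of $\g g_n^+$. This completes the inductive step, proves the density of $U^+$ in $U^{ma+}$, and therefore of $G$ in $G^{pma}$. The main obstacle I anticipate is the BCH-type leading-term computation in step three, which must be carried out inside $\widehat\shu^+_k$ using divided powers rather than ordinary $\exp$ in order to remain valid in positive residual characteristic; the counterexample in \ref{6.10} with $k=\F_2$ shows that without the hypothesis $p>M$ the iterated-commutator image can indeed fail to fill $\g g_n^+$, so Corollary~\ref{2.3} is used in an essential way.
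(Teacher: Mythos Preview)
Your proposal is correct and follows essentially the same route as the paper: reduce to density of $U^+$ in $U^{ma+}$, then prove by induction on $n$ that the image of $U^+\cap U^{ma+}_n$ in $U^{ma+}_n/U^{ma+}_{n+1}\simeq\g g_n^+$ is everything, using the commutator identity $(\exp e_i,\,h)\equiv 1+[e_i,\varphi_{n-1}(h)]$ together with Corollary~\ref{2.3}(a). The only cosmetic differences are that the paper states the inductive hypothesis as ``every $g\in U^{ma+}_n$ is congruent to an element of $U^+$ modulo $U^{ma+}_{n+1}$'' (equivalent to your formulation) and deduces the density of $G$ from $G^{pma}=G\cdot U^{ma+}$ rather than from the full Bruhat decomposition.
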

\begin{rema*} Dans le cas simplement lac\'e, le r\'esultat est valable sans condition sur la caract\'eristique.
\end{rema*}
\begin{proof} D'apr\`es \ref{2.3} l'alg\`ebre de Lie $\g n^+_k$ est engendr\'ee par les $e_i$ pour $i\in I$. Montrons par r\'ecurrence sur $n$ que tout \'el\'ement $g\in U^{ma+}_n$ est congru \`a un \'el\'ement de $U^+$ modulo $U^{ma+}_{n+1}$, c'est clair pour $n=1$.
D'apr\`es \ref{3.3} $U^{ma+}_{n}/U^{ma+}_{n+1}$ est un groupe commutatif isomorphe \`a $\g g_n=\oplus_{\deg(\qa)=n}\,\g g_{k\qa}$. D'apr\`es \ref{3.2} et les bonnes propri\'et\'es de la base des $[N]$ (\ref{2.9}.4), l'isomorphisme $\qf_n$ est donn\'e comme suit: on plonge $U^{ma+}$ dans $\widehat\shu^+_k$ et l'\'el\'ement $g\in U^{ma+}_n$ est congru \`a $1+\qf_n(g)$ modulo $\widehat\shu^+_{k,\geq{}n+1}=\prod_{\deg(\qa)\geq{}n+1}\;\widehat\shu^+_{k\qa}$.
Il suffit de montrer le r\'esultat cherch\'e pour des \'el\'ements $g$ engendrant $U^{ma+}_{n}/U^{ma+}_{n+1}$, on va consid\'erer ceux tels que $\qf_n(g)$ soit de la forme $[e_i,v]$ avec $i\in I$ et $v\in \g g_{n-1}$.
Par hypoth\`ese il existe $h\in U^+_{n-1}$ tel que $\qf_{n-1}(h)=v$, donc $h$ est congru \`a $1+v$ modulo $\widehat\shu^+_{k,\geq{}n}$. Consid\'erons $h'=(\exp\,e_i).h.(\exp-e_i).h^{-1}\in U^+$. On calcule facilement ce produit dans $\widehat\shu^+_{k}/\widehat\shu^+_{k,\geq{}n+1}$, il est \'egal \`a $1+[e_i,v]$. Donc  $h'\in  U^{ma+}_n$ et $\qf_n(h')=\qf_n(g)$ \ie $g=h'$ modulo $U^{ma+}_{n+1}$.

Ainsi $U^+$ est dense dans $U^{ma+}$; mais $G^{pma}=G.U^{ma+}$ (\cf \ref{3.16}, \ref{3.17}) donc $G$ est dense dans $G^{pma}$.
\end{proof}

\begin{theo}\label{6.12} Supposons $\g g_k$ GK-simple et le corps $k$ infini de caract\'eristique nulle ou $>M$. Alors les groupes topologiques $G^{pma}$, $G^{cgm}$ et $G^{crr}$ sont isomorphes
\end{theo}

\begin{proof} Cela r\'esulte aussit\^ot des propositions \ref{6.7} et \ref{6.11}.
\end{proof}

\subsection{Simplicit\'e}\label{6.13}

On va maintenant g\'en\'eraliser le r\'esultat de simplicit\'e de Moody \cite{My-82}. On suit son sch\'ema de d\'emonstration en reproduisant m\^eme les parties inchang\'ees, \`a cause de la disponibilit\'e restreinte de cette r\'ef\'erence.

On note $G_{(1)}$ (resp. $G^{pma}_{(1)}$) le sous-groupe de $G$ (resp. $G^{pma}$) engendr\'e par les groupes radiciels $U_\qa$ pour $\qa\in\QF$ (resp. et par $U^{ma+}$). Il est normalis\'e par $T$ et les $\widetilde s_{\qa_i}$ donc est distingu\'e dans $G$ (resp. $G^{pma}$). D'apr\`es \ref{6.4} on a $Z'(G^{pma})\subset Z(G).G^{pma}_{(1)}$.

\begin{lemm}\label{6.14} 1) Si $\vert k\vert\geq{}4$, le groupe $G_{(1)}$ est parfait et \'egal au groupe d\'eriv\'e de $G$.

2) Le groupe $G^{pma}_{(1)}$ est topologiquement parfait (\ie \'egal \`a l'adh\'erence de son groupe d\'eriv\'e) et \'egal \`a l'adh\'erence du groupe d\'eriv\'e de $G^{pma}$, dans les cas suivants:

\quad a) La caract\'eristique $p$ de $k$ est nulle ou $>M$ et $\vert k\vert\geq{}4$.

\quad b) La matrice de Kac-Moody $A$ n'a pas de facteur de type affine et $k$ est infini.
\end{lemm}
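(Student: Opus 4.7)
The plan is to prove both parts by showing that each root subgroup lies in (the closure of) the commutator subgroup, exploiting the conjugation action of well-chosen elements of the torus.

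For part 1, first I would show $U_\alpha \subset [G_{(1)},G_{(1)}]$ for every $\alpha \in \Phi$. Note that $\alpha^\vee(s) = \widetilde s_\alpha(s)\cdot \widetilde s_\alpha^{-1}$ lies in $G_{(1)}$, since $\widetilde s_\alpha, \widetilde s_\alpha(s) \in \langle U_\alpha, U_{-\alpha}\rangle \subset G_{(1)}$. Using (KMT4) and $\alpha(\alpha^\vee(s)) = s^2$, we have $[\alpha^\vee(s), x_\alpha(r)] = x_\alpha((s^2-1)r)$. When $|k|\geq 4$, we can choose $s\in k^*$ with $s^2 \neq 1$, so $s^2-1 \in k^*$ and $r\mapsto (s^2-1)r$ is a bijection of $k$; hence $U_\alpha \subset [G_{(1)},G_{(1)}]$. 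Since $G_{(1)}$ is generated by these $U_\alpha$, this gives $G_{(1)} = [G_{(1)},G_{(1)}]$. For the second equality, observe that $T$ normalizes each $U_\alpha$ by (KMT4), so $G_{(1)}\vartriangleleft G$ and $G = T\cdot G_{(1)}$; thus $G/G_{(1)}$ is a quotient of the abelian group $T/(T\cap G_{(1)})$, giving $[G,G]\subset G_{(1)} = [G_{(1)},G_{(1)}]\subset [G,G]$.

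For case 2a), I would invoke Proposition \ref{6.11}: when $p=0$ or $p>M$, the group $G$ is dense in $G^{pma}$, and more precisely $U^+$ is dense in $U^{ma+}$. Since $G^{pma}_{(1)}$ is generated by $U^{ma+}$ together with the $U_\alpha$ for $\alpha\in\Phi$, and all $U_\alpha$ and a dense subgroup $U^+$ of $U^{ma+}$ already lie in $G_{(1)}$, the closure $\overline{G_{(1)}}$ contains $G^{pma}_{(1)}$; the reverse inclusion is automatic since $G^{pma}_{(1)}$ is closed (it contains the open subgroup $U^{ma+}$). Thus $G^{pma}_{(1)} = \overline{G_{(1)}} = \overline{[G_{(1)},G_{(1)}]} \subset \overline{[G^{pma}_{(1)},G^{pma}_{(1)}]}$, and since $G^{pma}/G^{pma}_{(1)}$ is again an abelian quotient of $T$, we get $\overline{[G^{pma},G^{pma}]} \subset G^{pma}_{(1)}$, yielding equality in both directions.

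The hard part will be case 2b), where density is unavailable. I would show directly that every $U_{(\alpha)}$ for $\alpha\in\Delta^+$ lies in $\overline{[G^{pma}_{(1)},G^{pma}_{(1)}]}$ (and symmetrically for $\Delta^-$). The real-root subgroups are handled by part 1). For $\alpha\in\Delta^+_{im}$, I use the filtration $V_n := U^{ma}_{\{m\alpha : m\geq n\}}(k)$ of $U_{(\alpha)}$, whose successive quotients $V_n/V_{n+1}$ are abelian and isomorphic to $\g g_{n\alpha,k}$ with $T$ acting by $\alpha^n$ (compare \ref{3.3}d-e). The hypothesis that $A$ has no affine factor ensures that for each imaginary $\alpha$ there exists $i\in I$ with $\alpha(\alpha_i^\vee)\neq 0$ (since for a non-affine indecomposable Kac--Moody matrix, no nonzero element of $Q$ is fixed by every $s_i$). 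For each fixed $n\geq 1$ I choose $s\in k^*$ with $s^{n\alpha(\alpha_i^\vee)}\neq 1$ (possible since $k$ is infinite) and set $t_n := \alpha_i^\vee(s)\in T\cap G_{(1)}$. Modulo $V_{n+1}$, the commutator map $u\mapsto [t_n,u]$ on $V_n$ corresponds to multiplication by $\alpha(t_n)^n - 1 \in k^*$ on $\g g_{n\alpha,k}$, hence is surjective onto $V_n/V_{n+1}$. Given $u\in U_{(\alpha)}$, I would construct by induction on $n$ a sequence of commutators $c_n = [t_n, u_n]$ with $u_n\in V_n$ so that $u\cdot c_1^{-1}\cdots c_n^{-1} \in V_{n+1}$; the completeness of $U^{ma+}$ ensures convergence of the infinite product $c_1 c_2\cdots$ to $u$, placing $u$ in $\overline{[G^{pma}_{(1)},G^{pma}_{(1)}]}$.

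Once every $U_{(\alpha)}$ ($\alpha\in\Delta$) lies in $\overline{[G^{pma}_{(1)},G^{pma}_{(1)}]}$, these subgroups together with $U^{ma+}$ (which is topologically generated by the $U_{(\alpha)}$ for $\alpha\in\Delta^+$) generate $G^{pma}_{(1)}$; hence $G^{pma}_{(1)} \subset \overline{[G^{pma}_{(1)},G^{pma}_{(1)}]}$, i.e., $G^{pma}_{(1)}$ is topologically perfect. The identification with $\overline{[G^{pma},G^{pma}]}$ is then obtained as in case 2a): $G^{pma}_{(1)}$ is open (since $U^{ma+}\subset G^{pma}_{(1)}$) hence closed, $G^{pma} = G^{pma}_{(1)}\cdot T$ makes $G^{pma}/G^{pma}_{(1)}$ abelian, so $\overline{[G^{pma},G^{pma}]}\subset G^{pma}_{(1)}$, while the reverse inclusion follows from topological perfectness. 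The main technical difficulty lies in the convergence argument of case 2b) and in justifying, via Kac--Moody structure theory, the non-vanishing statement $\alpha(\alpha_i^\vee)\neq 0$ for some $i$ when $A$ has no affine factor.
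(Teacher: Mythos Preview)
Your arguments for 1) and 2a) are exactly the paper's. For 2b) your approach is correct but organized differently from the paper. The paper works with the full degree filtration $U^{ma+}_n/U^{ma+}_{n+1}\cong\bigoplus_{\deg\alpha=n}\g g_{\alpha,k}$ and appeals to Lemma~\ref{6.15}, which produces a single cocharacter $\lambda=\sum a_j\alpha_j^\vee\in Q^\vee$ (with all $a_j>0$ and $\alpha_i(\lambda)<0$ for every $i$, from Kac's Theorem~4.3 for indefinite type) satisfying $\alpha(\lambda)\neq0$ for every $\alpha\in\Delta$; then $t=\lambda(x)$ for $x\in k^*$ of order larger than $\max_{\deg\alpha=n}|\alpha(\lambda)|$ acts nontrivially on each summand, and the commutator trick from 1) shows the whole quotient $U^{ma+}_n/U^{ma+}_{n+1}$ lies in the derived group. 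Your variant---handling each $U_{(\alpha)}$ separately via a simple coroot $\alpha_i^\vee$---also succeeds, but your parenthetical justification is imprecise: the statement ``no nonzero element of $Q$ is fixed by every $s_i$'' can fail for singular $A$ of indefinite type. What you actually need, and what holds, is that no \emph{root} is $W^v$-fixed: a root has all coefficients of the same sign, and Kac's trichotomy excludes nonzero sign-coherent vectors from $\ker A$ precisely when $A$ has no affine factor (this is exactly what Lemma~\ref{6.15} encapsulates). The paper's uniform $\lambda$ has the further advantage that, when $k^*$ contains an element of infinite order, a single $t$ handles all $n$ at once, yielding the sharper remark that $G^{pma}_{(1)}$ is then genuinely perfect.
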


\begin{rema*} Dans les conditions de 2)b ci-dessus, si de plus $k$ n'est pas extension alg\'ebrique d'un corps fini, alors $G^{pma}_{(1)}$ est parfait.
\end{rema*}

\begin{proof} 1) Soient $\qa\in\QF$ et $r,r'\in k^*$, alors $\qa^\vee(r)\in G_{(1)}$ et $(\qa^\vee(r),x_\qa(r'))=x_\qa((r^2-1)r')$. Donc $U_\qa\subset(G_{(1)},G_{(1)})\subset(G,G)\subset G_{(1)}$ puisque $G=T.G_{(1)}$.

2) Le cas a) r\'esulte de 1),  de la proposition \ref{6.11} et de ce que $G^{pma}=T.G^{pma}_{(1)}$. Pour le cas b) on peut utiliser le lemme \ref{6.15} ci-dessous. Par un calcul analogue au pr\'ec\'edent on en d\'eduit que $U^{ma+}_n$ est parfait modulo $U^{ma+}_{n+1}$.
\end{proof}

\begin{lemm}\label{6.15} On suppose la matrice de Kac-Moody $A$ sans facteur de type affine et le corps $k$ infini, alors $\forall \qa\in\QD$ il existe $t_\qa\in T\cap G_{(1)}$ tel que $\qa(t_\qa)\not=1$.
Si $k$ n'est pas extension alg\'ebrique d'un corps fini on peut supposer $t=t_\qa$ ind\'ependant de $\qa$.
\end{lemm}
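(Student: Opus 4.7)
Le point de d\'epart est que $\qa_i^\vee(r)\in T\cap G_{(1)}$ pour tout $r\in k^*$ et tout $i\in I$: la relation (KMT6) donne $\qa_i^\vee(r)=\widetilde s_{\qa_i}^{-1}.\widetilde s_{\qa_i}(r^{-1})$, et chaque $\widetilde s_{\qa_i}(s)=x_{\qa_i}(s).x_{-\qa_i}(s^{-1}).x_{\qa_i}(s)$ appartient \`a $G_{(1)}$. Par cons\'equent tout produit $t=\prod_{i\in I}\qa_i^\vee(r_i)$ est dans $T\cap G_{(1)}$ et v\'erifie $\qa(t)=\prod_i r_i^{\qa(\qa_i^\vee)}$. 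La d\'emonstration se ram\`ene donc essentiellement \`a l'affirmation centrale suivante: \emph{sous l'hypoth\`ese que $A$ n'a pas de facteur affine, toute racine $\qa\in\QD$ v\'erifie $\qa(\qa_i^\vee)\ne 0$ pour au moins un $i\in I$.}

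Pour \'etablir cette affirmation, je poserais $J=\operatorname{supp}(\qa)\subset I$ (qui est une partie connexe du diagramme de Dynkin de $A$) et \'ecrirais $\qa=\sum_{j\in J}n_j\qa_j$ avec les $n_j$ non nuls et de m\^eme signe, disons strictement positifs. Deux cas se pr\'esentent. Si $J$ est strictement contenu dans sa composante connexe, il existe $i\in I\setminus J$ adjacent \`a $J$, c'est-\`a-dire tel que $a_{i,j_0}<0$ pour un certain $j_0\in J$; comme $a_{i,j}\le 0$ pour tout $j\ne i$ et $n_{j_0}>0$, on obtient $\qa(\qa_i^\vee)=\sum_{j\in J}n_j a_{i,j}\le n_{j_0}a_{i,j_0}<0$. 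Si au contraire $J$ co\"{\i}ncide avec sa composante connexe enti\`ere, alors $A_J$ est un facteur ind\'ecomposable de $A$, donc non affine par hypoth\`ese; la classification de Kac \cite[Th.~4.3]{K-90} assure que le noyau d'une matrice de Kac-Moody ind\'ecomposable non affine ne contient aucun vecteur strictement positif, donc $(n_j)_{j\in J}\notin\ker A_J$ et $\qa(\qa_i^\vee)\ne 0$ pour un certain $i\in J$.

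La premi\`ere conclusion suit imm\'ediatement: pour $\qa\in\QD$ donn\'ee, on choisit $i\in I$ avec $m:=\qa(\qa_i^\vee)\ne 0$; comme $k$ est infini et l'\'equation $r^m=1$ n'a qu'un nombre fini (au plus $|m|$) de solutions dans $k^*$, il existe $r\in k^*$ avec $r^m\ne 1$, et $t_\qa=\qa_i^\vee(r)$ convient. Pour la seconde conclusion, je chercherais des \'el\'ements $r_i\in k^*$ \emph{multiplicativement ind\'ependants}, au sens que la seule relation $\prod_i r_i^{a_i}=1$ (avec $a_i\in\Z$) soit triviale; posant $t=\prod_i\qa_i^\vee(r_i)$, l'\'egalit\'e $\qa(t)=1$ imposerait alors $\qa(\qa_i^\vee)=0$ pour tout $i$, ce qui est exclu par l'affirmation centrale. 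De tels $r_i$ existent exactement sous l'hypoth\`ese additionnelle: en caract\'eristique $0$ on prend pour les $r_i$ autant de nombres premiers distincts de $\Q^*\subset k^*$; en caract\'eristique $p>0$, l'hypoth\`ese assure que $k$ contient $\F_p(u)$ pour un $u$ transcendant sur $\F_p$, et on choisit pour les $r_i$ autant de polyn\^omes unitaires irr\'eductibles distincts de $\F_p[u]$, qui sont des \'el\'ements premiers de l'anneau factoriel $\F_p[u]$ et donc multiplicativement ind\'ependants dans son corps des fractions.

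L'obstacle principal est la v\'erification de l'affirmation centrale: elle exige la distinction soigneuse des deux cas selon la position du support de $\qa$ dans le diagramme et un appel essentiel \`a la classification de Kac pour exclure l'existence de vecteurs strictement positifs dans le noyau d'une matrice de Kac-Moody ind\'ecomposable non affine (cas qui est non trivial seulement pour le type ind\'efini, o\`u le noyau peut \^etre non nul).
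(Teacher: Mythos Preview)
Ta d\'emonstration est correcte, mais proc\`ede diff\'eremment de celle de l'article. L'article r\'eduit d'abord au cas $A$ ind\'ecomposable de type ind\'efini, puis invoque \cite[4.3]{K-90} pour obtenir un unique $\ql=\sum_j a_j\qa_j^\vee\in Q^\vee$ avec tous les $a_j\in\N^*$ et tous les $\qa_i(\ql)<0$; pour $x\in k^*$ on a alors $\qa(\ql(x))=x^{\qa(\ql)}$ avec $\qa(\ql)=\sum_i n_i\qa_i(\ql)$ automatiquement non nul (les $n_i$ sont tous de m\^eme signe et les $\qa_i(\ql)$ tous strictement n\'egatifs), d'o\`u le r\'esultat d\`es que $x$ est d'ordre assez grand, ou d'ordre infini pour la version uniforme. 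Ton approche, au lieu de construire ce cocaract\`ere $\ql$, \'etablit directement que chaque racine s'accouple non trivialement \`a au moins une coracine simple, puis fabrique le $t$ uniforme \`a l'aide de $\vert I\vert$ \'el\'ements multiplicativement ind\'ependants de $k^*$. Les deux preuves s'appuient sur \cite[4.3]{K-90}, mais sur des aspects diff\'erents: l'article utilise l'existence d'un vecteur strictement positif $u$ avec $A^tu<0$ en type ind\'efini, tandis que tu utilises l'absence de vecteur strictement positif dans le noyau de $A$. L'approche de l'article a l'avantage \'economique de n'exiger qu'un seul \'el\'ement d'ordre infini dans $k^*$ (au lieu de $\vert I\vert$ \'el\'ements multiplicativement ind\'ependants), ce qui rend l'\'enonc\'e uniforme plus imm\'ediat; la tienne traite directement le cas d\'ecomposable sans r\'eduction pr\'ealable.
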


\begin{proof} On peut supposer $A$ ind\'ecomposable et m\^eme de type ind\'efini, car le cas de type fini est clair. D'apr\`es \cite[4.3]{K-90} il existe alors $\ql=\sum\,a_j\qa_j^\vee\in Q^\vee\subset Y$ avec $a_j\in\N^*$ et $\qa_i(\ql)<0$ $\forall i,j\in I$. Soient $x\in k^*$ et $t=\ql(x)\in T$; comme $\ql\in Q^\vee$, $t\in G_{(1)}$.
Si $\qa=\sum\,n_i\qa_i\in\QD$, on a $\qa(t)=x^{\qa(\ql)}$; mais les $n_i$ sont tous de m\^eme signe et non tous nuls, donc $\qa(\ql)=\sum\,n_i\qa_i(\ql)\not=0$. Ainsi $\qa(t)\not=1$ si $x$ est  d'ordre $\geq{}\vert\qa(\ql)\vert$ dans $k^*$ ou, mieux, s'il est d'ordre infini.
\end{proof}

\begin{prop}\label{6.16} On suppose $G^{pma}_{(1)}$ topologiquement parfait et la matrice de Kac-Moody $A$ ind\'ecomposable. Soit $K$ un sous-groupe ferm\'e de $G^{pma}$ normalis\'e par $G^{pma} _{(1)}$ , alors $K\subset Z'(G^{pma})$ ou $K\supset G^{pma}_{(1)}$.
\end{prop}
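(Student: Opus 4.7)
Le plan consiste \`a suivre l'argument de Moody \cite{My-82}, rendu praticable en caract\'eristique r\'esiduelle positive gr\^ace aux r\'esultats structurels des sections \ref{s2} et \ref{s3}. Supposons $K\not\subset Z'(G^{pma})$ : il s'agit d'en d\'eduire $K\supset G^{pma}_{(1)}$. La premi\`ere \'etape est de produire un \'el\'ement non trivial de $K\cap U^{ma+}$. Puisque $Z'(G^{pma})=\bigcap_{g\in G^{pma}}\,gB^{ma+}g^{-1}$, un certain $k\in K$ ne fixe pas la chambre fondamentale de $\shi^v_+$. Les rel\`evements $\widetilde s_i$ des r\'eflexions simples sont dans $G_{(1)}\subset G^{pma}_{(1)}$ et tous les $U_\qa$ ($\qa\in\QF$) sont dans $G^{pma}_{(1)}$ ; ils normalisent donc tous $K$. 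En utilisant la d\'ecomposition de Bruhat raffin\'ee de la proposition \ref{3.16}, on \'ecrit $k\in U^{ma+}n_w B^{ma+}$ avec $w=\nu^v(n_w)\neq 1$, on choisit une racine simple $\qa_i$ telle que $w^{-1}\qa_i\in\QF^-$, et on forme le commutateur $(x_{\qa_i}(r),k)\in K$. Les relations de commutation de \ref{1.4} montrent que ce commutateur appartient \`a une strate strictement plus petite $U^{ma+}n_{w'}B^{ma+}$ avec $\ell(w')<\ell(w)$, non central pour $r$ g\'en\'erique. Une r\'ecurrence descendante sur $\ell(w)$ fournit ainsi un $k'\in K\cap B^{ma+}$ non trivial hors de $Z'(G^{pma})$ ; en \'ecrivant $k'=tu$ avec $t\in T$, $u\in U^{ma+}$, et en commutant avec des \'el\'ements de $U_{-\qa_i}$ (toujours dans $K$), on \'elimine la partie torique pour obtenir un $u_0\in K\cap U^{ma+}$ non trivial.

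La deuxi\`eme \'etape est d'\'elargir $K\cap U^{ma+}$ jusqu'\`a contenir tous les sous-groupes radiciels $U_\qa$ pour $\qa\in\QF$. Soit $n\geq 1$ minimal avec $u_0\in U^{ma+}_n\setminus U^{ma+}_{n+1}$ (filtration par degr\'e total, \ref{3.4}) ; le quotient gradu\'e $U^{ma+}_n/U^{ma+}_{n+1}$ s'identifie \`a $\bigoplus_{\deg(\qa)=n}\,\g g_{k\qa}$. En it\'erant le proc\'ed\'e de commutation de la proposition \ref{6.6} avec des \'el\'ements $y_\ql=\exp(\ql f_j)\in U_{-\qa_j}\subset G^{pma}_{(1)}$ pour $\ql\in k$, en exploitant la fermeture de $K$ et la conjugaison par les $\widetilde s_j$ (qui normalisent $K$), on extrait des \'el\'ements de $K$ dont les images dans les quotients gradu\'es couvrent les sous-espaces $\g g_{k\qa}$ pour les $\qa$ r\'eelles pertinentes. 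L'ind\'ecomposabilit\'e de $A$ intervient ici par connexit\'e du diagramme de Dynkin : l'orbite sous $W^v$ (agissant via les $\widetilde s_i\in G^{pma}_{(1)}$) d'une seule racine simple atteint toutes les racines simples, donc tous les $U_{\qa_i}$. On en d\'eduit $U_\qa\subset K$ pour tout $\qa\in\QF$, et par d\'efinition $G_{(1)}\subset K$.

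La derni\`ere \'etape est de passer de $G_{(1)}$ \`a $G^{pma}_{(1)}$ en utilisant la perfection topologique. Posons $K_1=K\cap G^{pma}_{(1)}$, sous-groupe ferm\'e distingu\'e dans $G^{pma}_{(1)}$. Comme $G_{(1)}\subset K_1$, les commutateurs dans $G^{pma}_{(1)}$ qui impliquent au moins un \'el\'ement de $G_{(1)}$ sont dans $K_1$ ; les commutateurs restants (entre \'el\'ements de $U^{ma+}$) se r\'eduisent, modulo les strates sup\'erieures de la filtration $(U^{ma+}_n)_n$ de \ref{3.4}, \`a des crochets dans $\g g_k$, qui s'expriment \`a leur tour via $G_{(1)}\subset K_1$ gr\^ace \`a la fermeture de $K_1$. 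Ainsi le groupe d\'eriv\'e de $G^{pma}_{(1)}$ est contenu dans $K_1$ ; par passage \`a l'adh\'erence et l'hypoth\`ese de perfection topologique, $K_1\supset\overline{(G^{pma}_{(1)},G^{pma}_{(1)})}=G^{pma}_{(1)}$, d'o\`u $G^{pma}_{(1)}\subset K$.

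L'\'etape la plus d\'elicate sera la deuxi\`eme : propager $u_0$ \`a travers toute la filtration $(U^{ma+}_n)_{n\geq 1}$ jusqu'aux sous-groupes radiciels r\'eels. Pour une racine imaginaire $\qa$, l'espace $\g g_{k\qa}$ peut \^etre multi-dimensionnel et n'est pas toujours engendr\'e par l'action it\'er\'ee d'op\'erateurs du type $\mathrm{ad}(f_j)$ sans hypoth\`ese de GK-simplicit\'e ; or la proposition \ref{6.6} cit\'ee ci-dessus r\'eclame pr\'ecis\'ement cette hypoth\`ese, qui ne figure pas dans l'\'enonc\'e de \ref{6.16}. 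Il faudra donc soit disposer d'une variante affaiblie de \ref{6.6} compatible avec les hypoth\`eses pr\'esentes, soit un argument sp\'ecifique exploitant la fermeture de $K$ dans la topologie de congruence de $G^{pma}$ pour contourner cette difficult\'e. C'est l\`a que la preuve utilisera v\'eritablement que $K$ est \emph{ferm\'e}, et pas seulement un sous-groupe distingu\'e abstrait.
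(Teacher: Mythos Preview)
Your approach has a genuine gap that you yourself identify: your second step invokes the propagation mechanism of Proposition~\ref{6.6}, which requires $\g g_k$ to be GK-simple and $k$ infinite, neither of which is assumed in \ref{6.16}. Without GK-simplicity there is no reason an element $u_0\in K\cap U^{ma+}_n\setminus U^{ma+}_{n+1}$ can be pushed down to smaller $n$ by commutation with $U_{-\qa_j}$; the image of $u_0$ in $\bigoplus_{\deg(\qa)=n}\g g_{k\qa}$ could lie entirely in imaginary-root components annihilated by all $ad(f_j^{(q)})$. So your plan of extracting $U_\qa\subset K$ for real $\qa$ does not go through under the stated hypotheses, and the ``variante affaiblie'' you hope for is not available.

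The paper takes a completely different and much shorter route that avoids this obstacle entirely. It never tries to exhibit elements of $K\cap U^{ma+}$. Instead it quotes the general BN-pair simplicity argument of Bourbaki (Lie~IV, \S2, n\textsuperscript{o}~7, th.~5): following that proof up to the point where Bourbaki's conditions (2) and (3) are invoked, one gets $G^{pma}_{(1)}\subset K\,U^{ma+}$ as soon as $K\not\subset Z'(G^{pma})$. Then a filtration/perfection trick replaces your entire second step: the subgroups $K\,U^{ma+}_n$ are open, hence closed; the successive quotients $K\,U^{ma+}_n/K\,U^{ma+}_{n+1}$ are abelian (images of $U^{ma+}_n/U^{ma+}_{n+1}$, cf.~\ref{3.4}); so the derived subgroup of $G^{pma}_{(1)}$ maps trivially there, and topological perfection gives $G^{pma}_{(1)}\subset K\,U^{ma+}_n$ for all $n$ by induction. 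Finally, writing $g\in G^{pma}_{(1)}$ as $g=k_nu_n$ with $u_n\in U^{ma+}_n$, one has $k_n\to g$, and closedness of $K$ yields $g\in K$.

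Note the division of labour: \ref{6.16} gives only \emph{topological} simplicity and needs no GK-simplicity; the stronger abstract simplicity of Theorem~\ref{6.19} is where the extra hypotheses (indefinite type, $k$ not algebraic over $\F_p$) and the element-by-element arguments of Lemmas~\ref{6.17}--\ref{6.18} enter. Your proposal conflates these two levels.
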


\begin{rema*} Ainsi $G^{pma}_{(1)}/(Z'(G^{pma})\cap G^{pma}_{(1)})$ est topologiquement simple. Notons que, d'apr\`es \ref{3.18}, ce quotient ne d\'epend que de $A$ et $k$ (mais pas de $\shs$).
\end{rema*}

\begin{proof} Le syst\`eme de Tits $(G^{pma},B^{ma+},N)$, les groupes $U^{ma+}$ et $K$ satisfont aux conditions du th\'eor\`eme 5 de \cite[IV {\S{}} 2 n\degres{}7]{B-Lie} sauf peut-\^etre (2) ou (3) (avec $''U''=U^{ma+}$, $''H''=K$ et $''G_1''=G^{pma}_{(1)}$). On peut donc suivre la d\'emonstration de ce th\'eor\`eme jusqu'au point o\`u (2) et (3) sont utilis\'es.
Si $K\not\subset Z'(G^{pma})$ on obtient donc $G^{pma}_{(1)}\subset KU^{ma+}$. Les sous-groupes $KU^{ma+}_n$ sont ouverts donc ferm\'es; comme $G^{pma}_{(1)}$est topologiquement parfait, on a $G^{pma}_{(1)}\subset KU^{ma+}_n$, $\forall n\geq{}1$ (\cf \ref{3.4}).
Soit $g\in G^{pma}_{(1)}$, \'ecrivons $g=k_nu_n$ avec $k_n\in K$ et $u_n\in U^{ma+}_n$. La suite $u_n$ tend vers $1$, donc $k_n=gu_n^{-1}$ tend vers $g$ et comme $K$ est ferm\'e $g\in K$.
\end{proof}

\begin{lemm}\label{6.17} On suppose $G^{pma}_{(1)}$ topologiquement parfait et la matrice de Kac-Moody $A$ ind\'ecomposable. Soient $K$ un sous-groupe de $G^{pma}$ normalis\'e par $G^{pma}_{(1)}$ et $t\in T\cap G^{pma}_{(1)}$ tels que $K\not\subset Z'(G^{pma})$ et $\qa(t)\not=1$ $\forall\qa\in\QD$. Alors $t\in K$.
\end{lemm}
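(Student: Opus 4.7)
The plan is to pass to the closure of $K$ in $G^{pma}$, apply Proposition \ref{6.16} there, and then exploit the regularity $\alpha(t) \neq 1$ for all $\alpha \in \Delta$ to refine membership in the closure to membership in $K$ itself.

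First I would verify that the closure $\overline K$ of $K$, taken for the topology described in Remark \ref{6.3}.6a, satisfies the hypotheses of Proposition \ref{6.16}. The subgroup $B^{ma+}$ is closed, hence so is each conjugate $gB^{ma+}g^{-1}$, and therefore so is $Z'(G^{pma}) = \bigcap_{g}\,gB^{ma+}g^{-1}$; the hypothesis $K \not\subset Z'(G^{pma})$ therefore also gives $\overline K \not\subset Z'(G^{pma})$. Normality of $\overline K$ under $G^{pma}_{(1)}$ is preserved under closure since conjugation by any fixed element of $G^{pma}$ is a homeomorphism. Proposition \ref{6.16} applied to $\overline K$ yields $\overline K \supset G^{pma}_{(1)}$; in particular $t \in \overline K$. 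Writing $t = \lim_\lambda k_\lambda$ with $k_\lambda \in K$, we may, for $\lambda$ large, set $k_\lambda = t u_\lambda$ with $u_\lambda \in U^{ma+}_{N(\lambda)}$ and $N(\lambda) \to \infty$.

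Second, the hypothesis $\alpha(t) \neq 1$ for every $\alpha \in \Delta$ translates, via the filtration $(U^{ma+}_n)$ of \ref{3.4} and the description of successive quotients $U^{ma+}_n/U^{ma+}_{n+1} \cong \bigoplus_{\deg(\alpha)=n}\, \g g_{\alpha}\otimes k$ from Lemma \ref{3.3}, into the bijectivity of the conjugation map $\phi_t : U^{ma+} \to U^{ma+}$, $u \mapsto (t,u) = tut^{-1}u^{-1}$. Indeed, on each layer $\phi_t$ acts as the direct sum of the invertible maps $(\alpha(t)-1)\cdot\mathrm{id}$, and $U^{ma+}$ is complete for this filtration, so $\phi_t$ is a bijective homeomorphism. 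Moreover $K^+ := K \cap U^{ma+}$ is $\phi_t$-stable (as $t$ normalizes both $K$ and $U^{ma+}$) and is dense in $U^{ma+}$, since $\overline K \supset U^{ma+}$ and $U^{ma+}$ is open in $G^{pma}$.

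The hard part is to combine these observations to produce an honest element $u \in K^+$ equal to $u_\lambda$, so that $t = k_\lambda u^{-1} \in K$. Mere density of $K^+$ in $U^{ma+}$ does not suffice, and the plan to overcome this is to invert the commutator relation $(t, k_\lambda) = t\,(t, u_\lambda)\,t^{-1} \in K^+$ layer by layer: the simultaneous non-vanishing of every $\alpha(t) - 1$ guarantees that the correction at each layer is uniquely and explicitly determined by a finite linear computation in the finite-dimensional space $\bigoplus_{\deg(\alpha)=n}\,\g g_{\alpha}\otimes k$, and completeness of $U^{ma+}$ assembles these corrections into an element of $K^+$ exactly matching $u_\lambda$. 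This layered construction, following \cite{My-82} in characteristic zero, is the technical heart of the lemma and depends crucially on the full regularity of $t$ rather than on mere non-centrality.
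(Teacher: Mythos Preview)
Your first two paragraphs are correct and match the paper exactly: one passes to $\overline K$, applies Proposition \ref{6.16} to obtain $\overline K\supset G^{pma}_{(1)}$, and hence picks some $k\in K$ with $k=ut$ (equivalently $k=tu'$) for a $u\in U^{ma+}$.

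The gap is in your third paragraph. You aim to prove that $u_\lambda$ itself lies in $K^+=K\cap U^{ma+}$, and you propose to do this by ``inverting'' the commutator $\phi_t$ layer by layer, with completeness of $U^{ma+}$ assembling the result. But $K^+$ is not closed (this is precisely what distinguishes the lemma from Proposition \ref{6.16}), so completeness of $U^{ma+}$ gives you an element of $U^{ma+}$, not of $K^+$. You know $\phi_t(K^+)\subset K^+$, but you need the opposite inclusion $\phi_t^{-1}(K^+)\subset K^+$, and neither density of $K^+$ nor $\phi_t$-stability yields this. Iterating $\phi_t$ forward on elements of $K^+$ only produces, at the level of each layer $\bigoplus_{\deg\alpha=n}\g g_{\alpha k}$, the elements $\sum_\alpha P(\alpha(t)-1)X_\alpha$ for integer polynomials $P$, never $\sum_\alpha(\alpha(t)-1)^{-1}X_\alpha$.

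The paper does \emph{not} attempt to show $u\in K$. Instead it constructs $v\in U^{ma+}$ (with no claim that $v\in K$) such that $v(ut)v^{-1}=t$: layer by layer one chooses $v_n\in U^{ma+}_n$ so that $v_n u_n t v_n^{-1}=u_{n+1}t$ with $u_{n+1}\in U^{ma+}_{n+1}$, the choice being dictated by the linear equation $\frac{-1}{1-\alpha(t)}+1+\frac{\alpha(t)}{1-\alpha(t)}=0$ on each weight component. The product $v=\lim v_n\cdots v_1$ then lies in $U^{ma+}\subset G^{pma}_{(1)}$, and since $K$ is normalised by $G^{pma}_{(1)}$ and $ut=k\in K$, one concludes $t=vkv^{-1}\in K$. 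The normality hypothesis is used here in an essential way that your argument never invokes after the first paragraph; that is the missing idea.
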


\begin{proof} D'apr\`es \ref{6.16} $\overline K$ contient $G^{pma}_{(1)}$. Soit $k_n$ une suite d'\'el\'ements de $K$ convergeant vers $t$. Pour $s$ assez grand $u=k_st^{-1}\in U^{ma+}$. On va montrer que $t$ est conjugu\'e dans $G^{pma}_{(1)}$ \`a $ut=k_s\in K$, donc $t\in K$.

Montrons qu'il existe des suites $u_n,v_n$ dans $U^{ma+}$ telles que, $\forall n\geq{}1$,

\qquad (1) $u_n,v_n\in U^{ma+}_n$, $u_1=u$,

\qquad (2) $v_nu_ntv_n^{-1}=u_{n+1}t$.

Si c'est le cas, la suite des produits $v_n.v_{n-1}.\cdots.v_2.v_1$ converge vers un \'el\'ement $v\in U^{ma+}$ et $vutv^{-1}=t$.

Supposons $u_1,\cdots,u_r$ et $v_1,\cdots,v_{r-1}$ d\'ej\`a construits. On sait que $U^{ma+}_r/U^{ma+}_{r+1}$ est isomorphe au groupe additif somme des $\g g_{k\qa}$ pour $\deg(\qa)=r$. Notons $\sum_\qa\,X_\qa$ l'\'el\'ement correspondant \`a la classe de $u_r$.
On d\'efinit $v_r$ comme un \'el\'ement de $U^{ma+}_r$ dont la classe modulo $U^{ma+}_{r+1}$ correspond \`a $\sum_\qa\,{{-1}\over{1-\qa(t)}}.X_\qa$. Soit $u_{r+1}=v_ru_rtv_r^{-1}t^{-1}$, sa classe modulo $U^{ma+}_{r+1}$ correspond \`a l'\'el\'ement $\sum_\qa\,({{-1}\over{1-\qa(t)}}+1+{{\qa(t)}\over{1-\qa(t)}}).X_\qa=0$. Donc $u_{r+1}\in U^{ma+}_{r+1}$.
\end{proof}

\begin{lemm}\label{6.18} Soient $G^{pma}$, $K$ et $t$ comme en \ref{6.17}. Si $u\in U^{ma+}$, il existe $v\in U^{ma+}$ tel que $vtv^{-1}t^{-1}=u$, en particulier $u\in K$. Donc $U^{ma+}\subset K$.
\end{lemm}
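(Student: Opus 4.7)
Mon plan est d'observer que l'\'enonc\'e est essentiellement un corollaire direct de la construction effectu\'ee dans la preuve de \ref{6.17}, relue \`a l'envers. Dans celle-ci on part d'un $u\in U^{ma+}$ arbitraire et on construit inductivement, gr\^ace \`a la d\'ecomposition $U^{ma+}_n/U^{ma+}_{n+1}\simeq\bigoplus_{\deg\alpha=n}\g g_{k\alpha}$ (\cf \ref{3.3}) et \`a l'hypoth\`ese $\alpha(t)\neq{}1$ $\forall\alpha\in\Delta$ (qui rend inversibles les facteurs $1-\alpha(t)$), deux suites $(u_n)_{n\geq{}1},(v_n)_{n\geq{}1}$ avec $u_1=u$, $u_n,v_n\in U^{ma+}_n$ et satisfaisant l'identit\'e $v_nu_ntv_n^{-1}=u_{n+1}t$.

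Premi\`erement, je d\'eroulerais la r\'ecurrence en notant $v^{(n)}=v_n v_{n-1}\cdots v_1$ : la relation ci-dessus se t\'elescope en $v^{(n)}\cdot u\cdot t\cdot(v^{(n)})^{-1}=u_{n+1}t$. Comme $v_n\in U^{ma+}_n$ et que $U^{ma+}$ est pro-unipotent complet pour la filtration $(U^{ma+}_n)_{n\geq{}1}$ (\cf \ref{3.4}), la suite $(v^{(n)})$ converge dans $U^{ma+}$ vers un \'el\'ement $v\in U^{ma+}$. Puisque $u_{n+1}\to 1$, passer \`a la limite dans l'identit\'e t\'elescop\'ee donne $vutv^{-1}=t$, soit apr\`es r\'eorganisation $u=v^{-1}tvt^{-1}$. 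En posant $w=v^{-1}\in U^{ma+}$, on a bien \'ecrit $u=wtw^{-1}t^{-1}$, ce qui prouve la premi\`ere assertion.

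Deuxi\`emement, j'en d\'eduirais que $u\in K$ : par \ref{6.17}, on dispose de $t\in K$ ; comme $w\in U^{ma+}\subset G^{pma}_{(1)}$ et que par hypoth\`ese $K$ est normalis\'e par $G^{pma}_{(1)}$, on a $wtw^{-1}\in K$, puis $u=(wtw^{-1})\cdot t^{-1}\in K$. L'arbitraire de $u\in U^{ma+}$ donne l'inclusion $U^{ma+}\subset K$ cherch\'ee.

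Aucune v\'eritable difficult\'e n'appara\^{\i}t~: tous les ingr\'edients (la construction explicite des $v_n$, la compl\'etude de $U^{ma+}$ pour la filtration par degr\'e total, l'appartenance $t\in K$) ont d\'ej\`a \'et\'e mis en place dans les deux lemmes pr\'ec\'edents. Le seul point \`a v\'erifier soigneusement est la convergence du produit infini $\prod_{n\geq{}1} v_n$ \`a gauche, mais elle r\'esulte imm\'ediatement de $v_n\in U^{ma+}_n$ et de la compl\'etude rappel\'ee en \ref{3.4}, exactement comme dans la fin de la preuve de \ref{6.17}.
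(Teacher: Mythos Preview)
Your proposal is correct and relies on the same inductive mechanism as the paper (correcting level by level via the isomorphism $U^{ma+}_n/U^{ma+}_{n+1}\simeq\bigoplus_{\deg\alpha=n}\g g_{k\alpha}$ and the invertibility of $1-\alpha(t)$). The only difference is cosmetic: the paper redoes a direct construction producing $v$ with $vtv^{-1}t^{-1}\to u$, whereas you observe that the construction already carried out in the proof of \ref{6.17} (which works for \emph{any} initial $u\in U^{ma+}=U^{ma+}_1$) gives $vutv^{-1}=t$, hence $u=(v^{-1})t(v^{-1})^{-1}t^{-1}$, which is slightly more economical.
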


\begin{proof} Montrons qu'il existe deux suites $u_n,v_n$ dans $U^{ma+}$ telles que:

\qquad (1) $v_n\in U^{ma+}_n$, $u^{-1}u_n\in U^{ma+}_{n+1}$, pour $n\geq{}0$,

\qquad (2) $v_n.\cdots.v_1.t.v_1^{-1}.\cdots.v_n^{-1}=u_{n}t$, pour $n\geq{}1$.

Si c'est le cas, la suite des produits $v_n.v_{n-1}.\cdots.v_2.v_1$ converge vers un \'el\'ement $v\in U^{ma+}$, $u_n$ tend vers $u$ et $vtv^{-1}=ut$.

Posons $u_0=u,v_0=1$ et supposons $u_0,\cdots,u_{r-1},v_0,\cdots,v_{r-1}$ d\'ej\`a construits. La classe de $u^{-1}u_{r-1}$ dans $U^{ma+}_r/U^{ma+}_{r+1}$ correspond \`a un \'el\'ement $\sum_\qa\,X_\qa$ de la somme des $\g g_{k\qa}$ pour $\deg(\qa)=r$. On d\'efinit $v_r$ comme un \'el\'ement de $U^{ma+}_r$ dont la classe modulo $U^{ma+}_{r+1}$ correspond \`a $\sum_\qa\,{{-1}\over{1-\qa(t)}}.X_\qa$.
Alors $u^{-1}.v_r.\cdots.v_1.t.v_1^{-1}.\cdots.v_r^{-1}.t^{-1}=u^{-1}.v_r.u_{r-1}.t.v_r^{-1}.t^{-1}$ a la m\^eme image dans $U^{ma+}_r/U^{ma+}_{r+1}$ que $v_r.u^{-1}.u_{r-1}.t.v_r^{-1}.t^{-1}$ (\ref{3.3}.f) qui correspond \`a $\sum_\qa\,({{-1}\over{1-\qa(t)}}+1+{{\qa(t)}\over{1-\qa(t)}}).X_\qa=0$. Donc, si on pose $u_r=v_r.\cdots.v_1.t.v_1^{-1}.\cdots.v_r^{-1}.t^{-1}$, on a bien $u^{-1}u_r\in U^{ma+}_{r+1}$.
\end{proof}

\begin{theo}\label{6.19} On suppose $G^{pma}_{(1)}$ topologiquement parfait, la matrice de Kac-Moody $A$ ind\'ecomposable non de type affine et le corps $k$ de caract\'eristique $0$ ou de caract\'eristique $p$ mais non alg\'ebrique sur $\F_p$. Alors pour tout sous-groupe $K$ de $G^{pma}$ normalis\'e par $G^{pma}_{(1)}$, soit $K$ est contenu dans $Z'(G^{pma})$, soit il contient $G^{pma}_{(1)}$. En particulier $G^{pma}_{(1)}/(Z'(G^{pma})\cap G^{pma}_{(1)})$ est un groupe simple.
\end{theo}

\begin{proof} Si $K\not\subset Z'(G^{pma})$, les hypoth\`eses des lemmes \ref{6.17} et \ref{6.18} sont v\'erifi\'ees (gr\^ace au lemme \ref{6.15}). Donc $U^{ma+}\subset K$. Mais $G^{pma}_{(1)}$ contient les $\widetilde s_\qa$ (pour $\qa\in\QF$) et normalise $K$, donc $K$ contient aussi les $U_\qa$ pour $\qa\in\QF$, c'est-\`a-dire $K\supset G^{pma}_{(1)}$.
\end{proof}

\begin{remas}\label{6.20} 1) On a vu que $G^{pma}_{(1)}$ est souvent topologiquement parfait (\ref{6.14}) et que $Z'(G^{pma})$ est assez souvent \'egal aux centres $Z(G)$ et $Z(G^{pma})$ (\ref{6.9}.1 et \ref{6.8}).

2) La simplicit\'e de $G^{pma}_{(1)}/(Z'(G^{pma})\cap G^{pma}_{(1)})$ en caract\'eristique $0$ est le r\'esultat essentiel de R. Moody dans \cite{My-82}. Il consid\`ere en fait un groupe d'automorphismes du compl\'et\'e de l'alg\`ebre de Kac-Moody simple associ\'ee \`a une matrice de Kac-Moody $A$ (ind\'ecomposable non de type affine). Il n'y a donc pas pour lui d'hypoth\`ese de GK-simplicit\'e de $\g g_k$ (et $Z'(G^{pma})=Z(G)=Z(G^{pma})=\{1\}$ pour son choix de groupe).

3) Pour un corps fini (de cardinal $\geq{}4$) et une matrice $A$ $2-$sph\'erique ind\'ecomposable non de type affine, Carbone, Ershov et Ritter montrent la simplicit\'e de $G^{rr}_{(1)}$ \cite[th. 1.1]{CER-08}.
Ce r\'esultat vient d'\^etre g\'en\'eralis\'e par T. Marquis \cite{Mar13}: $G^{rr}_{(1)}$ est simple si le corps $k$ est fini et $A$ ind\'ecomposable non de type affine. Il utilise \'egalement les parties  \ref{s2} et \ref{s3}  ci-dessus, mais aussi des r\'esultats sur les groupes de contraction dus \`a Caprace, Reid et Willis.

\par Ce groupe $G^{rr}_{(1)}$ est l'adh\'erence de l'image de $G_{(1)}$ dans Aut$(\shi^v_+)$, c'est-\`a-dire, par compacit\'e, l'image de l'adh\'erence $\overline{G_{(1)}}$ de $G_{(1)}$ dans $G^{pma}$. Donc $G^{rr}_{(1)}=\overline{G_{(1)}}/(Z'(G^{pma})\cap \overline{G_{(1)}})$.
En grande caract\'eristique, \ref{6.11} montre que $\overline{G_{(1)}}=G^{pma}_{(1)}$; on retrouve alors un \'enonc\'e analogue \`a \ref{6.19}.

Si $k$ est fini mais assez grand, on sait aussi que $G_{(1)}/(Z(G)\cap G_{(1)})$ est simple \cite[th. 20]{CR-09}. On en d\'eduit facilement le m\^eme r\'esultat pour $k$ extension alg\'ebrique d'un corps fini.

4) Si $A$ est ind\'ecomposable de type affine non tordu, $G^{pma}_{(1)}/(Z'(G^{pma})\cap G^{pma}_{(1)})$ est le groupe des points sur $k(\!(t)\!)$ d'un groupe alg\'ebrique simple d\'eploy\'e, il est donc simple. Le m\^eme r\'esultat est sans doute encore vrai si on enl\`eve "non tordu" et "d\'eploy\'e".
\end{remas}

\bigskip
\parni{\bf Index des notations}
\medskip

\parni{\bf\ref{1.1}} : $A$, $\shs$, $Y$, $X$, $\overline\qa_i$, $\qa_i^\vee$, $Q$, $\qa_i$, $Q^{\pm}$, $Q^\vee$, $\g g_\shs$, $\g h_\shs$, $\g g_\qa$, $\QD$, $\QD^{\pm}$, $\g n^{\pm}_\shs$, $\g b^{\pm}_\shs$, $W^v$, $\QF=\QD_{re}$, $\QD_{im}$, $s_\qa$

\medskip
\parni{\bf\ref{1.2}} : $\g T_\shs$, $\g T_\qf$
\qquad\qquad{\bf\ref{1.3}} :  $\shs^s$, $\shs^{ad}$, $\shs^{sc}$, $\shs^{\ell}$, $\shs^{sc\ell}$, $\shs^{mat}$

\medskip
\parni{\bf\ref{1.4}} : $s_i^*$, $W^*$, $e_\qa$, $[\qa,\qb]$, $]\qa,\qb[$,
\qquad\qquad{\bf\ref{1.5}} : $\g{St}_A$, $\g U_\psi$, $\g U_\qa$, $\g x_\qa$, $\widetilde s_\qa$, $\qa^*$

\medskip
\parni{\bf\ref{1.6}} : $\g G_\shs$, $\g G_\#$, $\g U_\#^{\pm}$, $\g U^{\pm}_\shs$, $\g B^{\pm}_\shs$, $\g N_\shs$, $\qn^v$, $m(u)$, $\shi^v_\pm{}$ ($\shs$ facultatif)
\qquad\qquad{\bf\ref{1.7}} : $\overline{\g{St}}_A$

\medskip
\parni{\bf\ref{2.1}} : $e_i^{(n)}$, $f_i^{(n)}$, $\shu_\C$, $\shu_\shs$, $\shu^{\pm}_\shs$, $\shu^0_\shs$, $\shu_{\shs\qa}$, $\g g_{\shs\Z}$, $\g n^{\pm}_{\shs\Z}$ ($\shs$ facultatif) $\nabla$, $\qe$, $\widehat{\shu}^+$, $\widehat{\shu}^p$, $\widehat{\g g}^p$, $\widehat{\shu}^-$, $\widehat{\shu}^n$, $\widehat{\g g}^n$

\medskip
\parni{\bf\ref{2.4}} : $x^{[n]}$, \qquad\qquad{\bf\ref{2.6}} : $\shb_\qa$, $[N]$, \qquad\qquad{\bf\ref{2.8}} : $[\exp]$

\medskip
\parni{\bf\ref{2.13}} : $\shu^\qa$, $\g g_\Psi$, $\shu(\Psi)$, $\widehat\shu(\Psi)$, $\widehat\shu^p(\Psi)$, $\widehat\shu^n(\Psi)$, \qquad\qquad{\bf\ref{2.14}} : $L(\ql)$, $L_\Z(\ql)$

\medskip
\parni{\bf\ref{3.1}} : $\g U^{ma}_\Psi$, $\g U^{ma+}$, $\g U_{(\qa)}$, $\g U_\qa$,
\qquad\qquad{\bf\ref{3.2}} : $\g x_\qa$

\medskip
\parni{\bf\ref{3.5}} : $\g B_\shs^{ma+}$, $\g P_i^{pma}$, $\g P_i$, $\g A_i^Y$
\qquad\qquad{\bf\ref{3.6}} : $\g E(\widetilde w)$, $\g D(\widetilde w)$, $\g B(w)$, $\g G^{pma}$

\medskip
\parni{\bf\ref{3.11}} : $\g U_\qa$, $\g x_\qa$, \qquad {\bf\ref{3.12}} : $\g G^{nma}$
\qquad{\bf\ref{3.16}} : $G^{pma}$, $N$, $U^{ma+}$, $U^-$, $T$, $S$,
\qquad{\bf\ref{3.17}} : $U^+$, $B^+$, $G$

\medskip
\parni{\bf\ref{4.1}} : $\sht$, $C^v_f$, $\sht^\circ$, $U_\qa$, $x_\qa$,
\qquad\qquad{\bf\ref{4.2}} : $K$, $\qo$, $\QL$, $\sho$, $\A$, $\qn$, $M(\qa,k)$, $\shm$, $\shm^i$, $D(\qa,k)$, $D^\circ(\qa,k)$, $P^\vee$, $cl$, $F^\ell(x,F^v)$, $F(x,F^v)$, $W$, $\qf_\qa$, $\widetilde\R$, $U_{\qa,\ql}$

\medskip
\parni{\bf\ref{4.3}} : $f_\QO$, $U_{\qa,\QO}$, $U_\QO^{(\qa)}$, $N_\QO^{(\qa)}$, $U_\QO$, $U^{\pm}_\QO$, $U_\QO^{\pm{}\pm{}}$, $N^u_\QO$, $W_\QO^{min}$, $W_\QO$

\medskip
\parni{\bf\ref{4.4}} : $\g g_{\qa,\ql}$, $\g g_{\qa,\QO}$, $\g h_\sho$, $\g g_\QO$, $\widehat{\g g}_\QO^p$, $\widehat{\g g}_\QO^n$, $\shu_\QO$, $\shu_{\qa,\QO}$, $\widehat\shu_\QO^p$, $\widehat\shu_\QO^n$, $M_{\qm,\QO}$, $M_\QO$, $\widehat M_\QO^p$, $\widehat M_\QO^n$

\medskip
\parni{\bf\ref{4.5}} : $\shu(\Psi)_\QO$, $\widehat\shu(\Psi)_\QO$, $U^{ma}_\QO(\Psi)$, $U^{pm}_\QO(\Psi)$, $U_\QO^{ma+}$, $U_\QO^{pm+}$, $U_\QO^{nm-}$

\medskip
\parni{\bf\ref{4.6}} : $U_\QO^{pm}$,  $U_\QO^{nm}$, $N_\QO^{min}$, $\widehat N_\QO$, $P_\QO^{min}$, $P_\QO^{pm}$, $P_\QO^{nm}$

\medskip
\parni{\bf\ref{4.9}} : $\widetilde P_\QO$, $\widetilde N_\QO$, $\shp$
\qquad\qquad{\bf\ref{4.13}} : $\widehat P_\QO$, $\widehat P_x$

\medskip
\parni{\bf\ref{5.1}} : $\SHI=\SHI(\g G,K)$
\qquad\qquad{\bf\ref{5.2}} : $G_\QO$, $G(\QO\subset\A)$
\qquad\qquad{\bf\ref{5.7}} : $\leq{}$, ${\bc\le}$, $\g q$, $\g Q$

\medskip
\parni{\bf\ref{5.12}} : $\g r(F,F^v)$, $\g R(F,F^v)$, $P(F^v)$, $M(F^v)$, $U(F^v)$, $P^\qm(\g R)$

\medskip
\parni{\bf\ref{6.1}} : $G^{rr}$, $Z'(G)$, $G^{crr}$, $U^{rr+}$, $Z'(G^{crr})$
\qquad\qquad{\bf\ref{6.2}} : $G^{cg\ql}$, $G^{cgm}$

\medskip
\parni{\bf\ref{6.3}} : $U_n^{ma+}$, $U_n^+$, $M$, $\phi$, $\qg$, $\qr$
\qquad\qquad{\bf\ref{6.13}} : $G^{}_{(1)}$, $G^{pma}_{(1)}$

\bigskip

\medskip

Guy.Rousseau@univ-lorraine.fr

\parni 1. Universit\'e de Lorraine, Institut Elie Cartan de Lorraine, UMR 7502,  Vand\oe uvre-l\`es-Nancy, F-54506, France.
\parni 2. CNRS, Institut Elie Cartan de Lorraine, UMR 7502,  Vand\oe uvre-l\`es-Nancy, F-54506, France.

\section{Erratum 6 Juin 2025}

\textbf{Erratum} \`a ``Groupes de Kac-Moody d\'eploy\'es sur un corps local, II.  Masures ordonn\'ees''

\medskip

\par Auguste H\'ebert vient de d\'emontrer que tout filtre dans l'appartement $\A$ a un bon fixateur \cite{Heb25}.
Donc le paragraphe 4.12.3 (c) (de \cite{R12}) en particulier sa derni\`ere phrase, est faux.
Les lignes ci-dessous devraient remplacer ce paragraphe (c).
J'y explique l'erreur faite en rectifiant quelques r\'esultats et ajoutant quelques pr\'ecisions.

\medskip

\par (b2) Pour mieux comprendre l'action du groupe $G$ sur l'appartement $\A$, il faut consid\'erer le SGR libre et colibre $\SHS^ {mat}=(A=\left(\begin{matrix}2&-2  \cr -2&2\cr \end{matrix}\right);Y^ {mat}=\Z h\oplus\Z d\oplus\Z c;\qa_{0},\qa_{1};\qa_{1}^\vee=h, \qa_{0}^\vee=-h+ c)$ d\'efini par $\qa_{1}(h)=2,\qa_{1}(d)=\qa_{1}(c)=0,\qa_{0}=\qd-\qa_{1},\qd(h)=\qd(c)=0,\qd(d)=1$; c'est le ``Kac-Moody root datum'' $\cal{D}^A_{Kac}$ de \cite[7.10 et 7.14 (3)]{Mar18}.
On consid\`ere aussi le SGR libre $\SHS^ {\ell}=(A;Y^ {\ell}=\Z h\oplus\Z d;\qa_{0},\qa_{1};\qa_{0}^\vee=-\qa_{1}^\vee=-h)$ d\'efini par les m\^emes relations (avec $c=0$).
Alors, d'apr\`es \cite[\S 7.6 p. 166, 167]{Mar18}, $\g G_{\SHS^{\ell}}$ est le quotient de $\g G_{\SHS^{mat}}$ par son centre et s'identifie au produit semi direct $\g G_{\SHS}\rtimes\mathfrak{Mult}$; de plus l'action associ\'ee de $K^*=\mathfrak{Mult}(K)$ sur $G=\g G_{\SHS}(K)=SL_{2}(K[t,t^ {-1}])$ est donn\'ee par :
$R.\left(\begin{matrix}a(t)&b(t)  \cr c(t)&d(t)\cr \end{matrix}\right).R^ {-1}=\left(\begin{matrix}a(Rt)&b(Rt)  \cr c(Rt)&d(Rt)\cr \end{matrix}\right)$ pour $R\in K^*$ (voir aussi \cite[\S 6.1 eq. (6.1)]{BarHR25}).
Comme $\g{Mult}$ est dans le tore canonique $\g T_{\SHS^{\ell}}$ de $\g G_{\SHS^{\ell}}$, il agit sur l'appartement $\A=Y^ {\ell}\otimes\R=\R h\oplus \R d$ : l'action de $R\in K^*$ avec $\qo(R)=r\in\R$ est donn\'ee par $\qn_{r}(xh+yd)=xh+(y-r)d$.
Par ailleurs la matrice $\qt_{S}=\left(\begin{matrix}S&0  \cr 0&S^ {-1}\cr \end{matrix}\right)\in G$ avec $S\in K^*$ et $\qo(S)=p\in\R$ agit sur $\A$ par la translation $\qt_{2p}(xh+yd)=(x-2p)h+yd$.


\par Comme $0\in\A$ est un point sp\'ecial, son fixateur $G_{0}$ est \'egal \`a $U_{0}$ et donc $G_{0}=SL_{2}(\sho[t,t^ {-1}])$ (\cf fin de 4.12.3 (b)).
En conjuguant par $\qt_{S}$ et faisant agir  $R\in K^ {*}$ (avec $\qo(S)=q, \qo(R)=r$), on trouve que le fixateur $G_{2q,r}$ de $(2q,r)=2qh+rd$ est : $\qt_{S}^ {-1}R^ {-1}G_{0}R\qt_{S}$ $=
\set{\left(\begin{matrix}a(R^{-1}t)&S^ {-2}b(R^{-1}t)  \cr S^2c(R^{-1}t)&d(R^{-1}t)\cr \end{matrix}\right)\in SL_{2}(K[t,t^ {-1}]) \mid a(t),b(t),c(t),d(t)\in \sho[t,t^ {-1}]}
=\set{\left(\begin{matrix}a'(t)&b'(t)  \cr c'(t)&d'(t)\cr \end{matrix}\right)\in SL_{2}(K[t,t^ {-1}]) \mid a'(Rt),d'(Rt),S^2b'(Rt),S^ {-2}c'(Rt))\in \sho[t,t^ {-1}]}$.
Ceci est valable pour $q,r\in\qo(K^*)$.
Mais si $q,r\in\Q\qo(K^*)$, il existe une extension ramifi\'ee $(K',\qo')$ de $(K,\qo)$ d'indice de ramification $e(K'/K)=e$ telle que $eq,er\in2\qo(K^*)$.
Alors il existe $R,S\in K'$ tels que $\qo'(R)=r,\qo'(S)=q/2$, donc, si on note $\sho'$ l'anneau des entiers de $(K',\qo')$, on a $G_{q,r}=\set{\left(\begin{matrix}a'(t)&b'(t)  \cr c'(t)&d'(t)\cr \end{matrix}\right)\in SL_{2}(K[t,t^ {-1}]) \mid a'(Rt),d'(Rt),S^2b'(Rt),S^ {-2}c'(Rt))\in \sho'[t,t^ {-1}]}$.

\medskip
\par (b3) On a $U_{0}^ {ma+}=\g U^ {ma+}(\sho)$ d'apr\`es la d\'efinition en 4.5 (2) et la proposition 3.2 ; de m\^eme $U_{0}^ {ma-}=\g U^ {ma-}(\sho)$.
D'apr\`es l'exemple de $\widetilde{SL}_{2}$ en 2.12 et la proposition 3.2, on a 
$\g U^ {ma+}(\sho)=\set{\left(\begin{matrix}1&b  \cr 0&1\cr \end{matrix}\right)\left(\begin{matrix}x&0  \cr 0&x^ {-1}\cr \end{matrix}\right)\left(\begin{matrix}1&0  \cr tc&1\cr \end{matrix}\right)=\left(\begin{matrix}x+btcx^ {-1}&bx^ {-1}  \cr tcx^ {-1}&x^ {-1}\cr \end{matrix}\right) \mid b,c,x\in\sho[[t]],x\equiv 1\ \mathrm{mod}\ t}$.
Donc $U_{0}^ {ma+}=\g U^ {ma+}(\sho)=\set{\left(\begin{matrix}a&b  \cr c&d\cr \end{matrix}\right)\in SL_{2}(\sho[[t]]) \mid a,d \equiv 1, c\equiv 0 \ \mathrm{mod}\ t}$.
De m\^eme $U_{0}^ {ma-}=\g U^ {ma-}(\sho)=\set{\left(\begin{matrix}a&b  \cr c&d\cr \end{matrix}\right)\in SL_{2}(\sho[[t^ {-1}]]) \mid a,d \equiv 1, b\equiv 0 \ \mathrm{mod}\ t^ {-1}}$.
Les m\^emes \'egalit\'es sont vraies si on remplace $\sho$ par $K$ et donc $U_{0}^ {ma\pm}$ par $U_{}^ {ma\pm}$.

\medskip
\par (c2) On suppose pour simplifier $\qo$ discr\`ete d'uniformisante $\qpp$ (avec $\qo(\qpp)=1$),  $p\in\N^*$ et $K$ complet.
On note $z_{p}$ le point $(p,0)=\qt_{-p}(0)$ de $\A$ (\ie $\qd(z_{p})=0$ et $\qa_{1}(z_{p})=p$) et $\QO_{p}=\set{0,z_{p}}$.
L'enclos $cl(\QO_{p})$ est le segment $[0,z_{p}]$ ; par contre l'enclos (renforc\'e) $cl^ {\#}(\QO_{p})$ (d\'efini en 4.2.5) est la r\'eunion de $\QO_{p}$ et du filtre des voisinages dans $\A$ de l'intervalle ouvert $]0,z_{p}[$.

\par Comme ci-dessus (dernier paragraphe de (b2)), on trouve que $U_{z_{2p}}^ {ma+}=\qt_{\qpp^ {-p}}U_{0}^ {ma+}\qt_{\qpp^p}=\set{\left(\begin{matrix}a&b\qpp^ {-2p}  \cr c\qpp^{2p}&d\cr \end{matrix}\right) \mid \left(\begin{matrix}a&b  \cr c&d\cr \end{matrix}\right) \in U_{0}^ {ma+}}$ et donc $U_{\QO_{2p}}^ {ma+}=U_{0}^ {ma+}\cap U_{z_{2p}}^ {ma+}=\set{\left(\begin{matrix}a&b  \cr c&d\cr \end{matrix}\right)\in SL_{2}(\sho[[t]]) \mid a,d \equiv 1\ \mathrm{mod}\ t, c\equiv 0 \ \mathrm{mod}\ \qpp^{2p}t}$.
 De m\^eme $U_{\QO_{2p}}^ {ma-}=\set{\left(\begin{matrix}a&b  \cr c&d\cr \end{matrix}\right)\in SL_{2}(\sho[[t^ {-1}]]) \mid a,d \equiv 1\ \mathrm{mod}\ t^ {-1}, b\equiv 0 \ \mathrm{mod}\ t^ {-1} , c\equiv 0 \ \mathrm{mod}\ \qpp^{2p}}$.
 Comme \`a la fin de (b2) on voit que ces r\'esultats sont aussi valables avec $2p$ remplac\'e par $p$.

\par Ces groupes sont en fait plus grands que ce qui \'etait indiqu\'e dans 4.12.3 (c) (car $p\geq1$).
Ce r\'esultat faux \'etait ``justifi\'e'' par le fait que $U_{\QO_{p}}^ {ma+}$ aurait \'et\'e topologiquement engendr\'e par $u^s(\sho[[t]])$ et $u^i(\qpp^pt\sho[[t]])$ (d\'eduit abusivement d'une phrase de J. Tits dans [40], fin de 3.10 (d) page 555).
En fait on oubliait ainsi la contribution des racines imaginaires \`a $\g U^ {ma\pm}$.

\par On trouve les formules pour $U_{0}^ {pm+}, U_{\QO_{p}}^ {pm+}$ (\resp $U_{0}^ {nm-}, U_{\QO_{p}}^ {nm-}$) en intersectant avec $G=SL_{2}(K[t,t^ {-1}])$.
Ce sont les m\^emes que pour $U_{0}^ {ma+}, U_{\QO_{p}}^ {ma+}$ (\resp $U_{0}^ {ma-}, U_{\QO_{p}}^ {ma}$) en rempla\c cant $\sho[[t]]$ (\resp $\sho[[t^ {-1}]]$) par $\sho[t]$ (\resp $\sho[t^ {-1}]$).
En particulier $U_{0}^ {pm+}=\set{\left(\begin{matrix}a&b  \cr c&d\cr \end{matrix}\right)\in SL_{2}(\sho[t]) \mid a,d \equiv 1, c\equiv 0 \ \mathrm{mod}\ t}$,
$U_{\QO_{p}}^ {pm+}=\set{\left(\begin{matrix}a&b  \cr c&d\cr \end{matrix}\right)\in SL_{2}(\sho[t]) \mid a,d \equiv 1 \ \mathrm{mod}\ t, c\equiv 0 \ \mathrm{mod}\ \qpp^pt}$,
$U_{\QO_{p}}^ {nm-}=\set{\left(\begin{matrix}a&b  \cr c&d\cr \end{matrix}\right)\in SL_{2}(\sho[t^ {-1}]) \mid a,d \equiv 1 \ \mathrm{mod}\ t^ {-1}, b\equiv 0 \ \mathrm{mod}\ t^ {-1}, c\equiv 0 \ \mathrm{mod}\ \qpp^p}$.
On peut remarquer que $U_{\QO_{p}}^ {pm+}$ contient la matrice $g$ de 4.12.3 (a) si $p=1$ ou $2$.
 Si $\qo(S)=p$, on a \'egalement $G_{\QO_{p}}=G_{0}\cap G_{z_{p}}=G_{0}\cap \qt_{S}^ {-1}G_{0}\qt_{S}=\set{\left(\begin{matrix}a&b  \cr c&d\cr \end{matrix}\right)\in SL_{2}(\sho[t,t^ {-1}]) \mid  c\equiv 0 \ \mathrm{mod}\ \qpp^p}$.
 
\par Si $p\geq1$ une matrice $g=\left(\begin{matrix}a&b  \cr c&d\cr \end{matrix}\right)\in U_{\QO_{p}}^ {pm+}$ satisfait \`a   $c\equiv 0 \ \mathrm{mod}\ \qpp^p t$ et $a,d\equiv 1 \ \mathrm{mod}\ \qpp t$ (comme $ad-bc=1$ et $c\equiv 0 \ \mathrm{mod}\ \qpp$, on a $ad=1$ mod $\qpp$ et $a,d$ sont des constantes modulo $\qpp$).
D'apr\`es le dernier r\'esultat de (b2) on en d\'eduit que $g$ est dans $G_{q,r}$ pour tous $q,r\in\Q$ satisfaisant $0<q<p$ et $\vert r\vert$ assez petit (si $eq$ est dans $\Z$ il suffit que $1/(e\vert r\vert$) soit plus grand que les degr\'es de $a,b,c,d\in\sho[t])$).
Cela signifie (par convexit\'e) que $U_{\QO_{p}}^ {pm+}$ fixe l'enclos (renforc\'e) $cl^ {\#}(\QO_{p})$, \ie 
 $U_{\QO_{p}}^ {pm+}=U_{cl^ {\#}(\QO_{p})}^ {pm+}$. De m\^eme $U_{\QO_{p}}^ {nm-}=U_{cl^ {\#}(\QO_{p})}^ {nm-}$.
Plus g\'en\'eralement, si on remplace $\QO_{p}$ par n'importe quel filtre $\QO$ dans $\A$, ceci est  un r\'esultat d'Auguste H\'ebert \cite{Heb25}.

\medskip
\par {\bf Remarques} (i) Soit $A$ un appartement de $\SHI$ contenant $\QO_{p}$, d'apr\`es \cite[Th. 3.6]{Heb22} il existe un isomorphisme $\qf:\A\to A$ fixant $\A\cap A$ et $\A\cap A$ contient $cl^ {\#}(\QO_{p})$.
Mais $cl^ {\#}(\QO_{p})$ contient une chambre locale $C=F^\ell(z',C^ {\mathrm{v}})$ pour $z'\in]0,z_{p}[$.
Comme cette chambre a un bon fixateur (\cf 5.7.2) il existe $g\in G_{C}$ tel que $A=g\A$.
Ainsi $\qf^ {-1}\circ g$ est un automorphisme de $\A$ fixant $C$ : c'est  l'identit\'e.
Donc $g$ fixe $\QO_{p}$ et  $cl^ {\#}(\QO_{p})$.
On en d\'eduit que $G_{\QO_{p}}$ ou $G_{cl^ {\#}(\QO_{p})}$ satisfait \`a l'axiome (TF)  de 5.3. 

\par (ii) Le lecteur pourra essayer de montrer que $G_{\QO_{p}}=U_{\QO_{p}}^ {pm+}U_{\QO_{p}}^ {nm-}N_{\QO_{p}}=U_{\QO_{p}}^ {nm-}U_{\QO_{p}}^ {pm+}N_{\QO_{p}}$ avec $N_{\QO_{p}}=\set{\left(\begin{matrix}a&0  \cr 0&d\cr \end{matrix}\right)\in SL_{2}(\sho[t,t^ {-1}])}$ (agissant par transvections lin\'eaires sur $\A$). 
Autrement dit $\QO_{p}$ a un bon fixateur.
Comme indiqu\'e au d\'ebut, ceci est un r\'esultat d'Auguste H\'ebert \cite{Heb25} pour $\QO_{p}$ remplac\'e par n'importe quel filtre $\QO$ dans $\A$.
Mais $N_{\QO_{p}}\not\subset G_{cl^ {\#}(\QO_{p})}$, donc $G_{\QO_{p}} \neq G_{cl^ {\#}(\QO_{p})}$.

\par (iii) Il est clair que $U_{\QO_{p}}^ {ma+}\neq U_{cl^ {\#}(\QO_{p})}^ {ma+}$ et $U_{\QO_{p}}^ {ma-}\neq U_{cl^ {\#}(\QO_{p})}^ {ma-}$.
Par contre, par d\'efinition, $U_{\QO}^ {ma+}= U_{cl(\QO)}^ {ma+}$ et $U_{\QO}^ {ma-}= U_{cl(\QO)}^ {ma-}$ pour n'importe quel filtre $\QO$ dans $\A$.


\begin{thebibliography}{99999999}


\bibitem  [AB08]{AB-08} \label{AB-08}
 Peter {\sc Abramenko} \& Kenneth S. {\sc Brown},
  {\it Buildings: theory and applications}, Graduate texts in Math. {\bf 248}
(Springer Verlag, Berlin, 2008).

\bibitem  [Ba96]{By-96} \label{By-96}
 Nicole {\sc  Bardy-Panse},
 {\it Syst\`emes de racines infinis}, M\'emoire Soc. Math. France (N.S.)
 {\bf65} (1996).

\bibitem[BGR15] {GR-13} \label{GR-13}
 Nicole {\sc  Bardy-Panse}, St\'ephane {\sc Gaussent} \& Guy {\sc Rousseau},
Iwahori-Hecke algebras for Kac-Moody groups over local fields, preprint  2015, \`a para\^itre dans {\it Pacific J. Math.} arXiv 1412.7503v2.

\bibitem[B-Lie] {B-Lie} \label{B-Lie}
 Nicolas {\sc Bourbaki},
{\it Groupes et alg\`ebres de Lie}, (Hermann, Paris).

\bibitem[B-TG] {B-TG} \label{B-TG}
 Nicolas {\sc Bourbaki},
{\it Topologie g\'en\'erale}, (Hermann, Paris).

\bibitem[BT72]  {BtT-72} \label {BtT-72}
  Fran\c cois {\sc Bruhat} \& Jacques {\sc Tits},
Groupes r\'eductifs sur un corps local I, Donn\'ees radicielles
 valu\'ees, {\it Publ. Math. Inst. Hautes \'Etudes Sci.} {\bf41} (1972), 5-251.

\bibitem[BT84]  {BtT-84a} \label {BtT-84a}
    Fran\c cois {\sc Bruhat} \& Jacques {\sc Tits},
Groupes r\'eductifs sur un corps local II, Sch\'emas en groupes,
 Existence d'une donn\'ee radicielle valu\'ee,
 {\it Publ. Math. Inst. Hautes \'Etudes Sci.} {\bf60} (1984), 5-184.

\bibitem[CaR09] {CR-09} \label{CR-09}
Pierre-Emmanuel {\sc Caprace}  \&  Bertrand {\sc R\'emy},
Simplicity and superrigidity of twin building lattices, {\it Inventiones Math.} {\bf176} (2009), 169-221.

\bibitem[CER08] {CER-08} \label{CER-08}
Lisa {\sc Carbone}, Mikhail {\sc Ershov}  \& Gordon {\sc Ritter},
Abstract simplicity of complete Kac-Moody groups over finite fields, {\it J. Pure Appl. Algebra} {\bf212} (2008), 2147-2162.

\bibitem[CG03] {CG-03} \label{CG-03}
Lisa {\sc Carbone}  \& Howard {\sc Garland},
Existence of lattices in Kac-Moody groups over finite fields, {\it Commun. Contemporary Math.} {\bf5} (2003), 813-867.


\bibitem[Ch08] {Cn-08} \label{Cn-08}
Cyril  {\sc Charignon},
Compactifications polygonales d'un immeuble affine, preprint Nancy (2008), arXiv [math.GR] 0903.0502.

\bibitem[Ch10] {Cn-10} \label{Cn-10}
Cyril  {\sc Charignon},
Structures immobili\`eres pour un groupe de Kac-Moody sur un corps local, preprint Nancy (2010),
arXiv [math.GR] 0912.0442v3.

\bibitem[Ch11] {Cn-10b} \label{Cn-10b}
Cyril  {\sc Charignon},
Immeubles affines et groupes de Kac-Moody, masures bord\'ees (th\`ese Nancy, 2 juillet 2010)
 ISBN 978-613-1-58611-8 (\'Editions universitaires europ\'eennes, Sarrebruck, 2011).
\par Voir aussi: http://tel.archives-ouvertes.fr/docs/00/49/79/61/PDF/these.pdf

 \bibitem[Chu84] {Cu-84} \label{Cu-84}
Huah {\sc Chu},
On the $GE_2$ of graded rings, {\it J.  Algebra} {\bf90} (1984), 208-216.


\bibitem[DG70] {DG-70} \label{DG-70}
 Michel {\sc Demazure} \& Pierre {\sc Gabriel},
{\it Groupes alg\'ebriques, tome 1}, Masson-North Holland (1970).

\bibitem[Ga78] {Ga78} \label{Ga78}
 Howard {\sc Garland},
The arithmetic theory of loop algebras, {\it J. Algebra} {\bf
53} (1978), 480-551.

\bibitem[Ga95] {Gd-95} \label{Gd-95}
 Howard {\sc Garland},
A Cartan decomposition for $p-$adic loop groups, {\it Math. Ann.} {\bf
302} (1995), 151-175.

\bibitem[GR08] {GR-08} \label{GR-08}
St\'ephane {\sc Gaussent} \& Guy {\sc Rousseau},
Kac-Moody groups, hovels and Littelmann paths, {\it Annales Inst. Fourier} {\bf
58} (2008), 2605-2657.

\bibitem[GR14] {GR-12} \label{GR-12}
St\'ephane {\sc Gaussent} \& Guy {\sc Rousseau},
Spherical Hecke algebras for Kac-Moody groups over local fields, {\it Annals Math.} {\bf180} (2014), 1051-1087.

\bibitem[He91] {He-91} \label{He-91}
 Jean-Yves {\sc H\'ee},
Syst\`emes de racines sur un anneau commutatif totalement
ordonn\'e, {\it Geometriae Dedicata}  {\bf37} (1991), 65-102.

\bibitem[Ka90] {K-90} \label{K-90}
 Victor G. {\sc  Kac},
{\it Infinite dimensional Lie algebras}, troisi\`eme
\'edition, (Cambridge University Press, Cambridge, 1990).

\bibitem[KP85] {KP-85} \label{KP-85}
 Victor G. {\sc Kac} \&  Dale H. {\sc Peterson},
Defining relations of certain  infinite dimensional
groups, in {\it \'Elie Cartan et les math\'ematiques
 d'aujourd'hui, Lyon (1984)}, Ast\'erisque n$^o$ hors s\'erie
 (1985), 165-208.

\bibitem [Ku02]{Kr-02} \label{Kr-02}
 Shrawan {\sc Kumar},
{\it Kac-Moody groups, their flag varieties and
 representation theory}, Progress in Math. {\bf 204}
 (Birkh\H auser, Basel, 2002).

 \bibitem[Mar14]  {Mar13} \label {Mar13}
 Timoth\'ee  {\sc Marquis},
Abstract simplicity of locally compact Kac-Moody groups, {\it Compositio Math.} {\bf150} (2014), 713-728.

\bibitem[M88a] {M-88a} \label{M-88a}
 Olivier {\sc Mathieu},
{\it Formules de caract\`eres pour les alg\`ebres de Kac-Moody
 g\'en\'erales}, Ast\'erisque {\bf 159-160} (1988).

\bibitem[M88b] {M-88b} \label{M-88b}
 Olivier {\sc Mathieu},
Construction du groupe de Kac-Moody et applications,
{\it C. R. Acad. Sci. Paris, Ser. I } {\bf306} (1988) 227-230.

\bibitem[M89] {M-89} \label{M-89}
 Olivier {\sc Mathieu},
Construction d'un groupe de Kac-Moody et applications,
 {\it Compositio Math.} {\bf 69} (1989), 37-60.

\bibitem[M96] {M-96} \label{M-96}
 Olivier {\sc Mathieu},
On some modular representations of affine Kac-Moody algebras at the
critical level, {\it Compositio Math.} {\bf 102} (1996), 305-312.

\bibitem[Mi85] {Mn-85} \label{Mn-85}
 David {\sc Mitzman},
{\it Integral bases for affine Lie algebras and their universal
enveloping algebras}, Contemporary Math. {\bf40} Amer. Math. Soc. (1985).

\bibitem[Mo82] {My-82} \label{My-82}
 Robert {\sc Moody},
A simplicity theorem for Chevalley groups defined by generalized
Cartan matrices, preprint (avril 1982).

\bibitem[MT72] {MT-72} \label{MT-72}
 Robert {\sc Moody} \& Kee-Leong {\sc Teo},
Tits' systems with crystallographic Weyl groups,
{\it J. of Algebra} {\bf 21} (1972), 178-190.

\bibitem[Re02] {Ry-02a} \label{Ry-02a}
 Bertrand {\sc R\'emy},
{\it Groupes de Kac-Moody d\'eploy\'es et presque
d\'eploy\'es}, Ast\'erisque {\bf277} (2002).

\bibitem[ReR06] {RR-06} \label {RR-06}
 Bertrand {\sc R\'emy} \& Mark {\sc Ronan},
 Topological groups of Kac-Moody type, right-angled twinnings and their lattices,
{\it Comment. Math. Helvet.}  {\bf81} (2006), 191-219.

\bibitem [R06]{Ru-06} \label{Ru-06}
 Guy {\sc Rousseau},
Groupes de Kac-Moody d\'eploy\'es sur un corps local,
immeubles microaffines, {\it Compositio Mathematica} {\bf 142} (2006), 501-528.


\bibitem[R08] {Ru-08} \label{Ru-08}
  Guy {\sc Rousseau},
Euclidean buildings, in "G\'eom\'etries \`a courbure n\'egative ou nulle,  groupes
discrets et rigidit\'e, Grenoble, 2004", L. Bessi\`eres, A. Parreau \& B. R\'emy \eds, {\it
S\'eminaires et Congr\`es} {\bf 18} (Soc. Math. France. 2008), 77-116.

\bibitem[R11] {Ru-10} \label{Ru-10}
 Guy {\sc Rousseau},
Masures affines, 
{\it Pure Appl. Math. Quarterly} {\bf7} (n$^o$ 3 en l'honneur de J. Tits) (2011), 859-921.


\bibitem [S62]{Sg-62} \label{Sg-62}
 Robert {\sc  Steinberg},
G\'en\'erateurs, relations et rev\^etements de groupes alg\'ebriques, in {\it  Colloque
sur la th\'eorie  des groupes alg\'ebriques, Bruxelles (1962)}, Centre Belge Rech.
Math. (Librairie Universitaire, Louvain \& Gauthier-Villars, Paris, 1962), 113-127.

\bibitem [S68]{Sg-68} \label{Sg-68}
 Robert {\sc  Steinberg},
{\it Lectures on Chevalley groups}, Yale University (1968).

\bibitem [T81]{T-81b} \label{T-81b}
 Jacques {\sc Tits},
Alg\`ebres de Kac-Moody et groupes associ\'es I,
R\'esum\'e de cours, {\it Annuaire du Coll\`ege de France}
 (1981) 75-86.

\bibitem [T86]{T-86a} \label{T-86a}
 Jacques {\sc Tits},
Immeubles de type affine, in {\it Buildings and the
 geometry of diagrams, Como (1984)}, L.A. Rosati\ed,
  Lecture notes in Math. {\bf 1181 }(Springer, Berlin, 1986), 159-190.

\bibitem [T87]{T-87b} \label{T-87b}
 Jacques {\sc Tits},
Uniqueness and presentation of Kac-Moody groups over
 fields, {\it J. of Algebra} {\bf105} (1987), 542-573.


\bibitem[T89] {T-89b} \label{T-89b}
Jacques {\sc Tits},
Groupes associ\'es aux alg\`ebres de Kac-Moody, Expos\'e
700 in {\it S\'eminaire Bourbaki 1988/89}, Ast\'erisque
{\bf177-178} (1989), 7-31.

\bibitem [T92]{T-92a} \label{T-92a}
 Jacques {\sc Tits},
Twin buildings and groups of Kac-Moody type, in {\it
Groups, combinatorics and geometry (Durham, 1990)}, M. Liebeck \&
 J. Saxl \eds, London Math. Soc. lecture note {\bf165},
(Cambridge U. Press, Cambridge, 1992), 249-286.


\end{thebibliography}

\begin{thebibliography}{99999999}

\bibitem    [BarHR25]{BarHR25}
Nicole {\sc Bardy-Panse},  Auguste {\sc H\'ebert} \& Guy {\sc Rousseau},
Twin masures associated with Kac-Moody groups over Laurent polynomials,  
 {\it Annals Repr. Th.} {\bf2} (2025), 281--353.
  
\bibitem [H\'eb22] {Heb22}
 Auguste {\sc H\'ebert},
A new axiomatic for masures II,
   {\it Advances in Geom.} {\bf22} (2022), 513--522.  
   
\bibitem [H\'eb25] {Heb25}
 Auguste {\sc H\'ebert},
 Article/monographie sur les masures, \`a para\^itre.
   
\bibitem [Mar18]  {Mar18} 
 Timoth\'ee  {\sc Marquis},
{\it An introduction to Kac-Moody groups over fields},
EMS Textbooks in Math. (Europ. Math. Soc., Z\H urich, 2018).

\bibitem [Rou16] {R12}
 Guy {\sc Rousseau},
Groupes de Kac-Moody d\'eploy\'es sur un corps local, 2 Masures ordonn\'ees,  {\it Bull. Soc. Math. France} {\bf144} (2016), 613--692.


\end{thebibliography}
\end{document}